\numberwithin{equation}{section}
\let\etoolboxforlistloop\forlistloop % save the good meaning of \forlistloop
\let\forlistloop\etoolboxforlistloop % restore the good meaning of \forlistloop
\theoremstyle{plain}
\newtheorem{proposition}{Proposition}[section]
\newaliascnt{lemma}{proposition} 
\newtheorem{lemma}[lemma]{Lemma}%[section]
\Crefname{lemma}{Lemma}{Lemmas}
\newaliascnt{theorem}{proposition} 
\newtheorem{theorem}[theorem]{Theorem}%[section]
\newaliascnt{corollary}{proposition} 
\newtheorem{corollary}[corollary]{Corollary}%[section]
\newaliascnt{hypothesis}{proposition} 
\newtheorem{hypothesis}[lemma]{Hypothesis}
\newaliascnt{setting}{proposition} 
\newtheorem{setting}[setting]{Setting}%[section]
\theoremstyle{definition}
\newaliascnt{definition}{proposition} 
\newtheorem{definition}[definition]{Definition}%[section
\Crefname{definition}{Definition}{Definitions}
\newaliascnt{problem}{proposition} 
\newaliascnt{example}{proposition} 
\theoremstyle{remark}
\newaliascnt{remark}{proposition} 
\newtheorem{remark}[remark]{Remark}%[section]
\newaliascnt{notation}{proposition} 
\newtheorem{notation}[notation]{Notation}%[section]
\newtheorem{claim}{Claim}
\def\equationautorefname~#1\null{%
	%Equation
	(#1)\null
}
\newcommand{\R}{\mathbb{R}}
\newcommand{\C}{\mathbb{C}}
\newcommand{\Z}{\mathbb{Z}}
\newcommand{\N}{\mathbb{N}}
\newcommand{\E}{\mathcal{E}}
\newcommand{\curv}{\vec{\kappa}}
\renewcommand{\S}{\mathbb{S}}
\newcommand{\X}{\mathbb{X}}
\newcommand{\A}{\mathcal{A}}
\renewcommand{\H}{\mathbb{H}}
\newcommand{\Ll}{\mathcal{L}}
\newcommand{\diam}{\mathrm{diam}}
\newcommand{\iu}{\mathrm{i}\mkern1mu} %fuer komplexe Einheit
\newcommand*{\dd}{\mathop{}\!\mathrm{d}}
\def\nicefrac#1#2{%
    \raise.5ex\hbox{$#1$}%
    \kern-.15em/\kern-.05em%
    \lower.25ex\hbox{$#2$}}
\title[\textsc{The length-preserving elastic flow with free boundary on hypersurfaces in $\R^n$}]{\Large The length-preserving elastic flow\\ with free boundary on hypersurfaces in $\R^n$}
\author[A.~Dall'Acqua]{Anna Dall'Acqua}
\address[A.~Dall'Acqua]{Institute for Applied Analysis, Ulm University, Helmholtzstraße 18, 89081 Ulm, Germany.}
\email{anna.dallacqua@uni-ulm.de}
\author[M.~Schlierf]{Manuel Schlierf}
\address[M.~Schlierf]{Institute for Applied Analysis, Ulm University, Helmholtzstraße 18, 89081 Ulm, Germany.}
\email{manuel.schlierf@uni-ulm.de}
\begin{document}

\begin{abstract}
    We study the length-preserving elastic flow of curves in arbitrary codimension with free boundary on hypersurfaces. This constrained gradient flow is given by a nonlocal evolution equation with nonlinear higher-order boundary conditions. We prove global existence and subconvergence to critical points. The proof strategy involves a careful treatment of short-time existence, uniqueness, and parabolic energy estimates. 
\end{abstract}

\begingroup
\def\uppercasenonmath#1{\scshape} 
\maketitle
\endgroup

\noindent {\small\textbf{Keywords and phrases:} 
Elastic energy, length-preserving elastic flow, free boundary problem, arbitrary codimension, nonlocal fourth-order geometric evolution equation.}

\noindent {\small\textbf{MSC(2020)}: Primary 53E40, Secondary 35G31, 35B40.
}

%%%%%%%%%%%%%%%%%%%%%%%%%%%%%%%%%%%%%%%%%%%%%%%%%%%%%%%%%%%%%%%%%%%%%%%%%%%%%%%%%%%%%%%%%%%%%%%%
%%%%%%%%%%%%%%%%%%%%%%%%%%%%%%%%%%%%%%%%%%%%%%%%%%%%%%%%%%%%%%%%%%%%%%%%%%%%%%%%%%%%%%%%%%%%%%%%
\section{Introduction}
%%%%%%%%%%%%%%%%%%%%%%%%%%%%%%%%%%%%%%%%%%%%%%%%%%%%%%%%%%%%%%%%%%%%%%%%%%%%%%%%%%%%%%%%%%%%%%%%
%%%%%%%%%%%%%%%%%%%%%%%%%%%%%%%%%%%%%%%%%%%%%%%%%%%%%%%%%%%%%%%%%%%%%%%%%%%%%%%%%%%%%%%%%%%%%%%%

The study of elastic curves can be traced back to a problem posed by James Bernoulli in 1691 which is concerned with the shape of a suitable elastic beam when a weight is hung from its top. While Bernoulli came up with a solution to his problem, in 1744 Euler was the first to completely characterize the family of curves known as elastica, %, following a variational approach still holding up to modern standards. 
see %For a treatment of the mathematical history, we refer to   
\cite{truesdell1983} for a historical treatment. Today %For a modern treatment, consider 
the following setup is considered. Let $I$ be a compact one-dimensional connected manifold and $\gamma\colon I\to\R^n$ be an immersion. Then 
\begin{equation}
    \E(\gamma)=\int_{\S^1}|\curv|^2\dd s
\end{equation}
is the \emph{elastic energy} of $\gamma$ where $\curv=\partial_s^2\gamma$, $\partial_s=\frac{1}{|\gamma'|}\partial_x$ and $\dd s=|\gamma'|\dd x$ is the Riemannian measure on $I$ induced by $\gamma^*\langle\cdot,\cdot\rangle$, the pull-back of the Euclidean metric on $\R^n$. Moreover, $\Ll(\gamma)=\int_I\dd s$ denotes the length of $\gamma$. The critical points of $\E+\lambda\Ll$ where $\lambda\in\R$ are called \emph{elastica} and the study of associated variational problems is still a vibrant topic of research. Due to the behavior of $\E$ with respect to dilations, we trivially have 
\begin{equation}
    \inf\{\E(\gamma)\mid \gamma\in W^{2,2}( I ,\R^n)\text{ is an immersion}\}=0
\end{equation}
and the infimum is not attained if $I=\S^1$. However, setting $I=\S^1$ and minimizing with a fixed length constraint, round circles with the given length are the only minimizers. Indeed, this is a classical argument involving Fenchel's Theorem \cite{fenchel1929,borsuk1947} and the Cauchy--Schwarz inequality. Even more, for $n=2$, it is known that the only closed planar elastica are given by round circles or the figure-eight, up to similarity \cite{langersinger1985}. Among closed curves, variational techniques can be used to show that the elastic energy encodes topological information. In fact, if $I=\S^1$ and the product $\E(\gamma)\Ll(\gamma)$ is below an explicitly known threshold, then $\gamma$ is an embedding. This was first proved in \cite{mullerrupp2023} for $n=2$ and improved to $n>2$ in \cite{miura2023}. 

When minimizing $\E$ with fixed length among open curves, it turns out that the properties of the solutions sensitively depend on the prescribed boundary data. Uniqueness and the asymptotic shape of minimizers when suitably varying clamped boundary conditions, i.e.\ fixed end-points and tangents at the end-points, is treated in \cite{miura2020}. In \cite{dallacquadeckelnick2024}, the minimization with clamped boundary conditions is carried out among graphs, and qualitative properties of the minimizers are studied using a Noether-type identity. See \cite{yoshizawa2022} for a treatment of \emph{pinned} boundary conditions. Moreover, a recent development is the minimization of $\E+\lambda\Ll$ for so-called \emph{networks of curves} where $\lambda\geq 0$ and corresponds to coupled systems with nonlinear, higher-order boundary conditions \cite{dallacquapluda2017,dallacquanovagapluda2020,delninpludapozzetta2021}.

\subsection{The elastic flow}

A further line of research is concerned with the evolution modeling how a deformed elastic rod regains its equilibrium shape: The mathematically most efficient way is described by the gradient flow of the (possibly penalized) elastic energy, the so-called \emph{elastic flow}. The associated evolution equation is a quasi-linear, fourth-order, degenerate parabolic PDE and possibly includes nonlocal terms (for example, if the length is kept fixed). In particular, no maximum principles are available and the study of asymptotic behavior, that is, proving global existence and (sub-) convergence to critical points, is analytically challenging. 

To be more precise, a family of immersions $\gamma\colon[0,T)\times I\to\R^n$ is an elastic flow if
\begin{align}
    \partial_t\gamma &= - \big(2(\partial_s^\bot)^2\curv + |\curv|^2\curv -\lambda(t) \curv \big)\\
    &=\vcentcolon -\big(\nabla\E(\gamma(t,\cdot)) + \lambda(t)\nabla\Ll(\gamma(t,\cdot))\big) \quad\text{on $[0,T)\times I$},\label{eq:intro-elastic-flow-def}
\end{align}
with suitable boundary conditions if $\partial I\neq \emptyset$. Here $\partial_s^\bot=\partial_s - \langle \partial_s\ \cdot\ ,\partial_s\gamma\rangle\partial_s\gamma$. Moreover, $\gamma$ is a 
\begin{enumerate}
    \item \emph{free} elastic flow if $\lambda(t)\equiv 0$;
    \item \emph{length-penalized} elastic flow if $\lambda(t)\equiv \lambda>0$;
    \item \emph{length-preserving} elastic flow if $\lambda(t)=\lambda(\gamma(t,\cdot))$ with
    \begin{equation}\label{eq:def-lambda}
        \lambda(\gamma)=\frac{\int_{I}\langle 2(\partial_s^\bot)^2\curv+|\curv|^2\curv,\curv\rangle\dd s}{\int_{I}|\curv|^2\dd s} =  \frac{\int_{I}\langle \nabla\E(\gamma),\curv\rangle\dd s}{\E(\gamma)}.
    \end{equation}
    In this case, we compute $\partial_t\Ll(\gamma(t,\cdot))=0$.
\end{enumerate}

In \cite{dziukkuwertschaetzle2002}, a proof of global existence and subconvergence to critical points is given for all three types of elastic flows of closed curves, introducing a proof strategy based on parabolic energy estimates obtained with geometric versions of Gagliardo--Nirenberg-type interpolation inequalities. % which turned out to generalize to other situations. 
These energy techniques rely on a crucial step involving integration by parts --- and thus, they sensitively depend on the underlying boundary conditions. Clearly, the periodic boundary conditions of closed curves do not produce any boundary terms. 

In \cite{lin2012}, the strategy of \cite{dziukkuwertschaetzle2002} was first successfully modified and applied to a (free or length-penalized) elastic flow of open curves, namely the elastic flow with clamped boundary conditions. Also, see \cite{dallacquapozzi2014} for the (free or length-penalized) elastic flow with Navier boundary conditions corresponding to the minimization of pinned curves. As it turns out, the corresponding length-preserving elastic flows come with further significant difficulties in carrying out energy estimates and are treated separately in \cite{dallacqualinpozzi2014,dallacqualinpozzi2017}. Moreover, for the length-preserving clamped elastic flow, \cite{ruppspener2020} gives a detailed study of short-time existence and uniqueness for initial data in the energy space $W^{2,2}(I,\R^n)$.

The energy estimates carried out for the above elastic flows only yield sequential compactness for $t\nearrow \infty$, i.e.\ subconvergence to critical points (after suitable reparametrization). Proving full convergence can be achieved by proving and applying a \L ojasiewicz--Simon gradient inequality for the elastic energy; see \cite{mantegazzapozzetta2021,dallacquapozzispener2016,pozzetta2022} and \cite{ruppspener2020} for a length-preserving case.

Once global existence and convergence of the elastic flow is established, since no maximum principle is available, studying qualitative properties of elastic flows leads to further challenging questions. In this context, optimal energy thresholds for preserving the embeddedness of length-penalized and length-preserving elastic flows are treated in \cite{miuramuellerrupp2025}. There is also recent work on a problem posed by Huisken on the possibility of a ``migration phenomenon'' for elastic flows \cite{miura2023migrating-ef-1,miura2024migrating-ef-2}. Moreover, a careful analysis of the classification of elastica leads to the recent result \cite[Theorem~1.4]{muelleryoshizawa2024} proving the eventual embeddedness of the elastic flow with natural boundary conditions below a critical energy threshold.

As indicated above, nonlinear, especially higher-order boundary conditions for elastic flows of open curves result in a more involved analysis when establishing energy estimates as in \cite{dziukkuwertschaetzle2002}. This becomes apparent in \cite{dallacqualinpozzi2019,garckemenzelpluda2020}, where elastic flows of networks are studied. In particular, these come with a third-order boundary condition at the points where several curves in the network are joined.

\subsection{Free boundary elastic flows and main results}

In this article, we study the length-preserving elastic flow of open curves satisfying certain \emph{free boundary conditions}. Similarly to the study of elastic networks, the free boundary problem comes with a set of nonlinear higher-order boundary conditions, and thus with new challenges when studying the associated elastic flow.

More precisely, we fix $I=[-1,1]$, $n\geq 2$ and a complete and embedded hypersurface $M\subseteq\R^n$ and an associated smooth unit normal field $\xi\colon M\to\S^{n-1}$. If $M$ is noncompact, we additionally impose a condition suitably bounding the geometry of $M$, see \Cref{hyp:unif-tub-nbhd} below for more details. 

\begin{figure}[htb!]
    \vspace*{-1.2em}
    \centering
    \includegraphics[width=0.31\textwidth]{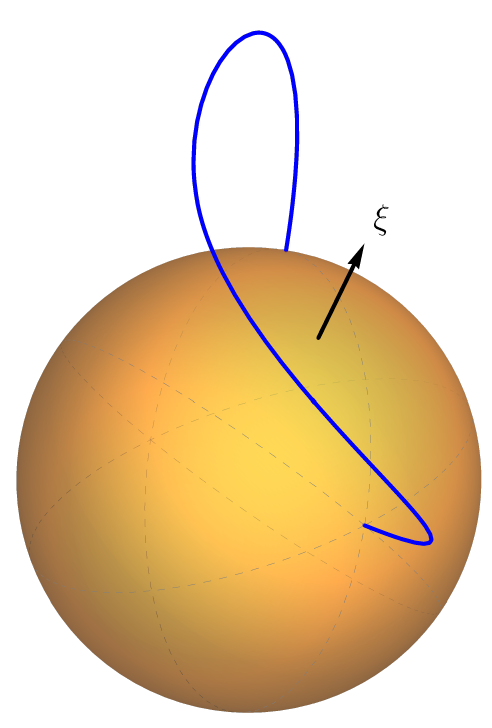}
    \vspace*{-0.6em}
    \caption{Curve $\gamma$ with orthogonal free boundary on a round sphere $M\subseteq \R^3$ with unit normal $\xi$. Here $\langle \partial_s\gamma(\pm1),\xi(\gamma(\pm1))\rangle = \mp1$.}\label{fig:curv-with-free-bdry-on-sphere}
\end{figure}

In some special cases, there have been recent results on the asymptotics of elastic flows with free boundary conditions. In \cite{diana2024}, the case $n=2$ with $M=\R\times\{0\}$ is studied. The author computes the Navier boundary conditions satisfied by the critical points $\gamma\colon I\to\R^2$ of $\E+\lambda\Ll$, prescribing the constraint $\gamma(\pm1)\in\R\times\{0\}$ and studies the associated %$L^2$-gradient flow of $\E+\lambda\Ll$ with 
length-penalized elastic flow, prescribing this set of free Navier boundary conditions. The main result is \cite[Theorem~5.1]{diana2024}, proving global existence under some qualitative assumptions about the behavior of the flow. The recent article \cite{gazwanimccoy2024} also works in the planar case with $M\subseteq\R^2$ given by a cone whose tip lies at $(0,0)$, studying the three types of elastic flows. The authors prescribe \emph{orthogonal free boundary conditions} $\gamma(\pm1)\in M\setminus\{(0,0)\}$ and $\partial_s\gamma(\pm 1)\perp T_{\gamma(\pm1)}M$. The elastic flows are studied under the additional third-order boundary condition $\partial_s^\bot\curv(\pm1)=0$ and global existence is established assuming that the flow stays away from the cone tip $(0,0)$.

Prescribing \emph{orthogonal} free boundary conditions is well-established for variational problems of Willmore-type, see \cite{alessandronikuwert2016,kuwertlamm2021,kuwertmueller2024,metsch2024} for the two-dimensional case. In this article, we also impose orthogonal free boundary conditions on $M$, thus generalizing the setting of \cite{gazwanimccoy2024}. We treat the case of arbitrary codimension with a more general free-boundary hypersurface $M$. In particular, in contrast to \cite{diana2024,gazwanimccoy2024}, recall that we do not assume that $M$ is flat.  

More precisely, for an immersion $\gamma\colon[-1,1]\to\R^n$, we consider the conditions
\begin{align}
    \gamma(\pm1)&\in M,\quad \partial_s\gamma(\pm1) \bot T_{\gamma(\pm1)}M \quad\text{and}\\
    \partial_s^{\bot}\curv(\pm1)&=-\langle\partial_s\gamma(\pm1),\xi(\gamma(\pm1))\rangle S_{\gamma(\pm1)}\curv(\pm1) \label{eq:intro-free-bcs}
\end{align}
where $S_p$ is the shape operator of $M$ at $p\in M$, see \Cref{sec:geom-prelim} below %for more details 
and %see 
\Cref{fig:curv-with-free-bdry-on-sphere}.% for an example with $M$ given by a round sphere in $\R^3$. 

Our first main result concerns the existence of length-preserving elastic flows satisfying these free boundary conditions. In \Cref{app:fct-spaces}, we introduce the function spaces used below.

\begin{theorem}\label{thm:existence-of-solution}
    Let $p>5$ and $\gamma_0\in W^{4(1-\frac1p),p}([-1,1],\R^n)$ be an immersion with $\E(\gamma_0)>0$, satisfying \eqref{eq:intro-free-bcs}. %Let $\varepsilon>0$. 
    There exist $T>0$ and a family of immersions $\gamma\colon[0,T)\times[-1,1]\to\R^n$ with $\gamma\in C^\infty((0,T)\times [-1,1])$ such that $[0,T)\mapsto W^{4(1-\frac1p),p}([-1,1])$, $t\mapsto \gamma(t,\cdot)$ is continuous and such that the following holds. We have
    \begin{equation}\label{eq:geo-ev-eq}
        \begin{cases}
            \partial_t^\bot\gamma(t,x) = -(\nabla\E(\gamma(t,\cdot))(x)-\lambda(\gamma(t,\cdot)) \cdot \curv(t,x))&\text{on $(0,T)\times[-1,1]$}\\
            \gamma(0,\cdot)=\gamma_0&\text{in $W^{4(1-\frac1p),p}([-1,1])$}\\
            \gamma(t,\cdot)\text{ satisfies \eqref{eq:intro-free-bcs}}&\text{for all $0\leq t<T$}
            % \gamma(t,\pm1)\in M&\text{for all $t\in[0,T)$}\\
            % \partial_s\gamma(t,\pm1)\bot T_{\gamma(t,\pm1)}M&\text{for all $t\in[0,T)$}\\
            % \partial_s^{\bot}\curv(t,\pm1) = -\tau_0(\pm1)S_{\gamma(t,\pm1)}\curv(t,\pm1) &\text{for all $t\in[0,T)$}.
        \end{cases}
    \end{equation}
    where $\lambda(\gamma)$ is given by \eqref{eq:def-lambda} and $\partial_t^\bot=\partial_t-\langle\partial_t\ \cdot\ ,\partial_s\gamma\rangle\partial_s\gamma$. 
    % \begin{equation}\label{eq:def-lambda}
    %     \lambda(\gamma)=\frac{\int_{-1}^{1}\langle 2(\partial_s^\bot)^2\curv+|\curv|^2\curv,\curv\rangle\dd s}{\int_{-1}^{1}|\curv|^2\dd s} =  \frac{\int_{-1}^{1}\langle \nabla\E(\gamma),\curv\rangle\dd s}{\E(\gamma)}.
    % \end{equation}
    % Moreover, there exists $T'\in (0,\min\{T,\varepsilon\})$ such that
    % \begin{equation}
    %     \partial_t\gamma(t,\cdot) = \partial_t^\bot\gamma(t,\cdot) = -(\nabla\E(\gamma(t,\cdot))-\lambda(\gamma(t,\cdot)) \cdot \curv(t,\cdot))\quad \text{for all $t\in[T',T)$}.
    % \end{equation}
\end{theorem}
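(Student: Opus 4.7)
The proof will follow the now-standard blueprint for short-time existence of quasi-linear fourth-order geometric flows with nonlinear higher-order boundary conditions (compare \cite{ruppspener2020} for the clamped length-preserving case): first one removes the reparametrization degeneracy, then one establishes maximal $L^p$-regularity for the linearization at $\gamma_0$, and finally one closes the argument with a Banach fixed-point scheme. The main new technical issue is that the free-boundary conditions \eqref{eq:intro-free-bcs} form an unusual mixture — a zeroth-order manifold constraint, a first-order orthogonality condition, and a third-order condition involving the shape operator of $M$ — and do not fit any off-the-shelf template.

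For the setup, I would exploit $p>5$ to regularize $\gamma_0$ slightly into an immersion $\bar\gamma_0\in C^\infty([-1,1],\R^n)$ close to $\gamma_0$ and carrying a smooth orthonormal normal frame $N_1,\ldots,N_{n-1}$ along it. Writing the evolving curve as $\gamma(t,x)=\bar\gamma_0(x)+\sum_{j=1}^{n-1}v_j(t,x)N_j(x)+\mu(t,x)\,\partial_x\bar\gamma_0(x)$ and fixing $\mu$ by a De Turck-type gauge (e.g.\ demanding $\partial_t\gamma$ to be purely normal to $\bar\gamma_0$, or an equivalent algebraic prescription) converts \eqref{eq:geo-ev-eq} into a non-degenerate quasi-linear fourth-order system for $v=(v_1,\ldots,v_{n-1})$ with principal part $\partial_t v+2|\bar\gamma_0'|^{-4}\partial_x^4 v$. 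The nonlocal factor $\lambda(\gamma)$ from \eqref{eq:def-lambda} is a smooth functional on the open set $\{\E(\gamma)>0\}$; since $\E(\gamma_0)>0$ by assumption and $t\mapsto\gamma(t,\cdot)$ is continuous into $W^{4(1-1/p),p}$, $\lambda$ stays uniformly bounded on a small ball around $\gamma_0$ in that space and therefore enters the fixed-point scheme as a nonlocal lower-order perturbation.

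I would then linearize at $\bar\gamma_0$ and verify that the resulting linear problem is parabolic with complementing (Lopatinskii-Shapiro) boundary conditions. Parabolicity in the sense of Petrovskii is immediate from the principal symbol. The linearization of \eqref{eq:intro-free-bcs} at each endpoint produces one scalar zeroth-order condition (from $\gamma(\pm 1)\in M$), $n-1$ first-order conditions (from $\partial_s\gamma\perp T_{\gamma}M$), and $n-1$ third-order conditions (from the shape-operator equation), which after the tangential gauge reduction match precisely the $2(n-1)$ boundary data required for a fourth-order $(n-1)$-system. Checking the complementing condition then amounts to showing that an associated constant-coefficient boundary-value problem on $\R_+$ admits only the trivial exponentially-decaying solution; I expect this to follow from the self-adjointness of $S_{\bar\gamma_0(\pm 1)}$ together with the orthogonality $\partial_s\bar\gamma_0\perp T_{\bar\gamma_0(\pm 1)}M$. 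Once this is in place, maximal $L^p$-regularity is available via the theorem of Denk-Hieber-Prüss (or Solonnikov), with the assumption that $\gamma_0$ satisfies \eqref{eq:intro-free-bcs} providing exactly the required initial compatibility. A standard contraction argument in a ball of $L^p(0,T;W^{4,p})\cap W^{1,p}(0,T;L^p)$ around $\bar\gamma_0$ gives existence and uniqueness on some $[0,T)$, and parabolic bootstrap upgrades the solution to $C^\infty((0,T)\times[-1,1])$. The main obstacle is precisely the Lopatinskii-Shapiro verification, since the third-order condition couples the normal components of $\curv$ through $S_{\gamma(\pm 1)}$ and does not decouple scalarly in the normal frame, so the algebraic check of the complementing condition must be carried out on the full vectorial system rather than component-wise.
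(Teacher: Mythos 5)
Your overall blueprint (break the reparametrization degeneracy, linearize, verify Lopatinskii--Shapiro, apply maximal $L^p$-regularity, contract, bootstrap) is the same as the paper's, and you correctly identify the vectorial complementing condition as the main algebraic obstacle --- the paper carries out exactly this check for the full system in \Cref{app:wp-lin-system}. However, your specific reduction to a normal graph $\gamma=\bar\gamma_0+\sum_j v_jN_j+\mu\,\partial_x\bar\gamma_0$ over a fixed reference with $\mu$ eliminated by a De Turck gauge has a genuine gap. First, the boundary-condition count does not close: your $(n-1)$-component fourth-order system requires $2(n-1)$ conditions at each endpoint, but you list $1+(n-1)+(n-1)=2n-1$, so the problem is over-determined by one. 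Second, the gauge you propose (``$\partial_t\gamma$ purely normal to $\bar\gamma_0$'') is incompatible with the free boundary: since $\partial_s\gamma_0(\pm1)\perp T_{\gamma_0(\pm1)}M$, the normal space of $\bar\gamma_0$ at $\pm1$ coincides with $T_{\gamma_0(\pm1)}M$, so your gauge confines the endpoint to the fixed affine tangent plane of $M$ at $\gamma_0(\pm1)$; intersected with the constraint $\gamma(t,\pm1)\in M$, this generically pins the endpoint and destroys the free-boundary character of the problem. The paper avoids both issues by keeping all $n$ components of $\gamma$ as unknowns and prescribing the tangential speed through the full operator $\mathcal A$ (with $\nabla\E(\gamma)=(\mathcal A(\gamma))^\bot$); the geometric third-order condition \eqref{eq:nat-bc}, which carries only $n-1$ scalar constraints, is then upgraded to the full vectorial condition in \eqref{eq:ana-fbc} (yielding $1+(n-1)+n=2n$ conditions, as required), at the price of \Cref{lem:geo-vs-ana-fbc}: a given $\gamma_0$ satisfying \eqref{eq:intro-free-bcs} must first be reparametrized by a diffeomorphism $\theta$ with prescribed third derivative at $\pm1$ before it is admissible for the analytic problem. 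Your sketch contains no substitute for this step.

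Two further points. Your claim that $\lambda(\gamma)$ from \eqref{eq:def-lambda} is a smooth, uniformly bounded functional on a $W^{4(1-\frac1p),p}$-ball is not correct as stated: that formula contains $(\partial_s^\bot)^2\curv$, i.e.\ fourth spatial derivatives, which are not controlled by the trace norm ($W^{4-\frac4p,p}\hookrightarrow C^3$ only). One must first integrate by parts as in \eqref{eq:lambda-ibp-formula}, using the third-order boundary condition to reduce the boundary term to something quadratic in $\curv$; this is what makes $\Lambda$ a $C^1$ lower-order perturbation in \Cref{lem:tilde-lambda-bounds}. Finally, mollifying $\gamma_0$ into $\bar\gamma_0$ is not needed and complicates the compatibility conditions; the paper instead takes as reference the solution $E\gamma_0$ of the linear problem \eqref{eq:def-bargamma}, whose initial and lateral traces match those of $\gamma_0$ exactly, so that the fixed point is sought in the zero-trace space $\prescript{}{0}{\mathbb X}_{T,p}$.
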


%We point out that \Cref{thm:existence-of-solution} especially yields $\gamma(t,\cdot)\to \gamma_0$ in $W^{4(1-\frac1p),p}(I)$ as $t\searrow 0$, see \Cref{prop:xtp-mappings-and-embeddings}\eqref{it:buc-emb}. 
Notice that, since $p>5$, $W^{4(1-\frac1p),p}(I)\hookrightarrow C^{3+(1-\frac5p)}(I)$ by the Sobolev inequality. Therefore, it is also necessary that $\gamma_0$ satisfies \eqref{eq:intro-free-bcs}. This makes $p>5$ a natural choice for the free boundary conditions \eqref{eq:intro-free-bcs}.

Well-posedness for geometric flows is rather delicate, and a detailed proof is often omitted in past work studying the asymptotics of elastic flows, for example \cite{dziukkuwertschaetzle2002,lin2012,dallacquapozzi2014}. In fact, the degeneracy of the equation comes from the fact that \eqref{eq:geo-ev-eq} is invariant with respect to reparametrization, that is, for any solution $\gamma$ and any smooth family of diffeomorphisms $\phi(t,\cdot)$ of $I$ with $\phi(0,\cdot)=\mathrm{id}_I$, also $(t,x)\mapsto \gamma(t,\phi(t,x))$ is a solution of \eqref{eq:geo-ev-eq}. While this introduces challenges when studying short-time existence and uniqueness (in a suitable sense), this observation later drastically aids the proof of global existence. Given a non-extensible solution $\gamma$ as in \Cref{thm:existence-of-solution} with vanishing tangential speed, our careful existence analysis will allow us to choose a suitable parametrization for which energy estimates for derivatives of the curvature yield control of full derivatives of the (reparametrized) solution. Since the initial datum only needs to be bounded in $W^{4(1-\frac1p),p}(I)$ for some $p>5$, %already time-uniform control of $\|\partial_s^2\curv\|_{L^2(\dd s)}$ may theoretically suffice to show that the solution is actually global, 
this may simplify the arguments in proving global existence compared to \cite{dallacqualinpozzi2014,dallacqualinpozzi2017,dallacqualinpozzi2019}. 

The proof strategy for \Cref{thm:existence-of-solution} can be summarized as follows. First, we choose a suitable \emph{analytic equation} for \eqref{eq:geom-ev-eq}. That is, we modify the boundary conditions \eqref{eq:intro-free-bcs} as well as the right hand side in the PDE to obtain a well-posed quasi-linear parabolic system of $n$ equations whose solutions are instantaneously smooth. This yields solutions of the geometric problem \eqref{eq:geo-ev-eq}.

For studying the asymptotic behavior, we consider solutions of \eqref{eq:geo-ev-eq} with zero tangential component of the speed (i.e.\ $\langle \partial_t \gamma, \partial_s \gamma\rangle=0$). Existence of such solutions can be obtained from \Cref{thm:existence-of-solution} as follows, especially using the fact that the boundary conditions \eqref{eq:intro-free-bcs} are invariant with respect to reparametrization.

\begin{corollary}\label{cor:ex-no-tang-velocity}
    Consider a solution $\gamma\colon[0,T)\times[-1,1]\to\R^n$ of \eqref{eq:geo-ev-eq} as in \Cref{thm:existence-of-solution} and let $\varepsilon\in(0,T)$. Then there exists a smooth family of diffeomorphisms $\phi\colon[0,T)\times[-1,1]\to[-1,1]$ such that $\phi(t,\cdot)=\mathrm{id}_{[-1,1]}$ for all $0\leq t<\frac12\varepsilon$ and such that, writing $\tilde\gamma(t,x)=\gamma(t,\phi(t,x))$, $\tilde\gamma$ again solves \eqref{eq:geo-ev-eq} on $[0,T)\times[-1,1]$ and, additionally,
    \begin{equation}
        \partial_t\tilde\gamma(t,x) = -(\nabla\E(\tilde\gamma(t,\cdot))(x)-\lambda(\tilde\gamma(t,\cdot)) \cdot \curv_{\tilde\gamma}(t,x))\quad\text{on  $(\varepsilon,T)\times[-1,1]$}.
    \end{equation}
\end{corollary}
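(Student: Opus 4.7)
The plan is to fix the reparametrization freedom by solving an ODE in $t$ that kills the tangential part of $\partial_t\gamma$, with a temporal cutoff to avoid the low-regularity initial time. First, I would observe that the tangential part of $\partial_t\gamma$ automatically vanishes at the two endpoints: since $t\mapsto\gamma(t,\pm1)$ is a curve in $M$, $\partial_t\gamma(t,\pm1)\in T_{\gamma(t,\pm1)}M$, while \eqref{eq:intro-free-bcs} forces $\partial_s\gamma(t,\pm1)\perp T_{\gamma(t,\pm1)}M$; hence $\langle\partial_t\gamma,\partial_s\gamma\rangle(t,\pm1)=0$ for every $t\in(0,T)$.

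Next, I would pick a smooth cutoff $\eta\colon[0,T)\to[0,1]$ with $\eta\equiv 0$ on $[0,\varepsilon/2]$ and $\eta\equiv 1$ on $[\varepsilon,T)$, set
\begin{equation*}
    V(t,y) \defeq \eta(t)\,\frac{\langle\partial_t\gamma(t,y),\partial_s\gamma(t,y)\rangle}{|\partial_x\gamma(t,y)|},
\end{equation*}
and solve the ODE $\partial_t\phi(t,x) = -V(t,\phi(t,x))$ with $\phi(0,x)=x$. Since $\gamma\in C^\infty((0,T)\times[-1,1])$ is an immersion, $V$ is smooth on $(0,T)\times[-1,1]$, while on $[0,\varepsilon/2]$ the factor $\eta$ removes any dependence on $\partial_t\gamma$ near $t=0$, yielding $\phi(t,\cdot)=\mathrm{id}_{[-1,1]}$ there and hence a $C^\infty$ extension across $t=0$ despite the limited regularity of $\gamma$ at the initial time. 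The boundary observation above forces $V(t,\pm1)=0$, so $\phi(t,\pm1)=\pm1$ for all $t$. Differentiating the ODE in $x$ gives
\begin{equation*}
    \partial_x\phi(t,x) = \exp\Bigl(-\int_0^t(\partial_y V)(\tau,\phi(\tau,x))\,\dd\tau\Bigr) > 0,
\end{equation*}
so each $\phi(t,\cdot)$ is an orientation-preserving diffeomorphism of $[-1,1]$, and standard ODE theory on subintervals $[0,T']$ with $T'<T$ produces $\phi\in C^\infty([0,T)\times[-1,1])$.

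Finally, I would set $\tilde\gamma(t,x)=\gamma(t,\phi(t,x))$ and compute
\begin{equation*}
    \partial_t\tilde\gamma(t,x)=(\partial_t\gamma)(t,\phi(t,x))+\partial_x\gamma(t,\phi(t,x))\,\partial_t\phi(t,x).
\end{equation*}
By choice of $V$, the tangential component of $\partial_t\tilde\gamma$ along $\partial_{\tilde s}\tilde\gamma$ equals $(1-\eta(t))\langle\partial_t\gamma,\partial_s\gamma\rangle(t,\phi(t,x))$, which vanishes for $t\geq\varepsilon$. Since $\nabla\E$, $\curv$, $\lambda(\cdot)$ and the boundary conditions \eqref{eq:intro-free-bcs} are all invariant under orientation-preserving reparametrization, $\tilde\gamma$ satisfies \eqref{eq:geo-ev-eq} on $[0,T)\times[-1,1]$, and on $(\varepsilon,T)\times[-1,1]$ the full (not merely normal) evolution equation $\partial_t\tilde\gamma=-(\nabla\E(\tilde\gamma)-\lambda(\tilde\gamma)\curv_{\tilde\gamma})$ holds. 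The only delicate point, in my view, is the $C^\infty$ matching between the frozen identity on $[0,\varepsilon/2]$ and the smooth ODE solution afterwards; choosing $\eta$ to vanish to infinite order at $t=\varepsilon/2$ reduces this to standard smooth-dependence of ODEs, so it is not a genuine obstruction.
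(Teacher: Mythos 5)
Your proposal is correct and follows essentially the same route as the paper: the paper proves the corollary by applying \Cref{prop:ex-rep-ana-to-geo}, which constructs $\phi$ as the flow of the ODE $\partial_t\phi=F(t,\phi)$ with $F=-\langle\partial_t\gamma,\partial_x\gamma\rangle/|\partial_x\gamma|^2$ (your $-V$ without the cutoff), using exactly your observation that $F(t,\pm1)=0$ by the orthogonality condition, and then glues with the identity on $[0,\varepsilon)$ via a cutoff argument (the cited Lemma~D.1 of \cite{ruppspener2020}). Building the cutoff $\eta$ directly into the vector field, as you do, is a clean and self-contained way to perform that gluing; all the remaining steps (positivity of $\partial_x\phi$ via the exponential formula, reparametrization invariance of $\nabla\E$, $\curv$, $\lambda$ and of \eqref{eq:intro-free-bcs}) match the paper's argument.
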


For solutions of \eqref{eq:geo-ev-eq} with zero tangential speed, we study the existence of reparametrizations to solutions of the aforementioned analytic equation. This is the key tool in translating useful well-posedness properties from the analytic equation to the (degenerate parabolic) problem with zero tangential speed.%the geometric problem \eqref{eq:geo-ev-eq}.

Combining suitable energy estimates for derivatives of the curvature with the theory of short-time existence and uniqueness, we obtain the following for the asymptotics of the length-preserving elastic flow with orthogonal free boundary conditions on $M$.

\begin{theorem}\label{thm:intro-main-asymptotics}
    Let $\gamma\colon[0,T)\times [-1,1]\to\R^n$ be a smooth maximal solution of
    \begin{equation}\label{eq:ev-eq-intro}
        \begin{cases}
            \partial_t\gamma(t,x) = -(\nabla\E(\gamma(t,\cdot))(x)-\lambda(\gamma(t,\cdot)) \cdot \curv(t,x))&\text{on  $(0,T)\times[-1,1]$}\\
            \gamma(0,\cdot)=\gamma_0&\text{in $[-1,1]$}\\
            \gamma(t,\cdot)\text{ satisfies \eqref{eq:intro-free-bcs}}&\text{for all $0\leq t<T$}
            % \gamma(t,\pm1)\in M&\text{for all $t\in[0,T)$}\\
            % \partial_s\gamma(t,\pm1)\bot T_{\gamma(t,\pm1)}M&\text{for all $t\in[0,T)$}\\
            % \partial_s^{\bot}\curv(t,\pm1) = -\tau_0(\pm1)S_{\gamma(t,\pm1)}\curv(t,\pm1) &\text{for all $t\in[0,T)$}
        \end{cases}
    \end{equation}
    with $\lambda(\gamma)$ as in \eqref{eq:def-lambda} such that, for $L_0=\Ll(\gamma_0)$, 
    \begin{equation}\label{eq:intro-unif-non-curvature}
        \inf_{t\in[0,T)} \big( L_0-|\gamma(t,1)-\gamma(t,-1)| \big) > 0.
    \end{equation}    
    Then $T=\infty$ and, for each $\alpha\in(0,\frac12)$ and $t_j\nearrow \infty$, $\gamma(t_j,\cdot)$ converges to a smooth elastica in the $C^{4+\alpha}$-topology after reparametrization, translation, and passing to a subsequence. If $M$ is compact, $\gamma(t_j,\cdot)$ converges to a smooth elastica satisfying \eqref{eq:intro-free-bcs} in the $C^{4+\alpha}$-topology, after passing to a subsequence and reparametrization.
\end{theorem}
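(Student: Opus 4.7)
The plan is to follow the Dziuk--Kuwert--Schätzle paradigm: establish uniform a priori bounds on all normal derivatives of the curvature, promote these to full $C^\infty$-bounds on the parametrization, extend indefinitely via \Cref{thm:existence-of-solution}, and extract subsequential limits by Arzelà--Ascoli. The nonconcentration hypothesis \eqref{eq:intro-unif-non-curvature} enters already at the level of standing bounds: the flow is length-preserving, so $\Ll(\gamma(t,\cdot))\equiv L_0$ and $\frac{d}{dt}\E(\gamma(t,\cdot))=-\int_{I}|\partial_t^\bot\gamma|^2\dd s\leq 0$; moreover the Poincaré-type bound
\begin{equation}
L_0-|\gamma(t,1)-\gamma(t,-1)|\leq \tfrac{1}{2}L_0^2\,\E(\gamma(t,\cdot)),
\end{equation}
obtained from $\gamma(t,1)-\gamma(t,-1)=\int_{0}^{L_0}\partial_s\gamma(t,\sigma)\dd\sigma$ and $\partial_s\gamma(t,\sigma)-\partial_s\gamma(t,0)=\int_{0}^{\sigma}\curv\dd\sigma'$ via Cauchy--Schwarz, combined with \eqref{eq:intro-unif-non-curvature} yields $\E(\gamma(t,\cdot))\geq\varepsilon_0>0$ and hence a uniform-in-time bound on $|\lambda(\gamma(t,\cdot))|$ via \eqref{eq:def-lambda}.

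Next I would carry out inductive parabolic energy estimates for $y_k(t)\vcentcolon=\int_{I}|(\partial_s^\bot)^k\curv|^2\dd s$, $k\in\N$. Differentiating in $t$ and integrating by parts yields a leading dissipation $-2\int_{I}|(\partial_s^\bot)^{k+2}\curv|^2\dd s$, polynomial lower-order curvature expressions absorbed via geometric Gagliardo--Nirenberg interpolation (applicable under fixed length and bounded energy), a nonlocal contribution from $\lambda(\gamma)\curv$ controlled by the previous step, and boundary contributions at $x=\pm 1$. These boundary terms are the core technical difficulty: starting from \eqref{eq:intro-free-bcs} --- which places $\curv(\pm 1)$ in $T_{\gamma(\pm 1)}M$ and couples $\partial_s^\bot\curv(\pm 1)$ to $\curv(\pm 1)$ via the shape operator --- I would differentiate these identities in $t$ and substitute \eqref{eq:ev-eq-intro} to inductively express the normal boundary derivatives $(\partial_s^\bot)^{j}\curv(\pm 1)$, $1\leq j\leq k+1$, through lower-order quantities and covariant derivatives of the second fundamental form of $M$ along $\gamma$; a one-dimensional trace inequality then absorbs them into the leading dissipation. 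Combining everything yields a Gronwall-closed system and uniform bounds $\sup_{t\in[0,T)}y_k(t)\leq C_k$ for every $k\in\N$. I anticipate this step --- systematically handling the nonlinear higher-order free boundary terms without leaving any uncontrolled top-order contribution --- to be the main technical obstacle, and it is what drives the smoothness and bounded-geometry assumptions on $M$.

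To convert these bounds into full regularity of the parametrization, I would use that $\partial_t\gamma$ is purely normal by \eqref{eq:ev-eq-intro}, so that the $y_k$-bounds together with standard commutator identities between $\partial_s$ and $\partial_s^\bot$, the length constraint $\Ll=L_0$, and a uniform lower bound on $|\partial_x\gamma|$ (bootstrapped from the curvature estimates and the vanishing tangential speed inherited from \Cref{cor:ex-no-tang-velocity}) translate into uniform bounds $\|\gamma(t,\cdot)\|_{C^k([-1,1])}\leq C_k'$ for every $k$, up to a time-dependent translation in $\R^n$. Long-time existence is then a standard contradiction: if $T<\infty$, the uniform $W^{4(1-\frac1p),p}$-bound together with the uniform lower bound on $|\partial_x\gamma|$ permits application of \Cref{thm:existence-of-solution} to $\gamma(t_0,\cdot)$ for some $t_0$ sufficiently close to $T$, extending $\gamma$ beyond $T$ and contradicting maximality. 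Hence $T=\infty$.

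Finally, for subconvergence, integrating the dissipation identity over $[0,\infty)$ gives $\int_{0}^{\infty}\int_{I}|\partial_t^\bot\gamma|^2\dd s\dd t\leq\E(\gamma_0)$, so along some sequence $s_j\to\infty$, $\|\partial_t^\bot\gamma(s_j,\cdot)\|_{L^2}\to 0$. Given arbitrary $t_j\to\infty$, after (if $M$ is noncompact) translating $\gamma(t_j,\cdot)$ so that $\gamma(t_j,-1)=0$, the uniform $C^k$-bounds permit Arzelà--Ascoli to extract a $C^{4+\alpha}$-convergent subsequence whose limit $\gamma_\infty$ --- together with the subsequential convergence of $\lambda(\gamma(t_{j_k},\cdot))$ to some $\lambda_\infty$, guaranteed by its a priori bound --- classically satisfies $\nabla\E(\gamma_\infty)=\lambda_\infty\nabla\Ll(\gamma_\infty)$, i.e.\ is a smooth elastica. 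If $M$ is compact, the endpoints $\gamma(t_j,\pm 1)\in M$ lie in a compact set and no translation is needed; since \eqref{eq:intro-free-bcs} is closed in the $C^{4+\alpha}$-topology, the limit inherits the full free boundary conditions.
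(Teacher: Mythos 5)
Your overall architecture is the classical Dziuk--Kuwert--Sch\"atzle scheme, but it has a genuine gap at its core: the asserted uniform bounds $\sup_{t\in[0,T)}y_k(t)\leq C_k$ for \emph{every} $k$. For this length-preserving free-boundary problem, closing the $k$-th energy estimate requires a vector field with leading term $(\partial_s^{\bot})^{2k+1}\curv$ that vanishes at $x=\pm1$, so that the worst boundary terms produced by integration by parts drop out. Already at the first order beyond the one the paper uses, such a field necessarily contains the nonlocal multiplier itself (cf.\ \Cref{lem:n5}, where $N_5=(\partial_s^{\bot})^5\curv+\mathbb P_1^{4,4}+\lambda\mathbb P_3^{1,1}$), so $\partial_t\lambda$ enters the evolution of $\int|N|^2\dd s$; controlling time derivatives of $\lambda$ is a substantial separate difficulty (precisely the obstruction handled in \cite{dallacqualinpozzi2017}) and is not resolved by ``differentiate the boundary conditions in $t$ and apply a trace inequality''. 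Relatedly, your standing ``uniform-in-time bound on $|\lambda|$ via \eqref{eq:def-lambda}'' does not follow from the energy lower bound alone: the numerator contains $\int|\partial_s^{\bot}\curv|^2\dd s$, which is not a priori bounded, and a uniform bound on $\lambda$ only becomes available \emph{after} the curvature estimate (\Cref{prop:dsbot3curv-l2-bound}). The paper deliberately avoids the all-order estimates: it proves only the third-order bound \eqref{eq:intro-en-est}, choosing $N_3$ so that no $\lambda$-terms appear, and then restarts the well-posed analytic problem at a $C^{4+\alpha}$-limit to obtain $T=\infty$.

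There is a second gap in the subconvergence step. You produce a sequence $s_j\to\infty$ with $\|\partial_t^{\bot}\gamma(s_j,\cdot)\|_{L^2}\to0$ but then pass to the limit along the \emph{given} sequence $t_j$; nothing guarantees that the limit along $t_j$ satisfies the elastica equation unless you additionally show $\|\partial_t\gamma(t,\cdot)\|_{L^2}\to0$ as $t\to\infty$, which classically again requires the higher-order estimates you have not established. The paper's \Cref{prop:subc} circumvents this with the Aubin--Lions--Simon lemma applied to the time-translated constant-speed flows $\tilde\gamma(t_j+\cdot,\cdot)-p_j$ on a fixed window $[0,T]$, combined with
\begin{equation}
\int_{t_j}^{t_j+T}\int_{-1}^{1}|\partial_t\gamma|^2\dd s\dd t=\E(\gamma(t_j,\cdot))-\E(\gamma(t_j+T,\cdot))\to0,
\end{equation}
which needs no information beyond the third-order bound. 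You would need either to adopt an argument of this kind or to genuinely establish the all-order boundary energy estimates.
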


\begin{figure}[htb!]
    \vspace*{-2em}
    \centering
    \begin{tikzpicture}

        % Place PDFs as nodes
        \node (A) at (0, -1) {\includegraphics[width=6cm]{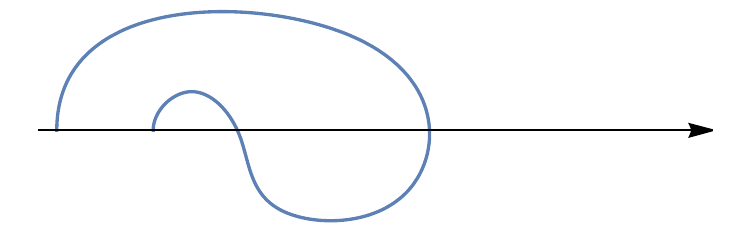}};
        \node (B) at (7.5, 0) {\includegraphics[width=6cm]{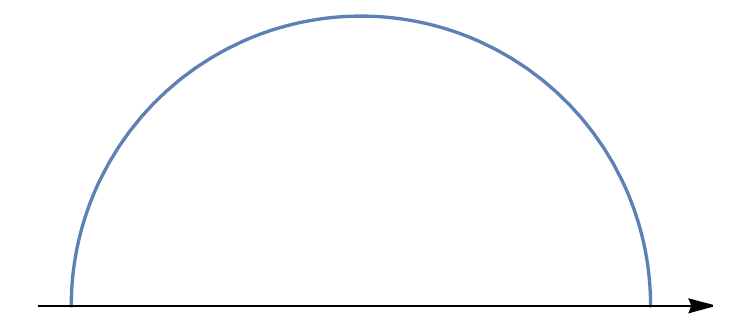}};
        
        % Draw arrows
        \draw[->] (3,-0.25) -- (4.5,-0.25);
        
    \end{tikzpicture}
    \caption{An initial datum (left) and the associated limit (right).}\label{fig:A_examples}
\end{figure}

Moreover, in \Cref{thm:special-case} below, we study the special case where $M=\R\times\{0\}\subseteq\R^2$ is the $x$-axis in the plane. In this case, we always obtain global existence and full convergence for $t\to\infty$ up to reparametrization. Even more, we can classify the possible limits: they are either segments of (multiple covered) round circles or figure-eight elastica (see \cite[Theorem~0.1]{langersinger1985}). An example is illustrated in \Cref{fig:A_examples}, where the flow ``migrates'' in the sense that it is completely contained in the upper half-plane after some finite time.

In general however, the result in \Cref{thm:intro-main-asymptotics} is conditional in the sense that we make the uniform non-flatness assumption \eqref{eq:intro-unif-non-curvature} along a maximal evolution $\gamma$. In fact, this assumption is automatically satisfied in many cases. If $M$ is flat, \eqref{eq:intro-unif-non-curvature} is always true and if $M$ is compact, we show that \eqref{eq:intro-unif-non-curvature} holds if there exists no straight line $\bar\gamma$ of length $L_0$ satisfying \eqref{eq:intro-free-bcs} with $\langle\partial_s\bar\gamma(\pm1),\xi(\bar\gamma(\pm1))\rangle = \langle\partial_s\gamma_0(\pm1),\xi(\gamma_0(\pm1))\rangle$. And such $\bar\gamma$ never exists if $M=\partial\Omega$ for a convex bounded domain $\Omega$, $\xi$ is outward-pointing and $\langle\partial_s\gamma_0(-1),\xi(\gamma_0(-1))\rangle=1$, for example, see \Cref{fig:curv-with-free-bdry-on-sphere}. For an example where $M=\partial\Omega$ with a non-convex domain $\Omega$ where such a straight line $\bar\gamma$ exists only for one critical value $L_0^*(M)>0$ of length, see \Cref{fig:example-nonconvex}.
\begin{figure}[htb!]
    \centering
    \includegraphics[width=0.35\textwidth]{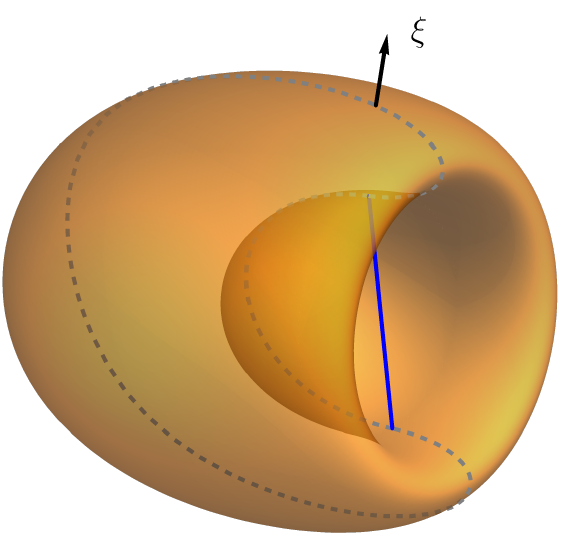}
    \caption{An example where a straight line $\bar\gamma$ of length $L_0$ satisfying \eqref{eq:intro-free-bcs} with $\langle\partial_s\bar\gamma(\pm1),\xi(\bar\gamma(\pm1))\rangle=\mp1$ exists for exactly one length $L_0=L_0^*(M)>0$.}\label{fig:example-nonconvex}
\end{figure}

Next, we comment on our proof strategy for \Cref{thm:intro-main-asymptotics}. As explained earlier, similarly to the analysis of elastic flows with periodic, clamped, or Navier boundary conditions, a result such as \Cref{thm:intro-main-asymptotics} relies on time-uniform energy estimates for derivatives of the curvature. In order to establish the theorem, writing $\zeta$ for the $\inf$ in \eqref{eq:intro-unif-non-curvature}, we prove that
\begin{equation}\label{eq:intro-en-est}
    \|\partial_s^3\curv\|_{L^2(\dd s)} \leq C(L_0,\E(\gamma_0),\zeta,n)\quad\text{for all $0\leq t<T$}.
\end{equation}
Estimate \eqref{eq:intro-en-est} is achieved by carefully choosing a vector field $N$ with the highest-order term $(\partial_s^\bot)^3\curv$ such that $N(t,\pm1)=0$ for all $0\leq t<T$ --- thus allowing integration by parts in terms appearing in $\partial_t\|N\|_{L^2(\dd s)}^2$. Here, only a priori control on the $L^\infty$-norm of the nonlocal Lagrange multiplier $\lambda$ is needed since $N$ can be chosen without any terms involving $\lambda$, i.e., no derivatives of $\lambda$ appear (in contrast to \cite{dallacqualinpozzi2017}).

Note that a similar strategy was successfully applied to prove \emph{global existence} for the evolution of elastic networks in \cite{dallacqualinpozzi2019,garckemenzelpluda2020}. However, with a novel argument, we additionally prove subconvergence to \emph{elastica} in \Cref{thm:intro-main-asymptotics} only using time-uniform bounds on $\|\partial_s^3\curv\|_{L^2(\dd s)}$. While combining \eqref{eq:intro-en-est} with interpolation and Sobolev embeddings clearly allows passing to \emph{some} limit along subsequences of times (up to reparametrization), previously used arguments for proving that this limit is an \emph{elastica} are based on showing $\|\partial_t\gamma\|_{L^2(\dd s)}^2\to 0$ for $t\nearrow \infty$ and require higher-order analogs of \eqref{eq:intro-en-est} (see, for instance, \cite[Proof of Thm.~3.2]{dziukkuwertschaetzle2002}, \cite[p.~6427]{lin2012} or \cite[p.~651]{dallacquapozzi2014}). To this end, we make use of an $L^\infty$-version of \cite[Lemma~4.10]{ruppspener2020} which already relates the $L^2(-1,1)$-norm of the time derivative of the constant-speed reparametrizations of $\gamma$ to $\|\partial_t\gamma\|_{L^2(\dd s)}$.%, passing to suitable weak limits in parabolic function spaces.

\subsection{Structure of the article}

The article is structured as follows.
We first review the precise geometric setup and collect some preliminary observations in \Cref{sec:prelim}. Furthermore, we adapt some established shorthand notation and deduce higher-order identities following from \eqref{eq:intro-free-bcs}. In \Cref{sec:well-posedness}, \Cref{thm:existence-of-solution,cor:ex-no-tang-velocity} are proved. Especially, we formulate an analytic version of \eqref{eq:geo-ev-eq}, prove existence and uniqueness in suitable parabolic function spaces for which linear maximal $L^p$-regularity theory is available, and show instantaneous smoothing of the equation. In particular, \Cref{sec:wp-reparametrizations} discusses the existence of reparametrizations relating the (well-posed) analytic problem and \eqref{eq:ev-eq-intro}. \Cref{sec:long-time} starts with a review of available interpolation inequalities and a priori estimates for the nonlocal Lagrange multiplier, explaining the assumption \eqref{eq:intro-unif-non-curvature} and eventually leading to \eqref{eq:intro-en-est}. We then proceed to prove \Cref{thm:intro-main-asymptotics} and, afterwards, give a detailed discussion of \eqref{eq:intro-unif-non-curvature} in \Cref{sec:non-flatness}. Finally, we discuss the aforementioned special planar case where $M$ is flat in \Cref{thm:special-case}.

%%%%%%%%%%%%%%%%%%%%%%%%%%%%%%%%%%%%%%%%%%%%%%%%%%%%%%%%%%%%%%%%%%%%%%%%%%%%%%%%%%%%%%%%%%%%%%%%
%%%%%%%%%%%%%%%%%%%%%%%%%%%%%%%%%%%%%%%%%%%%%%%%%%%%%%%%%%%%%%%%%%%%%%%%%%%%%%%%%%%%%%%%%%%%%%%%
\section{Preliminaries}\label{sec:prelim}
%%%%%%%%%%%%%%%%%%%%%%%%%%%%%%%%%%%%%%%%%%%%%%%%%%%%%%%%%%%%%%%%%%%%%%%%%%%%%%%%%%%%%%%%%%%%%%%%
%%%%%%%%%%%%%%%%%%%%%%%%%%%%%%%%%%%%%%%%%%%%%%%%%%%%%%%%%%%%%%%%%%%%%%%%%%%%%%%%%%%%%%%%%%%%%%%%

%%%%%%%%%%%%%%%%%%%%%%%%%%%%%%%%%%%%%%%%%%%%%%%%%%%%%%%%%%%%%%%%%%%%%%%%%%%%%%%%%%
%%%%%%%%%%%%%%%%%%%%%%%%%%%%%%%%%%%%%%%%%%%%%%%%%%%%%%%%%%%%%%%%%%%%%%%%%%%%%%%%%%
\subsection{Assumptions on and basic properties of the hypersurface $M$}\label{sec:geom-prelim}
%%%%%%%%%%%%%%%%%%%%%%%%%%%%%%%%%%%%%%%%%%%%%%%%%%%%%%%%%%%%%%%%%%%%%%%%%%%%%%%%%%
%%%%%%%%%%%%%%%%%%%%%%%%%%%%%%%%%%%%%%%%%%%%%%%%%%%%%%%%%%%%%%%%%%%%%%%%%%%%%%%%%%

Consider a complete and embedded hypersurface $M\subseteq\R^n$ where $n\geq 2$. For $x\in M$, $T_xM\subseteq\R^n$ denotes the tangent space and $N_xM=(T_xM)^\bot$ is the normal space. There exists a smooth unit normal field %$\xi\colon M\to \S^{n-1}$, i.e.\ $\xi(x)\in N_xM$ for all $x\in M$.
\begin{equation}\label{eq:defxi}
\xi\colon M\to \S^{n-1}, \quad \xi(x)\in N_xM . 
\end{equation}
Throughout this article, we assume the following hypothesis.

\begin{hypothesis}\label{hyp:unif-tub-nbhd}
    There exists $\delta>0$ such that $\Phi^M\colon M\times (-\delta,\delta)\to B_\delta(M)$, $(x,r)\mapsto x+r\xi(x)$ is a smooth diffeomorphism. Moreover, for each $k\in\N$, 
    \begin{equation}\label{eq:unif-bd-derivatives}
        \sup_{(x,r)\in M\times (-\delta,\delta)} |D^k\Phi^M(x,r)|,\ \sup_{y\in B_\delta(M)} |D^k (\Phi^M)^{-1}(y)| < \infty.
    \end{equation}
\end{hypothesis}

In the following, 
%We denote 
$(\Pi^M,d^M)\vcentcolon=(\Phi^M)^{-1}$, i.e.\ $\Pi^M$ is the nearest point projection in the $\delta$-tubular neighborhood $B_\delta(M)$ of $M$ and $d^M$ denotes the signed distance to $M$.

\begin{remark}
	Examples of hypersurfaces satisfying \Cref{hyp:unif-tub-nbhd} %the above 
    are compact hypersurfaces without boundary or \emph{uniformly embedded} hypersurfaces, see \cite[Def.~2.21, Rem.~2.22 and Thm.~2.31]{Eldering2013}.
\end{remark}
%Consider a %\todo{compact}, 
%complete and embedded hypersurface $M\subseteq\R^n$ for some $n\geq 2$. Denoting by $A$ its second fundamental form and by $\xi$ a smooth, global choice of an outer unit normal vector field, 
Writing $A$ for the vectorial second fundamental form of $M$, the shape operator $S$ is determined by the choice of the unit normal $\xi$ via
\begin{equation}
  \langle S_pX,Y\rangle = \langle A(X,Y),\xi\rangle\quad\text{for all $p\in M$ and $X,Y\in T_pM$}.
\end{equation} 
%Further, fix $\delta>0$ sufficiently small as well as $M_{\delta}=B_{\delta}(M)\subseteq\R^n$ such that there exists a smooth nearest-point-projection $\Pi^M\colon M_{\delta}\to M$. Let $\eta\in C_c^{\infty}(\R^n,[0,1])$ with $\eta|_{M_{\delta/2}}\equiv 1$ and $\mathrm{supp}(\eta)\subseteq M_{\delta}$. 
\begin{remark}[Extension of functions defined on $M$]
    With a slight abuse of notation, we view all objects defined on $M$ as extended onto $\R^n$ with the following construction. Choose a smooth function $\tilde\eta\in C_c^\infty(\R,[0,1])$ with $\chi_{[-\frac13\delta,\frac13\delta]}\leq \tilde\eta\leq \chi_{(-\frac23\delta,\frac23\delta)}$ and define $\eta\in C^{\infty}(\R^n,[0,1])$ by 
    \begin{equation}
        \eta(x)=\begin{cases}
            \tilde\eta(d^M(x)) &\text{for $x\in B_\delta(M)$}\\
            0&\text{for $x\notin B_\delta(M)$}.
        \end{cases}
    \end{equation}
    Then $\eta\in C^\infty(\R^n,[0,1])$ satisfies
    \begin{equation}\label{eq:bounds-eta}
        \chi_{B_{\frac13\delta}(M)}\leq \eta \leq \chi_{B_{\frac23\delta}(M)}\quad\text{and}\quad \sup_{x\in\R^n} |D^k\eta(x)|<\infty \text{ for all $k\in\N_0$},
    \end{equation}
    using \eqref{eq:unif-bd-derivatives} to derive uniform bounds for $|D^kd^M|$.
    If $V$ is some vector space and $T\colon M\to V$ is smooth, then we extend $T$ to $\R^n$ (without renaming) by 
    \begin{equation}
        T(x)=\begin{cases}
            \eta(x)T(\Pi^M(x))&\text{for $x\in B_\delta(M)$}\\
            0&\text{for $x\notin B_\delta(M)$}.
        \end{cases}
    \end{equation}
    Using \eqref{eq:bounds-eta} and \eqref{eq:unif-bd-derivatives}, this extension satisfies $\|T\|_{C^k(\R^n)}\leq C(k,M) \cdot \|T\|_{C^k(M)}$ for all $k\in\N_0$. 
\end{remark}

With the above definitions, we obtain
\begin{equation}\label{eq:shape-op}
  D_X \xi(p) = - S_pX\quad\text{for all $p\in M$ and $X\in T_pM$}.
\end{equation}

\begin{remark}[Extending the shape operator]\label{rem:extshape}
    % Note that, for $p\in M$, $S_p\colon T_pM\to T_pM$ is a linear map. Identifying $T_pM=(\mathrm{span}\{\xi(p)\})^{\bot}\subseteq\R^n$, setting $S_p(\xi(p))=0$, this extends to a linear map $S_p\colon\R^n\to\R^n$. That is, in the above construction, we can take $T=S$ with $V=\mathcal{T}^1_1(\R^n)$, the vector space of $(1,1)$-tensors on $\R^n$. Classical derivatives of $S$ on $\R^n$ are viewed as maps $D^m S\colon\R^n \to \mathcal{T}^{1+m}_1(\R^n)$ for any $m\in\N$.
    With the extension of $\xi$ as constructed in the previous remark, and using \eqref{eq:shape-op}, we extend the shape operator to a function $\R^n\ni x\mapsto S_x\in L(\R^n,\R^n)$,
    \begin{equation}
        S_xX\vcentcolon= -D_X\xi(x)\quad\text{for all $x\in\R^n$ and $X\in\R^n$}.
    \end{equation}
    Using that $\xi=\pm \nabla d^M$ on $M$, \eqref{eq:unif-bd-derivatives} and the previous remark % and \eqref{eq:shape-op} yield 
    yield for this extension of the shape operator that
    \begin{equation}\label{eq:unif-bound-shape-op}
        \sup_{x\in\R^n} |D^k S_x| < \infty\quad\text{for all $k\in\N_0$}.
    \end{equation}
\end{remark}

%%%%%%%%%%%%%%%%%%%%%%%%%%%%%%%%%%%%%%%%%%%%%%%%%%%%%%%%%%%%%%%%%%%%%%%%%%%%%%%%%%
%%%%%%%%%%%%%%%%%%%%%%%%%%%%%%%%%%%%%%%%%%%%%%%%%%%%%%%%%%%%%%%%%%%%%%%%%%%%%%%%%%
\subsection{Evolution equations and natural boundary conditions}
%%%%%%%%%%%%%%%%%%%%%%%%%%%%%%%%%%%%%%%%%%%%%%%%%%%%%%%%%%%%%%%%%%%%%%%%%%%%%%%%%%
%%%%%%%%%%%%%%%%%%%%%%%%%%%%%%%%%%%%%%%%%%%%%%%%%%%%%%%%%%%%%%%%%%%%%%%%%%%%%%%%%%

Let $\gamma\colon[-1,1]\to\R^n$ be an immersion. As usual, one denotes by $\partial_s=\frac{1}{|\partial_x\gamma|}\partial_x$ differentiation by arc-length, and by $\dd s=|\partial_x\gamma|\dd x$ integration by the Riemannian measure induced by the metric $\gamma^*\langle\cdot,\cdot\rangle$ on $[-1,1]$. Then $\curv=\partial_s^2\gamma$ is the \emph{curvature} of $\gamma$ and
\begin{equation}
    \E(\gamma) = \int_{-1}^1 |\curv|^2\dd s, \qquad \Ll(\gamma)=\int_{-1}^{1}\dd s
\end{equation}
are its \emph{elastic energy} and length, respectively. %. Moreover, $\Ll(\gamma)=\int_{-1}^{1}\dd s$ is the length of $\gamma$.
Furthermore, for any vector field $\Phi\colon[-1,1]\to\R^n$ along $\gamma$, write $\Phi^{\bot}=\Phi-\langle \Phi,\partial_s\gamma\rangle\partial_s\gamma$ for its normal projection along $\gamma$.

We collect here the evolution equations of some geometric quantities, see for instance \cite[Lemma 2.1]{dziukkuwertschaetzle2002}.

\begin{lemma}\label{lem:ev-eq} 
    Let $\gamma\colon[0, T)\times [-1,1] \to \mathbb{R}^{n}$ be a family of immersions and decompose $\partial_t\gamma=V+\varphi\partial_s\gamma$ where $\varphi=\langle\partial_t\gamma,\partial_s\gamma\rangle$ such that $V$ is normal along $\gamma$. Given any smooth normal field $N$ along $\gamma$, the following formulas hold.
    \begin{align}
    \label{a}
    \partial_{t}(\dd s)&=(\partial_s\varphi - \langle \vec{\kappa}, V \rangle) \dd s \\
    \label{b}
    \partial_{t} \partial_{s}- \partial_{s}\partial_{t} &= (\langle \vec{\kappa}, V \rangle-\partial_s\varphi) \partial_{s}\\
    \label{c}
    \partial_{t} \partial_s\gamma &= \partial_{s}^{\perp} V + \varphi\curv\\
    \label{d}
    \partial_{t} N&= \partial_{t}^{\perp} N - \langle \partial_s^{\bot} V+\varphi\curv,N\rangle\partial_s\gamma  \\
    \label{e}
    \partial_{t}^{\perp} \vec{\kappa}& = (\partial_{s}^{\perp})^2 V + \langle \vec{\kappa}, V\rangle  \vec{\kappa} + \varphi\partial_s^\bot\curv \\
    \label{f}
    (\partial_{t}^{\perp}\partial_{s}^{\perp}-\partial_{s}^{\perp}\partial_{t}^{\perp}) N& = (\langle\curv,V\rangle-\partial_s\varphi)\partial_s^{\bot}N+\langle\curv,N\rangle\partial_s^{\bot}V-\langle\partial_s^{\bot}V,N\rangle\curv \, .
    \end{align}
\end{lemma}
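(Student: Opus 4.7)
The six formulas are standard first-variation identities for a time-dependent immersion; I would derive them in the order listed, since each uses the previous ones. The only structural inputs I would need are the decomposition $\partial_t\gamma=V+\varphi\,\partial_s\gamma$ with $V\perp \partial_s\gamma$, the normality of $\curv=\partial_s^2\gamma$ (i.e.\ $\langle\curv,\partial_s\gamma\rangle=0$), and the identity $\langle\partial_s V,\partial_s\gamma\rangle=-\langle V,\curv\rangle$ obtained by differentiating $\langle V,\partial_s\gamma\rangle=0$.

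For \eqref{a}, I compute $\partial_t|\partial_x\gamma|=|\partial_x\gamma|^{-1}\langle\partial_x\partial_t\gamma,\partial_x\gamma\rangle$. Writing $\partial_x=|\partial_x\gamma|\partial_s$ and expanding $\partial_t\gamma=V+\varphi\partial_s\gamma$, the $\varphi\partial_s\gamma$ contribution gives $|\partial_x\gamma|\partial_s\varphi$ (the $\varphi\curv$ piece is killed by $\langle\curv,\partial_s\gamma\rangle=0$), while the $V$ contribution yields $|\partial_x\gamma|\langle\partial_s V,\partial_s\gamma\rangle=-|\partial_x\gamma|\langle V,\curv\rangle$, and \eqref{a} follows from $\dd s=|\partial_x\gamma|\dd x$. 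Formula \eqref{b} is then immediate: since $\partial_s=|\partial_x\gamma|^{-1}\partial_x$, and $\partial_x$ commutes with $\partial_t$, one gets $\partial_t\partial_s-\partial_s\partial_t=-|\partial_x\gamma|^{-1}(\partial_t|\partial_x\gamma|)\partial_s$, which is the claimed commutator by \eqref{a}.

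For \eqref{c}, I apply \eqref{b} to $\gamma$: $\partial_t\partial_s\gamma=(\langle\curv,V\rangle-\partial_s\varphi)\partial_s\gamma+\partial_s(V+\varphi\partial_s\gamma)=\langle\curv,V\rangle\partial_s\gamma+\partial_s V+\varphi\curv$, and then using $\partial_s V=\partial_s^\bot V+\langle\partial_s V,\partial_s\gamma\rangle\partial_s\gamma=\partial_s^\bot V-\langle V,\curv\rangle\partial_s\gamma$ cancels the tangential term and produces \eqref{c}. For \eqref{d}, I decompose $\partial_t N=\partial_t^\bot N+\langle\partial_t N,\partial_s\gamma\rangle\partial_s\gamma$; differentiating $\langle N,\partial_s\gamma\rangle=0$ and substituting \eqref{c} gives the tangential coefficient $-\langle\partial_s^\bot V+\varphi\curv,N\rangle$.

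For \eqref{e}, I apply $\partial_t$ to $\curv=\partial_s\partial_s\gamma$, using \eqref{b} twice and \eqref{c}: this yields $\partial_t\curv=(\langle\curv,V\rangle-\partial_s\varphi)\curv+\partial_s^2 V+\partial_s(\varphi\curv)+(\langle\curv,V\rangle-\partial_s\varphi)\partial_s\partial_s\gamma$ structurally, and after projecting onto the normal bundle (using \eqref{d} applied with $N=\curv$) the tangential parts drop out, leaving $(\partial_s^\bot)^2 V+\langle\curv,V\rangle\curv+\varphi\,\partial_s^\bot\curv$. Finally, for \eqref{f} I would compute $\partial_t^\bot\partial_s^\bot N$ by expanding $\partial_s^\bot N=\partial_s N-\langle\partial_s N,\partial_s\gamma\rangle\partial_s\gamma$, applying \eqref{b} and \eqref{c}, then taking the normal component using \eqref{d}, and compare with $\partial_s^\bot\partial_t^\bot N$ obtained analogously; the terms involving $\partial_t^\bot N$ cancel and only the boundary-like contributions involving $V$ and $\varphi$ survive, giving the stated commutator.

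The main bookkeeping obstacle is \eqref{f}: several tangential/normal splittings must be tracked simultaneously, and one must be careful that the terms $\langle\curv,N\rangle\partial_s^\bot V$ and $\langle\partial_s^\bot V,N\rangle\curv$ appear with the correct signs from the projection onto the normal bundle. Otherwise the derivation is routine, and throughout one only uses normality of $V$, $\curv$, and $N$ together with the product rule.
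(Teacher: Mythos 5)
Your derivation is correct and is the standard direct computation; the paper does not actually prove this lemma but cites \cite[Lemma~2.1]{dziukkuwertschaetzle2002}, whose proof proceeds exactly as you outline (commute $\partial_t$ with $\partial_s$ via the first variation of $|\partial_x\gamma|$, substitute $\partial_t\gamma=V+\varphi\partial_s\gamma$, use $\langle\partial_s V,\partial_s\gamma\rangle=-\langle V,\curv\rangle$, and project onto the normal bundle). One cosmetic caveat: your displayed intermediate expression for \eqref{e} lists the commutator contribution $(\langle\curv,V\rangle-\partial_s\varphi)\curv$ twice (since $\partial_s\partial_s\gamma=\curv$) and writes $\partial_s^2V$ where the unprojected $\partial_s V$ or $\partial_s\partial_s^\bot V$ belongs, which taken literally would leave a spurious $-\partial_s\varphi\,\curv$ after normal projection; the method you describe nevertheless yields \eqref{e} once the bookkeeping is done a single time.
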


Let $\gamma\colon[0,T)\times[-1,1]\to\R^n$ be a family of immersions, $V=(\partial_t\gamma)^{\bot}$ and $\varphi=\langle\partial_t\gamma,\partial_s\gamma\rangle$. Using \eqref{a} and \eqref{e}, we obtain
\begin{align}
  \frac{\dd}{\dd t} \E(\gamma) &= \int_{-1}^{1} \langle 2(\partial_s^{\bot})^2\curv+|\curv|^2\curv,V\rangle\dd s +  \Bigl[ 2\langle \curv,\partial_s^{\bot}V\rangle - 2 \langle \partial_s^{\bot}\curv,V \rangle + \varphi |\curv|^2 \Bigr]_{-1}^1 \\
  &=\vcentcolon \int_{-1}^{1} \langle \nabla \E(\gamma),V\rangle\dd s +  \Bigl[ 2 \langle \curv,\partial_s^{\bot}V\rangle - 2 \langle \partial_s^{\bot}\curv,V \rangle + \varphi |\curv|^2 \Bigr]_{-1}^1.\label{eq:first-var}
\end{align}

%Motivated by previous works on free boundary conditions for the variational study of the two-dimensional Willmore functional \cite{alessandronikuwert2016,kuwertlamm2021,kuwertmueller2024,metsch2024}, we consider the boundary conditions given by the following hypothesis. 
%\begin{hypothesis}
We provide some justification for the third-order boundary condition in \eqref{eq:intro-free-bcs}. Consider a smooth family of immersions $\gamma\colon[0,T)\times[-1,1]\to\R^n$ satisfying the following orthogonal free boundary conditions on $M$:
\begin{equation}\label{eq:orth-bc}
    \begin{cases}
        \gamma(t,\pm1)\in M&\text{for all $t\in[0,T)$}\\
        \partial_s\gamma(t,\pm1)\bot T_{\gamma(t,\pm1)}M&\text{for all $t\in[0,T)$}.
    \end{cases}
\end{equation}
%\end{hypothesis}

Now define $\tau(t,\pm1)=\langle\partial_s\gamma(t,\pm1),\xi(\gamma(t,\pm1))\rangle$ for $0\leq t<T$. Since $M$ has codimension one, \eqref{eq:orth-bc} yields $|\tau|\equiv 1$. In particular, by continuity in $t$, $\tau(t,\pm1)=\tau_0(\pm1)\vcentcolon=\tau(0,\pm1)$ for all $0\leq t<T$, i.e., it is fixed by the initial datum. Then \eqref{eq:orth-bc} yields  
\begin{equation}\label{eq:dsgamma-at-boundary}
    \partial_s\gamma(t,\pm1) = \tau_0(\pm1)\xi(\gamma(t,\pm1))\quad\text{for all  $t\in[0,T)$}.
\end{equation}
Equation \eqref{eq:orth-bc} further implies $\partial_t\gamma(t,\pm1)\in T_{\gamma(t,\pm1)}M$ and thus
\begin{equation}\label{eq:varphi 0 rand}
\varphi(t, \pm1)=\langle \partial_t\gamma(t,\pm1),\partial_s\gamma(t,\pm1)\rangle=0 \quad \text{for all $t\in[0,T)$} .
\end{equation}
Then, differentiating \eqref{eq:dsgamma-at-boundary} in $t$, using \eqref{eq:shape-op} and \eqref{c}, we get 
\begin{equation}\label{eq:rel-V}
  \partial_s^{\bot}V(t,\pm1)=-\tau_0(\pm1)S_{\gamma(t,\pm1)}V.
\end{equation}
Combining \eqref{eq:rel-V} and \eqref{eq:first-var}, one obtains the natural boundary condition 
\begin{equation}\label{eq:nat-bc}
  \partial_s^{\bot}\curv(t,\pm1) = -\tau_0(\pm1)S_{\gamma(t,\pm1)}\curv(t,\pm1)
\end{equation}
associated to the \emph{orthogonal free boundary conditions} \eqref{eq:orth-bc} for the elastic energy.

\begin{definition}
    If $\gamma\colon[0,T)\times[-1,1]\to\R^n$ is a smooth family of immersions with
    \begin{equation}\label{eq:ev-eq}
    \begin{cases}
        \partial_t\gamma(t,x) = -(\nabla\E(\gamma(t,\cdot))(x)-\lambda(\gamma(t,\cdot)) \cdot \curv(t,x))&\text{for all  $(t,x)\in[0,T)\times[-1,1]$}\\
        \gamma(0)=\gamma_0&\text{in $[-1,1]$}\\
        \gamma(t,\pm1)\in M&\text{for all $t\in[0,T)$}\\
        \partial_s\gamma(t,\pm1)\bot T_{\gamma(t,\pm1)}M&\text{for all $t\in[0,T)$}\\
        \partial_s^{\bot}\curv(t,\pm1) = -\tau_0(\pm1)S_{\gamma(t,\pm1)}\curv(t,\pm1) &\text{for all $t\in[0,T)$},
    \end{cases}
    \end{equation}
    then $\gamma$ is called a \emph{fixed-length free-boundary elastic flow} (FLFB-EF) starting at $\gamma_0$. Here, $\lambda(\gamma)$ is given by \eqref{eq:def-lambda} 
    % \begin{equation}\label{eq:def-lambda}
    %     \lambda(\gamma)=\frac{\int_{-1}^{1}\langle 2(\partial_s^\bot)^2\curv+|\curv|^2\curv,\curv\rangle\dd s}{\int_{-1}^{1}|\curv|^2\dd s} =  \frac{\int_{-1}^{1}\langle \nabla\E(\gamma),\curv\rangle\dd s}{\E(\gamma)}.
    % \end{equation}
    and $\gamma_0\colon[-1,1]\to\R^n$ is a smooth immersion with 
    \begin{align}
        \gamma_0(\pm1)&\in M,\quad \partial_s\gamma_0(\pm1)=\tau_0(\pm1)\xi(\gamma_0(\pm1)),\\%\bot T_{\gamma_0(\pm1)}M,\\
        \partial_s^{\bot}\curv_{\gamma_0}(\pm1) &= -\tau_0(\pm1)S_{\gamma_0(\pm1)}\curv_{\gamma_0}(\pm1)\quad \text{and}\quad\E(\gamma_0)>0.
    \end{align}
\end{definition}

\begin{remark}[On the Lagrange multiplier]
    Integrating by parts in \eqref{eq:def-lambda} , we have
    \begin{equation}\label{eq:lambda-ibp-formula}
        \lambda(\gamma)=\E(\gamma)^{-1} \Big(\int_{-1}^1 -2|\partial_s^\bot\curv|^2+|\curv|^4\dd s + 2\Big[\langle\partial_s^\bot\curv,\curv\rangle\Big]_{-1}^1\Big).
    \end{equation}
    By the third-order boundary condition in \eqref{eq:ev-eq}, the boundary term is quadratic in the curvature. 
    This order-reduction is important  when establishing global existence for \eqref{eq:ev-eq}. % proof.
\end{remark}

\begin{remark}\label{rem:en-mon}
    Consider a FLFB-EF %elastic flow 
    $\gamma\colon[0,T)\times[-1,1]\to \R^n$.  %with fixed length and orthogonal boundary on $M$. 
    Using \eqref{a} and \eqref{eq:def-lambda}, we compute
    \begin{align}
        \frac{\dd}{\dd t} \Ll(\gamma(t)) = - \int_{-1}^1 \langle \curv,\partial_t\gamma\rangle\dd s = \int_{-1}^1\langle \nabla\E(\gamma),\curv\rangle\dd s - \lambda(\gamma) \E(\gamma) = 0.
     \end{align}
     Therefore, by \eqref{eq:first-var}, \eqref{eq:varphi 0 rand}, \eqref{eq:rel-V} and \eqref{eq:nat-bc},
    \begin{align}
        \frac{\dd}{\dd t} \E(\gamma(t)) &= \int_{-1}^1 \langle \nabla \E(\gamma),\partial_t\gamma\rangle \dd s = - \int_{-1}^1 |\partial_t\gamma|^2\dd s + \lambda(\gamma) \int_{-1}^1\langle\curv,\partial_t\gamma\rangle\dd s\\
        &= - \int_{-1}^1 |\partial_t\gamma|^2\dd s - \lambda(\gamma)  \frac{\dd}{\dd t}\Ll(\gamma(t)) = - \int_{-1}^1 |\partial_t\gamma|^2\dd s \leq 0.
    \end{align}
\end{remark}

Finally, the following identity one obtains via integration by parts is crucial in the energy methods used to prove global existence and (sub-) convergence.

\begin{lemma}[{\cite[Lemma~2.3]{dallacquapozzi2014}}]\label{lem:ibp}
    For a family of immersions $\gamma\colon[0,T)\times[-1,1]\to\R^n$ with $\partial_t\gamma=\partial_t^\bot\gamma$ and a vector field $N\colon[0,T)\times[-1,1]\to\R^n$ which is normal along $\gamma$,
    \begin{align}
        \frac{\dd}{\dd t} \frac12\int_{-1}^1|N|^2\dd s + \int_{-1}^1 |(\partial_s^{\bot})^2N|^2\dd s = &\int_{-1}^1 \langle Y,N\rangle\dd s - \frac12\int_{-1}^1 |N|^2\langle\curv, \partial_t\gamma\rangle\dd s\\% + \frac12\int_{-1}^1 |N|^2 \partial_s\varphi \dd s  \\
        & - \Big[ \langle N,(\partial_s^{\bot})^3N\rangle - \langle \partial_s^{\bot} N,(\partial_s^{\bot})^2N\rangle \Big]_{-1}^1
    \end{align}
    where $Y = \partial_t^{\bot} N + (\partial_s^{\bot})^4N$.
    % \begin{equation}
    %     Y = \partial_t^{\bot} N + (\partial_s^{\bot})^4N.
    % \end{equation}
\end{lemma}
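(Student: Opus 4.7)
The plan is to compute $\frac{\dd}{\dd t}\frac12\int_{-1}^1|N|^2\dd s$ by differentiating both the integrand and the measure, then to expand $\partial_t^\bot N = Y - (\partial_s^\bot)^4N$ and shift derivatives off of $(\partial_s^\bot)^4N$ by integrating by parts twice in the normal connection.

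First I would differentiate under the integral. Since the assumption $\partial_t\gamma=\partial_t^\bot\gamma$ implies $\varphi=\langle\partial_t\gamma,\partial_s\gamma\rangle\equiv 0$, the evolution formula \eqref{a} for the arc-length measure simplifies to $\partial_t(\dd s)=-\langle\curv,\partial_t\gamma\rangle\dd s$. Moreover, since $N$ is normal and $\partial_t N-\partial_t^\bot N$ is tangential (by \eqref{d}), $\langle N,\partial_t N\rangle=\langle N,\partial_t^\bot N\rangle$. Combining these,
\begin{equation}
    \frac{\dd}{\dd t}\frac12\int_{-1}^1|N|^2\dd s = \int_{-1}^1\langle N,\partial_t^\bot N\rangle\dd s - \frac12\int_{-1}^1|N|^2\langle\curv,\partial_t\gamma\rangle\dd s.
\end{equation}

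Next I would substitute $\partial_t^\bot N = Y-(\partial_s^\bot)^4N$ into the first integral, which reduces matters to identifying
\begin{equation}
    \int_{-1}^1\langle N,(\partial_s^\bot)^4N\rangle\dd s \;=\; \bigl[\langle N,(\partial_s^\bot)^3N\rangle - \langle \partial_s^\bot N,(\partial_s^\bot)^2N\rangle\bigr]_{-1}^1 + \int_{-1}^1|(\partial_s^\bot)^2N|^2\dd s.
\end{equation}
This comes from applying twice the basic product rule: if $\Phi,\Psi$ are normal fields along $\gamma$, then the tangential part of $\partial_s\Phi-\partial_s^\bot\Phi$ is orthogonal to $\Psi$, so $\partial_s\langle\Phi,\Psi\rangle=\langle\partial_s^\bot\Phi,\Psi\rangle+\langle\Phi,\partial_s^\bot\Psi\rangle$. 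Integrating this identity once with $\Phi=N$, $\Psi=(\partial_s^\bot)^3N$ and a second time with $\Phi=\partial_s^\bot N$, $\Psi=(\partial_s^\bot)^2N$ gives the displayed formula after noting that $\dd s$ is an arc-length measure so the boundary terms arise directly from the fundamental theorem of calculus.

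Substituting back and rearranging yields the claim. I expect no real obstacle here; the only subtle points are (i) correctly using $\varphi=0$ to kill the tangential contribution to $\partial_t(\dd s)$ and to ensure $\partial_t N$ and $\partial_t^\bot N$ have the same inner product with $N$, and (ii) consistently exploiting the normality of all the iterated $(\partial_s^\bot)^kN$ so that ordinary integration by parts with $\partial_s$ can be replaced by integration by parts with $\partial_s^\bot$.
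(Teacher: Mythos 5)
Your proposal is correct and is exactly the standard computation behind the cited result \cite[Lemma~2.3]{dallacquapozzi2014}: with $\varphi=0$ the measure evolves by $\partial_t(\dd s)=-\langle\curv,\partial_t\gamma\rangle\dd s$, normality of $N$ lets you replace $\partial_tN$ by $\partial_t^\bot N$ in the inner product, and the two integrations by parts in $\partial_s^\bot$ (valid because $\partial_s\langle\Phi,\Psi\rangle=\langle\partial_s^\bot\Phi,\Psi\rangle+\langle\Phi,\partial_s^\bot\Psi\rangle$ for normal fields) produce precisely the stated boundary terms. The paper itself gives no proof and simply cites the reference, so there is nothing to compare beyond noting that your argument is the expected one and is complete.
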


%%%%%%%%%%%%%%%%%%%%%%%%%%%%%%%%%%%%%%%%%%%%%%%%%%%%%%%%%%%%%%%%%%%%%%%%%%%%%%%%%%%%%%%%%%%%%%%%
%%%%%%%%%%%%%%%%%%%%%%%%%%%%%%%%%%%%%%%%%%%%%%%%%%%%%%%%%%%%%%%%%%%%%%%%%%%%%%%%%%%%%%%%%%%%%%%%
\subsection{Preliminaries for the tensor notation used in the energy estimates}
%%%%%%%%%%%%%%%%%%%%%%%%%%%%%%%%%%%%%%%%%%%%%%%%%%%%%%%%%%%%%%%%%%%%%%%%%%%%%%%%%%%%%%%%%%%%%%%%
%%%%%%%%%%%%%%%%%%%%%%%%%%%%%%%%%%%%%%%%%%%%%%%%%%%%%%%%%%%%%%%%%%%%%%%%%%%%%%%%%%%%%%%%%%%%%%%%

% The shape operator $S$ associates to any $p \in M$ a linear map from $T_pM$ to $T_pM$ such that 
% \begin{equation}
%   \langle S_pX,Y\rangle = \langle A(X,Y),\xi\rangle\quad\text{for all $p\in M$ and $X,Y\in T_pM$}.
% \end{equation} 

Since maximum principle techniques are not available for higher-order flows, for the analysis of the long-time behavior, we rely on energy methods. In literature, a notation is employed to abbreviate terms appearing in such energy estimates for related flows. This notation is tailored to Gagliardo--Nirenberg-type interpolation estimates, see \cite{dziukkuwertschaetzle2002,lin2012,dallacquapozzi2014}. We slightly adapt this notation to fit into the framework of our problem. 

Consider a family of immersions $\gamma\colon[0,T)\times[-1,1]\to\R^n$ and let $a,b,c\in\N_0$ be integers with $b\geq 1$. Then $P_{b}^{a,c}\colon[-1,1]\to\R^n$ denotes a vector field along $\gamma$ which can be written in the form
\begin{equation}
    T_{\gamma(t,x)}(\partial_s\gamma,\dots,\partial_s\gamma,(\partial_s^{\bot})^{i_1}\curv,\dots,(\partial_s^{\bot})^{i_b}\curv)
\end{equation}
where $\sum_{j=1}^bi_j=a$, $\max_{1\leq j\leq b}i_j=c$ and $T\colon\R^n\to \mathcal{T}^{b+l}_{1}(\R^n)$, $p\mapsto T_p\in\mathcal{T}_1^{b+l}(\R^n)$ is a smooth tensor field on $\R^n$ with uniform bounds
\begin{equation}\label{eq:hyp-tensor-unif-bounds}
    C(P^{a,c}_b,k)\vcentcolon=\sup_{p\in\R^n} |D^kT_p| < \infty\quad\text{for all $k\in\N_0$}.
\end{equation}
Here, $l\in\N_0$ denotes the number of entries of $\partial_s\gamma$ which we do not keep track of in the notation. 
% For a set $K\subseteq\R^n$, denote 
% \begin{equation}
%     C(P_{a,b},K)=\sup_{p\in K} |T_p|.% \|T_p\|_{\mathcal{T}_1^{b+l}}\ .
% \end{equation}
Moreover, for integers $A,B,C\in\N_0$ with $B\geq 1$, $\mathbb{P}_B^{A,C}$ denotes a finite sum of terms of type $P_{b}^{a,c}$ with $a+\frac12 b\leq A+\frac12 B$ and $c\leq C$.

\begin{lemma}\label{lem:tens-alg}
    Consider a FLFB-EF $\gamma\colon[0,T)\times [-1,1]\to\R^n$. Suppose that 
    \begin{equation}
        P_{b}^{a,c}=T_{\gamma(t,x)}(\partial_s\gamma,\dots,\partial_s\gamma,(\partial_s^{\bot})^{i_1}\curv,\dots,(\partial_s^{\bot})^{i_b}\curv)
    \end{equation}
    is normal along $\gamma$ where $T\in\mathcal{T}_1^{b+l}(\R^n)$ is a smooth tensor field on $\R^n$ as above. For any $m\in\N$, we have
    \begin{align}
        (\partial_s^{\bot})^m P_{b}^{a,c} %&= \sum_{\sigma_1=1}^b\dots\sum_{\sigma_m=1}^b T_{\gamma(t,x)}(\partial_s\gamma,\dots,\partial_s\gamma,(\partial_s^{\bot})^{i_1+\sum_{\nu=1}^m\delta_{1,\sigma_{\nu}}}\curv,\dots,(\partial_s^{\bot})^{i_b+\sum_{\nu=1}^m\delta_{b,\sigma_{\nu}}}\curv) \\
        %&\qquad+ \mathbb{P}_{b+1}^{a+m-1,c+m-1} \\
        &= \sum_{j=1}^b T_{\gamma(t,x)}(\partial_s\gamma,\dots,\partial_s\gamma,(\partial_s^{\bot})^{i_1+m\delta_{1,j}}\curv,\dots,(\partial_s^{\bot})^{i_b+m\delta_{b,j}}\curv) + \mathbb{P}_{b}^{a+m,c+m-1}\label{eq:tens-alg-1}
    \end{align}
    and
    \begin{align}
        \partial_t^{\bot} P_{b}^{a,c} =\  &
        \sum_{j=1}^b T_{\gamma(t,x)}(\partial_s\gamma,\dots,\partial_s\gamma,(\partial_t^{\bot})^{\delta_{1,j}}(\partial_s^{\bot})^{i_1}\curv,\dots,(\partial_t^{\bot})^{\delta_{b,j}}(\partial_s^{\bot})^{i_b}\curv) \\
        &+ \mathbb{P}_{b+1}^{a+3,\max\{c,3\}} + \lambda \mathbb{P}^{a+1,\max\{c,1\}}_{b+1}.\label{eq:tens-alg-2}
    \end{align}
\end{lemma}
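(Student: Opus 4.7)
I would establish both identities by a direct application of the product rule to $T_\gamma(\partial_s\gamma,\dots,(\partial_s^\bot)^{i_j}\curv,\dots)$, carefully tracking the scaling weight $a+\tfrac{b}{2}$, the number of curvature slots $b$, and the max-derivative index $c$. The uniform bounds \eqref{eq:hyp-tensor-unif-bounds} ensure that every derivative $DT$ of the coefficient tensor is again an admissible coefficient, so iterating the product rule never leaves the $P$-class. Moreover, since $\gamma$ solves the FLFB-EF equation, the right-hand side $-2(\partial_s^\bot)^2\curv-|\curv|^2\curv+\lambda\curv$ is normal along $\gamma$; hence $\varphi\equiv 0$, and identity (d) of \Cref{lem:ev-eq} reduces to $\partial_t N = \partial_t^\bot N - \langle\partial_s^\bot V,N\rangle\partial_s\gamma$ for any normal field $N$ along $\gamma$.

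\textbf{Proof of \eqref{eq:tens-alg-1}.} I would argue by induction on $m$. For $m=1$, the product rule applied to $T_\gamma(\cdot)$ produces three kinds of contributions: the chain-rule term $DT_\gamma\cdot\partial_s\gamma$ (same $a,b,c$, with one extra $\partial_s\gamma$-slot absorbed by the unrecorded tensor entries); differentiation of a $\partial_s\gamma$-slot producing $\curv$ (increasing $b$ by one, leaving $a$ and $c$ fixed); and differentiation of a curvature slot via the identity $\partial_s((\partial_s^\bot)^{i_j}\curv) = (\partial_s^\bot)^{i_j+1}\curv - \langle(\partial_s^\bot)^{i_j}\curv,\curv\rangle\partial_s\gamma$. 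The normal projection $\partial_s^\bot$ further strips off a tangential correction proportional to $\partial_s\gamma$. Keeping the principal contributions (those where the derivative lands normally on one single curvature slot) yields the displayed main sum, while every remaining term has scaling weight $\le a+\tfrac{b}{2}+\tfrac12$ and max-derivative order $\le c$, hence lies in $\mathbb{P}_b^{a+1,c}$. For the inductive step, applying $\partial_s^\bot$ once more to the induction hypothesis and reusing the base case term by term gives the claim; since the max-derivative index can rise by at most one per iteration, one obtains the claimed $\mathbb{P}_b^{a+m,c+m-1}$ remainder.

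\textbf{Proof of \eqref{eq:tens-alg-2}.} Again via the product rule, but now in $t$, I would convert every $\partial_t$ acting on a curvature slot into $\partial_t^\bot$ using (d) of \Cref{lem:ev-eq}, paying the correction $-\langle\partial_s^\bot V,(\partial_s^\bot)^{i_j}\curv\rangle\partial_s\gamma$. The remaining error terms are of three kinds: $DT_\gamma\cdot\partial_t\gamma$; the term $\partial_t(\partial_s\gamma)=\partial_s^\bot V$ replacing a $\partial_s\gamma$-slot; and the tangential corrections just mentioned. Substituting the PDE $\partial_t\gamma=-2(\partial_s^\bot)^2\curv-|\curv|^2\curv+\lambda\curv$ together with $\partial_s^\bot V = -2(\partial_s^\bot)^3\curv - \partial_s^\bot(|\curv|^2\curv) + \lambda\,\partial_s^\bot\curv$, the highest-weight errors stem from the $(\partial_s^\bot)^3\curv$ and $(\partial_s^\bot)^2\curv$ pieces and fall into $\mathbb{P}_{b+1}^{a+3,\max\{c,3\}}$; the remaining polynomial-in-curvature contributions have strictly smaller weight and also fit in this class; every $\lambda$-linear contribution groups into $\lambda\mathbb{P}_{b+1}^{a+1,\max\{c,1\}}$.

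\textbf{Main obstacle.} The argument is not conceptually hard, but the bookkeeping is delicate: for each correction produced by the product rule, the chain rule, and the normal projection, one must verify the scaling-weight bound $a'+\tfrac12 b' \le (a+m)+\tfrac12 b$, respectively $(a+3)+\tfrac12(b+1)$, together with the max-derivative bound. Keeping the $\lambda$-dependent contributions cleanly separated from the $\lambda$-independent ones in \eqref{eq:tens-alg-2} is a further subtle point, since the PDE mixes purely geometric and $\lambda$-multiplied terms inside both $\partial_t\gamma$ and $\partial_s^\bot V$; one has to follow these two families through the entire computation.
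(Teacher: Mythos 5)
Your proof is correct and follows essentially the same route as the paper: the product rule combined with the conversion identities $\partial_s^\bot N=\partial_s N+\langle N,\curv\rangle\partial_s\gamma$ and $\partial_t N=\partial_t^\bot N-\langle\partial_s^\bot V,N\rangle\partial_s\gamma$ together with the structure $V=\mathbb{P}_1^{2,2}+\lambda P_1^{0,0}$ and $\partial_s^\bot V=\mathbb{P}_1^{3,3}+\lambda\mathbb{P}_1^{1,1}$; the paper merely proves \eqref{eq:tens-alg-1} in one shot via $(\partial_s^\bot)^k\curv=\partial_s^k\curv+\mathbb{P}_2^{k-1,k-1}$ rather than by induction on $m$. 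One small point in your inductive step: the individual terms of the decomposition after $m$ applications of $\partial_s^\bot$ need not be normal, so instead of ``reusing the base case term by term'' you should apply the projection identity to the whole (normal) field $(\partial_s^\bot)^mP_b^{a,c}$ at once and only then distribute $\partial_s$ over the summands.
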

\begin{proof}
    First note that, for a normal vector field $N$ along $\gamma$,
    \begin{equation}\label{eq:tens-alg-3}
        \partial_s^{\bot}N=\partial_s N + \langle N,\curv\rangle \partial_s\gamma.
    \end{equation}
    Moreover, by \cite[Lemma~2.7]{dallacqualinpozzi2017}, for any $k\in\N$,
    \begin{equation}\label{eq:tens-alg-4}
        (\partial_s^{\bot})^k\curv = \partial_s^k\curv + \mathbb P_{2}^{k-1,k-1}.
    \end{equation}
    Thus, also using the product rule for tensors, for $m\in\N$, we obtain
    \begin{align}
        &(\partial_s^{\bot})^m P_{b}^{a,c} = \mathbb{P}_{b+1}^{a+m-1,c+m-1}\\
        &\quad+  \sum_{\sigma_1=1}^b\dots\sum_{\sigma_m=1}^b T_{\gamma(t,x)}(\partial_s\gamma,\dots,\partial_s\gamma,\partial_s^{i_1+\sum_{\nu=1}^m\delta_{1,\sigma_{\nu}}}\curv,\dots,\partial_s^{i_b+\sum_{\nu=1}^m\delta_{b,\sigma_{\nu}}}\curv)\\
        &= \mathbb{P}_{b+1}^{a+m-1,c+m-1}\\
        &\quad+  \sum_{\sigma_1=1}^b\dots\sum_{\sigma_m=1}^b T_{\gamma(t,x)}(\partial_s\gamma,\dots,\partial_s\gamma,(\partial_s^{\bot})^{i_1+\sum_{\nu=1}^m\delta_{1,\sigma_{\nu}}}\curv,\dots,(\partial_s^{\bot})^{i_b+\sum_{\nu=1}^m\delta_{b,\sigma_{\nu}}}\curv).
    \end{align}
    Using \eqref{eq:ev-eq} and \eqref{eq:first-var},
    \begin{equation}\label{eq:DefV}
    V\vcentcolon=\partial_t\gamma=- 2 (\partial_s^{\bot})^{2}\curv -|\curv|^2 \curv + \lambda \curv=\mathbb{P}_1^{2,2}+\lambda P_1^{0,0}.
    \end{equation} 
    Thus, for a normal vector field $N$ along $\gamma$, \eqref{c} and \eqref{d} yield
    \begin{equation}
        \partial_t^{\bot}N=\partial_t N + \langle N,\partial_s^{\bot}V\rangle\partial_s\gamma = \partial_t N + N * (\mathbb P_1^{3,3} + \lambda \mathbb P^{1,1}_1)
    \end{equation}
    where $*$ denotes some constant-coefficient multilinear map. Combined with \eqref{c} and the product rule for tensors, we get \eqref{eq:tens-alg-2}.
\end{proof}

\begin{lemma}\label{lem:control-Y}
    Let $\gamma\colon[0,T)\times[-1,1]\to\R^n$ be a FLFB-EF. Then for $m\in\N_0$ 
    \begin{align}
        \big(\partial_t^{\bot} + (\partial_s^{\bot})^4\big)(\partial_s^{\bot})^m\curv &= \lambda (\partial_s^{\bot})^{m+2}\curv + \mathbb{P}_{3}^{m+2,m+2} + \lambda \mathbb{P}_{3}^{m,m}.
    \end{align}
    Moreover, for any term $P_b^{a,c}$ which is normal along $\gamma$, we have
    \begin{align}
        \big(\partial_t^{\bot} + (\partial_s^{\bot})^4\big)P_{b}^{a,c} &= \mathbb{P}_{b}^{a+4,c+3} + \lambda \mathbb{P}_{b}^{a+2,c+2}.
    \end{align}
\end{lemma}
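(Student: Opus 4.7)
The plan is to verify both identities by a direct expansion, using the commutation relations collected in \Cref{lem:ev-eq} together with the tensor calculus of \Cref{lem:tens-alg}, crucially exploiting that $\varphi \equiv 0$ along a FLFB-EF (since $V = \partial_t\gamma$ is already normal).

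For the first identity I argue by induction on $m \in \mathbb{N}_0$. In the base case $m=0$, inserting $V = -2(\partial_s^\bot)^2\curv - |\curv|^2\curv + \lambda\curv$ from \eqref{eq:DefV} into \eqref{e} yields
\begin{equation*}
    \partial_t^\bot\curv = -2(\partial_s^\bot)^4\curv + \lambda(\partial_s^\bot)^2\curv - (\partial_s^\bot)^2(|\curv|^2\curv) + \langle\curv, V\rangle\curv.
\end{equation*}
Expanding the last two summands via \eqref{eq:tens-alg-1} and via $\langle\curv, V\rangle\curv = -2\langle\curv,(\partial_s^\bot)^2\curv\rangle\curv - |\curv|^4\curv + \lambda|\curv|^2\curv$, a direct weight check on the index $a + b/2$ places every $\lambda$-independent remainder into $\mathbb{P}_3^{2,2}$ and the $\lambda$-piece into $\lambda\mathbb{P}_3^{0,0}$. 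Adding $(\partial_s^\bot)^4\curv$ to both sides completes the base case. For the inductive step I apply $\partial_s^\bot$ to the identity at stage $m$ and commute past $\partial_t^\bot$ using \eqref{f}: since $\varphi = 0$, the commutator applied to any normal vector field $N$ simplifies to $\langle\curv, V\rangle\partial_s^\bot N + \langle\curv, N\rangle\partial_s^\bot V - \langle\partial_s^\bot V, N\rangle\curv$. Taking $N = (\partial_s^\bot)^m\curv$ and substituting \eqref{eq:DefV} (combined with \eqref{eq:tens-alg-1} to rewrite $\partial_s^\bot V$) puts this commutator into $\mathbb{P}_3^{m+3,m+3} + \lambda\mathbb{P}_3^{m+1,m+1}$; together with the inductive hypothesis and $\partial_s^\bot(\lambda(\partial_s^\bot)^{m+2}\curv) = \lambda(\partial_s^\bot)^{m+3}\curv$, this upgrades the identity to stage $m+1$.

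For the second identity I apply \Cref{lem:tens-alg} directly to $P_b^{a,c}$. Formula \eqref{eq:tens-alg-1} with exponent $4$ places $(\partial_s^\bot)^4 P_b^{a,c}$ into $\mathbb{P}_b^{a+4,c+3}$. Formula \eqref{eq:tens-alg-2} decomposes $\partial_t^\bot P_b^{a,c}$ as a ``Leibniz'' remainder in $\mathbb{P}_{b+1}^{a+3,\max\{c,3\}} + \lambda\mathbb{P}_{b+1}^{a+1,\max\{c,1\}}$ (which, after an arithmetic check of weights, sits inside $\mathbb{P}_b^{a+4,c+3} + \lambda\mathbb{P}_b^{a+2,c+2}$), plus $b$ terms where one factor $(\partial_s^\bot)^{i_j}\curv$ has been replaced by $\partial_t^\bot(\partial_s^\bot)^{i_j}\curv$. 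Applying the first identity with $m = i_j$ to each replaced factor and reinserting into the tensor produces contributions again in $\mathbb{P}_b^{a+4,c+3} + \lambda\mathbb{P}_b^{a+2,c+2}$.

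The main (and essentially only) obstacle is bookkeeping: at every step one must verify that each substituted term satisfies both $a' + b'/2 \le A + B/2$ and $c' \le C$ for the target class $\mathbb{P}_B^{A,C}$, and one must check that the leading $(\partial_s^\bot)^4$-contributions coming from the $-2(\partial_s^\bot)^2\curv$ factor of $V$ combine consistently with the $(\partial_s^\bot)^4$ in the parabolic operator on the left. There is no genuinely analytic content here; the lemma is a purely algebraic consequence of \Cref{lem:ev-eq,lem:tens-alg}.
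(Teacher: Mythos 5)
Your overall strategy is reasonable and, for the second identity, coincides with the paper's own proof (decompose $\big(\partial_t^\bot+(\partial_s^\bot)^4\big)P_b^{a,c}$ via \Cref{lem:tens-alg} and feed the first identity into the diagonal terms); since the paper disposes of the first identity by citation, your self-contained induction is in principle a welcome substitute, and your inductive step via \eqref{f} with $\varphi\equiv 0$ and your weight bookkeeping for the second identity are correct as far as they go.

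However, the base case does not close, and the defect is visible in your own display. You correctly compute
\begin{equation*}
    \partial_t^\bot\curv \;=\; -2(\partial_s^\bot)^4\curv + \lambda(\partial_s^\bot)^2\curv - (\partial_s^\bot)^2(|\curv|^2\curv) + \langle\curv,V\rangle\curv,
\end{equation*}
with the coefficient $-2$ coming from $V=-2(\partial_s^\bot)^2\curv-|\curv|^2\curv+\lambda\curv$ in \eqref{eq:DefV}. Adding $(\partial_s^\bot)^4\curv$ to both sides therefore leaves $-(\partial_s^\bot)^4\curv$ on the right, not zero. That term is a $P_1^{4,4}$ with $a+\tfrac b2=\tfrac92$ and $c=4$, so it cannot be absorbed into $\mathbb{P}_3^{2,2}$ (which requires $a+\tfrac b2\le\tfrac72$ and $c\le2$). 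The defect propagates through your induction as an extra $-(\partial_s^\bot)^{m+4}\curv$ at stage $m$, and then into your proof of the second identity as diagonal terms $T(\dots,(\partial_s^\bot)^{i_j+4}\curv,\dots)$ that exceed the class $\mathbb{P}_b^{a+4,c+3}$ whenever $i_j=c$. The cancellation you assert only occurs for the operator $\partial_t^\bot+2(\partial_s^\bot)^4$; the mismatch stems from the fact that the cited source works with the $\tfrac12$-normalized elastic energy, whereas here $\E=\int|\curv|^2\dd s$ and hence $V$ carries the factor $2$. So either you must correct the coefficient of $(\partial_s^\bot)^4$ (and flag the discrepancy with the statement as printed), after which your argument goes through, or you must carry the leading term $-(\partial_s^\bot)^{m+4}\curv$ explicitly — but then what you prove is not the stated identity. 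As written, your conclusion contradicts your own intermediate computation.
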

\begin{proof}
    The first equation is \cite[Lemma~2.5, Equation (2.14)]{dallacqualinpozzi2017}. With \Cref{lem:tens-alg}, writing $P_{b}^{a,c}=T_{\gamma(t,x)}(\partial_s\gamma,\dots,\partial_s\gamma,(\partial_s^{\bot})^{i_1}\curv,\dots,(\partial_s^{\bot})^{i_b}\curv)$,
    \begin{align}
        &\qquad\big(\partial_t^{\bot} + (\partial_s^{\bot})^4\big)P_{b}^{a,c} \\
        &= \sum_{j=1}^b T_{\gamma(t,x)}(\partial_s\gamma,\dots,\partial_s\gamma,\big[\partial_t^{\bot}+(\partial_s^{\bot})^4\big]^{\delta_{1,j}}(\partial_s^{\bot})^{i_1}\curv,\dots,\big[\partial_t^{\bot}+(\partial_s^{\bot})^4\big]^{\delta_{b,j}}(\partial_s^{\bot})^{i_b}\curv) \\
        &\qquad + \mathbb{P}_{b+1}^{a+3,\max\{c,3\}} + \mathbb{P}_{b}^{a+4,c+3} + \lambda \mathbb{P}_{b+1}^{a+1,\max\{c,1\}}.
    \end{align}
    With these computations, the second equation in the claim follows from the first one.
\end{proof}

%%%%%%%%%%%%%%%%%%%%%%%%%%%%%%%%%%%%%%%%%%%%%%%%%%%%%%%%%%%%%%%%%%%%%%%%%%%%%%%%%%%%%%%%%%%%%%%%
%%%%%%%%%%%%%%%%%%%%%%%%%%%%%%%%%%%%%%%%%%%%%%%%%%%%%%%%%%%%%%%%%%%%%%%%%%%%%%%%%%%%%%%%%%%%%%%%
\subsection{Higher-order identities following from the free boundary conditions}
%%%%%%%%%%%%%%%%%%%%%%%%%%%%%%%%%%%%%%%%%%%%%%%%%%%%%%%%%%%%%%%%%%%%%%%%%%%%%%%%%%%%%%%%%%%%%%%%
%%%%%%%%%%%%%%%%%%%%%%%%%%%%%%%%%%%%%%%%%%%%%%%%%%%%%%%%%%%%%%%%%%%%%%%%%%%%%%%%%%%%%%%%%%%%%%%%

\begin{lemma}\label{lem:n3}
    Consider a FLFB-EF $\gamma\colon[0,T)\times[-1,1]\to\R^n$. Then
    \begin{equation}
        (\partial_s^{\bot})^3\curv + \tau_0S_{\gamma}(\partial_s^{\bot})^2\curv + \langle \curv,\partial_s^{\bot}\curv\rangle \curv  = 0\quad\text{on $[0,T)\times\{\pm1\}$}.
    \end{equation}
\end{lemma}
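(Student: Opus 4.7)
The plan is to exploit the fact that the first-order boundary condition $\partial_s\gamma(t,\pm1)=\tau_0\xi(\gamma(t,\pm1))$ holds for \emph{all} $t\in[0,T)$: differentiating it in time and then substituting the evolution equation produces a third-order spatial identity at $x=\pm1$ in which the appropriate cancellations yield exactly the claim.

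First I would note that for a FLFB-EF the speed $\partial_t\gamma=-\nabla\E(\gamma)+\lambda(\gamma)\curv$ is purely normal, since both $\nabla\E(\gamma)=2(\partial_s^\bot)^2\curv+|\curv|^2\curv$ and $\curv$ are normal. Hence $\varphi\equiv 0$, $V=\partial_t\gamma$, and \eqref{c} reduces to $\partial_t\partial_s\gamma=\partial_s^\bot V$. Differentiating the orthogonality relation $\partial_s\gamma=\tau_0\xi(\gamma)$ in $t$ at $x=\pm1$ and using \eqref{eq:shape-op} then yields
\begin{equation}
    \partial_s^\bot V=-\tau_0 S_\gamma V \qquad\text{at $x=\pm1$},
\end{equation}
which is precisely \eqref{eq:rel-V}.

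Next I would substitute $V=-2(\partial_s^\bot)^2\curv-|\curv|^2\curv+\lambda\curv$ into both sides. For the left-hand side, a short calculation based on $\partial_s\curv=\partial_s^\bot\curv-|\curv|^2\partial_s\gamma$ shows that $\partial_s^\bot(|\curv|^2\curv)=2\langle\curv,\partial_s^\bot\curv\rangle\curv+|\curv|^2\partial_s^\bot\curv$, and hence
\begin{equation}
    \partial_s^\bot V=-2(\partial_s^\bot)^3\curv-2\langle\curv,\partial_s^\bot\curv\rangle\curv-|\curv|^2\partial_s^\bot\curv+\lambda\partial_s^\bot\curv.
\end{equation}
For the right-hand side, I would use linearity of $S_\gamma$ together with the third-order boundary condition rewritten as $S_\gamma\curv=-\tau_0\partial_s^\bot\curv$ (valid since $\tau_0^2=1$) to get
\begin{equation}
    -\tau_0 S_\gamma V=2\tau_0 S_\gamma(\partial_s^\bot)^2\curv-|\curv|^2\partial_s^\bot\curv+\lambda\partial_s^\bot\curv.
\end{equation}

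Equating the two expressions, the terms $|\curv|^2\partial_s^\bot\curv$ and $\lambda\partial_s^\bot\curv$ cancel on both sides, and dividing by $-2$ reads off the claimed identity. There is no real obstacle — the argument is a straightforward verification once one differentiates the first-order boundary condition in $t$. The only mildly non-routine point is noticing that substituting the PDE produces precisely the same combination $(|\curv|^2-\lambda)\partial_s^\bot\curv$ on both sides, so that the nonlocal Lagrange multiplier $\lambda$ drops out of the resulting third-order boundary identity — a feature that is presumably essential later, when this identity is used to eliminate boundary contributions in the energy estimates without introducing any dependence on $\lambda$.
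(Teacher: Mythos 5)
Your proof is correct and follows exactly the paper's route: substitute $V=-2(\partial_s^\bot)^2\curv-|\curv|^2\curv+\lambda\curv$ into the differentiated first-order boundary condition \eqref{eq:rel-V} and observe that the $\lambda$- and $|\curv|^2\partial_s^\bot\curv$-terms cancel via \eqref{eq:nat-bc}. You have merely written out the computation that the paper's one-line proof leaves implicit.
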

\begin{proof}
    The claim follows using \eqref{eq:DefV} and simplifying \eqref{eq:rel-V} accordingly. Indeed, all terms involving $\lambda$ cancel due to \eqref{eq:nat-bc}.
\end{proof}
\begin{lemma}\label{lem:n5}
    Let $\gamma\colon[0,T)\times[-1,1]\to\R^n$ be a FLFB-EF. Then there exists a vector field $N_5\colon[0,T)\times[-1,1]\to\R^n$ which is normal along $\gamma$ with
    \begin{equation}
        N_5 = (\partial_s^{\bot})^5\curv + \mathbb{P}_1^{4,4}+\lambda\mathbb{P}_3^{1,1}
    \end{equation}
    satisfying $N_5(t,x)=0$ for all $0\leq t<T$ and $x\in\{\pm1\}$.
\end{lemma}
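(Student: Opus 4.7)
The plan is to exploit the fact that the natural boundary condition $\partial_s^{\bot}\curv + \tau_0 S_\gamma\curv = 0$ is preserved in time along any FLFB-EF and thus holds identically at $x = \pm 1$ for all $t \in [0,T)$. Differentiating in $t$ at the boundary gives $\partial_t(\partial_s^{\bot}\curv) + \tau_0\,\partial_t(S_\gamma\curv) = 0$, and the strategy is to convert the time derivatives into spatial ones via the PDE, thereby isolating $(\partial_s^{\bot})^5\curv$ as the top-order term and checking that the correction fits into the narrow classes $\mathbb{P}_1^{4,4}$ and $\lambda\mathbb{P}_3^{1,1}$.

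First I would decompose $\partial_t(\partial_s^{\bot}\curv)$ into its normal and tangential parts via \eqref{d}, using that $\varphi = \langle\partial_t\gamma,\partial_s\gamma\rangle\equiv 0$ along an FLFB-EF, and then commute $\partial_t^{\bot}\partial_s^{\bot}\to \partial_s^{\bot}\partial_t^{\bot}$ via \eqref{f}. Substituting $\partial_t^{\bot}\curv$ from \Cref{lem:control-Y} (with $m = 0$) and invoking \Cref{lem:tens-alg} yields $\partial_s^{\bot}\partial_t^{\bot}\curv = -(\partial_s^{\bot})^5\curv + \lambda(\partial_s^{\bot})^3\curv + \mathbb{P}_3^{3,3} + \lambda\mathbb{P}_3^{1,1}$. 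In parallel, the chain-rule expansion $\partial_t(S_\gamma\curv) = DS_\gamma[V]\curv + S_\gamma\,\partial_t\curv$, together with $V$ from \eqref{eq:DefV} and another application of \Cref{lem:control-Y}, produces the contribution $-\tau_0 S_\gamma(\partial_s^{\bot})^4\curv \in \mathbb{P}_1^{4,4}$ together with a troublesome $\tau_0\lambda S_\gamma(\partial_s^{\bot})^2\curv$ term.

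The key step — which I expect to be the main obstacle — is to observe that the two ``rogue'' $\lambda$-contributions $\lambda(\partial_s^{\bot})^3\curv$ and $\tau_0\lambda S_\gamma(\partial_s^{\bot})^2\curv$ together collapse into $\lambda\mathbb{P}_3^{1,1}$. Neither fits individually, but \Cref{lem:n3} gives precisely $\tau_0 S_\gamma(\partial_s^{\bot})^2\curv = -(\partial_s^{\bot})^3\curv - \langle\curv,\partial_s^{\bot}\curv\rangle\curv$ at the boundary, so their sum reduces to $-\lambda\langle\curv,\partial_s^{\bot}\curv\rangle\curv \in \lambda\mathbb{P}_3^{1,1}$. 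Moreover, the various tangential corrections generated by \eqref{d} and \eqref{f} all involve $\partial_s^{\bot}V$, which a priori has spatial order $3$; for those I would invoke the boundary identity \eqref{eq:rel-V}, $\partial_s^{\bot}V = -\tau_0 S_\gamma V$, to reduce the spatial order by one and place each correction into $\mathbb{P}_1^{4,4} + \lambda\mathbb{P}_3^{1,1}$.

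Collecting everything, at $x = \pm 1$ one obtains a pointwise identity $(\partial_s^{\bot})^5\curv + R = 0$, with $R$ a normal element of $\mathbb{P}_1^{4,4} + \lambda\mathbb{P}_3^{1,1}$. Since $S$ and its derivatives extend smoothly and with uniformly bounded derivatives from $M$ to $\R^n$ by \Cref{rem:extshape}, $R$ promotes canonically to a globally defined normal vector field along $\gamma$, and the choice $N_5 \vcentcolon= (\partial_s^{\bot})^5\curv + R$ yields the desired vector field of the claimed form that vanishes on $\{\pm 1\}$. The bulk of the remaining work is routine tensor-class bookkeeping; the essential insight is that \Cref{lem:n3} is precisely the identity needed to cancel the $\lambda$-terms that would otherwise overshoot $\mathbb{P}_3^{1,1}$.
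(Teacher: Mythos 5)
Your proposal is correct and follows essentially the same route as the paper: differentiate the third-order boundary condition \eqref{eq:nat-bc} in time, convert the time derivatives to spatial ones via \eqref{e}, \eqref{f} and the expression \eqref{eq:DefV} for $V$, and invoke \Cref{lem:n3} at the boundary to collapse the two $\lambda$-terms that individually exceed $\lambda\mathbb{P}_3^{1,1}$. The only (cosmetic) difference is that you expand $\partial_t^\bot\curv$ via \Cref{lem:control-Y} early on, whereas the paper keeps $V$ abstract until the final substitution; the key cancellation you single out is exactly the one the paper uses to pass from \eqref{eq:5o-2} to \eqref{eq:5o-3}.
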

\begin{proof}
    Differentiating the boundary condition \eqref{eq:nat-bc} with respect to time yields
    \begin{align}
      \partial_t^{\bot}\partial_s^{\bot}\curv = -\tau_0S_{\gamma}\partial_t^{\bot}\curv-\tau_0DS_{\gamma}(\curv,V)\label{eq:5o-1}.
    \end{align}
    Using \eqref{f} and \eqref{e} (with $\varphi=0$),
    \begin{align}
      \partial_t^{\bot}\partial_s^{\bot}\curv &= \partial_s^{\bot}\Bigl((\partial_s^{\bot})^2V+\langle\curv,V\rangle \curv\Bigr) + \langle \curv,V\rangle \partial_s^{\bot}\curv + |\curv|^2\partial_s^{\bot}V - \langle \partial_s^{\bot}V,\curv\rangle\curv\\
      &=(\partial_s^{\bot})^3V+\langle\partial_s^{\bot}\curv,V\rangle\curv + 2\langle\curv,V\rangle\partial_s^{\bot}\curv + |\curv|^2\partial_s^{\bot}V.
    \end{align}
    By \eqref{e} and \eqref{eq:nat-bc}
    \begin{align}
      -\tau_0S_{\gamma}\partial_t^{\bot}\curv = -\tau_0S_{\gamma}(\partial_s^{\bot})^2 V +\langle\curv,V\rangle\partial_s^{\bot}\curv.
    \end{align}
    So \eqref{eq:5o-1} turns into
    \begin{equation}\label{eq:5o-2}
      (\partial_s^{\bot})^3V+\langle\partial_s^{\bot}\curv,V\rangle\curv + \langle\curv,V\rangle\partial_s^{\bot}\curv + |\curv|^2\partial_s^{\bot}V = -\tau_0S_{\gamma}(\partial_s^{\bot})^2 V -\tau_0DS_{\gamma}(\curv,V)
    \end{equation}
    on $[0,T)\times\{\pm1\}$. In particular, by \eqref{eq:DefV} %writing $-\frac12 V=(\partial_s^{\bot})^2\curv + \frac12|\curv|^2\curv - \lambda\curv$ 
    and using
    % \begin{equation}
    %     (\partial_s^{\bot})^3(-\lambda\curv) + \tau_0S_{\gamma}(\partial_s^{\bot})^2(-\lambda\curv) = \lambda \langle\curv,\partial_s^{\bot}\curv\rangle \curv
    % \end{equation}
    % by 
    \Cref{lem:n3}, we obtain on $[0,T)\times\{\pm1\}$ that
    \begin{equation}\label{eq:5o-3}
        (\partial_s^{\bot})^5\curv = -\tau_0S_{\gamma}(\partial_s^{\bot})^4\curv + \mathbb{P}_3^{3,2} + \lambda \mathbb{P}_{3}^{1,1} = \mathbb{P}_1^{4,4}+\lambda\mathbb{P}_3^{1,1},
    \end{equation}
    so we can take $N_5$ as the difference of the right- and left hand side of \eqref{eq:5o-3}.
\end{proof}

%%%%%%%%%%%%%%%%%%%%%%%%%%%%%%%%%%%%%%%%%%%%%%%%%%%%%%%%%%%%%%%%%%%%%%%%%%%%%%%%%%%%%%%%%%%%%%%%
%%%%%%%%%%%%%%%%%%%%%%%%%%%%%%%%%%%%%%%%%%%%%%%%%%%%%%%%%%%%%%%%%%%%%%%%%%%%%%%%%%%%%%%%%%%%%%%%
\section{Short-time existence and uniqueness}\label{sec:well-posedness}
%%%%%%%%%%%%%%%%%%%%%%%%%%%%%%%%%%%%%%%%%%%%%%%%%%%%%%%%%%%%%%%%%%%%%%%%%%%%%%%%%%%%%%%%%%%%%%%%
%%%%%%%%%%%%%%%%%%%%%%%%%%%%%%%%%%%%%%%%%%%%%%%%%%%%%%%%%%%%%%%%%%%%%%%%%%%%%%%%%%%%%%%%%%%%%%%%

In order to establish existence and uniqueness for \eqref{eq:ev-eq}, we consider a related evolution equation and related boundary conditions that are not invariant %any more 
with respect to reparametrizations. This system is then parabolic.

Throughout this section, we sometimes abbreviate $I=[-1,1]$ and work with an integrability parameter $p>5$.

\begin{setting}\label{set:anpbm}
Consider an immersion $\gamma_0\in W^{4(1-\frac1p),p}([-1,1],\R^n)$ with $\gamma_0(\pm1)\in M$. There exist $\delta_0\in(0,\frac13\delta)$ (with $\delta$ as in \Cref{hyp:unif-tub-nbhd}) and smooth frames $(T_i^{\pm1})_{i=1}^{n-1}$ of the tangent bundle of $M$ over $B_{\delta_0}(\gamma_0(\pm1))\cap M$, extended onto $B_{\delta_0}(\gamma_0(\pm1))$ as in \Cref{sec:geom-prelim}. Moreover, there exists $\delta_1>0$ such that any map $\gamma\in C^1([-1,1],\R^n)$ with $\|\gamma-\gamma_0\|_{C^1}<\delta_1$ is an immersion.
\end{setting}

%%%%%%%%%%%%%%%%%%%%%%%%%%%%%%%%%%%%%%%%%%%%%%%%%%%%%%%%%%%%%%%%%%%%%%%
%%%%%%%%%%%%%%%%%%%%%%%%%%%%%%%%%%%%%%%%%%%%%%%%%%%%%%%%%%%%%%%%%%%%%%%
\subsection{Reformulation of the boundary conditions}
%%%%%%%%%%%%%%%%%%%%%%%%%%%%%%%%%%%%%%%%%%%%%%%%%%%%%%%%%%%%%%%%%%%%%%%
%%%%%%%%%%%%%%%%%%%%%%%%%%%%%%%%%%%%%%%%%%%%%%%%%%%%%%%%%%%%%%%%%%%%%%%

A simple computation yields
\begin{equation}\label{eq:dscurv}
    \partial_s^{\bot}\curv = \Big(\frac{\partial_x^3\gamma}{|\partial_x\gamma|^3}-3\frac{\langle \partial_x^2\gamma,\partial_x\gamma\rangle}{|\partial_x\gamma|^5}\partial_x^2\gamma\Big)^\bot.
\end{equation}
This motivates the following.

\begin{definition}
We say that an immersion $\gamma\in C^3([-1,1],\R^n)$ with $|\gamma(\pm1)-\gamma_0(\pm1)|<\delta_0$ satisfies the \emph{analytic free boundary conditions} if 
\begin{equation}\label{eq:ana-fbc}
    \begin{cases}
        d^M(\gamma(\pm1))=0\\
        \langle \partial_x\gamma(\pm1),T_i^{\pm1}(\gamma(\pm1))\rangle =0 \quad\text{for all $1\leq i\leq n-1$}\\
        \frac{\partial_x^3\gamma(\pm1)}{|\partial_x\gamma(\pm1)|^3}-3\frac{\langle \partial_x^2\gamma(\pm1),\partial_x\gamma(\pm1)\rangle}{|\partial_x\gamma(\pm1)|^5}\partial_x^2\gamma(\pm1) + \tau_0(\pm1) S_{\gamma(\pm1)}\curv_{\gamma}(\pm1) = 0.
    \end{cases}
\end{equation}
\end{definition}

\begin{proposition}[Relation between the boundary conditions]\label{lem:geo-vs-ana-fbc}
    Let %$p>5$ and 
    $\gamma\in W^{4(1-\frac1p),p}(I)$ be an immersion with $|\gamma(\pm1)-\gamma_0(\pm1)|<\delta_0$. We have the following.
    \begin{enumerate}[(a)]
        \item If $\gamma$ satisfies \eqref{eq:ana-fbc}, then also also \eqref{eq:orth-bc},\eqref{eq:nat-bc} (and \eqref{eq:intro-free-bcs}) hold.
        \item If $\gamma$ satisfies \eqref{eq:orth-bc},\eqref{eq:nat-bc}, then there exists a smooth diffeomorphism $\theta\colon[-1,1]\to[-1,1]$ such that the reparametrization $\tilde\gamma=\gamma\circ\theta\in W^{4(1-\frac1p),p}(I)$ of $\gamma$ satisfies the analytic free boundary conditions \eqref{eq:ana-fbc}. Moreover, $\frac12\leq \theta'\leq\frac32$ and, if $A>0$ with 
        \begin{equation}
            \Big|-\frac{\langle\partial_x^3\gamma(\pm1),\partial_x\gamma(\pm1)\rangle}{|\partial_x\gamma(\pm1)|^2} + 3 \frac{\langle\partial_x^2\gamma(\pm1),\partial_x\gamma(\pm1)\rangle^2}{|\partial_x\gamma(\pm1)|^4}\Big| \leq A,
        \end{equation}
        then $\|\partial_x^k\theta\|_{C^0([-1,1])}\leq C(k,A)$ for all $k\in\N$ with $k\geq 2$.
    \end{enumerate}
\end{proposition}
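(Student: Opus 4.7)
The first two lines of \eqref{eq:ana-fbc} encode \eqref{eq:orth-bc} directly: $d^M(\gamma(\pm1))=0$ gives $\gamma(\pm1)\in M$, and since $(T_i^{\pm1}(\gamma(\pm1)))_{i=1}^{n-1}$ spans $T_{\gamma(\pm1)}M$ (by \Cref{set:anpbm}), the vanishing of the corresponding inner products forces $\partial_s\gamma(\pm1)\perp T_{\gamma(\pm1)}M$. Together with $\curv(\pm1)\perp\partial_s\gamma(\pm1)$ this places $\curv(\pm1)$ into $T_{\gamma(\pm1)}M$; since the extended shape operator from \Cref{rem:extshape} preserves $T_pM$ for $p\in M$, it follows that $S_{\gamma(\pm1)}\curv(\pm1)$ is tangent to $M$ and hence already normal to $\partial_s\gamma(\pm1)$. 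Projecting the third line of \eqref{eq:ana-fbc} onto the normal bundle of $\gamma$ and applying \eqref{eq:dscurv} therefore produces \eqref{eq:nat-bc}, from which (with the first two lines) \eqref{eq:intro-free-bcs} follows at once.

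\textbf{Part (b), reduction to a tangential scalar equation.} Since \eqref{eq:orth-bc} and \eqref{eq:nat-bc} are reparametrization invariant, for any smooth diffeomorphism $\theta\colon[-1,1]\to[-1,1]$ with $\theta(\pm1)=\pm1$ and $\theta'>0$ the map $\tilde\gamma=\gamma\circ\theta$ inherits the geometric boundary conditions. In particular the first two lines of \eqref{eq:ana-fbc} for $\tilde\gamma$ are automatic, and by \eqref{eq:dscurv} so is the normal component of the third line. Using (as in part (a)) that $S_{\tilde\gamma(\pm1)}\curv_{\tilde\gamma}(\pm1)$ is normal to $\partial_s\tilde\gamma(\pm1)$, what remains is the single scalar condition
\[
    g(\tilde\gamma;\pm1)\vcentcolon=\frac{\langle\partial_x^3\tilde\gamma,\partial_x\tilde\gamma\rangle}{|\partial_x\tilde\gamma|^4}\bigg|_{\pm1}-3\frac{\langle\partial_x^2\tilde\gamma,\partial_x\tilde\gamma\rangle^2}{|\partial_x\tilde\gamma|^6}\bigg|_{\pm1}=0.
\]

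\textbf{Part (b), choice of $\theta$ and bounds.} A direct chain-rule computation for $\tilde\gamma=\gamma\circ\theta$ yields the transformation law
\[
    g(\tilde\gamma;\pm1)=g(\gamma;\pm1)+\frac{\theta'''(\pm1)}{\theta'(\pm1)^3\,|\partial_x\gamma(\pm1)|^2}-\frac{3\,\theta''(\pm1)^2}{\theta'(\pm1)^4\,|\partial_x\gamma(\pm1)|^2}-\frac{3\,\theta''(\pm1)\langle\partial_x^2\gamma(\pm1),\partial_x\gamma(\pm1)\rangle}{\theta'(\pm1)^2\,|\partial_x\gamma(\pm1)|^4}.
\]
Imposing $\theta'(\pm1)=1$ and $\theta''(\pm1)=0$ collapses $g(\tilde\gamma;\pm1)=0$ to $\theta'''(\pm1)=-|\partial_x\gamma(\pm1)|^2\,g(\gamma;\pm1)$, and since
\[
    |\partial_x\gamma(\pm1)|^2\,g(\gamma;\pm1)=\frac{\langle\partial_x^3\gamma(\pm1),\partial_x\gamma(\pm1)\rangle}{|\partial_x\gamma(\pm1)|^2}-3\frac{\langle\partial_x^2\gamma(\pm1),\partial_x\gamma(\pm1)\rangle^2}{|\partial_x\gamma(\pm1)|^4},
\]
the hypothesis on $A$ is exactly $|\theta'''(\pm1)|\leq A$. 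A smooth $\theta$ realizing these prescribed Taylor data at $\pm1$ is then produced as a small perturbation of the identity, e.g.
\[
    \theta(x)=x+\frac{\theta'''(-1)}{6}\chi_-(x)(x+1)^3+\frac{\theta'''(1)}{6}\chi_+(x)(x-1)^3
\]
with smooth cutoffs $\chi_\pm$ localized near $\pm1$. Shrinking the cutoff scale in a manner depending only on $A$ guarantees $\tfrac12\leq\theta'\leq\tfrac32$, while the higher $C^k$-bounds $\|\partial_x^k\theta\|_{C^0}\leq C(k,A)$ follow from $|\theta'''(\pm1)|\leq A$ and the fixed derivatives of $\chi_\pm$; smoothness of $\theta$ preserves the regularity class $W^{4(1-\frac1p),p}$ under composition. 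The main technical (but routine) step is the chain-rule bookkeeping that establishes the transformation law; once in hand, the proposition reduces to solving a single scalar equation by prescribing $\theta'''(\pm1)$.
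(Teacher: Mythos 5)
Your proposal is correct and follows essentially the same strategy as the paper: part (a) is the same argument (the frame encodes \eqref{eq:orth-bc}, $\curv(\pm1)\in T_{\gamma(\pm1)}M$ so $S_{\gamma(\pm1)}\curv(\pm1)\perp\partial_s\gamma(\pm1)$, and \eqref{eq:dscurv} then gives \eqref{eq:nat-bc}), and in part (b) you make the same reduction to the tangential scalar condition and the same choice of boundary data $\theta(\pm1)=\pm1$, $\theta'(\pm1)=1$, $\theta''(\pm1)=0$, $\theta'''(\pm1)=-\langle\partial_x^3\gamma,\partial_x\gamma\rangle/|\partial_x\gamma|^2+3\langle\partial_x^2\gamma,\partial_x\gamma\rangle^2/|\partial_x\gamma|^4$ at $\pm1$. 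The one place you genuinely deviate is the realization of $\theta$: the paper's \Cref{lem:phi-constr} builds it by mollifying a piecewise-polynomial interpolant, whereas you take the identity plus cutoff-localized cubic bumps $\frac{a_{\pm1}}{6}\chi_\pm(x)(x\mp1)^3$ and shrink the cutoff scale as a function of $A$ to force $\frac12\le\theta'\le\frac32$; this is a cleaner route to the same $C^k(A)$-bounds, since the perturbation of $\theta'$ on the support of $\chi_\pm$ is $O(A\varepsilon^2)$ and all higher derivatives are controlled by $A$ and the fixed cutoff profile. Your explicit transformation law for $g$ under reparametrization is consistent with the chain-rule identities the paper records, so the argument closes.
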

In the proof of \Cref{lem:geo-vs-ana-fbc}(b), we use the following elementary construction for $\theta$, also see \cite[Lemma~5.2]{garckemenzelpluda2020}.
\begin{lemma}\label{lem:phi-constr}
    Let $A>0$. For all $a_{\pm1}\in\R$ with $|a_{-1}|,|a_1|\leq A$, there exists a smooth diffeomorphism $\theta\colon[-1,1]\to[-1,1]$ satisfying $\frac12\leq\theta'\leq\frac32$, 
    \begin{equation}\label{eq:phi-constr-0}
        \theta(\pm1)=\pm1,\quad\theta'(\pm1)=1,\quad\theta''(\pm1)=0\quad \theta'''(\pm1)=a_{\pm1}
    \end{equation} 
    as well as $\|\partial_x^k\theta\|_{C^0([-1,1])}\leq C(k,A)$ for all $k\in\N$.
\end{lemma}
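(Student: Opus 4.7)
I would look for $\theta$ of the form $\theta(x) = x + \psi_-(x) + \psi_+(x)$, where $\psi_\pm$ are smooth perturbations supported in small one-sided neighborhoods of $\pm 1$. Fix once and for all a cutoff $\chi\in C_c^\infty(\R,[0,1])$ with $\chi\equiv 1$ on $[-\tfrac12,\tfrac12]$ and $\mathrm{supp}(\chi)\subset(-1,1)$, and set $\chi_\varepsilon(t)\vcentcolon=\chi(t/\varepsilon)$ for $\varepsilon\in(0,\tfrac12)$ to be chosen later as a function of $A$. The natural ansatz that absorbs the prescribed third-derivative data is
\begin{equation}
    \psi_\pm(x) \vcentcolon= \tfrac{a_{\pm1}}{6}(x\mp1)^3\, \chi_\varepsilon(x\mp1).
\end{equation}

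Because $\chi_\varepsilon\equiv 1$ near $0$, the Taylor polynomial of $\psi_\pm$ at $x=\pm 1$ up to order $3$ is exactly $\tfrac{a_{\pm1}}{6}(x\mp1)^3$, so $\psi_\pm(\pm1)=\psi_\pm'(\pm1)=\psi_\pm''(\pm1)=0$ and $\psi_\pm'''(\pm1)=a_{\pm1}$. Moreover $\mathrm{supp}(\psi_-)\subseteq[-1,-1+\varepsilon]$ and $\mathrm{supp}(\psi_+)\subseteq[1-\varepsilon,1]$ are disjoint since $\varepsilon<\tfrac12$, so the two perturbations do not interfere and $\theta\vcentcolon=\mathrm{id}+\psi_-+\psi_+$ satisfies \eqref{eq:phi-constr-0}.

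The crucial step is the derivative bound. A product-rule computation on $\mathrm{supp}(\psi_\pm)$, where $|x\mp 1|\leq\varepsilon$, gives
\begin{equation}
    |\psi_\pm'(x)| \leq \tfrac{|a_{\pm1}|}{2}\varepsilon^2\|\chi\|_\infty + \tfrac{|a_{\pm1}|}{6}\varepsilon^2\|\chi'\|_\infty \leq C(\chi)\,A\,\varepsilon^2.
\end{equation}
Choosing $\varepsilon=\varepsilon(A)\in(0,\tfrac12)$ so that $C(\chi)A\varepsilon^2\leq \tfrac12$ yields $\tfrac12\leq\theta'\leq\tfrac32$; in particular $\theta$ is a strictly increasing diffeomorphism $[-1,1]\to[-1,1]$. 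For higher derivatives $k\geq 2$, the Leibniz rule together with $|x\mp 1|^{3-j}\leq \varepsilon^{3-j}$ on the support yields
\begin{equation}
    |\psi_\pm^{(k)}(x)| \leq |a_{\pm1}|\sum_{j=0}^{\min(k,3)} c_{k,j}\,\varepsilon^{3-j}\,\varepsilon^{-(k-j)}\|\chi^{(k-j)}\|_\infty \leq C(k,\chi)\,A\,\varepsilon^{3-k}.
\end{equation}
Since $\varepsilon$ depends only on $A$, this yields the desired bound $\|\partial_x^k\theta\|_{C^0([-1,1])}\leq C(k,A)$ for every $k\in\N$.

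The only delicate point --- and the reason for the cubic-times-cutoff ansatz rather than, say, a direct polynomial interpolant on $[-1,1]$ --- is the need to accommodate an arbitrarily large prescribed value $|a_{\pm1}|\leq A$ while preserving the bound $\theta'\in[\tfrac12,\tfrac32]$. Localizing the perturbation to a window of size $\varepsilon$ converts a third-derivative spike of size $A$ at the boundary into a first-derivative perturbation of size $\approx A\varepsilon^2$, which can be made small by shrinking $\varepsilon$; this scaling mechanism is the key feature of the construction. Everything else (verification of the endpoint values, disjointness of supports, higher $C^k$-bounds) is routine bookkeeping.
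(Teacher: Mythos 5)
Your proof is correct, and it takes a genuinely different — and simpler — route than the paper's. The paper builds $\theta$ by gluing two explicit cubic interpolants $p_{\pm1}(x)=\pm1+(x-(\pm1))+\tfrac16 a_{\pm1}(x-(\pm1))^3$ near the endpoints to a linear segment in the middle, then mollifies the resulting Lipschitz function with a Dirac sequence and patches the mollification back to the original via a cutoff so as not to destroy the endpoint data; verifying $\tfrac12\le\theta'\le\tfrac32$ then requires estimating the mollification error $|f_\varepsilon-f|$ and the extra term $\eta'(f_{\varepsilon_0}-f)$. Your ansatz $\theta=\mathrm{id}+\psi_-+\psi_+$ with $\psi_\pm=\tfrac{a_{\pm1}}{6}(x\mp1)^3\chi_\varepsilon(x\mp1)$ is manifestly smooth from the outset, so no mollification or patching is needed: the endpoint conditions hold because $\chi_\varepsilon\equiv1$ near $0$, the supports are disjoint, and all derivative bounds come from a single Leibniz-rule computation. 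The essential mechanism is identical in both proofs — shrinking the perturbation window to size $\varepsilon\sim A^{-1/2}$ converts a third-derivative datum of size $A$ into a first-derivative perturbation of size $O(A\varepsilon^2)=O(1)$, which is exactly the paper's choice $\delta=\min\{\tfrac16,1/\sqrt{2A}\}$ — but your construction eliminates the bookkeeping around the mollifier and the cutoff $\eta$, at no cost in generality. One cosmetic remark: the restriction $\varepsilon<\tfrac12$ is more than enough for disjointness of the supports ($\varepsilon<1$ would do), and strict monotonicity plus $\theta(\pm1)=\pm1$ indeed gives surjectivity onto $[-1,1]$, so the diffeomorphism claim is complete as written.
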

\begin{proof}
    Consider the third-order polynomials $p_{\pm1}(x)=\pm 1+(x-(\pm1))+\frac16 a_{\pm1}(x-(\pm1))^3$ and note that
    \begin{equation}
        p_{\pm1}(\pm1)=\pm1,\quad p_{\pm1}'(\pm1)=1,\quad p_{\pm1}''(\pm1)=0\quad p_{\pm1}'''(\pm1)=a_{\pm1}.
    \end{equation} 
    Moreover, choosing 
    \begin{equation}\label{eq:phi-constr-1}
        \delta=\delta(A) = \min\Big\{\frac{1}{6},\frac{1}{\sqrt{2 A}}\Big\},
    \end{equation}
    we have $|p_{\pm1}'(x)-1|\leq \frac{1}{4}$ and $|p_{\pm1}(x)-(\pm1)| < \frac14$ for all $|x-(\pm1)|\leq \delta$. Define $f\colon (-1-\delta,1+\delta)\to\R$,
    \begin{equation}
        f(x)=\begin{cases}
            p_{\pm1}(x)&\text{for $|x-(\pm1)|\leq \delta$}\\
            p_{-1}(-1+\delta)\frac{(1-\delta)-x}{2-2\delta} + p_1(1-\delta)\frac{x-(-1+\delta)}{2-2\delta} &\text{for $x\in(-1+\delta,1-\delta)$}.
        \end{cases}
    \end{equation}
    Notice that, using $\frac32\leq|p_{-1}(-1+\delta)-p_1(1-\delta)|\leq 2$ and $\delta\leq\frac16$, 
    \begin{equation}\label{eq:phi-constr-3}
        \frac34\leq f'\leq \frac54\quad \text{a.e.\ on $(-1-\delta,1+\delta)$}.
    \end{equation}
    Denote by $\psi_{\varepsilon}(x)=\varepsilon^{-1}\psi_1(x/\varepsilon)$ the standard Dirac-sequence and let $f_\varepsilon = f * \psi_\varepsilon$. Thus, we get for $0<\varepsilon<\delta$ and $x\in[-1,1]$
    \begin{equation}\label{eq:phi-constr-2}
        |f_\varepsilon(x)-f(x)|\leq \int_{x-\varepsilon}^{x+\varepsilon} \psi_{\varepsilon}(x-y)|f(y)-f(x)|\dd y \leq \frac54\varepsilon \int_{-\varepsilon}^{\varepsilon} \psi_\varepsilon(y)\dd y = \frac54\varepsilon.
    \end{equation}
    Next choose $\eta\in C^\infty(\R,\R)$ with $\chi_{[-1+\frac23\delta,1-\frac23\delta]}\leq\eta\leq \chi_{[-1+\frac13\delta,1-\frac13\delta]}$ and $|\partial_x^k\eta|\leq C(k)\delta^{-k}$ for all $k\in\N$ and fix $\varepsilon_0=\varepsilon_0(A)$ given by $\varepsilon_0 = \frac{\delta}{5 C(1)}$, cf.\ \eqref{eq:phi-constr-1}. Finally, define
    \begin{equation}
        \theta(x)=(1-\eta(x))f(x)+\eta(x) f_{\varepsilon_0}(x).
    \end{equation}
    By construction, $\theta$ is smooth and \eqref{eq:phi-constr-0} is satisfied. Moreover, using $f\in W^{1,\infty}((-1-\delta,1+\delta))$ and \eqref{eq:phi-constr-3}, for $x\in[-1,1]$, we have
    \begin{equation}
        f_{\varepsilon_0}'(x)=\int_{x-\varepsilon_0}^{x+\varepsilon_0}\psi_{\varepsilon_0}(x-y)\partial_yf(y)\dd y \in \Big[\frac34,\frac54\Big].
    \end{equation}
    Using $\theta'=(1-\eta)f'+\eta f_{\varepsilon_0}' + \eta'(f_{\varepsilon_0}-f)$, \eqref{eq:phi-constr-2}, $|\eta'|\leq C(1)/\delta$ and the choice of $\varepsilon_0$, we get
    \begin{equation}
        \theta' \geq \frac34 - \frac{5}{4}\varepsilon_0\frac{C(1)}{\delta} = \frac12\quad\text{and}\quad \theta'\leq \frac54 + \frac{5}{4}\varepsilon_0\frac{C(1)}{\delta} = \frac32.
    \end{equation}
    In particular, $\theta(\pm1)=\pm 1$ yields that $\theta\colon[-1,1]\to[-1,1]$ is a (smooth) diffeomorphism. The remainder of the claim now follows using $\partial_x^k\theta = \int_{x-\varepsilon_0}^{x+\varepsilon_0}\partial_x^k\psi_{\varepsilon_0}(x-y)f(y)\dd y$ and $\|\partial_x^k\psi_{\varepsilon_0}\|_{C^0}\leq C(k,A)$ for all $k\geq 2$.
\end{proof} 
\begin{proof}[Proof of \Cref{lem:geo-vs-ana-fbc}]
    Clearly, \eqref{eq:orth-bc} is equivalent to the first two conditions in \eqref{eq:ana-fbc} by the choice of the frame in \Cref{set:anpbm}. So we only need to check \emph{(a)} and \emph{(b)} for the third-order conditions. First suppose that $\gamma$ satisfies \eqref{eq:ana-fbc}. Then $\partial_s\gamma(\pm1)\bot T_{\gamma(\pm1)}M$ so that $\curv(\pm1)\in T_{\gamma(\pm1)}M$. Thus also $S_{\gamma(\pm1)}\curv(\pm1)\in T_{\gamma(\pm1)}M$ which yields that 
    \begin{equation}
        \frac{\partial_x^3\gamma(\pm1)}{|\partial_x\gamma(\pm1)|^3}-3\frac{\langle \partial_x^2\gamma(\pm1),\partial_x\gamma(\pm1)\rangle}{|\partial_x\gamma(\pm1)|^5}\partial_x^2\gamma(\pm1) = - \tau_0(\pm1) S_{\gamma(\pm1)}\curv(\pm1) \bot \partial_s\gamma(\pm1).
    \end{equation}
    Combined with \eqref{eq:dscurv}, this implies \eqref{eq:nat-bc}. 
    
    Conversely, suppose that \eqref{eq:orth-bc},\eqref{eq:nat-bc} hold. By \Cref{lem:phi-constr}, there exists a smooth diffeomorphism $\theta\colon[-1,1]\to[-1,1]$ satisfying \eqref{eq:phi-constr-0} where we choose
    \begin{align}
        %\theta(\pm1) &= \pm1,\quad \theta'(\pm1)=1,\quad\theta''(\pm1)=0\\
        a_{\pm1}=\theta'''(\pm1) &= -\frac{\langle\partial_x^3\gamma,\partial_x\gamma\rangle}{|\partial_x\gamma|^2} + 3 \frac{\langle\partial_x^2\gamma,\partial_x\gamma\rangle^2}{|\partial_x\gamma|^4}\Big|_{y=\pm1} .
    \end{align}
    The diffeomorphism satisfies $\frac12\leq \theta'\leq\frac32$ on $[-1,1]$ as well as $\|\partial_x^k\theta\|_{C^0}\leq C(k,A)$ for all $k\geq 2$. Then we compute for $\tilde\gamma=\gamma\circ\theta$ that
    \begin{align}
        \langle \frac{\partial_x^3\tilde\gamma}{|\partial_x\tilde\gamma|^3} - 3 \frac{\langle\partial_x^2\tilde\gamma,\partial_x\tilde\gamma\rangle}{|\partial_x\tilde\gamma|^5}\partial_x^2\tilde\gamma,\partial_x\tilde\gamma \rangle\Big|_{y=\pm1} = 0,
    \end{align}
    Since $\tilde\gamma$ satisfies the third-order condition \eqref{eq:nat-bc} by its invariance with respect to reparametrizations, \eqref{eq:dscurv} therefore yields that $\tilde\gamma$ also satisfies \eqref{eq:ana-fbc}.
    % \textbf{Step 1.} Argue that, w.l.o.g., $\gamma$ can be taken to be parametrized by constant speed.
    %
    % First we show that there exists a $C^3$-diffeomorphism $\theta_0\colon I\to I$ such that $\gamma \circ\theta_0$ is parametrized by constant speed. Letting $L=\Ll(\gamma)$ and
    % \begin{equation}
    %     \psi_0(y)=\frac{2}{L}\int_{-1}^y |\gamma'(x)|\dd x-1,
    % \end{equation}
    % we have that $\psi_0\colon I\to I$ with $\psi_0'=|\gamma|'>0$ is a bijection. Since $\gamma\in C^{3}(I)$ is an immersion, we get that $\psi_0$ is a $C^3$-diffeomorphism of $I$ and set $\theta_0=\psi_0^{-1}$. So we may replace $\gamma$ by $\gamma\circ\theta_0$, and the latter is parametrized by constant speed.
    %
    % \textbf{Step 2.} Constructing the diffeomorphism $\theta$.
    %
    % Consider the mapping $\psi\colon[-1,1]\to[-1,1]$ given by \todo{not yet right}
    % \begin{equation}
    %     -1+\frac{2}{\int_{-1}^{1}\cosh\big[\sqrt{1+x}\sqrt{\int_{-1}^x|\curv|^2\dd s}\big]\dd x} \int_{-1}^{y} \cosh\big[\sqrt{1+x}\sqrt{\int_{-1}^x|\curv|^2\dd s}\big]\dd x.
    % \end{equation}
    % Using $\gamma\in C^3$, we find that $\psi$ is a $C^3$-diffeomorphism and, using that $\langle\partial_s^3\gamma,\partial_s\gamma\rangle = -|\curv|^2$ by \emph{Step 1},
    % \begin{equation}
    %     \psi'''- \frac{\langle\partial_x^3\gamma,\partial_x\gamma\rangle}{|\partial_x\gamma|^2}\psi'=0.
    % \end{equation}
    % Let $\theta=\psi^{-1}$. In particular, using that $\gamma$ satisfies \eqref{eq:nat-bc} and that $\partial_x^2\gamma\bot\partial_x\gamma$ due to the \emph{Step 1}, $\tilde\gamma=\gamma\circ\theta$ satisfies \eqref{eq:ana-fbc}.
\end{proof}

%%%%%%%%%%%%%%%%%%%%%%%%%%%%%%%%%%%%%%%%%%%%%%%%%%%%%%%%%%%%%%%%%%%%%%%
%%%%%%%%%%%%%%%%%%%%%%%%%%%%%%%%%%%%%%%%%%%%%%%%%%%%%%%%%%%%%%%%%%%%%%%
\subsection{The analytic problem}
%%%%%%%%%%%%%%%%%%%%%%%%%%%%%%%%%%%%%%%%%%%%%%%%%%%%%%%%%%%%%%%%%%%%%%%
%%%%%%%%%%%%%%%%%%%%%%%%%%%%%%%%%%%%%%%%%%%%%%%%%%%%%%%%%%%%%%%%%%%%%%%

For an arbitrary $T>0$, consider%Let $T_1\in(0,\infty)$ and $p>5$. Consider the parabolic maximal $L^p$-regularity spaces 
\begin{align}
    \mathbb X_{T,p} &= W^{1,p}(0,T;L^p([-1,1],\R^n))\cap L^p(0,T;W^{4,p}([-1,1],\R^n)),\\
    \prescript{}{0}{\mathbb X}_{T,p} &= \{\varphi\in \mathbb X_{T,p}\mid \mathrm{tr}_{t=0}(\varphi)=0\}.
\end{align}
More details on these spaces as well as useful mapping properties and embeddings are collected in \Cref{app:fct-spaces}. In particular, $\mathrm{tr}_{t=0}(\varphi) \in W^{4(1-\frac1p),p}([-1,1],\R^n)$ for $\varphi \in \mathbb X_{T,p}$ by \Cref{prop:xtp-mappings-and-embeddings}\eqref{it:buc-emb} and \eqref{eq:SobSlobSpaces}.

Let $\gamma_0\in W^{4(1-\frac1p),p}([-1,1],\R^n)$ be an immersion with $\E(\gamma_0)>0$ and $T_1>0$. Using $p>5$, by \cite[Theorem~2.1]{denkhieberpruess2007} there exists $\bar\gamma\in\X_{T_1,p}$ solving
\begin{equation}\label{eq:def-bargamma}
    \begin{cases}
        \partial_t\bar\gamma+\partial_x^4\bar\gamma = 0&\text{in $L^p((0,T_1)\times (-1,1))$}\\
        \mathrm{tr}_{t=0}\bar\gamma = \gamma_0&\text{in $W^{4(1-\frac1p),p}([-1,1])$}\\
        \mathrm{tr}_{\partial I}\bar\gamma(t,\cdot) = \mathrm{tr}_{\partial I}\gamma_0&\text{in $W^{1-\frac{1}{4p},p}(0,T_1;\R^n\times\R^n)$}\\
        \mathrm{tr}_{\partial I}\partial_x^3\bar\gamma(t,\cdot) = \mathrm{tr}_{\partial I}\partial_x^3\gamma_0&\text{in $W^{\frac14-\frac{1}{4p},p}(0,T_1;\R^n\times\R^n)$}
    \end{cases}
\end{equation}
such that, for a constant $C=C(T_1,p)$,
\begin{equation}\label{eq:bargamma-gamma0-estimate}
    \|\bar\gamma\|_{\X_{T_1,p}}\leq C(T_1,p)\cdot \|\gamma_0\|_{W^{4(1-\frac1p),p}([-1,1])}.
\end{equation} 
Proceeding similarly as in the proof of \Cref{prop:wp-lin-system} in \Cref{app:wp-lin-system}, one can verify that the boundary conditions in \eqref{eq:def-bargamma} satisfy the Lopatinskii--Shapiro condition. 

\begin{notation}\label{not:wp-gamma0}
    We write $T_0=T_0(\gamma_0)$ for the maximal time in $(0,T_1]$ such that $\bar\gamma(t,\cdot)$ satisfies
    \begin{equation}
        |\partial_x\bar\gamma(t,x)|\geq \frac12\min_I |\partial_x\gamma_0|\quad\text{for all $0\leq t\leq T_0$ and $x\in I$}
    \end{equation}
    and $\E(\bar\gamma(t,\cdot))>0$ for all $0\leq t<T_0(\gamma_0)$. Moreover, we denote $E\gamma_0\vcentcolon=\bar\gamma|_{[0,T_0]\times I}$.
\end{notation}
For $0<T<T_0$ and $\varphi\in \prescript{}{0}{\mathbb X}_{T,p}$, define $\gamma_\varphi=\bar\gamma|_{[0,T]\times I}+\varphi$. Then $\gamma_\varphi\in\mathbb X_{T,p}$, $\mathrm{tr}_{t=0}(\gamma_\varphi)=\gamma_0$. With $\delta_0$, $\delta_1$ determined by $\gamma_0$ as in \Cref{set:anpbm}, consider 
\begin{align}\label{eq:SetU}
    \mathcal U^{\gamma_0}_{T,p} = \{\varphi \in \prescript{}{0}{\mathbb X}_{T,p}\mid\ &|\gamma_\varphi(t,\pm1)-\gamma_0(\pm1)|<\delta_0,\ \E(\gamma_\varphi(t,\cdot))>0 \text{ and }\\
    &\|\gamma_\varphi(t,\cdot) -\gamma_0\|_{C^1([-1,1])}<\delta_1\text{ for all $0\leq t\leq T$}\},
\end{align}
an open subset of $\prescript{}{0}{\mathbb X}_{T,p}$.

\begin{definition}\label{def:ana-prob}
    Suppose $\gamma_0\in W^{4(1-\frac1p),p}(I)$ is an immersion with $\E(\gamma_0)>0$ satisfying \eqref{eq:ana-fbc}. Then, for $0<T < T_0(\gamma_0)$, $\gamma\in\mathbb X_{T,p}$ is said to solve the \emph{analytic problem} with initial datum $\gamma_0$ if $\gamma-(E\gamma_0)|_{[0,T]\times I}\in \mathcal U^{\gamma_0}_{T,p}$ and 
    \begin{equation}\label{eq:ana-prob}
        \begin{cases}
            \partial_t\gamma + \mathcal A(\gamma) - \lambda(\gamma)\curv_{\gamma}= 0&\text{in $L^p((0,T)\times (-1,1))$}\\
            %\gamma(0,\cdot)=\gamma_0&\text{a.e.\ in $(-1,1)$}\\
            d^M(\gamma(t,\pm1))=0&\text{for all $0\leq t<T$}\\
            \langle \partial_x\gamma(t,\pm1),T_i^{\pm1}(\gamma(t,\pm1))\rangle =0 &\text{for all $0\leq t<T$, $1\leq i\leq n-1$}\\
            \frac{\partial_x^3\gamma(t,\pm1)}{|\partial_x\gamma(t,\pm1)|^3}-3\frac{\langle \partial_x^2\gamma(t,\pm1),\partial_x\gamma(t,\pm1)\rangle}{|\partial_x\gamma(t,\pm1)|^5}\partial_x^2\gamma(t,\pm1) \\
            \quad+ \tau_0(\pm1) S_{\gamma(t,\pm1)}\curv_{\gamma}(t,\pm1) = 0&\text{for all $0\leq t<T$}
        \end{cases}
    \end{equation}
    where $\lambda$ is given by \eqref{eq:lambda-ibp-formula} and 
    \begin{equation}
    \mathcal A(\gamma) = 2\frac{\partial_x^4\gamma}{|\partial_x\gamma|^4} - 12 \frac{\langle \partial_x^2\gamma,\partial_x\gamma\rangle}{|\partial_x\gamma|^6}\partial_x^3\gamma - 8 \frac{\langle \partial_x^3\gamma,\partial_x\gamma\rangle}{|\partial_x\gamma|^6}\partial_x^2\gamma  -5\frac{|\partial_x^2\gamma|^2}{|\partial_x\gamma|^6}\partial_x^2\gamma + 35 \frac{\langle\partial_x^2\gamma,\partial_x\gamma\rangle^2}{|\partial_x\gamma|^8}\partial_x^2\gamma.
\end{equation}
\end{definition}

%For an immersion $\gamma\in W^{4,p}([-1,1],\R^n)$, define
%\begin{equation}
%    \mathcal A(\gamma) = 2\frac{\partial_x^4\gamma}{|\partial_x\gamma|^4} - 12 \frac{\langle \partial_x^2\gamma,\partial_x\gamma\rangle}{|\partial_x\gamma|^6}\partial_x^3\gamma - 8 \frac{\langle \partial_x^3\gamma,\partial_x\gamma\rangle}{|\partial_x\gamma|^6}\partial_x^2\gamma  -5\frac{|\partial_x^2\gamma|^2}{|\partial_x\gamma|^6}\partial_x^2\gamma + 35 \frac{\langle\partial_x^2\gamma,\partial_x\gamma\rangle^2}{|\partial_x\gamma|^8}\partial_x^2\gamma.
%\end{equation}
% and
% \begin{equation}
%     \mathcal C(\gamma) = \frac{\partial_x^3\gamma}{|\partial_x\gamma|^3} - \frac{\langle\partial_x^3\gamma,\partial_x\gamma\rangle}{|\partial_x\gamma|^5}\partial_x\gamma - 3\frac{\langle\partial_x^2\gamma,\partial_x\gamma\rangle}{|\partial_x\gamma|^5}\partial_x^2\gamma + \frac{\langle\partial_x^2\gamma,\partial_x\gamma\rangle^2}{|\partial_x\gamma|^7}\partial_x\gamma.
% \end{equation}
The differential operator $\mathcal{A}$ defined above satisfies $\nabla\E(\gamma) = (\mathcal A(\gamma))^\bot$, see  \cite[Proposition~A.1(iv)]{ruppspener2020}.

\begin{remark}\label{rem:sol-of-anaprob-is-sol-of-geo-prob}
    Let $\gamma$ solve the analytic problem with initial datum $\gamma_0$ as in \Cref{def:ana-prob}. Then $\gamma$ solves \eqref{eq:geo-ev-eq}. Indeed, the fact that \eqref{eq:intro-free-bcs} follows from \eqref{eq:ana-fbc} is already proved in \Cref{lem:geo-vs-ana-fbc}. Moreover, the choice of $\mathcal A$ yields that
    \begin{align}
        \partial_t^\bot\gamma = -\big(\mathcal (A(\gamma))^\bot -\lambda(\gamma)\curv_\gamma\big) = -\big(\nabla\E(\gamma)-\lambda(\gamma)\curv_\gamma\big).
    \end{align}
\end{remark}

%%%%%%%%%%%%%%%%%%%%%%%%%%%%%%%%%%%%%%%%%%%%%%%%%%%%%%%%%%%%%%%%%%%%%%%%%%%%%%%%%%%%%%%%%%%%%%%%
%%%%%%%%%%%%%%%%%%%%%%%%%%%%%%%%%%%%%%%%%%%%%%%%%%%%%%%%%%%%%%%%%%%%%%%%%%%%%%%%%%%%%%%%%%%%%%%%
\subsection{Existence and uniqueness of solutions to the analytic problem}
%%%%%%%%%%%%%%%%%%%%%%%%%%%%%%%%%%%%%%%%%%%%%%%%%%%%%%%%%%%%%%%%%%%%%%%%%%%%%%%%%%%%%%%%%%%%%%%%
%%%%%%%%%%%%%%%%%%%%%%%%%%%%%%%%%%%%%%%%%%%%%%%%%%%%%%%%%%%%%%%%%%%%%%%%%%%%%%%%%%%%%%%%%%%%%%%%

In the following, $p>5$ is considered to be fixed and dependencies of constants on $p$ are not explicitly mentioned. We prove here the following result.

\begin{theorem}\label{thm:wp-ana-prob}
    Let $p>5$ and $\gamma_0\in W^{4(1-\frac1p),p}(I,\R^n)$ be an immersion satisfying  $\E(\gamma_0)>0$ and \eqref{eq:ana-fbc}. There exist $T=T(\gamma_0)>0$ and a unique $\gamma\in \mathbb X_{T,p}$ solving the analytic problem with initial datum $\gamma_0$. Moreover, the time $T(\gamma_0)$ depends continuously on $\gamma_0$ in $W^{4(1-\frac1p),p}(I)$ in the sense of \Cref{def:cts-dependence-on-gamma0} below.
\end{theorem}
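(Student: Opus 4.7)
The plan is to find $\gamma = E\gamma_0|_{[0,T]\times I} + \varphi$ with $\varphi\in\mathcal U^{\gamma_0}_{T,p}$ by a contraction argument on small time intervals, using maximal $L^p$-regularity for the associated linearization around $\bar\gamma = E\gamma_0$. The principal part of $\mathcal A(\gamma)$ is $2|\partial_x\gamma|^{-4}\partial_x^4\gamma$ and the principal boundary operators are of order $0$ (the constraints $d^M(\gamma(\pm1))=0$ and $\langle\partial_x\gamma,T_i^{\pm1}\rangle=0$) and of order $3$ (the last equation in \eqref{eq:ana-prob}). Linearizing each of these at $\bar\gamma$ and freezing coefficients at $t=0$ produces a linear parabolic system with boundary operators whose top-order symbols coincide (up to a positive multiplicative factor coming from $|\partial_x\gamma_0|$) with the boundary operators already appearing in \eqref{eq:def-bargamma}; hence the Lopatinskii--Shapiro condition holds and maximal $L^p$-regularity is available (this is the content of the reference \Cref{prop:wp-lin-system} in \Cref{app:wp-lin-system}).

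The strategy then is the following. First, I would rewrite \eqref{eq:ana-prob} as a fixed-point problem
\begin{equation}
    \mathcal L\varphi = \mathcal N(\varphi),\qquad \varphi\in \mathcal U^{\gamma_0}_{T,p},
\end{equation}
where $\mathcal L\colon\prescript{}{0}{\mathbb X}_{T,p}\to \mathbb Y_{T,p}$ (with $\mathbb Y_{T,p}$ the appropriate data space carrying the four boundary/initial traces together with the $L^p$-interior component) is the linear isomorphism supplied by maximal $L^p$-regularity for the linearization at $\bar\gamma$, and $\mathcal N$ collects all the remaining terms: the quasilinear correction $\mathcal A(\gamma_\varphi)-\mathcal L_{\mathrm{top}}\varphi$, the semilinear term $-\lambda(\gamma_\varphi)\curv_{\gamma_\varphi}$, and the nonlinear parts of the three boundary operators (with the $t=0$ boundary and initial data matching those of $\bar\gamma$, so $\mathcal N(0)$ is small in $\mathbb Y_{T,p}$ for small $T$ by absolute continuity of the norm).

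Second, I would verify that $\mathcal N$ maps a closed ball $\overline B_R(0)\subseteq \prescript{}{0}{\mathbb X}_{T,p}$ into $\mathbb Y_{T,p}$ and is a contraction there, provided $R$ and $T$ are chosen small enough. The key estimates rely on the embeddings of $\mathbb X_{T,p}$ collected in \Cref{app:fct-spaces} (for $p>5$ one has $\mathbb X_{T,p}\hookrightarrow C([0,T];C^{3+\alpha}(I))$ for some $\alpha>0$), which give uniform $C^3$-control of $\gamma_\varphi$ and hence uniform bounds and Lipschitz estimates for all coefficients $|\partial_x\gamma_\varphi|^{-k}$, for $S_{\gamma_\varphi}$ and $D S_{\gamma_\varphi}$ via \eqref{eq:unif-bound-shape-op}, and for the reformulation \eqref{eq:lambda-ibp-formula} of $\lambda(\gamma_\varphi)$ (which is the reason for writing $\lambda$ in integrated-by-parts form: only $\curv$ and $\partial_s^{\bot}\curv$ enter, both controlled in $C^0([0,T]\times I)$ uniformly; the denominator $\E(\gamma_\varphi)$ stays bounded away from zero on $\mathcal U^{\gamma_0}_{T,p}$). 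For $T$ small, the gain of a positive power of $T$ in the relevant Nemytskii estimates (together with $\mathcal N(0)$ being small) makes the self-map and contraction properties hold, giving the unique fixed point $\varphi$.

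Third, having obtained a unique $\varphi$, I would define $T(\gamma_0)$ as the supremum of times for which this construction works and the membership $\varphi\in\mathcal U^{\gamma_0}_{T,p}$ is preserved; openness of $\mathcal U^{\gamma_0}_{T,p}$ and maximal regularity together yield that $T(\gamma_0)>0$. For the continuous dependence statement, I would make the whole fixed-point construction depend on $\gamma_0$: replacing $\gamma_0$ by $\tilde\gamma_0$ close in $W^{4(1-\frac1p),p}(I)$ changes $\bar\gamma$ continuously via \eqref{eq:bargamma-gamma0-estimate}, changes the linearized operator and the data in $\mathbb Y_{T,p}$ continuously, and hence, by a standard uniform contraction argument (parametrized version of Banach's fixed-point theorem, as in the proof of the implicit function theorem), yields a fixed point depending continuously on $\gamma_0$. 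The time of existence can then be chosen to vary lower-semicontinuously with $\gamma_0$, which is the notion needed for \Cref{def:cts-dependence-on-gamma0}. The main technical obstacle is bookkeeping the Lopatinskii--Shapiro verification for the genuinely fourth-order system with a third-order nonlinear boundary operator, but once the linearization has been identified this reduces to a computation with constant-coefficient model operators on a half-line.
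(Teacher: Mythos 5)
Your approach is essentially the paper's: the paper also writes the problem as a fixed point equation $\mathcal S^{-1}\mathcal F(\varphi)=\varphi$ for the perturbation $\varphi=\gamma-E\gamma_0\in\prescript{}{0}{\mathbb X}_{T,p}$, obtains maximal $L^p$-regularity for a linearization at $\bar\gamma=E\gamma_0$ via a Lopatinskii--Shapiro verification (\Cref{prop:wp-lin-system}), checks that $\|\mathcal F(0)\|\to0$ and $\|\mathcal F'(0)\|\to0$ as $T\searrow0$ using the time-uniform embeddings of \Cref{prop:xtp-mappings-and-embeddings}, and runs Banach's fixed point theorem; the continuous dependence of $T(\gamma_0)$ is likewise obtained by tracking the constants through the contraction argument. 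Two cosmetic differences: the paper does not freeze coefficients at $t=0$ but works directly with the variable-coefficient operator (Solonnikov), and it deliberately excludes from the linear operator the term $-\frac{8\partial_x^4\bar\gamma}{|\partial_x\bar\gamma|^6}\langle\partial_x\bar\gamma,\partial_x\varphi\rangle$ (whose coefficient is only $L^p$ in time) and the nonlocal term $\Lambda'(0)\varphi\cdot\curv_{\bar\gamma}$, treating both as perturbations; your choice of lumping the entire $\lambda$-term into $\mathcal N$ is equally viable.

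The one genuine gap is the uniqueness claim. The theorem asserts uniqueness among \emph{all} $\gamma\in\mathbb X_{T,p}$ solving the analytic problem, i.e.\ with $\gamma-E\gamma_0\in\mathcal U^{\gamma_0}_{T,p}$, whereas your contraction only yields uniqueness inside the small ball $\overline B_R(0)$. Moreover, your phrasing ``provided $R$ and $T$ are chosen small enough'' leaves open whether $R$ shrinks with $T$; the argument the paper relies on (borrowed from Rupp--Spener) hinges precisely on the fact that the radius $\bar r$ can be fixed \emph{independently} of $T\in(0,\bar T)$. Given that, for any competing solution $\tilde\gamma$ one has $\|\tilde\gamma-E\gamma_0\|_{\mathbb X_{T',p}}\to0$ as $T'\searrow0$ (dominated convergence), so $\tilde\gamma$ eventually enters the ball of radius $\bar r$ and must coincide with the constructed solution on a short interval; a continuation argument then gives coincidence on all of $[0,T]$. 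You should make the $T$-independence of the radius explicit (it follows from your own observation that the maximal-regularity constant and the smallness of $\mathcal F'$ are uniform in $T$) and add this step to close the uniqueness claim.
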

\begin{definition}\label{def:cts-dependence-on-gamma0}
    In the following, we say that a constant $C$ in an inequality \emph{depends continuously on $\gamma_0$ in $W^{4(1-\frac1p),p}(I)$} if, for all $\delta>0$, there exists $\zeta>0$ such that, for all $\tilde\gamma_0\in W^{4(1-\frac1p),p}(I)$ satisfying \eqref{eq:ana-fbc} and $\|\tilde\gamma_0-\gamma_0\|_{W^{4(1-\frac1p),p}(I)}<\zeta$, the inequality is also satisfied with $\gamma_0$ replaced by $\tilde\gamma_0$ and $C$ replaced by $C+\delta$. 

    Similarly, we say that a time-parameter $\bar T=\bar T(\gamma_0)>0$ \emph{depends continuously on $\gamma_0$ in $W^{4(1-\frac1p),p}(I)$} if, for any $0<\delta<\bar T(\gamma_0)$, there exists $\zeta>0$ such that, for any $\tilde\gamma_0\in W^{4(1-\frac1p),p}(I)$ satisfying \eqref{eq:ana-fbc} with $\|\tilde\gamma_0-\gamma_0\|_{W^{4(1-\frac1p),p}(I)}<\zeta$, one can choose $\bar T(\tilde\gamma_0)\geq \bar{T}(\gamma_0)-\delta$.
\end{definition}

Consider $T_1\in(0,\infty)$, %. To prove the theorem, we 
fix an immersion $\gamma_0\in W^{4(1-\frac1p),p}([-1,1])$ as in \Cref{thm:wp-ana-prob} %.   satisfying $\E(\gamma_0)>0$ and \eqref{eq:ana-fbc}, 
and %we fix 
$T_0=T_0(\gamma_0)$ and $\bar{\gamma}=E\gamma_0$ as in \eqref{eq:def-bargamma} and \Cref{not:wp-gamma0}. Recall that $\bar\gamma(t,\cdot)$ is an immersion with $\E(\bar\gamma(t,\cdot))>0$ for all $0\leq t < T_0$. Finally, for $0<T<T_0$ and $\varphi\in\mathcal U^{\gamma_0}_{T,p}$, write $\gamma_\varphi\vcentcolon=E\gamma_0|_{[0,T]\times I}+\varphi$.

With the notation and spaces introduced above (see also %with more details in 
\Cref{app:fct-spaces} and in particular \Cref{prop:xtp-mappings-and-embeddings}), consider the operator $\mathcal T \vcentcolon= \mathcal T_{T}$ mapping
\begin{align} \label{eq:Ftp}
    \mathcal U^{\gamma_0}_{T,p} %\subseteq \prescript{}{0}{\mathbb X}_{T,p}  
    \to\prescript{}{0}{\mathbb F}_{T,p}\vcentcolon= \ & L^p((0,T)\times I,\R^n) 
    \times \prescript{}{0}{W}^{1-\frac{1}{4p},p}(0,T;\R\times\R)\\
    &\quad\times
        \prescript{}{0}{W}^{\frac34-\frac{1}{4p},p}(0,T;\R^{n-1}\times \R^{n-1}) \times \prescript{}{0}{W}^{\frac14-\frac{1}{4p},p}(0,T;\R^n\times\R^n)
\end{align}
given by
\begin{align}
    \mathcal T(\varphi) = \Big( &\partial_t\gamma_\varphi + \A(\gamma_\varphi) - \lambda(\gamma_\varphi)\curv_{\gamma_\varphi},\mathcal B_0(\varphi),\mathcal B_1(\varphi),\mathcal B_3(\varphi)\Big)
\end{align}
where 
\begin{align}
    \mathcal{B}_0(\varphi)&=\mathrm{tr}_{\partial I} (d^M\circ\gamma_\varphi),\quad \mathcal{B}_1(\varphi) = \begin{pmatrix}
        %\mathrm{tr}_{\partial I} (d^M\circ\gamma_\varphi)\\
        \langle T_1^{\pm1}\circ(\mathrm{tr}_{\partial I}\gamma_\varphi),\mathrm{tr}_{\partial I}\partial_x\gamma_\varphi\rangle\\
        \vdots\\
        \langle T_{n-1}^{\pm1}\circ(\mathrm{tr}_{\partial I}\gamma_\varphi),\mathrm{tr}_{\partial I}\partial_x\gamma_\varphi\rangle
    \end{pmatrix},\\
    \mathcal B_3(\varphi) &= \mathrm{tr}_{\partial I} \Big[ \frac{\partial_x^3\gamma_\varphi}{|\partial_x\gamma_\varphi|^3}-3\frac{\langle \partial_x^2\gamma_\varphi,\partial_x\gamma_\varphi\rangle}{|\partial_x\gamma_\varphi|^5}\partial_x^2\gamma_\varphi  + \tau_0 S_{\gamma_\varphi}\curv_{\gamma_\varphi} \Big].
\end{align}
We first consider the Lagrange multiplier.

\begin{lemma}\label{lem:tilde-lambda-bounds}
    For any $\sigma\in (\frac1p,\frac14-\frac{1}{4p})$, we have that $\Lambda\colon \mathcal U^{\gamma_0}_{T,p}\to C^{\sigma-\frac1p}([0,T],\R)$,
    \begin{equation}
        \Lambda(\varphi)|_t = \E(\gamma_\varphi(t,\cdot))^{-1}\Big(\int_{-1}^1 -2|\partial_s^\bot\curv_{\gamma_\varphi}|^2+|\curv_{\gamma_\varphi}|^4\dd s\Big|_t + 2\Big[\langle\partial_s^\bot\curv_{\gamma_\varphi}(t,y),\curv_{\gamma_\varphi}(t,y)\rangle\Big]_{y=-1}^1\Big) %\big(t\mapsto \lambda(\bar\gamma(t,\cdot)+\varphi(t,\cdot))\big)
    \end{equation}
    is $C^1$. Moreover, there is a constant $C=C(\gamma_0,T_0)$ such that $\|\Lambda'(0)\|_{L(\prescript{}{0}{\mathbb X}_{T,p},C^0([0,T],\R))}\leq C(T_0,\gamma_0)$ %. 
    and $C(T_0,\gamma_0)$ depends continuously on $\gamma_0$ in $W^{4(1-\frac1p),p}(I)$.
\end{lemma}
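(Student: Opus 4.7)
The plan is to exploit the Sobolev embeddings valid for $p>5$ together with the structure of $\mathcal{U}^{\gamma_0}_{T,p}$: on this set, $\inf_{t,x} |\partial_x \gamma_\varphi|$ stays bounded away from $0$ (via $\|\gamma_\varphi - \gamma_0\|_{C^1} < \delta_1$), and $\E(\gamma_\varphi(t,\cdot))$ stays uniformly positive. Because $4(1-\frac1p) > 3 + \frac{1}{p}$ when $p > 5$, one has $W^{4(1-\frac1p),p}(I) \hookrightarrow C^{3+\alpha}(I)$ for some $\alpha = \alpha(p) > 0$. By the standard trace and mixed-regularity properties of $\mathbb{X}_{T,p}$ (cf.\ \Cref{prop:xtp-mappings-and-embeddings}), $\mathbb{X}_{T,p}$ embeds continuously into a Hölder-in-time space with values in $W^{4(1-\sigma) - \frac{4}{p},p}(I)$ for $\sigma < 1 - \frac1p$; choosing $\sigma \in (\frac1p, \frac14 - \frac{1}{4p})$ and applying the Sobolev embedding once more yields a continuous embedding $\mathbb{X}_{T,p} \hookrightarrow C^{\sigma - \frac1p}([0,T], C^3(I))$. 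In particular the integrand and boundary terms defining $\Lambda(\varphi)|_t$ are meaningful.

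Next, I would verify the $C^1$-regularity. Writing out the integrand, every term appearing in $-2|\partial_s^\bot \curv_{\gamma_\varphi}|^2 + |\curv_{\gamma_\varphi}|^4$ and in the boundary correction $2[\langle \partial_s^\bot \curv_{\gamma_\varphi}, \curv_{\gamma_\varphi}\rangle]_{-1}^1$ is a rational expression in $\partial_x \gamma_\varphi, \partial_x^2 \gamma_\varphi, \partial_x^3 \gamma_\varphi$, whose denominator is a positive power of $|\partial_x \gamma_\varphi|^2$, multiplied (at the boundary) by the extended shape operator $S$ composed with $\mathrm{tr}_{\partial I}\gamma_\varphi$; by \Cref{hyp:unif-tub-nbhd} and \Cref{rem:extshape}, $S$ is smooth with uniformly bounded derivatives on $\R^n$. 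Since $\varphi \mapsto \gamma_\varphi$ is affine from $\prescript{}{0}{\mathbb{X}}_{T,p}$ to $\mathbb{X}_{T,p}$, and the embedding above together with the composition and multiplication properties of $C^{\sigma - 1/p}([0,T], C^3(I))$ make each building block $C^\infty$ on $\mathcal U^{\gamma_0}_{T,p}$ (the denominator $\E(\gamma_\varphi)$ stays uniformly bounded below on this set), one obtains that $\Lambda$ is smooth, in particular $C^1$, as a map into $C^{\sigma - \frac1p}([0,T], \R)$.

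Finally, for the norm estimate on $\Lambda'(0)$, I would differentiate at $\varphi = 0$, i.e.\ at $\gamma_\varphi = \bar\gamma = E\gamma_0$, using the chain rule. Each term in $\Lambda'(0)\varphi$ is a pointwise bilinear-in-$(\bar\gamma,\varphi)$ expression in spatial derivatives up to order $3$, possibly multiplied by a smooth function of $\bar\gamma(t,\pm1)$ with uniformly bounded derivatives, and divided by $\E(\bar\gamma)$ (resp.\ $\E(\bar\gamma)^2$ from differentiating the prefactor). Applying the continuous embedding $\prescript{}{0}{\mathbb{X}}_{T,p} \hookrightarrow C([0,T], C^3(I))$ to $\varphi$, and using \eqref{eq:bargamma-gamma0-estimate} together with $\bar\gamma \in C([0,T_0], W^{4(1-\frac1p),p}(I)) \hookrightarrow C([0,T_0], C^3(I))$ and the uniform lower bounds on $|\partial_x \bar\gamma|$ and $\E(\bar\gamma)$ built into \Cref{not:wp-gamma0}, yields
\begin{equation}
  \|\Lambda'(0)\varphi\|_{C^0([0,T])} \leq C(T_0,\gamma_0)\,\|\varphi\|_{\prescript{}{0}{\mathbb{X}}_{T,p}}.
\end{equation}
The continuous dependence of $C(T_0,\gamma_0)$ on $\gamma_0$ in the sense of \Cref{def:cts-dependence-on-gamma0} follows from the linearity of the map $\gamma_0 \mapsto \bar\gamma$ in \eqref{eq:def-bargamma} together with \eqref{eq:bargamma-gamma0-estimate}, the continuity of the Sobolev embeddings involved, and the fact that $|\partial_x \bar\gamma|$ and $\E(\bar\gamma)$ are continuous functionals of $\gamma_0$ in $W^{4(1-\frac1p),p}(I)$. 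I expect the only mildly delicate step to be organising the interpolation/trace bookkeeping to hit exactly the Hölder space $C^{\sigma - \frac1p}([0,T],\R)$; once this is in place, both the $C^1$-property and the linear estimate reduce to routine applications of the product and chain rules in spaces where all factors are pointwise bounded.
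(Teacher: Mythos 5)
Your proposal is correct and follows essentially the same route as the paper: both rely on the $T$-uniform embedding $\prescript{}{0}{\mathbb X}_{T,p}\hookrightarrow C^{\sigma-\frac1p}([0,T],C^{3}(I))$ from \Cref{prop:xtp-mappings-and-embeddings}\eqref{it:hoeld-emb}, the lower bound on $|\partial_x\bar\gamma|$ from \Cref{not:wp-gamma0}, and \eqref{eq:bargamma-gamma0-estimate} applied to $E\gamma_0-E\tilde\gamma_0$ for the continuous dependence. Two cosmetic points: the boundary term in $\Lambda$ is $\langle\partial_s^\bot\curv,\curv\rangle$ and does not actually involve the shape operator, and $\E(\gamma_\varphi)$ is only pointwise positive on $\mathcal U^{\gamma_0}_{T,p}$ rather than uniformly so over the whole set — but local uniformity (which is all that $C^1$-ness requires) follows from compactness of $[0,T]$ and continuity, so neither affects the argument.
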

\begin{proof}
Since $T < T_0(\gamma_0)$, by \Cref{prop:xtp-mappings-and-embeddings}\eqref{it:hoeld-emb}, we have that $\Lambda$ is well-defined. There exist smooth $\R^{1\times n}$-valued polynomials $l_{0,i}$ such that 
\begin{equation}
    \partial_{t,0}\E(\bar\gamma+t\varphi)=\int_{-1}^1 \sum_{i=1}^2 l_{0,i}(|\partial_x\bar\gamma(t,x)|^{-1},\partial_x\bar\gamma(t,x),\partial_x^2\bar\gamma(t,x)) \partial_x^i\varphi(t,x) \dd x.
\end{equation}
Thus, we obtain
    \begin{align}
        \Lambda'(0)\varphi|_t &= -\frac{\Lambda(0)}{\E(\bar\gamma(t,\cdot))} \int_{-1}^1 \sum_{i=1}^2 l_{0,i}(|\partial_x\bar\gamma(t,x)|^{-1},\partial_x\bar\gamma(t,x),\partial_x^2\bar\gamma(t,x)) \partial_x^i\varphi(t,x) \dd x\\
        &+\E(\bar\gamma(t,\cdot))^{-1}\int_{-1}^1 \sum_{i=1}^3 l_{1,i}(|\partial_x\bar\gamma(t,x)|^{-1},\partial_x\bar\gamma(t,x),\partial_x^2\bar\gamma(t,x),\partial_x^3\bar\gamma(t,x)) \partial_x^i\varphi(t,x) \dd x \\
        &+ \E(\bar\gamma(t,\cdot))^{-1}\sum_{i=1}^3 l_{2,i}(|\partial_x\bar\gamma(t,y)|^{-1},\partial_x\bar\gamma(t,y),\partial_x^2\bar\gamma(t,y),\partial_x^3\bar\gamma(t,y)) \partial_x^i\varphi(t,y)\Big|_{y=-1}^1
    \end{align}
    for smooth $\R^{1\times n}$-valued polynomials $l_{1,i}$ and $l_{2,i}$. A similar computation yields that $\Lambda$ is $C^1$. Note that $\bar\gamma\in C^{\sigma-\frac1p}([0,T_0],C^{3+(1-4\sigma-\frac1p)}([-1,1],\R^n))$ by \Cref{prop:xtp-mappings-and-embeddings} and  $|\partial_x\bar\gamma|\geq\frac12\min_I|\partial_x\gamma_0|$ by \Cref{not:wp-gamma0}. 
    
    The claim now follows from the above since the embedding-constant of $$\prescript{}{0}{\mathbb X}_{T,p}\hookrightarrow C^{\sigma-\frac1p}([0,T],C^{3+(1-4\sigma-\frac1p)}([-1,1],\R^n))$$ is $T$-independent for $T\leq T_0$, see \Cref{prop:xtp-mappings-and-embeddings}\eqref{it:hoeld-emb}. Hereby the continous dependence of the constant on $\gamma_0$ in $W^{4(1-\frac1p),p}(I)$ is due to \eqref{eq:bargamma-gamma0-estimate} applied to $E\gamma_0-E\tilde\gamma_0$.
\end{proof}

The following lemma summarizes some simple computations.

\begin{lemma}\label{lem:linearizations-of-analytic-system}
    The operators $\mathcal T$, $\mathcal B_0$, $\mathcal B_1$ and $\mathcal B_3$ are $C^1$ and, for $\varphi\in\prescript{}{0}{\mathbb X}_{T,p}$, we have
    \begin{align}
        \mathcal T'(0)\varphi = \Big( &\partial_t\varphi + \frac2{|\partial_x\bar\gamma|^4}\partial_x^4\varphi + \sum_{i=1}^3 a_{i}(|\partial_x\bar\gamma|^{-1},\partial_x\bar\gamma,\dots,\partial_x^3\bar\gamma)\partial_x^i\varphi \\
        &- \frac{8\partial_x^4\bar\gamma}{|\partial_x\bar\gamma|^6}\langle\partial_x\bar\gamma,\partial_x\varphi\rangle  - \Lambda(0) \cdot \sum_{i=1}^2b_{i}(|\partial_x\bar\gamma|^{-1},\partial_x\bar\gamma,\partial_x^2\bar\gamma)\partial_x^i\varphi  - \Lambda'(0)\varphi\cdot \curv_{\bar\gamma},\\
        & \mathcal B_0'(0)\varphi, \mathcal B_1'(0)\varphi, \mathcal{B}_3'(0)\varphi\Big)
    \end{align}
    where the $a_i$, $b_i$ are polynomials %and the $a_{i,\cdot}$ are smooth families of polynomials, 
    with values in $\R^{n\times n}$ respectively. Moreover,
    \begin{equation}
        \mathcal{B}_0'(0)\varphi=\mathrm{tr}_{\partial I}\langle\xi\circ\bar\gamma,\varphi\rangle, \quad \mathcal{B}_1'(0)\varphi =  \begin{pmatrix} 
            %\mathrm{tr}_{\partial I}\langle\xi\circ\bar\gamma,\varphi\rangle\\
            \langle T_1^{\pm1}(\mathrm{tr}_{\partial I}\bar\gamma),\mathrm{tr}_{\partial I}\partial_x\varphi\rangle+\mathrm{tr}_{\partial I} [c_1\circ \bar\gamma \cdot \varphi]\\
            \vdots\\
            \langle T_{n-1}^{\pm1}(\mathrm{tr}_{\partial I}\bar\gamma),\mathrm{tr}_{\partial I}\partial_x\varphi\rangle+\mathrm{tr}_{\partial I} [c_{n-1}\circ \bar\gamma \cdot \varphi]
        \end{pmatrix} %+ \begin{pmatrix} 0\\\mathrm{tr}_{\partial I} [c\circ \bar\gamma \cdot \varphi] \end{pmatrix}
    \end{equation}
    where $c=(c_1,\dots,c_{n-1})$ is a smooth $\R^{{n-1}\times n}$-valued function, and finally,
    \begin{equation}
        \mathcal{B}_3'(0)\varphi = \mathrm{tr}_{\partial I}\Big[ \frac{\partial_x^3\varphi}{|\partial_x\bar\gamma|^3} + \sum_{i=0}^2d_{i,\bar\gamma}(|\partial_x\bar\gamma|^{-1},\partial_x\bar\gamma,\dots,\partial_x^3\bar\gamma)\partial_x^i\varphi\Big]% - 3\frac{\partial_x^3\bar\gamma}{|\partial_x\bar\gamma|^5} \langle\partial_x\bar\gamma,\partial_x\varphi\rangle \Big]
    \end{equation}
    where the $d_{i,\cdot}$ are smooth families of polynomials with values in $\R^{n\times n}$.
\end{lemma}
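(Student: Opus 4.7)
The strategy is to identify each of $\mathcal T, \mathcal B_0, \mathcal B_1, \mathcal B_3$ as a composition of Nemytskii-type maps (superposition with smooth functions on $\R^n$), algebraic operations (multiplication, powers, and inversion of $|\partial_x\gamma_\varphi|$), and the boundary trace, and then apply the chain and product rules. The functional-analytic framework is supplied by \Cref{prop:xtp-mappings-and-embeddings}: for $p>5$ one has $\mathbb X_{T,p}\hookrightarrow C([0,T],C^{3+\alpha}(I))$ for some $\alpha>0$, and on $\mathcal U^{\gamma_0}_{T,p}$ the function $|\partial_x\gamma_\varphi|$ stays uniformly bounded away from zero. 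Combined with the multiplier/Nemytskii properties of the Slobodeckij spaces appearing in $\prescript{}{0}{\mathbb F}_{T,p}$, and with the smoothness and uniform derivative bounds of $d^M,\xi,S,T_i^{\pm1}$ extended to $\R^n$ (see \Cref{hyp:unif-tub-nbhd} and \Cref{rem:extshape}), this makes each of the four maps $C^1$ between the stated spaces; the $C^1$-regularity of the nonlocal factor $\Lambda$ is already available via \Cref{lem:tilde-lambda-bounds}.

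For the explicit formulas one just differentiates at $\varphi=0$, i.e.\ at $\gamma_\varphi=\bar\gamma$. The only fourth-order summand of $\mathcal A$ is $2|\partial_x\gamma|^{-4}\partial_x^4\gamma$; its linearisation produces $2|\partial_x\bar\gamma|^{-4}\partial_x^4\varphi$ together with the correction $-8|\partial_x\bar\gamma|^{-6}\langle\partial_x\bar\gamma,\partial_x\varphi\rangle\partial_x^4\bar\gamma$ coming from $\partial_{s,0}|\partial_x(\bar\gamma+s\varphi)|^{-4}=-4|\partial_x\bar\gamma|^{-6}\langle\partial_x\bar\gamma,\partial_x\varphi\rangle$. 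The remaining summands of $\mathcal A$ are polynomial in $(|\partial_x\bar\gamma|^{-1},\partial_x\bar\gamma,\partial_x^2\bar\gamma,\partial_x^3\bar\gamma)$ and, when differentiated, contribute to $\sum_{i=1}^{3}a_i\partial_x^i\varphi$ with $a_i$ polynomial of the stated form. For the nonlocal term, Leibniz gives $\Lambda'(0)\varphi\cdot\curv_{\bar\gamma}+\Lambda(0)\cdot D\curv(\bar\gamma)\varphi$, and linearising $\curv_\gamma=|\partial_x\gamma|^{-2}\partial_x^2\gamma-|\partial_x\gamma|^{-4}\langle\partial_x^2\gamma,\partial_x\gamma\rangle\partial_x\gamma$ at $\bar\gamma$ yields exactly a sum $b_1\partial_x\varphi+b_2\partial_x^2\varphi$ with $b_i$ polynomial in $(|\partial_x\bar\gamma|^{-1},\partial_x\bar\gamma,\partial_x^2\bar\gamma)$.

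The boundary maps are handled in the same spirit, the trace $\mathrm{tr}_{\partial I}$ commuting through the differentiation. For $\mathcal B_0$ the chain rule gives $\mathrm{tr}_{\partial I}\, Dd^M(\bar\gamma)\varphi$; since $\bar\gamma(\pm1)\in M$ and $\nabla d^M=\xi$ on $M$, this is $\mathrm{tr}_{\partial I}\langle\xi\circ\bar\gamma,\varphi\rangle$. For $\mathcal B_1$, entrywise product rule yields $\langle T_i^{\pm1}\circ\bar\gamma,\partial_x\varphi\rangle+\langle DT_i^{\pm1}(\bar\gamma)\varphi,\partial_x\bar\gamma\rangle$ at the boundary, and the latter takes the form $\mathrm{tr}_{\partial I}[c_i\circ\bar\gamma\cdot\varphi]$ with a smooth $c_i\in C^\infty(\R^n,\R^{1\times n})$ (using that $\partial_x\bar\gamma(\pm1)$ is encoded by the background solution together with $T_i^{\pm1}$). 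For $\mathcal B_3$, the principal third-order contribution comes from $\partial_x^3\gamma/|\partial_x\gamma|^3$, whose linearisation produces $\partial_x^3\varphi/|\partial_x\bar\gamma|^3$ plus a $\partial_x\varphi$-correction; the second summand of $\mathcal B_3$ and the shape-operator term $\tau_0S_\gamma\curv_\gamma$ (smooth by \Cref{rem:extshape}) only contribute terms of order $\partial_x^i\varphi$ with $i\le2$. Collecting everything gives the claimed form $\partial_x^3\varphi/|\partial_x\bar\gamma|^3+\sum_{i=0}^{2}d_{i,\bar\gamma}\partial_x^i\varphi$ with smooth $d_{i,\cdot}$.

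The point I would watch most carefully is not the algebra but the verification that each nonlinear map actually lands in the correct Slobodeckij component of $\prescript{}{0}{\mathbb F}_{T,p}$ with continuously differentiable dependence on $\varphi$: these target spaces carry anisotropic time-regularity of fractional order, and the $C^1$-smoothness of the Nemytskii and multiplier maps into them, as well as the required trace estimates, must be invoked from the background results collected in \Cref{app:fct-spaces}. Once these mapping and regularity properties are granted, the explicit formulas for $\mathcal T'(0),\mathcal B_0'(0),\mathcal B_1'(0),\mathcal B_3'(0)$ are precisely the Gateaux derivatives computed above, and continuity of the differentials in $\varphi$ upgrades them to Fréchet derivatives.
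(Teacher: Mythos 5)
Your proposal is correct and is exactly what the paper intends: the paper states this lemma without proof ("summarizes some simple computations"), and your chain/product-rule linearisations of $\mathcal A$, of $\lambda(\gamma)\curv_\gamma$ via \Cref{lem:tilde-lambda-bounds}, and of the three boundary operators reproduce the stated formulas, with the $-8|\partial_x\bar\gamma|^{-6}\langle\partial_x\bar\gamma,\partial_x\varphi\rangle\partial_x^4\bar\gamma$ correction computed correctly. Your closing caveat about Nemytskii/multiplier mapping properties into the anisotropic Slobodetskii components of $\prescript{}{0}{\mathbb F}_{T,p}$ is the right thing to flag; it is covered by the embeddings of \Cref{prop:xtp-mappings-and-embeddings} and the product estimate \Cref{lem:estimate-in-boundary-terms}, which is also all the paper itself relies on.
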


Motivated by these computations, we consider a suitable linearization of the analytic problem and show its well-posedness. The motivation for the particular choice of linearization and the choice of boundary conditions for $\bar\gamma$ in \eqref{eq:def-bargamma} becomes apparent when studying the well-posedness proof in detail.

\begin{proposition}\label{prop:wp-lin-system}
    % For all $f\in L^p((0,T)\times I)$, $h_1^1\in \prescript{}{0}{W}^{1-\frac{1}{4p},p}(0,T;\R\times\R)$, $h_1^2\in \prescript{}{0}{W}^{\frac34-\frac{1}{4p},p}(0,T;\R^{n-1}\times \R^{n-1})$ and $h_2\in \prescript{}{0}{W}^{\frac14-\frac{1}{4p},p}(0,T;\R^n\times\R^n)$, there exists a unique solution $\varphi\in \prescript{}{0}{\mathbb X}_{T,p}$ of
    Let $T\in(0,T_0)$. For all $f\in L^p((0,T)\times I)$, $h_0\in \prescript{}{0}{W}^{1-\frac{1}{4p},p}(0,T;\R\times\R)$, $h_1\in \prescript{}{0}{W}^{\frac34-\frac{1}{4p},p}(0,T;\R^{n-1}\times \R^{n-1})$ and $h_3\in \prescript{}{0}{W}^{\frac14-\frac{1}{4p},p}(0,T;\R^n\times\R^n)$, there exists a unique solution $\varphi\in \prescript{}{0}{\mathbb X}_{T,p}$ of
    \begin{equation}\label{eq:lin-system}
        \begin{cases}
            \partial_t\varphi + \frac2{|\partial_x\bar\gamma|^4}\partial_x^4\varphi + \sum_{i=1}^3 a_{i}(|\partial_x\bar\gamma|^{-1},\partial_x\bar\gamma,\dots,\partial_x^3\bar\gamma)\partial_x^i\varphi \\ \ - \Lambda(0) \cdot \sum_{i=1}^2b_{i}(|\partial_x\bar\gamma|^{-1},\partial_x\bar\gamma,\partial_x^2\bar\gamma)\partial_x^i\varphi = f 
            % &\text{a.e.\ on $(0,T)\times I$}\\
            % \mathcal{B}_1'(0)\varphi = (h_1^1,h_1^2)&\text{on $[0,T]$}\\
            % \mathcal{B}_2'(0)\varphi = h_2 &\text{on $[0,T]$}.
            &\text{in $L^p((0,T)\times I)$}\\
            \mathcal{B}_j'(0)\varphi = h_j\text{ for $j=0,1,3$}&\text{on $[0,T]$}.%\\
            %\mathrm{tr}_{\partial I}\Big[ \frac{\partial_x^3\varphi}{|\partial_x\bar\gamma|^3} + \sum_{i=0}^2d_{i,\bar\gamma}(|\partial_x\bar\gamma|^{-1},\partial_x\bar\gamma,\partial_x^2\bar\gamma)\partial_x^i\varphi\Big] = h_3 &\text{on $[0,T]$}.
        \end{cases}
    \end{equation}
    Moreover, for a constant $C=C(T_0,\gamma_0)$,
    \begin{align}
        \|\varphi\|_{\X_{T,p}} \leq C &\Big( \|f\|_{L^p((0,T)\times I)} + \|h_0\|_{W^{1-\frac{1}{4p},p}(0,T)}\\
        &\qquad+\|h_1\|_{W^{\frac34-\frac{1}{4p},p}(0,T)}+\|h_3\|_{W^{\frac{1}{4}-\frac{1}{4p},p}(0,T)} \Big).\label{eq:time-indep-maxreg-estimate}
    \end{align}
    Moreover, the constant $C(T_0,\gamma_0)$ depends continuously on $\gamma_0$ in $W^{4(1-\frac1p),p}(I)$.
\end{proposition}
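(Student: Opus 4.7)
The plan is to apply the general $L^p$-maximal regularity theory for parabolic initial-boundary value problems of Denk--Hieber--Pr\"uss \cite{denkhieberpruess2007}: I would verify the two structural conditions (parabolicity and Lopatinskii--Shapiro) and then boost the resulting estimate to a $T$-independent one via an extension in time.

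Parabolicity is immediate since the principal spatial symbol is the scalar $\frac{2}{|\partial_x\bar\gamma(t,x)|^4}\mathrm{Id}_{\R^n}$, which by \Cref{not:wp-gamma0} is bounded from above and below on $[0,T_0]\times I$, so the angle of ellipticity is maximal. For the Lopatinskii--Shapiro condition, I freeze coefficients at a boundary point $(t_0,\pm 1)$ and work in the orthonormal frame $\{\xi,T_1^{\pm 1},\dots,T_{n-1}^{\pm 1}\}$ at $\bar\gamma(t_0,\pm 1)$. Since these frame vectors are constants for the frozen model problem, the principal parts of $\mathcal{B}_0'(0),\mathcal{B}_1'(0),\mathcal{B}_3'(0)$ decouple the $\R^n$-valued unknown $\varphi$ into one scalar problem for the normal component $\varphi^\bot=\langle\varphi,\xi\rangle$, carrying boundary conditions of order $0$ (from $\mathcal{B}_0'$) and of order $3$ (the normal component of $\mathcal{B}_3'$), together with $n-1$ independent scalar problems for the tangential components $\varphi^i=\langle\varphi,T_i^{\pm 1}\rangle$, each carrying boundary conditions of order $1$ (from $\mathcal{B}_1'$) and order $3$ (the $i$-th tangential component of $\mathcal{B}_3'$). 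For each of these scalar fourth-order models one explicitly writes down the two stable characteristic roots $\mu_1(\lambda),\mu_3(\lambda)$ of $c\mu^4+\lambda=0$ with $\Real\mu_j>0$ and checks that the corresponding $2\times 2$ Lopatinskii determinant is nonzero and bounded away from $0$, uniformly in $\Real\lambda\geq 0$, $|\lambda|+|\mu|>0$, and $t_0\in[0,T_0]$.

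Once parabolicity and Lopatinskii--Shapiro hold, \cite[Theorem~2.1]{denkhieberpruess2007} directly yields unique solvability in $\prescript{}{0}{\mathbb X}_{T,p}$ together with estimate \eqref{eq:time-indep-maxreg-estimate}, a priori with a constant that may depend on $T$. To make the constant $T$-independent (for $T\leq T_0$), I would extend the data $f$ by $0$ onto $(0,T_0)\times I$ and extend the boundary data $h_0,h_1,h_3$ by $0$ onto $(0,T_0)$; since these data have vanishing trace at $t=0$, these zero-extensions preserve the relevant $\prescript{}{0}{W}^{s,p}$-norms with constant $1$. Solving the extended problem on the fixed time-interval $[0,T_0]$ gives a constant $C=C(T_0,\gamma_0)$, and restricting the resulting solution back to $[0,T]$ reproduces our solution by uniqueness, establishing \eqref{eq:time-indep-maxreg-estimate} with a $T$-independent constant. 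Continuous dependence of this constant on $\gamma_0$ in $W^{4(1-\frac1p),p}(I)$ follows since every ingredient (the extension $E\gamma_0$ via \eqref{eq:bargamma-gamma0-estimate}, the map $\Lambda'(0)$ via \Cref{lem:tilde-lambda-bounds}, and the frames $\xi,T_i^{\pm 1}$ at the continuously varying points $\gamma_0(\pm 1)\in M$) depends continuously on $\gamma_0$.

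The main obstacle is the verification of the Lopatinskii--Shapiro condition. It hinges on the non-obvious geometric observation that, in the frame adapted to $M$, the formally coupled system of mixed-order vector boundary conditions decouples at the principal level into $n$ scalar two-point problems with boundary conditions of orders $\{0,3\}$ or $\{1,3\}$, each of which admits a transparent determinant computation; without this decoupling one would face a non-trivial $2n\times 2n$ system whose non-degeneracy is much less apparent.
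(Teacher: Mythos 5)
Your overall strategy coincides with the paper's: verify uniform parabolicity and the Lopatinskii--Shapiro condition, invoke a black-box maximal $L^p$-regularity theorem, and obtain the $T$-independence of the constant by extending data with vanishing temporal trace to the fixed interval $[0,T_0]$. The paper works with Solonnikov's theorem rather than \cite[Theorem~2.1]{denkhieberpruess2007}, but this is immaterial. Your decoupling of the frozen boundary symbol in the frame $\{\xi,T_1^{\pm1},\dots,T_{n-1}^{\pm1}\}$ into one scalar problem with boundary orders $\{0,3\}$ and $n-1$ scalar problems with orders $\{1,3\}$ is exactly the content of the paper's computation: there one plugs the two stable roots $\tau_1^+,\tau_2^+=\iu\tau_1^+$ into the boundary symbol and uses linear independence of the frame together with $(\tau_1^+)^2=-(\tau_2^+)^2$ and $(\tau_1^+)^3=-\iu(\tau_2^+)^3$, which is precisely your observation that $\iu^{m_2-m_1}\neq 1$ for $m_2-m_1\in\{2,3\}$. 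So the "non-obvious geometric observation" you highlight is the same mechanism, just phrased as a determinant computation.

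Two points deserve attention. First, you do not verify the regularity hypotheses on the coefficients of the boundary operators, which is a genuine part of the argument rather than a formality: the coefficients of $\mathcal B_3'(0)$ involve $\mathrm{tr}_{\partial I}\partial_x\bar\gamma$ and $\mathrm{tr}_{\partial I}\partial_x^2\bar\gamma$, which are merely $C^{\frac12-\frac1{4p}-\frac1p}$ in time, and one must check that this exceeds the required threshold $\frac14-\frac1{4p}$, which uses $p>5$; moreover, the fact that $\mathrm{tr}_{\partial I}\bar\gamma$ and $\mathrm{tr}_{\partial I}\partial_x^3\bar\gamma$ are constant in time is exactly why the boundary conditions in \eqref{eq:def-bargamma} were chosen as they are, so this step cannot be skipped without losing the rationale for the setup. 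Second, the zero-extension of $h_j\in\prescript{}{0}{W}^{s,p}(0,T)$ to $(0,T_0)$ is not norm-preserving "with constant~$1$": the Gagliardo seminorm acquires a cross term controlled by a Hardy-type inequality, which requires $s>\frac1p$ (satisfied for $h_3$ with $s=\frac14-\frac1{4p}$ precisely because $p>5$) and yields a constant depending on $s,p,T_0$ only --- which is all you need, but the claim as stated is inaccurate. The paper instead quotes the extension lemmas of \cite{garckemenzelpluda2020} for this step.
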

For better readability, the proof of \Cref{prop:wp-lin-system} is postponed to \Cref{app:wp-lin-system}.

\begin{proposition}[Local existence and uniqueness]\label{prop:ana-prob-local-wp}
    There are $\bar r=\bar r(\gamma_0)>0$, $\bar T=\bar T(\gamma_0)>0$ such that, for all $T\in (0,\bar T)$, there exists $\varphi\in \bar B_{\prescript{}{0}{\mathbb X}_{T,p}}(0,\bar r)$ which is uniquely determined in $\bar B_{\prescript{}{0}{\mathbb X}_{T,p}}(0,\bar r)$ with $\mathcal T_{T}(\varphi)=0$. 
    
    Moreover, $\bar T$ depends continuously on $\gamma_0$ in $W^{4(1-\frac1p),p}(I)$ in the sense of \Cref{def:cts-dependence-on-gamma0}.
\end{proposition}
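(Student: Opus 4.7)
My plan is a Banach fixed point argument linearizing around $\varphi=0$, using the $T$-uniform well-posedness of the linearization from \Cref{prop:wp-lin-system}. Writing $R(\varphi)\vcentcolon=\mathcal T(\varphi)-\mathcal T(0)-\mathcal T'(0)\varphi$, the equation $\mathcal T_T(\varphi)=0$ is equivalent to the fixed point problem
\begin{equation*}
    \varphi=\Psi_T(\varphi)\vcentcolon=-\bigl(\mathcal T'(0)\bigr)^{-1}\bigl(\mathcal T(0)+R(\varphi)\bigr),
\end{equation*}
provided $\mathcal T(0)\in\prescript{}{0}{\mathbb F}_{T,p}$. By \Cref{lem:linearizations-of-analytic-system} combined with \Cref{prop:wp-lin-system}, the linearization $\mathcal T'(0)\colon\prescript{}{0}{\mathbb X}_{T,p}\to\prescript{}{0}{\mathbb F}_{T,p}$ is an isomorphism with a $T$-independent bound $C_1=C_1(T_0,\gamma_0)$ on its inverse.

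The first step is to check $\mathcal T(0)\in\prescript{}{0}{\mathbb F}_{T,p}$ with $\|\mathcal T(0)\|_{\prescript{}{0}{\mathbb F}_{T,p}}\to 0$ as $T\to 0$. The bulk component $\partial_t\bar\gamma+\mathcal A(\bar\gamma)-\lambda(\bar\gamma)\curv_{\bar\gamma}$ is a fixed $L^p$-function on $[0,T_0]\times I$ (using $T<T_0$ and the lower bound on $|\partial_x\bar\gamma|$ from \Cref{not:wp-gamma0}), so its norm over $(0,T)\times I$ vanishes by dominated convergence. The three boundary components $\mathcal B_0(0),\mathcal B_1(0),\mathcal B_3(0)$ have vanishing trace at $t=0$: indeed $\mathrm{tr}_{\partial I}\bar\gamma=\mathrm{tr}_{\partial I}\gamma_0$ for all $t$ by \eqref{eq:def-bargamma}, so $\mathcal B_0(0)\equiv 0$; while $\mathcal B_1(0)|_{t=0}$ and $\mathcal B_3(0)|_{t=0}$ vanish because $\gamma_0$ satisfies \eqref{eq:ana-fbc}. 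Hence each boundary component lies in the respective $\prescript{}{0}{W}^{s,p}(0,T)$-subspace, and its norm tends to $0$ as $T\to 0$ by the standard characterization of the zero-trace subspace (extend by zero and use that the restricted Slobodeckij norm vanishes).

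Next, since $\mathcal T$ is $C^1$ by \Cref{lem:linearizations-of-analytic-system} and \Cref{lem:tilde-lambda-bounds},
\begin{equation*}
    \|R(\varphi_1)-R(\varphi_2)\|_{\prescript{}{0}{\mathbb F}_{T,p}}\leq\sup_{s\in[0,1]}\bigl\|\mathcal T'(s\varphi_1+(1-s)\varphi_2)-\mathcal T'(0)\bigr\|\,\|\varphi_1-\varphi_2\|_{\prescript{}{0}{\mathbb X}_{T,p}}.
\end{equation*}
The $T$-uniform embeddings of $\prescript{}{0}{\mathbb X}_{T,p}$ into $C^0([0,T],C^{3+(1-\frac5p)}(I))$ supplied by \Cref{prop:xtp-mappings-and-embeddings}, together with continuity of $\mathcal T'$ and smoothness of $d^M$, $\xi$, $S$ on $B_\delta(M)$, yield that the above supremum tends to $0$ as $\max\{\|\varphi_1\|,\|\varphi_2\|\}\to 0$, \emph{uniformly in} $T\leq T_0$. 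Fix $\bar r=\bar r(\gamma_0)$ so that this Lipschitz constant of $R$ on $\bar B(0,\bar r)$ is at most $\tfrac{1}{2C_1}$, and moreover $\bar B(0,\bar r)\subseteq\mathcal U^{\gamma_0}_{T,p}$ for every $T\leq T_0$ (again possible by the $T$-uniform embedding into $C^0([0,T],C^1(I))$). Then choose $\bar T\leq T_0$ so that $C_1\|\mathcal T(0)\|_{\prescript{}{0}{\mathbb F}_{T,p}}\leq\tfrac{\bar r}{2}$ for all $T\leq\bar T$. Banach's fixed point theorem applied to $\Psi_T\colon\bar B_{\prescript{}{0}{\mathbb X}_{T,p}}(0,\bar r)\to\bar B_{\prescript{}{0}{\mathbb X}_{T,p}}(0,\bar r)$ produces the unique $\varphi$.

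Continuous dependence of $\bar T$ on $\gamma_0$ in $W^{4(1-\frac1p),p}(I)$ follows because each of $C_1$, the Lipschitz profile of $R$, and $\|\mathcal T(0)\|_{\prescript{}{0}{\mathbb F}_{T,p}}$ depends continuously on $\gamma_0$: the first is the content of \Cref{prop:wp-lin-system}, and the latter two follow from the continuous dependence of the reference trajectory $E\gamma_0$ on $\gamma_0$ through \eqref{eq:bargamma-gamma0-estimate}. I expect the main technical obstacle to be the second step, namely verifying that the boundary components of $\mathcal T(0)$ actually lie in the zero-trace Slobodeckij subspaces with norms vanishing as $T\to 0$; this must be carried out with attention to the nonlinear trace operators and the exact compatibility conditions built into \eqref{eq:def-bargamma} and \eqref{eq:ana-fbc}.
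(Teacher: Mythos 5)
Your overall strategy --- a contraction argument around $\varphi=0$ built on the $T$-uniform maximal regularity of \Cref{prop:wp-lin-system}, smallness of $\mathcal T(0)$, a uniformly small Lipschitz constant for the quadratic remainder, and Banach's fixed point theorem --- is the same as the paper's (which follows \cite[Section~9.2]{pruesssimonett2016}). There is, however, one genuine gap: you invert the full linearization $\mathcal T'(0)$ and claim this follows from \Cref{prop:wp-lin-system}. It does not. \Cref{prop:wp-lin-system} establishes the isomorphism property only for the reduced operator $\mathcal S$ of \eqref{eq:lin-system}, which omits precisely the two terms $-\frac{8\partial_x^4\bar\gamma}{|\partial_x\bar\gamma|^6}\langle\partial_x\bar\gamma,\partial_x\varphi\rangle$ and $-\Lambda'(0)\varphi\cdot \curv_{\bar\gamma}$ appearing in $\mathcal T'(0)$ (cf.\ \Cref{lem:linearizations-of-analytic-system}). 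These are excluded deliberately: the coefficient $\partial_x^4\bar\gamma$ is merely $L^p$ in time--space ($\bar\gamma\in\mathbb X_{T_0,p}$ only embeds into $C^0([0,T_0],C^3(I))$), so it cannot be treated as a coefficient in the Solonnikov-type theory behind \Cref{prop:wp-lin-system}, which requires continuous coefficients; and $\Lambda'(0)$ is nonlocal and likewise falls outside that framework. So the isomorphism property of $\mathcal T'(0)$ with a $T$-independent bound is not available off the shelf.

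The gap is repairable, and the repair is exactly the estimate the paper proves as \emph{(NL)(ii)}: both extra terms define bounded operators $\prescript{}{0}{\mathbb X}_{T,p}\to L^p((0,T)\times I)$ whose norms tend to $0$ as $T\searrow 0$ (dominated convergence for the $L^p$-coefficient paired with the $T$-uniform embedding $\prescript{}{0}{\mathbb X}_{T,p}\hookrightarrow C^0([0,T],C^1(I))$, and \Cref{lem:tilde-lambda-bounds} for $\Lambda'(0)$). A Neumann series then shows that $\mathcal T'(0)$ differs from $-\mathcal S$ by a small perturbation and is invertible for all $T\leq\bar T_1(\gamma_0)$ with a bound uniform in such $T$; after this your argument goes through. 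Alternatively --- and this is what the paper does --- one keeps these two terms in the nonlinearity (the map $\mathcal F$ in the paper's proof) and only inverts $\mathcal S$, absorbing their smallness into the contraction constant. Either way, these smallness estimates must actually be proved; as written, your proposal never addresses the two problematic terms. The remaining steps (vanishing of $\|\mathcal T(0)\|$ as $T\searrow 0$ via dominated convergence and the compatibility built into \eqref{eq:def-bargamma} and \eqref{eq:ana-fbc}, uniform-in-$T$ smallness of the Lipschitz constant of the remainder, for which the paper's Claim~1 uses the extension lemma \Cref{lem:0xtp-extension-lemma}, and the continuous dependence of $\bar T$ on $\gamma_0$) match the paper's proof.
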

\begin{proof}
    Consider the $C^1$-map $\mathcal{F}\colon\mathcal U^{\gamma_0}_{T,p}\to \prescript{}{0}{\mathbb{F}}_{T,p}$ (see \eqref{eq:SetU} and \eqref{eq:Ftp}) given by
    \begin{align}
        \mathcal F(\varphi) =&\ \mathcal T(\varphi) - \mathcal T'(0)\varphi  + \Big(- \frac{8\partial_x^4\bar\gamma}{|\partial_x\bar\gamma|^6}\langle\partial_x\bar\gamma,\partial_x\varphi\rangle  - \Lambda'(0)\varphi \cdot\curv_{\bar\gamma}, 0,0,0\Big)%-3\frac{\partial_x^3\bar\gamma}{|\partial_x\bar\gamma|^5} \langle\partial_x\bar\gamma,\partial_x\varphi\rangle\Big)
    \end{align}
    as well as the linear operator $\mathcal{S}\colon \prescript{}{0}{\X_{T,p}}\to \prescript{}{0}{\mathbb{F}}_{T,p}$
    \begin{align}
        \mathcal S(\varphi) = - \Big(&\partial_t\varphi + \frac2{|\partial_x\bar\gamma|^4}\partial_x^4\varphi + \sum_{i=1}^3 a_{i}(|\partial_x\bar\gamma|^{-1},\partial_x\bar\gamma,\dots,\partial_x^3\bar\gamma)\partial_x^i\varphi \\ & - \Lambda(0) \cdot \sum_{i=1}^2b_{i}(|\partial_x\bar\gamma|^{-1},\partial_x\bar\gamma,\partial_x^2\bar\gamma)\partial_x^i\varphi,\mathcal{B}_0'(0)\varphi,\mathcal{B}_1'(0)\varphi,\mathcal{B}_3'(0)\varphi\Big).%\\
        %&\mathrm{tr}_{\partial I}\Big[ \frac{\partial_x^3\varphi}{|\partial_x\bar\gamma|^3} + \sum_{i=0}^2d_{i,\bar\gamma}(|\partial_x\bar\gamma|^{-1},\partial_x\bar\gamma,\partial_x^2\bar\gamma)\partial_x^i\varphi\Big]\Big)
    \end{align}
    Using \Cref{lem:linearizations-of-analytic-system}, we have $\mathcal S(\varphi)=\mathcal F(\varphi)$ if and only if $\mathcal T(\varphi)=0$.
    
    We will follow the strategy outlined in \cite[Section~9.2]{pruesssimonett2016} to apply Banach's fixed point theorem to $\mathcal S^{-1}\mathcal F$. To this end, we first verify the following properties:
    \begin{itemize}
        \item[(MR)] For each $T\in (0,T_0)$, $\mathcal S\colon \prescript{}{0}{\mathbb X}_{T,p}\to \prescript{}{0}{\mathbb F}_{T,p}$ is an isomorphism and
        \begin{equation}
            \|\mathcal S^{-1}\|_{L(\prescript{}{0}{\mathbb F}_{T,p},\prescript{}{0}{\mathbb X}_{T,p})} \leq C(T_0,\gamma_0).
        \end{equation}
        Notice that this condition is verified by \Cref{prop:wp-lin-system} where the $T$-independent bound on $\|\mathcal S^{-1}\|$ follows from \eqref{eq:time-indep-maxreg-estimate}.
        \item[(NL)] For each $T\in(0,T_0)$, $\mathcal F$ is of class $C^1$ and
        \begin{enumerate}[(i)]
            \item $\|\mathcal F(0)\|_{\mathbb F_{T,p}}\to 0$ as $T\searrow 0$;
            \item $\|\mathcal F'(0)\|_{L(\prescript{}{0}{\mathbb X}_{T,p},\prescript{}{0}{\mathbb F}_{T,p})}\to 0$ as $T\searrow 0$.
        \end{enumerate}
    \end{itemize}
    First, we discuss \emph{(NL)(i)}. Notice that $\mathcal F(0)=\mathcal T(0)$ and that $\|\mathcal T(0)\|_{{\mathbb F}_{T,p}}\to 0$ for $T\searrow 0$ simply follows from the dominated convergence theorem.
    
    Next, we verify \emph{(NL)(ii)}. Notice that
    \begin{equation}
        \mathcal F'(0)\varphi = \Big(- \frac{8\partial_x^4\bar\gamma}{|\partial_x\bar\gamma|^6}\langle\partial_x\bar\gamma,\partial_x\varphi\rangle  - \Lambda'(0)\varphi \cdot\curv_{\bar\gamma}, 0,0,0\Big).
    \end{equation}
    By \Cref{prop:xtp-mappings-and-embeddings}\eqref{it:hoeld-emb}, $\bar\gamma\in \mathbb X_{T_0,p}\cap C^{0}([0,T_0],C^{3}([-1,1]))$ so that $\frac{\partial_x^4\bar\gamma}{|\partial_x\bar\gamma|^6} \partial_x\bar\gamma\in L^p((0,T_0)\times(-1,1))$. Using the embedding $\prescript{}{0}{\mathbb X}_{T,p}\hookrightarrow C^0([0,T],C^1([-1,1],\R^n))$ with time-uniform embedding-constant (as a special case of \Cref{prop:xtp-mappings-and-embeddings}\eqref{it:hoeld-emb}), for $\varphi\in\prescript{}{0}{\mathbb X}_{T,p}$, we have
    \begin{align}
        \|\frac{\partial_x^4\bar\gamma}{|\partial_x\bar\gamma|^6}\langle\partial_x\bar\gamma,\partial_x\varphi\rangle \|_{L^p((0,T)\times(-1,1))} \leq \|\frac{\partial_x^4\bar\gamma}{|\partial_x\bar\gamma|^6} \partial_x\bar\gamma\|_{L^p((0,T)\times(-1,1))}\cdot C(T_0)   \cdot \|\varphi\|_{\mathbb{X}_{T,p}}.
    \end{align}
    Together, $\|\varphi\mapsto \frac{\partial_x^4\bar\gamma}{|\partial_x\bar\gamma|^6}\langle\partial_x\bar\gamma,\partial_x\varphi\rangle \|_{L(\prescript{}{0}{\mathbb X}_{T,p},L^p((0,T)\times I))} \to 0$
    %\prescript{}{0}{\mathbb F}_{T,p})}\to 0$
    for $T\searrow 0$ by the dominated convergence theorem. Moreover, using \Cref{lem:tilde-lambda-bounds}, for $\varphi\in\prescript{}{0}{\mathbb X}_{T,p}$,
    \begin{align}
        \|\Lambda'(0)\varphi \cdot\curv_{\bar\gamma}\|_{L^p((0,T)\times(-1,1))}&\leq C(\bar\gamma) \|\Lambda'(0)\|_{L(\prescript{}{0}{\mathbb X}_{T,p},C^0([0,T]))} \|\varphi\|_{\mathbb X_{T,p}} \cdot \|1\|_{L^p((0,T)\times(-1,1))}\\
        &\leq  \Big(C(\bar\gamma) \cdot C(T_0,\gamma_0)\cdot   (2T)^{\frac1p}\Big) \cdot \|\varphi\|_{\mathbb X_{T,p}}
    \end{align}
    so that \emph{(NL)(ii)} follows.

    In the following claim, we improve \emph{(NL)(ii)} to uniform neighborhoods of $0$ in the respective $\prescript{}{0}{\mathbb X}_{T,p}$ spaces.

    \textbf{Claim 1.} \emph{For $T\in(0,T_0)$, let 
    \begin{equation}
        \eta(T,r)=\sup\{\|\mathcal F'(\psi)\|_{_{L(\prescript{}{0}{\mathbb X}_{T,p},\prescript{}{0}{\mathbb F}_{T,p})}}\mid \psi\in\mathcal U^{\gamma_0}_{T,p}\text{ with }\|\psi\|_{\mathbb X_{T,p}}<r\}.
    \end{equation}
    For each $\varepsilon>0$, choosing $\bar T>0$ such that $\|\mathcal F'(0)\|_{L(\prescript{}{0}{\mathbb X}_{\bar T,p},\prescript{}{0}{\mathbb F}_{\bar T,p})}<\frac12\varepsilon$, there exists $\bar r=\bar r(\bar T)>0$ such that $\eta(T,r)<\varepsilon$ for all $0<T\leq\bar T$ and $0<r\leq \bar r$.}

    Let $\tilde\varepsilon>0$. By \emph{(NL)(ii)}, there exists $\bar T>0$ with $\|\mathcal F'(0)\|_{L(\prescript{}{0}{\mathbb X}_{T,p},\prescript{}{0}{\mathbb F}_{T,p})}<\frac{1}{2}\tilde\varepsilon$ for all $T\in (0, \bar T]$. Using that $\mathcal{F}$ is $C^1$, there exists $\bar r>0$ such that $\|\mathcal F'(\psi)\|_{_{L(\prescript{}{0}{\mathbb X}_{\bar T,p},\prescript{}{0}{\mathbb F}_{\bar T,p})}}<\tilde\varepsilon$ for all $\psi\in B_{\prescript{}{0}{\mathbb X}_{\bar T,p}}(0,\bar r)\subseteq \mathcal U^{\gamma_0}_{\bar T,p}$. %Fix $\bar C=\bar C(\bar T,p)$ as in \Cref{lem:0xtp-extension-lemma}. 
    
    Let $0<T\leq \bar T$ and $\psi\in\mathcal U^{\gamma_0}_{T,p}$ with $\|\psi\|_{\mathbb X_{T,p}}<\frac{\bar r}{2}$. Using \Cref{lem:0xtp-extension-lemma}, there exists $\bar \psi \in \mathcal U^{\gamma_0}_{\bar T,p}$ with $\|\bar \psi\|_{\mathbb X_{\bar T,p}}<\bar r$ and $\bar\psi|_{[0,T]\times I}=\psi$. Moreover, again using \Cref{lem:0xtp-extension-lemma}, we have
    \begin{equation}
        \|\mathcal F'(\psi)\|_{_{L(\prescript{}{0}{\mathbb X}_{T,p},\prescript{}{0}{\mathbb F}_{T,p})}} \leq 2 \|\mathcal F'(\bar \psi)\|_{_{L(\prescript{}{0}{\mathbb X}_{\bar T,p},\prescript{}{0}{\mathbb F}_{\bar T,p})}} \leq 2 \tilde\varepsilon.
    \end{equation}
    That is, $\eta(T,r)<2\tilde\varepsilon$ for all $0<T\leq \bar T$ and $0<r<\frac{\bar r}{2}$. So \emph{Claim 1} follows.

    Now proceeding as in \cite[Section~9.2]{pruesssimonett2016}, using \emph{Claim 1} and properties \emph{(MR)} and \emph{(NL)(i)}, there exist $\bar T=\bar T(\gamma_0)$ and $\bar r=\bar r(\gamma_0)$ such that, for all $0<T\leq \bar T$, 
    \begin{equation}
        \mathcal S^{-1}\circ \mathcal F \colon \bar B_{\prescript{}{0}{\mathbb X}_{T,p}}(0,\bar r) \to \bar B_{\prescript{}{0}{\mathbb X}_{T,p}}(0,\bar r)
    \end{equation}
    is a contraction. In particular, Banach's fixed point theorem yields the claim.

    Finally, the continuous dependence of $\bar T(\gamma_0)$ on $\gamma_0$ in $W^{4(1-\frac1p),p}(I)$ can be verified as follows. The steps where the time $T$ needs to be reduced in the fixed-point argument above %outlined in \cite[Section~9.2]{pruesssimonett2016} 
    can be tracked back to controlling the dependence on $\gamma_0$ of the constant in \emph{(MR)} and of the uniform-in-time-smallness of the respective norms in \emph{(NL)(i)} and \emph{(NL)(ii)}. The continuous dependence of the constant in \emph{(MR)} is part of \Cref{prop:wp-lin-system}. The uniform-in-time continuous dependence of the smallness of the norms in \emph{(NL)(i)} and \emph{(NL)(ii)} can be achieved by elementary estimates similarly as the ones carried out above, using \Cref{prop:xtp-mappings-and-embeddings} and \eqref{eq:bargamma-gamma0-estimate} for $E\gamma_0-E\tilde\gamma_0$.
\end{proof}

% \begin{proposition}
%     There exists $T_1=T_1(\gamma_0)\in(0,\infty)$ such that 
%     \begin{align}
%         \mathcal T_{T_1}'(0) \colon \prescript{}{0}{\mathbb X}_{T_1,p} \to\ &L^p((0,T_1)\times I,\R^n) \times \begin{pmatrix}
%         \prescript{}{0}{W}^{1-\frac{1}{4p},p}(0,T_1;\R\times\R)\\
%         \prescript{}{0}{W}^{\frac34-\frac{1}{4p},p}(0,T_1;\R^{n-1}\times \R^{n-1})
%     \end{pmatrix} \\
%     &\times \prescript{}{0}{W}^{\frac14-\frac{1}{4p},p}(0,T_1;\R^n\times\R^n)
%     \end{align}
%     is an isomorphism.
% \end{proposition}
% \begin{proof}
%     Use Banach's fixed point theorem to deal with the nonlocal term $\Lambda'(0)\varphi$ in $\mathcal{T}'(0)$ together with the previous proposition.
% \end{proof}

\begin{proof}[Proof of \Cref{thm:wp-ana-prob}]    
    Existence and the continuous dependence of the existence time on $\gamma_0$ follow from \Cref{prop:ana-prob-local-wp}. Uniqueness can be shown with the same argument as in \cite[Theorem~2.12]{ruppspener2020} where the proof relies on the key fact that $\bar r$ does not depend on $T\in(0,\bar T)$ in \Cref{prop:ana-prob-local-wp}.
\end{proof}

%%%%%%%%%%%%%%%%%%%%%%%%%%%%%%%%%%%%%%%%%%%%%%%%%%%%%%%%%%%%%%%%%%%%%%%%%%%%%%%%%%%%%%%%%%%%%%%%
%%%%%%%%%%%%%%%%%%%%%%%%%%%%%%%%%%%%%%%%%%%%%%%%%%%%%%%%%%%%%%%%%%%%%%%%%%%%%%%%%%%%%%%%%%%%%%%%
\subsection{Instantaneous smoothing for the analytic problem}
%%%%%%%%%%%%%%%%%%%%%%%%%%%%%%%%%%%%%%%%%%%%%%%%%%%%%%%%%%%%%%%%%%%%%%%%%%%%%%%%%%%%%%%%%%%%%%%%
%%%%%%%%%%%%%%%%%%%%%%%%%%%%%%%%%%%%%%%%%%%%%%%%%%%%%%%%%%%%%%%%%%%%%%%%%%%%%%%%%%%%%%%%%%%%%%%%

In literature, when showing the instantaneous smoothing property for an \emph{analytic problem} such as \eqref{eq:ana-prob} related to similar geometric flows (see \cite[Section~4]{garckemenzelpluda2020}), using that all terms in the equation and boundary conditions have a multilinear structure, the smoothing property is immediate with \Cref{prop:xtp-mappings-and-embeddings} and classical parabolic Schauder theory \cite{solonnikov1965MathPhysics}. However, we cannot directly apply this reasoning to \eqref{eq:ana-prob} since the boundary condition $d^M(\gamma(t,\pm1))=0$ is in general fully nonlinear. We first establish time-regularity to circumvent this issue.

Using a parameter-trick as in \cite[Article~B, Theorem~3.2]{rupp2022PhD}, also see \cite{escherpruesssimonett2003} and \cite[Section~9.4.1]{pruesssimonett2016}, we obtain regularity \emph{in time} as follows.

\begin{proposition}\label{prop:time-reg}
    Let $p>5$ and $\gamma_0\in W^{4(1-\frac1p),p}(I)$ be an immersion with $\E(\gamma_0)>0$ and \eqref{eq:ana-fbc}. Let $T>0$ and $\gamma\in\mathbb X_{T,p}$ solve the analytic problem with the initial datum $\gamma_0$. Then
    \begin{equation}
        \gamma\in C^{\infty}((0,T%'
        );W^{4(1-\frac1p),p}(I,\R^n)).
    \end{equation}
\end{proposition}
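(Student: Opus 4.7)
The plan is to employ the parameter trick as in \cite[Article~B, Theorem~3.2]{rupp2022PhD}, \cite{escherpruesssimonett2003} and \cite[Section~9.4.1]{pruesssimonett2016}, combined with the quasilinear well-posedness theory just developed. Since the conclusion is local in time, I would fix an arbitrary $t_0\in(0,T)$ and reduce to showing smoothness on a neighborhood of $t_0$. Via the time-shift $t\mapsto t+t_0$, the restriction $\gamma(t_0+\cdot,\cdot)$ solves the analytic problem on $[0,T-t_0]$ with initial datum $\gamma(t_0,\cdot)\in W^{4(1-\frac1p),p}(I)$, which still satisfies \eqref{eq:ana-fbc} because $\gamma$ does so for every $t\in[0,T)$. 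Choosing $\tau\in(0,T-t_0)$ small enough, I may place myself inside the regime of \Cref{prop:ana-prob-local-wp}, with reference curve $E\gamma(t_0,\cdot)$ on $[0,\tau]\times I$ and perturbation $\psi^{\ast}\vcentcolon=\gamma(t_0+\cdot,\cdot)-E\gamma(t_0,\cdot)|_{[0,\tau]\times I}\in\bar B_{\prescript{}{0}{\mathbb X}_{\tau,p}}(0,\bar r)$.

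Next, I introduce a time-scaling parameter: fix $\varepsilon\in(0,1)$ so small that $(1+\varepsilon)\tau<T-t_0$, and for $\sigma\in I_\varepsilon\vcentcolon=(1-\varepsilon,1+\varepsilon)$ let $\gamma_\sigma(t,x)\vcentcolon=\gamma(t_0+\sigma t,x)$ on $[0,\tau]\times I$. Since the boundary conditions in \eqref{eq:ana-prob} and the Lagrange multiplier \eqref{eq:lambda-ibp-formula} involve only $x$-derivatives, and $\partial_t\gamma_\sigma=\sigma(\partial_t\gamma)(t_0+\sigma t,\cdot)$, each $\gamma_\sigma$ satisfies the rescaled analytic problem $\sigma^{-1}\partial_t\gamma_\sigma+\mathcal{A}(\gamma_\sigma)-\lambda(\gamma_\sigma)\curv_{\gamma_\sigma}=0$ with the same spatial boundary conditions and initial datum $\gamma(t_0,\cdot)$. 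This leads naturally to the parameter-dependent operator $\widetilde{\mathcal T}\colon I_\varepsilon\times\mathcal{U}^{\gamma(t_0,\cdot)}_{\tau,p}\to\prescript{}{0}{\mathbb{F}}_{\tau,p}$ obtained from $\mathcal{T}_\tau$ in \eqref{eq:Ftp} by replacing $\partial_t$ with $\sigma^{-1}\partial_t$ in the PDE component and leaving the boundary components unchanged, so that $\widetilde{\mathcal{T}}(1,\cdot)=\mathcal{T}_\tau$. Smoothness of $\widetilde{\mathcal{T}}$ in $(\sigma,\psi)$ is routine: the dependence on $\sigma$ is real-analytic on $(0,\infty)$, and the dependence on $\psi$ is built from smooth compositions (involving $d^M$, $\xi$, $T_i^{\pm1}$, $S$ and rational expressions in the $\partial_x^k\gamma_\psi$) with Sobolev-embedded quantities, exactly as in \Cref{lem:linearizations-of-analytic-system} and \Cref{lem:tilde-lambda-bounds}.

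The main step, which I expect to be the key technical point, is verifying that $D_\psi\widetilde{\mathcal{T}}(1,\psi^{\ast})$ is an isomorphism of $\prescript{}{0}{\mathbb{X}}_{\tau,p}$ onto $\prescript{}{0}{\mathbb{F}}_{\tau,p}$. At $\sigma=1$ this is the full Fréchet derivative $\mathcal{T}_\tau'(\psi^{\ast})$. By \Cref{prop:wp-lin-system}, the auxiliary operator $\mathcal{S}$ from the proof of \Cref{prop:ana-prob-local-wp} is an isomorphism of the same pair of spaces with operator norm uniformly bounded in $\tau\leq T_0$. The properties (NL)(ii) and Claim~1 in that proof then show that $\mathcal{T}_\tau'(\psi^{\ast})-\mathcal{S}$ has arbitrarily small norm provided $\tau$ and $\bar r$ are chosen small enough, so a Neumann-series argument makes $\mathcal{T}_\tau'(\psi^{\ast})$ an isomorphism. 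The implicit function theorem now furnishes a smooth map $\sigma\mapsto\psi_\sigma\in\prescript{}{0}{\mathbb{X}}_{\tau,p}$ defined near $\sigma=1$ with $\widetilde{\mathcal{T}}(\sigma,\psi_\sigma)=0$ and $\psi_1=\psi^{\ast}$. The uniqueness part of \Cref{thm:wp-ana-prob}, applied to the $\sigma$-rescaled problem, identifies $\psi_\sigma$ with $\gamma_\sigma-E\gamma(t_0,\cdot)|_{[0,\tau]\times I}$, so $\sigma\mapsto\gamma_\sigma$ is smooth as a map into $\mathbb{X}_{\tau,p}$.

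Finally, differentiating $\gamma_\sigma(t,x)=\gamma(t_0+\sigma t,x)$ in $\sigma$ at $\sigma=1$ gives the chain-rule identity $\partial_\sigma^k\gamma_\sigma|_{\sigma=1}(t,x)=t^k(\partial_t^k\gamma)(t_0+t,x)$, so $t\mapsto t^k\partial_t^k\gamma(t_0+t,\cdot)$ lies in $\mathbb{X}_{\tau,p}$ for every $k\in\N$. Composing with the embedding $\mathbb{X}_{\tau,p}\hookrightarrow C^0([0,\tau];W^{4(1-\frac1p),p}(I))$ from \Cref{prop:xtp-mappings-and-embeddings}\eqref{it:buc-emb} shows that each $\partial_t^k\gamma$ is continuous into $W^{4(1-\frac1p),p}(I)$ on every compact subinterval of $(t_0,t_0+\tau]$. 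As $t_0\in(0,T)$ was arbitrary, this yields $\gamma\in C^\infty((0,T);W^{4(1-\frac1p),p}(I,\R^n))$, as claimed.
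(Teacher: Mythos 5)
Your proposal is correct in substance and uses the same underlying device as the paper (parameter trick plus implicit function theorem in the maximal-regularity setting), but the implementation is genuinely different. The paper keeps the original time interval and perturbs by a \emph{localized time translation} $t\mapsto t+\xi(t)r$; this forces it to first prove invertibility of the full linearization at the solution $\gamma$ itself (\Cref{lem:loc-wp-of-full-lin}, a Neumann-series perturbation of \Cref{prop:wp-lin-system}), and then to run an explicit continuation argument using \Cref{rem:dep-t'-in-loc-wp-of-full-lin}. You instead restart the problem at each $t_0$ and use a \emph{time dilation} $\sigma\mapsto\gamma(t_0+\sigma t,\cdot)$, which lets you recycle the small-time contraction framework of \Cref{prop:ana-prob-local-wp} verbatim: your identity $\mathcal T_\tau'(\psi^\ast)=-\mathcal S+\mathcal F'(\psi^\ast)$ together with \emph{(MR)} and \emph{Claim~1} does give the required isomorphism, so you avoid \Cref{lem:loc-wp-of-full-lin} entirely; and the dilation derivative $t^k\partial_t^k\gamma(t_0+t,\cdot)\in\mathbb X_{\tau,p}$ plus \Cref{prop:xtp-mappings-and-embeddings}\eqref{it:buc-emb} replaces the paper's composition with the evaluation map at $t_0$. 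Two points deserve to be made explicit. First, identifying the IFT branch $\psi_\sigma$ with $\gamma_\sigma-E\gamma(t_0,\cdot)$ should not be attributed to the uniqueness in \Cref{thm:wp-ana-prob} (the $\sigma$-rescaled equation is not literally the analytic problem of \Cref{def:ana-prob}); rather, use the \emph{local} uniqueness furnished by the IFT together with continuity of $\sigma\mapsto\gamma(t_0+\sigma\cdot,\cdot)$ in $\mathbb X_{\tau,p}$ (continuity of dilations in $L^p$-type spaces, as in \cite[Proposition~5.3]{escherpruesssimonett2003}). Second, ``as $t_0$ was arbitrary'' hides a covering step: your smoothness holds on $(t_0,t_0+\tau(t_0)]$ with $\tau(t_0)$ depending on $\gamma(t_0,\cdot)$, and to cover a given $s\in(0,T)$ you need $t_0<s$ with $s-t_0<\tau(t_0)$; this requires the continuous dependence of the existence time on the initial datum in $W^{4(1-\frac1p),p}(I)$ (\Cref{prop:ana-prob-local-wp} and \Cref{def:cts-dependence-on-gamma0}), i.e.\ exactly the ingredient the paper invokes in its maximality argument. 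With these two additions your proof is complete.
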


A major part in the proof of \Cref{prop:time-reg} is the following consequence of \Cref{prop:wp-lin-system}.

\begin{lemma}\label{lem:loc-wp-of-full-lin}
    In the setting of \Cref{prop:time-reg}, there exists $T'=T'(T,\gamma)\in(0,T]$ such that the following is true. 
    
    For all $f\in L^p((0,T')\times I)$, $h_0\in \prescript{}{0}{W}^{1-\frac{1}{4p},p}(0,T';\R\times\R)$, $h_1\in \prescript{}{0}{W}^{\frac34-\frac{1}{4p},p}(0,T';\R^{n-1}\times \R^{n-1})$ and $h_3\in \prescript{}{0}{W}^{\frac14-\frac{1}{4p},p}(0,T';\R^n\times\R^n)$, there exists a unique solution $\varphi\in \prescript{}{0}{\mathbb X}_{T',p}$ of
    \begin{align}
        \partial_t\varphi + \frac2{|\partial_x\gamma|^4}\partial_x^4\varphi + \sum_{i=1}^3 a_{i}(|\partial_x\gamma|^{-1},\partial_x\gamma,\dots,\partial_x^3\gamma)\partial_x^i\varphi - \frac{8\partial_x^4\gamma}{|\partial_x\gamma|^6}\langle\partial_x\gamma,\partial_x\varphi\rangle - \lambda'(\gamma)\varphi\cdot \curv_{\gamma}   \\
            \qquad - \lambda(\gamma) \cdot \sum_{i=1}^2b_{i}(|\partial_x\gamma|^{-1},\partial_x\gamma,\partial_x^2\gamma)\partial_x^i\varphi = f
            \qquad \text{in $L^p((0,T')\times I)$}
    \end{align}
    with boundary conditions
    \begin{equation}%\label{eq:lin-system}
        \begin{cases}
        
            \mathrm{tr}_{\partial I}\langle\xi\circ\gamma,\varphi\rangle=h_0&\text{on $[0,T']$}\\
            % \begin{pmatrix} 
            %     %\mathrm{tr}_{\partial I}\langle\xi\circ\gamma,\varphi\rangle\\
            %     \langle T_1^{\pm1}(\mathrm{tr}_{\partial I}\gamma),\mathrm{tr}_{\partial I}\partial_x\varphi\rangle+\mathrm{tr}_{\partial I} [c_1\circ \gamma \cdot \varphi]\\
            %     \vdots\\
            %     \langle T_{n-1}^{\pm1}(\mathrm{tr}_{\partial I}\gamma),\mathrm{tr}_{\partial I}\partial_x\varphi\rangle+\mathrm{tr}_{\partial I} [c_{n-1}\circ \gamma \cdot \varphi]
            % \end{pmatrix} 
            \langle T_i^{\pm1}(\mathrm{tr}_{\partial I}\gamma),\mathrm{tr}_{\partial I}\partial_x\varphi\rangle+\mathrm{tr}_{\partial I} [c_i\circ \gamma \cdot \varphi]= h_1^{(i)}&\text{on $[0,T']$, for $1\leq i\leq n-1$}\\
            \mathrm{tr}_{\partial I}\Big[ \frac{\partial_x^3\varphi}{|\partial_x\gamma|^3} + \sum_{i=0}^2d_{i,\gamma}(|\partial_x\gamma|^{-1},\partial_x\gamma,\dots,\partial_x^3\gamma)\partial_x^i\varphi\Big] = h_3&\text{on $[0,T']$}     
        \end{cases}
    \end{equation}
    where we use the notation of \Cref{lem:linearizations-of-analytic-system}. Moreover, one has an estimate
    \begin{align}
        \|\varphi\|_{\X_{T',p}} \leq C &\Big( \|f\|_{L^p((0,T')\times I)} + \|h_0\|_{W^{1-\frac{1}{4p},p}(0,T')}\\
        &\qquad+\|h_1\|_{W^{\frac34-\frac{1}{4p},p}(0,T')}+\|h_3\|_{W^{\frac{1}{4}-\frac{1}{4p},p}(0,T')} \Big).
    \end{align}
\end{lemma}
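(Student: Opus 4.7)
The strategy is a perturbation argument around the linearization of \Cref{prop:wp-lin-system}. The key observation is that since $\gamma \in \mathbb{X}_{T,p}$ solves the analytic problem with $\gamma(0,\cdot) = \gamma_0 = \bar\gamma(0,\cdot)$, for $T'$ small the coefficients built from $\gamma$ are close on $[0,T']$ to those built from $\bar\gamma = E\gamma_0$, so the linearization around $\gamma$ becomes a small perturbation of the operator $\mathcal{S}$ from \Cref{prop:wp-lin-system}.

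More precisely, fix any $T' \in (0, T_0(\gamma_0)]$ with $T' \le T$, and apply \Cref{prop:wp-lin-system} to obtain an isomorphism $\mathcal S : \prescript{}{0}{\mathbb{X}}_{T',p} \to \prescript{}{0}{\mathbb{F}}_{T',p}$ with $\|\mathcal S^{-1}\| \le C_0 = C_0(T_0,\gamma_0)$, where the bound is \emph{uniform in $T'$} by \eqref{eq:time-indep-maxreg-estimate}. Denote the full linear operator appearing in the statement of the lemma (on the left-hand side together with the three boundary traces) by $\mathcal L_\gamma$ and decompose
\[
\mathcal L_\gamma \;=\; \mathcal S + \mathcal P_{T'},
\]
where $\mathcal P_{T'}$ collects the differences between coefficients in $\bar\gamma$ and in $\gamma$ (in both the interior PDE and the boundary operators $\mathcal{B}_j'$), the additional interior term $-\tfrac{8 \partial_x^4\gamma}{|\partial_x\gamma|^6}\langle \partial_x\gamma,\partial_x\varphi\rangle$ which is present in $\mathcal L_\gamma$ but absent from $\mathcal S$, and the nonlocal zero-order term $-\lambda'(\gamma)\varphi \cdot \curv_\gamma$ together with the replacement of $\Lambda(0)$ by the time-dependent $\lambda(\gamma(t,\cdot))$.

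To control $\|\mathcal P_{T'}\|_{L(\prescript{}{0}{\mathbb X}_{T',p},\prescript{}{0}{\mathbb F}_{T',p})}$, I would invoke the embedding $\mathbb{X}_{T,p} \hookrightarrow C^\sigma([0,T]; C^{3+(1-4\sigma-1/p)}(I))$ of \Cref{prop:xtp-mappings-and-embeddings}\eqref{it:hoeld-emb} with $T$-uniform constant. Since $\gamma - \bar\gamma \in \prescript{}{0}{\mathbb X}_{T,p}$ vanishes at $t=0$, all coefficient differences appearing in $\mathcal P_{T'}$ (smooth functions of $|\partial_x\gamma|^{-1}, \partial_x^k\gamma$ and $|\partial_x\bar\gamma|^{-1}, \partial_x^k\bar\gamma$ for $k \le 3$) tend to zero in $C^0([0,T']\times I)$ as $T' \to 0$. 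Combined with the uniform-in-$T'$ bound on $\|\partial_x^i \varphi\|_{L^p((0,T')\times I)}$ for $i \le 3$ via $\|\varphi\|_{\mathbb X_{T',p}}$, this yields smallness of the lower-order coefficient perturbations in the interior and smallness of the boundary-trace perturbations via \Cref{prop:xtp-mappings-and-embeddings}. The extra term $-\tfrac{8\partial_x^4\gamma}{|\partial_x\gamma|^6}\langle\partial_x\gamma,\partial_x\varphi\rangle$ is handled exactly as the analogous term in the verification of property \textrm{(NL)(ii)} in the proof of \Cref{prop:ana-prob-local-wp} (dominated convergence applied to the $L^p$-integrable coefficient), and the smallness of $\lambda(\gamma) - \Lambda(0)$ and $\lambda'(\gamma) - \Lambda'(0)$ follows from an adaptation of \Cref{lem:tilde-lambda-bounds} centered at $\gamma$ instead of $\bar\gamma$, combined with a $T'^{1/p}$ factor.

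Choosing $T'$ small enough that $\|\mathcal P_{T'}\| \le \tfrac{1}{2 C_0}$, a Neumann series then shows that $\mathcal L_\gamma = \mathcal S(\mathrm{Id} + \mathcal S^{-1}\mathcal P_{T'})$ is an isomorphism with $\|\mathcal L_\gamma^{-1}\| \le 2 C_0$, producing existence, uniqueness, and the claimed estimate simultaneously. The main technical obstacle is bookkeeping: one must verify that every component of $\mathcal P_{T'}$ — including the highest-order coefficient perturbation $(\tfrac{2}{|\partial_x\gamma|^4} - \tfrac{2}{|\partial_x\bar\gamma|^4})\partial_x^4\varphi$ and the nonlocal contribution $\lambda'(\gamma)\varphi \cdot \curv_\gamma$ — carries a factor tending to zero as $T' \to 0$, which for the highest-order piece relies on the fact that the coefficient difference is small in $C^0([0,T']\times I)$ while $\partial_x^4\varphi$ is only controlled in $L^p((0,T')\times I)$ with a $T'$-uniform constant.
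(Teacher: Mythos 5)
Your proposal is correct and follows essentially the same route as the paper: a decomposition of the full linearization at $\gamma$ into the operator of \Cref{prop:wp-lin-system} plus a perturbation whose operator norm tends to zero as $T'\searrow 0$ (via the $T'$-uniform maximal regularity bound, the time-uniform embeddings of \Cref{prop:xtp-mappings-and-embeddings}, and the fact that $\gamma-\bar\gamma$ has vanishing temporal trace), concluded by a Neumann series. The only detail the paper makes explicit that you gloss over is the use of \Cref{lem:estimate-in-boundary-terms} to convert the $C^0$-smallness of the boundary coefficient differences into smallness in the fractional Sobolev trace norms.
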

\begin{proof}
    By \Cref{prop:wp-lin-system}, writing $\bar\gamma=E\gamma_0$, we have that for all $\tilde T\in (0,T]$, the linear operator 
    \begin{align}
        \mathcal L\colon \prescript{}{0}{\mathbb X}_{\tilde T,p} \to \prescript{}{0}{\mathbb F}_{\tilde T,p},\quad
        \varphi&\mapsto \Big(\partial_t\varphi + \frac2{|\partial_x\bar\gamma|^4}\partial_x^4\varphi + \sum_{i=1}^3 a_{i}(|\partial_x\bar\gamma|^{-1},\partial_x\bar\gamma,\dots,\partial_x^3\bar\gamma)\partial_x^i\varphi \\
        &\qquad- \lambda(\bar\gamma) \cdot \sum_{i=1}^2b_{i}(|\partial_x\bar\gamma|^{-1},\partial_x\bar\gamma,\partial_x^2\bar\gamma)\partial_x^i\varphi, \\
        &\qquad\mathrm{tr}_{\partial I}\langle\xi\circ\bar\gamma,\varphi\rangle, \begin{pmatrix} 
            %\mathrm{tr}_{\partial I}\langle\xi\circ\bar\gamma,\varphi\rangle\\
            \langle T_i^{\pm1}(\mathrm{tr}_{\partial I}\bar\gamma),\mathrm{tr}_{\partial I}\partial_x\varphi\rangle+\mathrm{tr}_{\partial I} [c_i\circ \bar\gamma \cdot \varphi]
        \end{pmatrix}_{i=1}^{n-1},\\
        &\qquad\mathrm{tr}_{\partial I}\Big[ \frac{\partial_x^3\varphi}{|\partial_x\bar\gamma|^3} + \sum_{i=0}^2d_{i,\bar\gamma}(|\partial_x\bar\gamma|^{-1},\partial_x\bar\gamma,\dots,\partial_x^3\bar\gamma)\partial_x^i\varphi\Big]\Big)
    \end{align}
    is invertible and 
    \begin{equation}\label{eq:loc-wp-fullLin-1}
        \|\mathcal L^{-1}\|_{L(\prescript{}{0}{\mathbb F}_{\tilde T,p},\prescript{}{0}{\mathbb X}_{\tilde T,p})} \leq C(T,\gamma_0) \quad\text{for all $0<\tilde T\leq T$}.
    \end{equation}
    Furthermore, consider $\mathcal K\colon \prescript{}{0}{\mathbb X}_{\tilde T,p} \to \prescript{}{0}{\mathbb F}_{\tilde T,p}$ given by 
    \begin{align}
        &\mathcal K(\varphi) =  \big[- \frac{8\partial_x^4\gamma}{|\partial_x\gamma|^6}\langle\partial_x\gamma,\partial_x\varphi\rangle - \lambda'(\gamma)\varphi\cdot \curv_{\gamma} ,0,0,0\big]\\
        &- \Big[(\frac2{|\partial_x\bar\gamma|^4}-\frac2{|\partial_x\gamma|^4})\partial_x^4\varphi + \sum_{i=1}^3 (a_{i}(|\partial_x\bar\gamma|^{-1},\dots,\partial_x^3\bar\gamma)-a_{i}(|\partial_x\gamma|^{-1},\dots,\partial_x^3\gamma))\partial_x^i\varphi \\
        &\qquad- \sum_{i=1}^2 (\lambda(\bar\gamma)b_{i}(|\partial_x\bar\gamma|^{-1},\partial_x\bar\gamma,\partial_x^2\bar\gamma)-\lambda(\gamma)b_{i}(|\partial_x\gamma|^{-1},\partial_x\gamma,\partial_x^2\gamma))\partial_x^i\varphi, \\
        &\mathrm{tr}_{\partial I}\langle\xi\circ\bar\gamma-\xi\circ\gamma,\varphi\rangle,\\
        &\begin{pmatrix} 
            %\mathrm{tr}_{\partial I}\langle\xi\circ\bar\gamma,\varphi\rangle\\
            \langle T_i^{\pm1}(\mathrm{tr}_{\partial I}\bar\gamma)-T_i^{\pm1}(\mathrm{tr}_{\partial I}\gamma),\mathrm{tr}_{\partial I}\partial_x\varphi\rangle+\mathrm{tr}_{\partial I} [(c_i\circ \bar\gamma-c_i\circ \gamma) \cdot \varphi]
        \end{pmatrix}_{i=1}^{n-1},\\
        &\mathrm{tr}_{\partial I}\Big( (\frac{1}{|\partial_x\bar\gamma|^3}-\frac{1}{|\partial_x\gamma|^3})\partial_x^3\varphi + \sum_{i=0}^2(d_{i,\bar\gamma}(|\partial_x\bar\gamma|^{-1},\dots,\partial_x^3\bar\gamma)-d_{i,\gamma}(|\partial_x\gamma|^{-1},\dots,\partial_x^3\gamma))\partial_x^i\varphi\Big)\Big].
    \end{align}
    In order to prove the claim, we show that there exists $T'\in (0,T)$ such that $\mathcal L+\mathcal K\colon \prescript{}{0}{\mathbb X}_{ T',p} \to \prescript{}{0}{\mathbb F}_{ T',p}$ is invertible. Using \eqref{eq:loc-wp-fullLin-1} and a Neumann-series argument, this follows from 
    \begin{equation}\label{eq:loc-wp-fullLin-2}
        \|\mathcal K\|_{L(\prescript{}{0}{\mathbb X}_{\tilde T,p},\prescript{}{0}{\mathbb F}_{\tilde T,p})}\to 0\quad\text{as $\tilde T\searrow 0$}.
    \end{equation}
    We now verify \eqref{eq:loc-wp-fullLin-2}. First observe that, for the first term in the above definition of $\mathcal K(\varphi)$, we have
    \begin{equation}
        \|\varphi\mapsto - \frac{8\partial_x^4\gamma}{|\partial_x\gamma|^6}\langle\partial_x\gamma,\partial_x\varphi\rangle - \lambda'(\gamma)\varphi\cdot \curv_{\gamma}\|_{L(\prescript{}{0}{\mathbb X}_{\tilde T,p},\prescript{}{0}{\mathbb F}_{\tilde T,p})} \to 0 \quad\text{for $\tilde T\searrow 0$}
    \end{equation}
    with the same estimates used to show \emph{(NL)(ii)} in the proof of \Cref{prop:ana-prob-local-wp}. By the dominated convergence theorem,
    \begin{equation}\label{eq:loc-wp-fullLin-3}
        \|\bar \gamma-\gamma\|_{\mathbb X_{\tilde T,p}}\to 0 \quad\text{as $\tilde T\searrow 0$}.
    \end{equation}
    Since $\bar\gamma - \gamma\in \prescript{}{0}{\mathbb X}_{\tilde T,p}$ for all $0<\tilde T\leq T$, the time-uniform embeddings in \Cref{prop:xtp-mappings-and-embeddings} and \eqref{eq:loc-wp-fullLin-3} together imply that, for $\tilde T\searrow 0$
    \begin{align}
        \|\bar\gamma-\gamma\|_{C^0([0,\tilde T],C^3(I))} \leq C(p) \|\bar\gamma-\gamma\|_{\mathrm{BUC}([0,\tilde T],W^{4(1-\frac1p),p}(I))} \to 0.
    \end{align}
    Especially, the $C^0([0,\tilde T]\times I)$-norm of the differences of the coefficients evaluated in $\bar\gamma$ resp.\ in $\gamma$ appearing in the above definition of $\mathcal K(\varphi)$ converge to $0$ as $\tilde T\searrow 0$. Using this key observation as well as the elementary estimate \Cref{lem:estimate-in-boundary-terms}, \eqref{eq:loc-wp-fullLin-2} follows.
\end{proof}

\begin{remark}[Controlling the dependency of $T'$ in \Cref{lem:loc-wp-of-full-lin}]\label{rem:dep-t'-in-loc-wp-of-full-lin}
    We work in the context of \Cref{lem:loc-wp-of-full-lin}. Suppose that $T>0$ and $\gamma$, $\tilde\gamma  \in \mathbb X_{T,p}$ are solutions of the analytic problem with initial data $\gamma_0$, $\tilde\gamma_0$. We claim that for all $\delta>0$, there exists $\zeta>0$ such that, if 
    \begin{equation}
        \|\gamma-\tilde\gamma\|_{\mathbb X_{T,p}}<\zeta,
    \end{equation}
    then $T'(T,\tilde\gamma)\geq T'(T,\gamma)-\delta$.

    This can be obtained by verifying two things in the above proof. First, the fact that the constant in \eqref{eq:loc-wp-fullLin-1} depends continuously on $\gamma_0=\mathrm{tr}_{t=0}(\gamma)$ in $W^{4(1-\frac1p),p}(I)$, see \Cref{prop:wp-lin-system}. Second, the fact that $\|\mathcal K_{\tilde\gamma}\|_{L(\prescript{}{0}{\mathbb X}_{\tilde T,p},\prescript{}{0}{\mathbb F}_{\tilde T,p})}$ in \eqref{eq:loc-wp-fullLin-2} is controlled by $\|\mathcal K_\gamma\|_{L(\prescript{}{0}{\mathbb X}_{\tilde T,p},\prescript{}{0}{\mathbb F}_{\tilde T,p})}$ and $\|\gamma-\tilde\gamma\|_{\mathbb X_{T,p}}$, uniformly in $\tilde T\in(0,T]$. 
\end{remark}

\begin{proof}[Proof of \Cref{prop:time-reg}]
    Let $t_0\in (0,T')$ where $T'\in(0,T)$ is fixed later, let $\varepsilon_0>0$ such that $[t_0-3\varepsilon_0,t_0+3\varepsilon_0]\subseteq(0,T')$ and let $\xi\in C^\infty(\R)$ with $\chi_{[t_0-\varepsilon_0,t_0+\varepsilon_0]}\leq\xi\leq\chi_{[t_0-2\varepsilon_0,t_0+2\varepsilon_0]}$. Using \cite[Proposition~2.2]{escherpruesssimonett2003}, there exists $r_0>0$ such that, for all $r\in(-r_0,r_0)$, $t\mapsto t+\xi(t)r$ is a smooth diffeomorphism of $[0,T']$. Furthermore, by \cite[Proposition~5.3]{escherpruesssimonett2003}, for each $r\in(-r_0,r_0)$, the mapping 
    \begin{align}
        \Psi_r^*\colon \mathbb{X}_{T',p} \to \mathbb{X}_{T',p},\quad
        g\mapsto \big[(t,x)\mapsto g(t+\xi(t)r,x)\big]
    \end{align}
    is well-defined, linear and bounded. Moreover, we have
    \begin{equation}\label{eq:time-reg-1}
        \partial_t\Psi_r^*g(t,x) = (1+\xi'(t)r) \Psi_r^*(\partial_tg)(t,x) = (1+\xi'(t)r)\partial_tg(t+\xi(t)r,x).
    \end{equation}
    Next, consider the parameter-dependent mapping
    \begin{align}
        \mathcal G&\colon (E\gamma_0 +\mathcal U^{\gamma_0}_{T',p}) \times (-r_0,r_0) \to \prescript{}{0}{\mathbb F}_{T',p},\\
        (g,r)&\mapsto \big(\partial_tg + (1+\xi'(t)r)\big(\mathcal A(g) -\lambda(g)\curv_{g} \big), \mathcal{B}_0(g-E\gamma_0),\mathcal{B}_1(g-E\gamma_0),\mathcal{B}_3(g-E\gamma_0) \big).
    \end{align}
    One checks that $\mathcal{G}$ is well defined and a smooth map using \eqref{eq:SetU}. We now fix $T'\in(0,T)$ so that, by \Cref{lem:loc-wp-of-full-lin}, $D_1\mathcal G(\gamma,0)\colon \prescript{}{0}{\mathbb X}_{T',p}\to \prescript{}{0}{\mathbb F}_{T',p}$ is invertible. 

    We have $\mathcal G(\Psi_r^*\gamma,r)=0$ for all $r\in(-r_0,r_0)$, using \eqref{eq:time-reg-1} and the fact that $\gamma$ solves \eqref{eq:ana-prob}. 

    Altogether, the implicit function theorem applied in $(\gamma,0)$ yields that there exists $r_1\in(0,r_0)$ %and a map
    such that the map $(-r_1,r_1)\to \bar\gamma+\mathcal U^{\gamma_0}_{T',p}$, $r\mapsto \Psi_r^*\gamma$ is smooth. In particular, composing with the bounded linear evaluation map $\mathbb X_{T',p}\to W^{4(1-\frac1p),p}(I)$, $g\mapsto g(t_0,\cdot)$ (see \Cref{prop:xtp-mappings-and-embeddings}\eqref{it:buc-emb}), we get smoothness of 
    \begin{equation}
        (-r_1,r_1)\to W^{4(1-\frac1p),p}(I),\quad r\mapsto \gamma(t_0+\xi(t_0)r,\cdot)=\gamma(t_0+r,\cdot).
    \end{equation}
    Altogether, we obtain $\gamma\in C^\infty((0,T'),W^{4(1-\frac1p),p}(I))$ with $T'=T'(T,\gamma)$ as in \Cref{lem:loc-wp-of-full-lin}. 
    
    Let $\bar T\in(0,T]$ be maximal with the property $\gamma\in C^\infty((0,\bar T),W^{4(1-\frac1p),p}(I))$. For the sake of contradiction, suppose that $\tau\vcentcolon=T-\bar T>0$. Taking a sequence $t_j\nearrow \bar T$, we have $$\gamma(t_j+\cdot,\cdot)|_{[0,\tau]\times I}\to \gamma(\bar T+\cdot,\cdot)|_{[0,\tau]\times I}\quad\text{in $\mathbb X_{\tau,p}$.}$$ 
    In particular, applying what we proved above to each $\gamma(t_j+\cdot,\cdot)|_{[0,\tau]\times I}$ and using \Cref{rem:dep-t'-in-loc-wp-of-full-lin}, we have $$\gamma\in C^\infty\Big(\big(t_j,t_j+\frac12 T'(\tau,\gamma(\bar T+\cdot,\cdot)|_{[0,\tau]\times I})\big),W^{4(1-\frac1p),p}(I)\Big)$$ for $j$ sufficiently large. This contradicts the choice of $\bar T$ and thus $\bar T=T$.
\end{proof}

Using the time-regularity proved in \Cref{prop:time-reg}, an application of (linear) Schauder theory yields also smoothness in space and thus concludes the proof of instantaneous smoothing for solutions of the analytic problem.

\begin{theorem}\label{thm:para-smooth}
    Let $p>5$ and $\gamma_0\in W^{4(1-\frac1p),p}(I)$ be an immersion with $\E(\gamma_0)>0$ and \eqref{eq:ana-fbc}. Let $T>0$ and $\gamma\in\mathbb X_{T,p}$ solve the analytic problem with the initial datum $\gamma_0$. Then %there exists $T'\in(0,T]$  
    \begin{equation}
        \gamma\in C^{\infty}((0,T)\times [-1,1],\R^n).
    \end{equation}
\end{theorem}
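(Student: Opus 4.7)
The plan is to promote the time-smoothness from \Cref{prop:time-reg} to joint $C^\infty$-regularity on $(0,T)\times I$ via a spatial bootstrap based on linear parabolic Schauder theory. The analytic equation \eqref{eq:ana-prob}, together with its boundary conditions, is regarded as a linear parabolic boundary value problem with coefficients frozen at the known $\gamma$; Schauder estimates then iteratively improve the spatial regularity of $\gamma$ and of all of its time derivatives.

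Concretely, \Cref{prop:time-reg} yields $\gamma\in C^\infty((0,T);W^{4(1-\frac1p),p}(I,\R^n))$, and since $p>5$, the Sobolev embedding gives $W^{4(1-\frac1p),p}(I)\hookrightarrow C^{3+\alpha}(I)$ for $\alpha\defeq 1-\frac5p\in(0,1)$. Writing $\gamma^{(j)}\defeq\partial_t^j\gamma$, this provides the base case $\gamma^{(j)}\in C^0_\loc((0,T);C^{3+\alpha}(I))$ for every $j\in\N_0$ of an induction on $k\in\N_0$ with hypothesis $\gamma^{(j)}\in C^0_\loc((0,T);C^{3+k+\alpha}(I))$ for all $j$. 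For the inductive step, differentiating \eqref{eq:ana-prob} $j$ times in $t$ shows that $\gamma^{(j)}$ solves a linear parabolic boundary value problem whose principal symbol agrees with that of \Cref{lem:linearizations-of-analytic-system} and whose coefficients are smooth in $t$ and of class $C^{k+\alpha}$ in $x$, by the inductive hypothesis and the chain rule. The Lopatinskii--Shapiro condition, being a property of the principal symbol only, is inherited from the analysis underlying \Cref{prop:wp-lin-system}. Applying classical parabolic Schauder theory \cite{solonnikov1965MathPhysics} on any compact subinterval $[t_1,t_2]\subset(0,T)$ then upgrades each $\gamma^{(j)}$ to $C^{4+k+\alpha,\,1+(k+\alpha)/4}([t_1,t_2]\times I)$, closing the induction; letting $k\to\infty$ yields $\gamma\in C^\infty((0,T)\times I,\R^n)$.

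The main obstacle to this argument is that the zeroth-order boundary condition $d^M(\gamma(t,\pm 1))=0$ is genuinely nonlinear and thus not directly accessible by linear Schauder theory. However, since $t\mapsto\gamma(t,\pm 1)$ is smooth by \Cref{prop:time-reg}, the linearization of this condition around $\gamma$ --- namely $\langle\xi(\gamma(\pm 1)),\,\cdot\,\rangle=0$, see \eqref{eq:shape-op} and \Cref{lem:linearizations-of-analytic-system} --- has smooth-in-$t$ coefficients, so the frozen linear boundary value problem is well-posed in the relevant Schauder spaces at each stage of the induction, allowing the bootstrap to proceed.
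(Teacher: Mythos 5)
Your overall strategy --- first the time-regularity of \Cref{prop:time-reg}, then a spatial bootstrap via linear parabolic Schauder theory from \cite{solonnikov1965MathPhysics} --- is exactly the paper's. The gap sits precisely in the step you yourself flag as the main obstacle. To run a Schauder estimate on $\gamma$ you must exhibit a \emph{linear} parabolic boundary value problem that $\gamma$ (or a known modification of it) actually \emph{solves}, with coefficients and inhomogeneities of known regularity. Well-posedness of the frozen linearized problem with boundary operator $\langle\xi(\gamma(t,\pm1)),\,\cdot\,\rangle$ does not achieve this: $\gamma$ does not satisfy $\langle\xi(\gamma(t,\pm1)),\gamma(t,\pm1)\rangle=0$; it satisfies the fully nonlinear condition $d^M(\gamma(t,\pm1))=0$, and these are not equivalent as conditions on the unknown. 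As written, your argument never identifies the linear system of which $\gamma$ is a solution, so the bootstrap cannot start. A secondary wrinkle: for $j\ge 1$ the linearized operator of \Cref{lem:linearizations-of-analytic-system} carries $\partial_x^4\gamma$ as a coefficient of a first-order term, which under your inductive hypothesis is only $C^{k-1+\alpha}$ in $x$, one order short of what your step claims.

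The paper's resolution is simpler and worth internalizing: since \Cref{prop:time-reg} makes $t\mapsto\gamma(t,\pm1)$ and $t\mapsto\partial_x\gamma(t,\pm1)$ \emph{known} smooth functions, one discards the original boundary operators entirely and imposes Dirichlet-type conditions $\tilde\gamma(t,\pm1)=\zeta(t)\gamma(t,\pm1)$ and $\partial_x\tilde\gamma(t,\pm1)=\zeta(t)\partial_x\gamma(t,\pm1)$ on the time-cut-off curve $\tilde\gamma=\zeta\cdot\gamma$, with all coefficients and the $\lambda(\gamma)$-term frozen at $\gamma$. These conditions are trivially satisfied, are always complementing for a fourth-order system (no Lopatinskii--Shapiro verification needed beyond the standard one), and the cut-off $\zeta$ makes all compatibility conditions at the artificial initial time hold to every order --- a point your restriction to $[t_1,t_2]\subset(0,T)$ also leaves unaddressed. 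Your induction could be repaired along the same lines, by treating all boundary traces of $\gamma$ up to third order as known smooth inhomogeneous data rather than by appealing to the linearized homogeneous conditions; but the justification you give, namely smoothness in $t$ of the coefficients of the linearized boundary operator, is not by itself sufficient.
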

\begin{proof}
    %Since smoothness is a local property, we may without loss of generality suppose that $T'=T$ in \Cref{prop:time-reg}. 
    %Fix $T'\in (0,T]$ as in \Cref{prop:time-reg}. 
    For $0<\varepsilon<T$ arbitrary, consider a cut-off function $\zeta\in C^\infty(\R)$ with $\chi_{(\varepsilon,\infty)} \leq \zeta \leq \chi_{(\frac12\varepsilon,\infty)}$ and define $\tilde\gamma(t,x)=\zeta(t) \cdot \gamma(t,x)$. Then $\tilde\gamma$ solves the linear system
    \begin{equation}
        \begin{cases}
            \partial_t\tilde\gamma+\frac{2}{|\partial_x\gamma|^4}\partial_x^4\tilde\gamma - 12 \frac{\langle \partial_x^2\gamma,\partial_x\gamma\rangle}{|\partial_x\gamma|^6}\partial_x^3\tilde\gamma - 8 \frac{\langle \partial_x^3\gamma,\partial_x\gamma\rangle}{|\partial_x\gamma|^6}\partial_x^2\tilde\gamma  -5\frac{|\partial_x^2\gamma|^2}{|\partial_x\gamma|^6}\partial_x^2\tilde\gamma \\
            \quad + 35 \frac{\langle\partial_x^2\gamma,\partial_x\gamma\rangle^2}{|\partial_x\gamma|^8}\partial_x^2\tilde\gamma - \frac{\lambda(\gamma)}{|\partial_x\gamma|^2}\big(I_{2\times 2}-\frac{\partial_x\gamma\cdot(\partial_x\gamma)^t}{|\partial_x\gamma|^2}\big)\partial_x^2\tilde\gamma = \partial_t\zeta \cdot\gamma&\text{in $L^p((0,T)\times I)$}\\
            \tilde\gamma(0,\cdot)=0&\text{in $W^{4(1-\frac1p),p}(I)$}\\
            \tilde\gamma(t,\pm1)=\zeta(t)\gamma(t,\pm1)&\text{for all $0\leq t\leq T$}\\
            \partial_x\tilde\gamma(t,\pm1)=\zeta(t)\partial_x\gamma(t,\pm1)&\text{for all $0\leq t\leq T$}.
        \end{cases}
    \end{equation}
    By \Cref{prop:time-reg}, $\zeta(t)\gamma(t,\pm1),\zeta(t)\partial_x\gamma(t,\pm1)\in C^{\infty}([0,T))$ and, by \Cref{prop:xtp-mappings-and-embeddings}\eqref{it:hoeld-emb}, all coefficients of the linear system above belong to $C^{\frac{\alpha}{4},\alpha}([0,T']\times I)$ for some $\alpha\in(0,1)$, for all $0<T'<T$. 
    
    Therefore, the claimed smoothness follows with a bootstrapping argument using \cite[Theorem~4.9]{solonnikov1965MathPhysics}. Note that compatibility conditions of any order are satisfied by the choice of $\zeta$.
\end{proof}

%%%%%%%%%%%%%%%%%%%%%%%%%%%%%%%%%%%%%%%%%%%%%%%%%%%%%%%%%%%%%%%%%%%%%%%%%%%%%%%%%%%%%%%%%%%%%%%%
%%%%%%%%%%%%%%%%%%%%%%%%%%%%%%%%%%%%%%%%%%%%%%%%%%%%%%%%%%%%%%%%%%%%%%%%%%%%%%%%%%%%%%%%%%%%%%%%
\subsection{Existence and regularity of reparametrizations: from analytic problem to equation with zero tangential speed and back}\label{sec:wp-reparametrizations}
%%%%%%%%%%%%%%%%%%%%%%%%%%%%%%%%%%%%%%%%%%%%%%%%%%%%%%%%%%%%%%%%%%%%%%%%%%%%%%%%%%%%%%%%%%%%%%%%
%%%%%%%%%%%%%%%%%%%%%%%%%%%%%%%%%%%%%%%%%%%%%%%%%%%%%%%%%%%%%%%%%%%%%%%%%%%%%%%%%%%%%%%%%%%%%%%%

First, we show that a smooth solution of the geometric problem \eqref{eq:geo-ev-eq} can be reparametrized to a solution of the geometric problem with zero tangential speed, that is, to a solution of \eqref{eq:ev-eq-intro}. Recall from \Cref{rem:sol-of-anaprob-is-sol-of-geo-prob} that any solution of the analytic problem especially also solves \eqref{eq:geo-ev-eq}. Second, we start with a solution of \eqref{eq:ev-eq-intro} and show that it can be reparametrized to a solution of the analytic problem. This is a key step needed to transfer properties such as unique solvability from the analytic problem to \eqref{eq:ev-eq-intro}.

Recall that, for an initial datum $\gamma_0$ satisfying \eqref{eq:orth-bc}, we abbreviate $\tau_0=\langle\partial_s\gamma_0,\xi\circ\gamma_0\rangle$.

\begin{proposition}\label{prop:ex-rep-ana-to-geo}
    Consider a smooth family of immersions $\gamma\colon[0,T)\times[-1,1]\to\R^n$ solving the geometric equation 
    \begin{equation}\label{eq:geom-ev-eq}
        \begin{cases}
            (\partial_t\gamma(t,x))^\bot = -(\nabla\E(\gamma(t,\cdot))(x)-\lambda(\gamma(t,\cdot)) \cdot \curv(t,x))&\text{for all $(t,x)\in[0,T)\times[-1,1]$}\\
            \gamma(0,\cdot)=\gamma_0&\text{in $[-1,1]$}\\
            \gamma(t,\pm1)\in M&\text{for all $t\in[0,T)$}\\
            \partial_s\gamma(t,\pm1)\bot T_{\gamma(t,\pm1)}M&\text{for all $t\in[0,T)$}\\
            \partial_s^{\bot}\curv(t,\pm1) = -\tau_0(\pm1)S_{\gamma(t,\pm1)}\curv(t,\pm1) &\text{for all $t\in[0,T)$}.
        \end{cases}
    \end{equation}
    and let $\phi_0\colon[-1,1]\to[-1,1]$ be a smooth positive diffeomorphism. Then there exists a smooth family of (positive) diffeomorphisms $\phi\colon[0,T)\times[-1,1]\to[-1,1]$ with $\phi(0,\cdot)=\phi_0$ and such that $(t,y)\mapsto \gamma(t,\phi(t,y))$ is a FLFB-EF starting at $\gamma(0,\phi_0(\cdot))$.
\end{proposition}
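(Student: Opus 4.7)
The plan is to construct $\phi$ as the flow of an ODE chosen so that the tangential component of $\partial_t\tilde\gamma$ vanishes. Writing $\tilde\gamma(t,y)=\gamma(t,\phi(t,y))$ gives
\[
  \partial_t\tilde\gamma(t,y) = (\partial_t\gamma)(t,\phi(t,y)) + (\partial_x\gamma)(t,\phi(t,y))\,\partial_t\phi(t,y),
\]
and the condition $\langle\partial_t\tilde\gamma,\partial_y\tilde\gamma\rangle = 0$ forces $\phi$ to solve the ODE
\[
  \partial_t\phi(t,y) = F(t,\phi(t,y)),\qquad F(t,x)\vcentcolon= -\frac{\langle (\partial_t\gamma)(t,x),(\partial_x\gamma)(t,x)\rangle}{|(\partial_x\gamma)(t,x)|^2},\qquad \phi(0,y)=\phi_0(y).
\]

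First I would verify that $F\colon [0,T)\times[-1,1]\to\R$ is smooth, which is immediate from the smoothness of $\gamma$ and the fact that $\gamma(t,\cdot)$ is an immersion. The key observation is that $F(t,\pm 1)=0$ for all $t$: since $\gamma(t,\pm 1)\in M$ and $\partial_s\gamma(t,\pm 1)\bot T_{\gamma(t,\pm 1)}M$, differentiating in $t$ (as in \eqref{eq:varphi 0 rand}) yields $\partial_t\gamma(t,\pm 1)\in T_{\gamma(t,\pm 1)}M$ and hence $\langle\partial_t\gamma,\partial_x\gamma\rangle(t,\pm 1)=0$. Thus $\pm 1$ are fixed points of the flow, so standard ODE theory (Picard--Lindel\"of with smooth dependence on initial data and parameters) yields a unique smooth $\phi\colon[0,T)\times[-1,1]\to[-1,1]$ with $\phi(t,\pm 1)=\pm 1$ and $\phi(0,\cdot)=\phi_0$.

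To see that $\phi(t,\cdot)$ is a positive diffeomorphism of $[-1,1]$, I would differentiate the ODE in $y$ to obtain the linear equation $\partial_t\partial_y\phi = (\partial_xF)(t,\phi)\,\partial_y\phi$, whence
\[
  \partial_y\phi(t,y) = \phi_0'(y)\exp\Big(\int_0^t (\partial_xF)(s,\phi(s,y))\,\dd s\Big) > 0,
\]
using $\phi_0'>0$. Combined with $\phi(t,\pm 1)=\pm 1$, this shows $\phi(t,\cdot)$ is a smooth positive diffeomorphism.

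It remains to check that $\tilde\gamma(t,y)=\gamma(t,\phi(t,y))$ is a FLFB-EF starting at $\gamma_0\circ\phi_0$. By construction, $\partial_t\tilde\gamma$ is normal along $\tilde\gamma$, so $\partial_t\tilde\gamma = (\partial_t\tilde\gamma)^\bot$. The normal velocity, $\curv$, $\nabla\E$, and (via its arc-length formulation \eqref{eq:def-lambda}) the Lagrange multiplier $\lambda$ are all invariant under reparametrization by positive diffeomorphisms, so
\[
  \partial_t\tilde\gamma(t,y) = (\partial_t\gamma)^\bot(t,\phi(t,y)) = -\bigl(\nabla\E(\tilde\gamma(t,\cdot))(y) - \lambda(\tilde\gamma(t,\cdot))\curv_{\tilde\gamma}(t,y)\bigr).
\]
The boundary conditions in \eqref{eq:geom-ev-eq} are likewise invariant under positive reparametrization, and the boundary quantity $\tau_0$ of $\tilde\gamma$ agrees with that of $\gamma$ because $\phi_0'>0$. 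The only real subtlety is ensuring that the ODE for $\phi$ respects the endpoints $\{\pm 1\}$; this is resolved precisely by the orthogonality condition built into \eqref{eq:geom-ev-eq}, which makes $F$ vanish at the boundary.
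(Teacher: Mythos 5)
Your proposal is correct and follows essentially the same route as the paper: define $F(t,z)=-\langle\partial_t\gamma,\partial_x\gamma\rangle/|\partial_x\gamma|^2$, observe $F(t,\pm1)=0$ from the orthogonal free boundary conditions so the endpoints are stationary for the ODE $\partial_t\phi=F(t,\phi)$, and deduce $\partial_y\phi>0$ from the variational equation $\partial_t\log\partial_y\phi=\partial_zF(t,\phi)$. The concluding verification that the reparametrized family is a FLFB-EF via invariance of $\nabla\E$, $\curv$, $\lambda$ and the boundary conditions is exactly what the paper leaves implicit.
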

\begin{proof}
    Consider the smooth function $F\colon[0,T)\times[-1,1]\to\R$ given by
    \begin{equation}
        F(t,z)=-\frac{\langle\partial_t\gamma(t,z),\partial_x\gamma(t,z)\rangle}{|\partial_x\gamma(t,z)|^2},
    \end{equation}
    and, for each $y\in[-1,1]$, consider the associated ODE
    \begin{equation}
        \begin{cases}
            \partial_t\phi(t,y)=F(t,\phi(t,y))\\
            \phi(0,y)=\phi_0(y).
        \end{cases}
    \end{equation}
    Using that $\langle\partial_t\gamma(t,\phi(t,y)),\partial_x\gamma(t,\phi(t,y))\rangle=0$ if $y=\pm1$, for all $0\leq t<T$, we get $F(t,\pm1)=0$ for all $0\leq t<T$. Thus, there exists a smooth family of maps $\phi\colon[0,T)\times[-1,1]\to[-1,1]$ solving the ODE such that $\phi(t,\pm1)=\pm1$ and $\phi(t,y)\in(-1,1)$ for all $y\in(-1,1)$ and $0\leq t<T$. We only need to show that $|\partial_y\phi(t,y)|>0$ for all $t\in[0,T)$ and $y\in[-1,1]$. Using $\partial_y\phi_0>0$ by assumption, this follows from the computation
    \begin{equation}
        \partial_t \log\partial_y\phi(t,y) = \frac{\partial_y(F(t,\phi(t,y)))}{\partial_y\phi(t,y)} = \partial_zF(t,\phi(t,y))
    \end{equation}
    as $\partial_zF$ is bounded on compact sets in $[0,T)\times[-1,1]$.
\end{proof}

\begin{lemma}\label{lem:reg-of-t-wise-inverse}
    Let $p>5$ and suppose that $\phi\colon[0,T]\times  I\to  I$ is a family of $C^1$-diffeomorphisms $I\to I$ with
    \begin{equation}
        \phi\in W^{1,p}(0,T;L^p(I,\R))\cap L^p(0,T;W^{4,p}(I,\R)).
    \end{equation}
    %such that, for every $0\leq t\leq T$, $\phi(t,\cdot)\colon I\to  I$ is a $C^1$-diffeomorphism. 
    Defining $\eta\colon[0,T]\times  I\to  I$, $\eta(t,\cdot) \vcentcolon= \phi(t,\cdot)^{-1}$, it follows that also 
    \begin{equation}
        \eta\in W^{1,p}(0,T;L^p(I,\R))\cap L^p(0,T;W^{4,p}(I,\R)).
    \end{equation}
\end{lemma}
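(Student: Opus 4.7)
The plan is to first exploit the parabolic embedding $\mathbb{X}_{T,p}\hookrightarrow C([0,T];C^3(I))$, which holds since $p>5$ (see \Cref{prop:xtp-mappings-and-embeddings}\eqref{it:hoeld-emb}). This gives $\phi\in C([0,T];C^3(I))$, so $\partial_x\phi$ is jointly continuous on the compact set $[0,T]\times I$. Since each $\phi(t,\cdot)\colon I\to I$ is a $C^1$-diffeomorphism of a compact interval, $\partial_x\phi(t,x)$ never vanishes; by continuity and compactness there exists $c_0>0$ with $|\partial_x\phi|\geq c_0$ on $[0,T]\times I$. This uniform lower bound is the quantitative input for everything below.

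For the spatial regularity $\eta\in L^p(0,T;W^{4,p}(I))$, I would differentiate the identity $\phi(t,\eta(t,y))=y$ in $y$ and solve iteratively. By Fa\`a di Bruno, for each $k\in\{1,\dots,4\}$, $\partial_y^k\eta(t,y)$ is a rational expression of the form $P_k(\partial_x\phi,\dots,\partial_x^k\phi)\circ\eta(t,y)/(\partial_x\phi\circ\eta(t,y))^{2k-1}$ with $P_k$ a polynomial. The change of variables $z=\eta(t,y)$ (so $\dd y=\partial_x\phi(t,z)\,\dd z$) then yields
\begin{equation}
\int_I |\partial_y^k\eta(t,y)|^p\,\dd y \leq c_0^{-(2k-1)p+1}\int_I \bigl|P_k(\partial_x\phi(t,z),\dots,\partial_x^k\phi(t,z))\bigr|^p\,\dd z.
\end{equation}
Using $\phi(t,\cdot)\in W^{4,p}(I)\hookrightarrow C^3(I)$ with a $t$-uniform embedding constant, the lower-order factors in $P_k$ are uniformly bounded in $t$, giving $\|\eta(t,\cdot)\|_{W^{4,p}(I)}^p \leq C(1+\|\phi(t,\cdot)\|_{W^{4,p}(I)}^p)$. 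Integrating in $t$ then yields the spatial claim.

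For the time regularity, differentiating $\phi(t,\eta(t,y))=y$ formally in $t$ suggests
\begin{equation}
\partial_t\eta(t,y) = -\frac{\partial_t\phi(t,\eta(t,y))}{\partial_x\phi(t,\eta(t,y))} \eqqcolon \Psi(t,y),
\end{equation}
and the same change of variables shows $\|\Psi\|_{L^p((0,T)\times I)}^p \leq c_0^{-p+1}\|\partial_t\phi\|_{L^p((0,T)\times I)}^p<\infty$. To justify that $\Psi$ is actually the distributional time derivative of $\eta$, I would approximate $\phi$ in $\mathbb{X}_{T,p}$ by smooth $\phi_k$ (e.g.\ via Friedrichs mollification in $t$ after extending $\phi$ appropriately). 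For $k$ large, the continuity of $\mathbb{X}_{T,p}\hookrightarrow C([0,T];C^3(I))$ gives $|\partial_x\phi_k|\geq c_0/2$, so each $\phi_k(t,\cdot)$ admits a smooth inverse $\eta_k$ satisfying the chain-rule identity pointwise. Passing to the limit in the weak-derivative identity $\int\eta_k\,\partial_t\psi = -\int\Psi_k\,\psi$ against test functions $\psi\in C_c^\infty((0,T)\times I)$ then gives $\partial_t\eta=\Psi$ weakly.

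The hard part will be the convergence $\Psi_k\to\Psi$ in $L^p((0,T)\times I)$ needed to close this limit: $\partial_t\phi_k\to\partial_t\phi$ converges only in $L^p$, while the inner argument $\eta_k$ also moves with $k$. I would split $\Psi_k-\Psi$ into (i) a term where $\phi_k$ and $\phi$ are composed with the \emph{same} $\eta_k$, controlled by the change-of-variables estimate $\|f\circ\eta_k\|_{L^p}^p\leq c_0^{-1}\|f\|_{L^p}^p$ applied to $f=\partial_t\phi_k-\partial_t\phi$, and (ii) a term $\partial_t\phi\circ\eta_k - \partial_t\phi\circ\eta$, which I handle by approximating $\partial_t\phi$ in $L^p$ by continuous functions and using the uniform convergence $\eta_k\to\eta$ provided by $\phi_k\to\phi$ in $C([0,T];C^0(I))$.
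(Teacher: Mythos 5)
Your argument is correct. The spatial half coincides with the paper's: both differentiate $\phi(t,\eta(t,y))=y$ up to fourth order via the inverse-function/Fa\`a di Bruno formulas and conclude $\eta\in L^p(0,T;W^{4,p}(I))$ from the uniform lower bound on $\partial_x\phi$ together with the embedding $\mathbb X_{T,p}\hookrightarrow C^0([0,T],C^3(I))$. For the time regularity, however, you take a genuinely different route. The paper packages the problem into the planar map $\Phi(t,x)=(t,\phi(t,x))$, which is a Sobolev homeomorphism of $(0,T)\times(-1,1)$ with non-vanishing Jacobian $J_\Phi=\partial_x\phi$, and invokes the Hencl--Koskela theorem on weak differentiability of inverses of Sobolev homeomorphisms together with the distortion estimate $\int|D\Psi|^p\leq\int|D\Phi|^p J_\Phi^{1-p}$; reading off the second component of $\Psi=\Phi^{-1}$ gives $\partial_t\eta\in L^p$. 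You instead mollify $\phi$ in time (after even extension), observe that the mollified slices remain uniformly nondegenerate diffeomorphisms of $I$, differentiate the exact inverses $\eta_k$ classically, and pass to the limit in the weak formulation; the crux, which you correctly identify and handle, is the $L^p$-convergence of $\partial_t\phi_k\circ\eta_k$ via the change-of-variables bound (using the uniform upper and lower bounds on $\partial_x\phi_k$) plus density of continuous functions. Your approach is more elementary and self-contained, avoids the external Sobolev-homeomorphism machinery, and yields the explicit formula $\partial_t\eta=-(\partial_t\phi/\partial_x\phi)\circ\eta$, which the paper's argument does not directly produce; the price is a longer approximation argument where the paper's citation does the work in two lines. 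The only points worth making explicit when writing this up are that the orientation of $\phi(t,\cdot)$ is constant in $t$ (so time-averaging indeed produces diffeomorphisms of $I$ onto $I$ fixing the endpoints) and that the fourth-order spatial identity is to be read as a chain rule for Sobolev functions, since $\partial_x^4\phi(t,\cdot)$ is only in $L^p$.
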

\begin{proof}
    Using the chain rule for Sobolev functions and the fact that $\phi(t,\cdot)\in C^3(I)$ for every $t\in [0,T]$ by \Cref{prop:xtp-mappings-and-embeddings}\eqref{it:hoeld-emb}, we get that $\eta(t,\cdot)$ is $C^3$ with    
    \begin{align}
        \partial_x\eta(t,x)&=\frac{1}{\partial_y\phi(t,y)},\quad \partial_x^2\eta(t,x)=-\frac{\partial_y^2\phi(t,y)}{(\partial_y\phi(t,y))^3},\\
        \partial_x^3\eta(t,x)&= - \frac{\partial_y^3\phi(t,y)}{(\partial_y\phi(t,y))^4}+3\frac{(\partial_y^2\phi(t,y))^2}{(\partial_y\phi(t,y))^5}\label{eq:eta-upto-third-order-der}
    \end{align}
    for $x=\phi(t,y)$. So for a.e.\ $t\in (0,T)$, $\partial_x^3\eta(t,\cdot)$ is weakly differentiable with weak derivative given by
    \begin{equation}\label{eq:g-un-1}
        \begin{aligned}
            % \partial_x\eta(t,\phi(t,\cdot))&=\frac{1}{\partial_x\phi(t,\cdot)},\;\partial_x^2\eta(t,\phi(t,\cdot))=-\frac{\partial_x^2\phi(t,\cdot)}{(\partial_x\phi(t,\cdot))^3},\\
            % \partial_x^3\eta(t,\phi(t,\cdot))&=-\frac{\partial_x^3\phi(t,\cdot)}{(\partial_x\phi(t,\cdot))^4}+3\frac{(\partial_x^2\phi(t,\cdot))^2}{(\partial_x\phi(t,\cdot))^5},\\
            \partial_x^4\eta(t,\phi(t,\cdot))&=-\frac{\partial_y^4\phi(t,\cdot)}{(\partial_y\phi(t,\cdot))^5}+10\frac{\partial_y^2\phi(t,\cdot)}{(\partial_y\phi(t,\cdot))^6}\partial_y^3\phi(t,\cdot)-15\frac{(\partial_y^2\phi(t,\cdot))^3}{(\partial_y\phi(t,\cdot))^7}.
        \end{aligned}
    \end{equation}
    Using the embedding in \Cref{prop:xtp-mappings-and-embeddings}\eqref{it:hoeld-emb}, we get $\eta\in L^p(0,T;W^{4,p}(I))$. Next, we verify that $\eta\in W^{1,p}(0,T;L^p(I))$.

    To this end, define $\Phi\colon [0,T]\times  I\to [0,T]\times  I$, $(t,x)\mapsto (t,\phi(t,x))$. As $\phi\in C^0([0,T],C^3( I))$ by \Cref{prop:xtp-mappings-and-embeddings}\eqref{it:hoeld-emb}, we particularly have that $\Phi$ is continuous. Moreover, since $[0,T]\times  I$ is compact and as $\Phi$ is bijective, we conclude that $\Phi$ is a homeomorphism. Further, for $\Omega\vcentcolon=(0,T)\times (-1,1)$, it is clear that
    \begin{equation}
        \Phi\in W^{1,p}(\Omega,\R^2)
    \end{equation}
    with non-vanishing Jacobian in $\Omega$. Hence, by \cite[Theorem 1.1]{heclkosela2006}, it follows that $\Psi\vcentcolon=\Phi^{-1}$ is weakly differentiable in $\Omega$. Proceeding as in \cite[Equations~(3.11) and (3.12)]{heclkosela2006}, we get
    \begin{equation}
        \int_{\Omega} |D\Psi(t,x)|^p\dd (t,x) \leq \int_{\Omega} \frac{|D\Phi(t,y)|^p}{J_\Phi(t,y)^{p-1}}\dd (t,y)
    \end{equation}
    % Further, as
    % \begin{equation}
    %     \frac{|D\Phi(x)|^2}{J_{\Phi}(x)} \leq C\frac{1+(\partial_x\phi(t,x))^2+(\partial_t\phi(t,x))^2}{|\partial_x\phi(t,x)|} \in L^{\frac p2}(\Omega),
    % \end{equation}
    % the inequality in \cite[Proof of Theorem 1.3, p.9]{heclkosela2006} shows that $\Psi\vcentcolon=\Phi^{-1}\in W^{1,p}(\Omega)$. 
    and thus $\Psi\in W^{1,p}(\Omega)$. Now note 
    \begin{equation}
        (t,y)=\Psi\circ \Phi(t,y)=\Psi(t,\phi(t,y)) =\vcentcolon (\Psi^{(1)}(t,\phi(t,y)),\Psi^{(2)}(t,\phi(t,y))).
    \end{equation}
    Altogether, $\Psi^{(2)}=\eta$ and therefore $\eta\in W^{1,p}(0,T;L^p(I,\R))$.
\end{proof}

\begin{proposition}\label{lem:ex-rep}
    Let $p>5$, $T>0$ and suppose that $\gamma\in\X_{T,p}$ is a family of immersions solving \eqref{eq:ev-eq} with $\gamma_0$ satisfying \eqref{eq:ana-fbc}. 
    % \begin{equation}\label{eq:geom-ev-eq}
    %     \begin{cases}
    %         (\partial_t\gamma(t,x))^\bot = -(\nabla\E(\gamma(t,\cdot))(x)-\lambda(\gamma(t,\cdot)) \cdot \curv(t,x))&\text{in  $L^p((0,T)\times(-1,1))$}\\
    %         \mathrm{tr}_{t=0}(\gamma)=\gamma_0&\text{in $W^{4(1-\frac1p),p}((-1,1))$}\\
    %         \gamma(t,\pm1)\in M&\text{for all $t\in[0,T)$}\\
    %         \partial_s\gamma(t,\pm1)\bot T_{\gamma(t,\pm1)}M&\text{for all $t\in[0,T)$}\\
    %         \partial_s^{\bot}\curv(t,\pm1) = -\tau_0(\pm1)S_{\gamma(t,\pm1)}\curv(t,\pm1) &\text{for all $t\in[0,T)$}.
    %     \end{cases}
    %     \end{equation}
    
    Then there exist $T'\in(0,T)$ and $\phi\in W^{1,p}(0,T';L^p((-1,1),\R))\cap L^p(0,T';W^{4,p}((-1,1),\R))$ such that $\phi(t,\cdot)$ is an orientation preserving $C^1$-diffeomorphism of $[-1,1]$ for each $0\leq t\leq T'$ and such that $\tilde{\gamma}(t,y)\vcentcolon= \gamma(t,\phi(t,y))$ satisfies $\tilde\gamma\in \mathbb X_{T',p}$ and solves \eqref{eq:ana-prob}. 

    Moreover, if $\gamma$ is smooth on $(0,T)\times[-1,1]$, then $\phi|_{(0,T')\times[-1,1]}$ is a smooth family of diffeomorphisms.
    % Moreover, if $\gamma_0$ is parametrized by constant speed, then $\phi$ is uniquely determined if one requires $\phi(0,\cdot)=\mathrm{id}_{[-1,1]}$.
\end{proposition}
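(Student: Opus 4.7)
The plan is to construct $\phi$ by deriving and solving the fourth-order quasilinear equation that $\phi$ must satisfy in order that $\tilde\gamma:=\gamma\circ\phi$ solves \eqref{eq:ana-prob}. Since $\gamma_0$ already satisfies \eqref{eq:ana-fbc}, I would impose the initial condition $\phi(0,\cdot)=\mathrm{id}_{[-1,1]}$, which makes $\tilde\gamma(0,\cdot)=\gamma_0$ and ensures all compatibility conditions are met at $t=0$.

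To derive the PDE for $\phi$, I decompose the analytic equation $\partial_t\tilde\gamma+\mathcal A(\tilde\gamma)-\lambda(\tilde\gamma)\curv_{\tilde\gamma}=0$ into normal and tangential components along $\gamma(\cdot,\phi)$. Because $\nabla\E=(\mathcal A)^\bot$, $\lambda$, and $\curv$ are reparametrization-invariant and $\partial_t\gamma$ is normal by \eqref{eq:ev-eq}, the normal component is automatically satisfied. The tangential component then yields the scalar equation
\begin{equation*}
\partial_t\phi(t,y)=-\frac{\langle \mathcal A(\gamma\circ\phi)(t,y),\,\partial_x\gamma(t,\phi(t,y))\rangle}{|\partial_x\gamma(t,\phi(t,y))|^2},
\end{equation*}
and a chain-rule expansion shows its principal part is $\frac{2}{(\partial_y\phi)^4|\partial_x\gamma(t,\phi)|^4}\partial_y^4\phi$, so the equation is uniformly fourth-order parabolic near $\phi=\mathrm{id}$. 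Similarly, the first two analytic boundary conditions on $\tilde\gamma$ are automatic from $\phi(t,\pm1)=\pm1$ together with the corresponding boundary conditions on $\gamma$, while the normal part of the third analytic boundary condition reduces to the natural boundary condition \eqref{eq:nat-bc} on $\gamma$ via the reparametrization-invariance of $\partial_s^\bot\curv$. The tangential part of that third boundary condition translates, after a direct computation, into a condition of the form $\partial_y^3\phi(t,\pm1)=F_{\pm1}(\gamma(t,\cdot),\phi,\partial_y\phi,\partial_y^2\phi)|_{y=\pm1}$. Together with $\phi(t,\pm1)=\pm1$, this gives four boundary conditions (of orders $0,0,3,3$), appropriate for a scalar fourth-order parabolic equation.

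Short-time solvability of this system in $W^{1,p}(0,T';L^p(I))\cap L^p(0,T';W^{4,p}(I))$ is the main technical task, which I would carry out exactly as in \Cref{sec:well-posedness}: linearize around $\phi=\mathrm{id}$, verify the Lopatinskii--Shapiro condition for the scalar linearization, invoke maximal $L^p$-regularity in the spirit of \Cref{prop:wp-lin-system}, and close a contraction argument as in \Cref{prop:ana-prob-local-wp}. The orientation-preserving $C^1$-diffeomorphism property on (a possibly smaller) interval $[0,T']$ follows from $\partial_y\phi(0,\cdot)=1$ together with the embedding $\X_{T',p}\hookrightarrow C^0([0,T'];C^3(I))$ available from \Cref{prop:xtp-mappings-and-embeddings} since $p>5$, while $\tilde\gamma=\gamma\circ\phi\in\mathbb X_{T',p}$ then follows from chain-rule estimates of the same flavour as those carried out in \Cref{lem:reg-of-t-wise-inverse}. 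For the smoothness claim, when $\gamma\in C^\infty((0,T)\times[-1,1])$ the PDE for $\phi$ has smooth coefficients there, and the parameter-trick proof of \Cref{prop:time-reg} combined with the Schauder argument of \Cref{thm:para-smooth} yields $\phi\in C^\infty((0,T')\times[-1,1])$.

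The hard part will be verifying the Lopatinskii--Shapiro condition for the scalar linearized problem. Although the unknown is one-dimensional, the third-order boundary condition is intertwined with lower-order data from $\gamma$ at the boundary, so the check requires a careful computation of the principal symbol; however, its structure mirrors that of the analytic system already analysed in \Cref{prop:wp-lin-system}, and the verification should reduce to a routine adaptation of that argument.
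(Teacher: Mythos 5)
Your approach is correct in structure but takes a genuinely different route from the paper. You derive the \emph{quasilinear} fourth-order PDE satisfied by $\phi$ directly and propose to carry out a full short-time existence theory (linearize around $\mathrm{id}$, verify Lopatinskii--Shapiro, contraction argument) for this auxiliary scalar problem. The paper instead solves for the \emph{inverse} $\eta(t,\cdot)\vcentcolon=\phi(t,\cdot)^{-1}$: writing $\gamma(t,x)=\tilde\gamma(t,\eta(t,x))$ and projecting the analytic equation for $\tilde\gamma$ onto the tangential direction, the nonlinear composition $\gamma(t,\phi(t,\cdot))$ disappears and the resulting PDE~\eqref{eq:pde-reparam} is \emph{linear} in $\eta$ with coefficients depending only on $\gamma$ and its spatial derivatives. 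This allows the paper to invoke linear maximal $L^p$-regularity (\cite{denkhieberpruess2007}) once and for all, with no fixed-point argument; the price is that they then must transfer regularity from $\eta$ to $\phi=\eta^{-1}$, which is the role of \Cref{lem:reg-of-t-wise-inverse} and \cite[Lemma~5.4]{garckemenzelpluda2020}. Both routes use the same boundary structure (Dirichlet plus a third-order condition, with compatibility at $t=0$ guaranteed by $\gamma_0$ satisfying \eqref{eq:ana-fbc}), and the Lopatinskii--Shapiro verification you flag as the ``hard part'' is essentially the same check the paper implicitly relies on for \eqref{eq:pde-reparam} (and verifies explicitly for the related system in the Appendix). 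Your approach is not wrong, but it re-imports the full quasilinear machinery that the inversion trick is designed to sidestep; for the smoothness claim in particular, the paper's linear formulation lets Schauder theory be applied directly after a cutoff, whereas in your formulation the coefficients depend on $\phi$ itself, so you need the parameter trick and bootstrapping rather than a one-shot Schauder estimate.
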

\begin{proof}
    %Without loss of generality, $\gamma_0$ is parametrized by arc-length. In particular, $\gamma_0$ satisfies \eqref{eq:ana-fbc}. 
    Consider the linear PDE
    \begin{equation}\label{eq:pde-reparam}
        \begin{cases}
            \partial_t\eta = - \frac{2}{|\partial_x\gamma|^4}\partial_x^4\eta + \frac{12}{|\partial_x\gamma|^6}\langle\partial_x^2\gamma,\partial_x\gamma\rangle\partial_x^3\eta + \frac{8}{|\partial_x\gamma|^6}\langle\partial_x^3\gamma,\partial_x\gamma\rangle\partial_x^2\eta \\
            \quad - \frac{35}{|\partial_x\gamma|^8}(\langle\partial_x^2\gamma,\partial_x\gamma\rangle)^2 \partial_x^2\eta + \frac{5}{|\partial_x\gamma|^6}|\partial_x^2\gamma|^2\partial_x^2\eta +\frac{2}{|\partial_x\gamma|^6}\langle\partial_x^4\gamma,\partial_x\gamma\rangle\partial_x\eta\\%+ \frac{2\lambda(\gamma)}{|\partial_x\gamma|^2}\partial_x^2\eta\\
            \quad -\frac{20}{|\partial_x\gamma|^8}\langle\partial_x^3\gamma,\partial_x\gamma\rangle\langle\partial_x^2\gamma,\partial_x\gamma\rangle \partial_x\eta - \frac{5}{|\partial_x\gamma|^8}|\partial_x^2\gamma|^2\langle\partial_x^2\gamma,\partial_x\gamma\rangle \partial_x\eta \\
            \quad +\frac{35}{|\partial_x\gamma|^{10}}(\langle\partial_x^2\gamma,\partial_x\gamma\rangle)^3 \partial_x\eta %-\frac{2\lambda(\gamma)}{|\partial_x\gamma|^4}\langle\partial_x^2\gamma,\partial_x\gamma\rangle \partial_x\eta
            \quad \text{on $[0,T]\times [-1,1]$}\\
            %\quad + \langle \partial_t\gamma,\partial_x\gamma\rangle\partial_x\eta \quad \text{on $[0,T]\times [-1,1]$}\\
            \eta(0,\cdot)=\mathrm{id}_{[-1,1]}\\
            \eta(t,\pm1)=\pm 1\quad\text{for all $0\leq t\leq T$}\\
            \partial_x^3\eta = 3\frac{\langle\partial_x^3\gamma,\partial_x\gamma\rangle}{|\partial_x\gamma|^2}\partial_x^2\eta + \Big(\frac{\langle\partial_x^3\gamma,\partial_x\gamma\rangle}{|\partial_x\gamma|^2}-3\frac{(\langle\partial_x^2\gamma,\partial_x\gamma\rangle)^2}{|\partial_x\gamma|^4}\Big)\partial_x\eta \quad\text{on $[0,T]\times \{\pm1\}$}.
        \end{cases}
    \end{equation}
    Since $\frac{\langle\partial_x^3\gamma_0(\pm1),\partial_x\gamma_0(\pm1)\rangle}{|\partial_x\gamma_0(\pm1)|^2}-3\frac{(\langle\partial_x^2\gamma_0(\pm1),\partial_x\gamma_0(\pm1)\rangle)^2}{|\partial_x\gamma_0(\pm1)|^4}=0$ follows from \eqref{eq:ana-fbc}, the compatibility conditions are satisfied and \cite[Theorem~2.1]{denkhieberpruess2007} yields that there exists a unique solution 
    \begin{equation}
        \eta\in W^{1,p}(0,T;L^p((-1,1),\R))\cap L^p(0,T;W^{4,p}((-1,1),\R)).
    \end{equation} 
    Moreover, since $|\partial_x\eta(0,\cdot)|\equiv 1$ and $\eta(t,\pm1)=\pm1$ for all $0\leq t\leq T$, there exists $T'\in(0,T]$ such that $\eta(t,\cdot)\colon[-1,1]\to[-1,1]$ is a (positive) $C^1$-diffeomorphism for all $0\leq t\leq T'$, using \Cref{prop:xtp-mappings-and-embeddings}\eqref{it:hoeld-emb}. 
    
    For $\phi(t,\cdot)\vcentcolon=\eta(t,\cdot)^{-1}$, we have $\phi\in W^{1,p}(0,T';L^p((-1,1),\R))\cap L^p(0,T';W^{4,p}((-1,1),\R))$ by \Cref{lem:reg-of-t-wise-inverse}. Thus, by \cite[Lemma~5.4]{garckemenzelpluda2020}, we have $\tilde\gamma\in\mathbb X_{T',p}$.

    Furthermore, proceeding as in \cite[Proof of Lemma~C.1]{dallacqualinpozzi2021}, especially using \cite[Equation~(C7)]{dallacqualinpozzi2021}, we have that $\partial_t\tilde\gamma + \A(\tilde\gamma) - \lambda(\tilde\gamma)\curv_{\tilde\gamma} = 0$ in $L^p((0,T')\times(-1,1))$. It remains to show that $\tilde\gamma$ satisfies \eqref{eq:ana-fbc} on $[0,T']$. To this end, using that $\gamma$ solves %\eqref{eq:geom-ev-eq},
    \eqref{eq:ev-eq}, we have for $0\leq t\leq T'$ and $y\in\{\pm1\}$
    \begin{equation}
        0=\partial_s^\bot \curv_\gamma + \tau_0 S_\gamma\curv_\gamma \big|_{(t,x)=(t,\phi(t,y))} = \partial_s^\bot \curv_{\tilde\gamma}+\tau_0 S_{\tilde\gamma}\curv_{\tilde\gamma}\big|_{(t,y)}.
    \end{equation}
    Thus, using \eqref{eq:dscurv}, in order to verify the third-order boundary condition in \eqref{eq:ana-fbc}, one only needs to check that
    \begin{equation}
        \langle \frac{\partial_y^3\tilde\gamma}{|\partial_y\tilde\gamma|^3}-3\frac{\langle \partial_y^2\tilde\gamma,\partial_y\tilde\gamma\rangle}{|\partial_y\tilde\gamma|^5}\partial_y^2\tilde\gamma,\partial_y\tilde\gamma\rangle = 0\quad\text{on $[0,T']\times\{\pm1\}$}.
    \end{equation}
    This can be reduced to the third-order boundary condition for $\eta$ in \eqref{eq:pde-reparam}, using that, for $0\leq t\leq T'$ and $x=\phi(t,y)$
    \begin{align}
        \partial_y\tilde\gamma(t,y)&=\partial_x\gamma(t,x)\partial_y\phi(t,y),\quad
        \partial_y^2\tilde\gamma(t,y)=\partial_x^2\gamma(t,x)(\partial_y\phi(t,y))^2+\partial_x\gamma(t,x)\partial_y^2\phi(t,y)\\
        \partial_y^3\tilde\gamma(t,y)&=\partial_x^3\gamma(t,x)(\partial_y\phi(t,y))^3+3\partial_x^2\gamma(t,x)\partial_y\phi(t,y)\partial_y^2\phi(t,y) +\partial_x\gamma(t,x)\partial_y^3\phi(t,y)
    \end{align}
    and \eqref{eq:eta-upto-third-order-der}.
    Finally suppose that $\gamma$ is smooth on $(0,T)\times I$. For $0<\varepsilon<T'$ arbitrary, consider a cut-off function $\zeta\in C^\infty(\R)$ with $\chi_{(\varepsilon,\infty)} \leq \zeta \leq \chi_{(\frac12\varepsilon,\infty)}$ and define $\tilde\eta(t,x)=\zeta(t) \cdot \eta(t,x)$. Then $\tilde\eta$ solves the linear system
    \begin{equation}
        \begin{cases}
            \partial_t\tilde\eta + \frac{2}{|\partial_x\gamma|^4}\partial_x^4\tilde\eta - \frac{12}{|\partial_x\gamma|^6}\langle\partial_x^2\gamma,\partial_x\gamma\rangle\partial_x^3\tilde\eta - \frac{8}{|\partial_x\gamma|^6}\langle\partial_x^3\gamma,\partial_x\gamma\rangle\partial_x^2\tilde\eta \\
            \quad + \frac{35}{|\partial_x\gamma|^8}(\langle\partial_x^2\gamma,\partial_x\gamma\rangle)^2 \partial_x^2\tilde\eta - \frac{5}{|\partial_x\gamma|^6}|\partial_x^2\gamma|^2\partial_x^2\tilde\eta -\frac{2}{|\partial_x\gamma|^6}\langle\partial_x^4\gamma,\partial_x\gamma\rangle\partial_x\tilde\eta\\%- \frac{2\lambda(\gamma)}{|\partial_x\gamma|^2}\partial_x^2\tilde\eta\\
            \quad +\frac{20}{|\partial_x\gamma|^8}\langle\partial_x^3\gamma,\partial_x\gamma\rangle\langle\partial_x^2\gamma,\partial_x\gamma\rangle \partial_x\tilde\eta + \frac{5}{|\partial_x\gamma|^8}|\partial_x^2\gamma|^2\langle\partial_x^2\gamma,\partial_x\gamma\rangle \partial_x\tilde\eta \\
            \quad -\frac{35}{|\partial_x\gamma|^{10}}(\langle\partial_x^2\gamma,\partial_x\gamma\rangle)^3 \partial_x\tilde\eta %+\frac{2\lambda(\gamma)}{|\partial_x\gamma|^4}\langle\partial_x^2\gamma,\partial_x\gamma\rangle \partial_x\tilde\eta 
            = (\partial_t\zeta)\eta\quad \text{on $[\frac14\varepsilon,T']\times [-1,1]$}\\
            %\quad + \langle \partial_t\gamma,\partial_x\gamma\rangle\partial_x\eta \quad \text{on $[0,T]\times [-1,1]$}\\
            \tilde\eta(\frac14\varepsilon,\cdot)=0\\
            \tilde\eta(t,\pm1)=\pm \zeta(t)\quad\text{for all $\frac14\varepsilon\leq t\leq T'$}\\
            \partial_x^3\tilde\eta - 3\frac{\langle\partial_x^3\gamma,\partial_x\gamma\rangle}{|\partial_x\gamma|^2}\partial_x^2\tilde\eta - \Big(\frac{\langle\partial_x^3\gamma,\partial_x\gamma\rangle}{|\partial_x\gamma|^2}-3\frac{(\langle\partial_x^2\gamma,\partial_x\gamma\rangle)^2}{|\partial_x\gamma|^4}\Big)\partial_x\tilde\eta  = 0\quad\text{on $[\frac14\varepsilon,T']\times \{\pm1\}$}.
        \end{cases}
    \end{equation}
    By assumption, all coefficients are smooth on $[\frac14\varepsilon,T']\times[-1,1]$. Moreover, using $\partial_t^j\zeta(\frac14\varepsilon)=0$ for all $j\in\N_0$, compatibility conditions of any order are satisfied. Thus, \cite[Theorem~4.9]{solonnikov1965MathPhysics} yields that $\tilde\eta$ is smooth on $[\frac14\varepsilon,T']$. In particular, $\eta$ is smooth on $(\varepsilon,T')$. Since $\varepsilon\in(0,T')$ was chosen arbitrary, this yields the claimed regularity of $\phi$.
\end{proof}

Combining \Cref{thm:wp-ana-prob,thm:para-smooth,prop:ex-rep-ana-to-geo}, and \cite[Lemma~D.1]{ruppspener2020}, we can give the proof of \Cref{thm:existence-of-solution,cor:ex-no-tang-velocity}.

\begin{proof}[Proofs of \Cref{thm:existence-of-solution,cor:ex-no-tang-velocity}] 
    Since $\gamma_0$ satisfies \eqref{eq:intro-free-bcs}, it solves \eqref{eq:orth-bc} and \eqref{eq:nat-bc} with $\tau_0=\langle\partial_s\gamma_0,\xi\circ\gamma_0\rangle$. 
    By \Cref{lem:geo-vs-ana-fbc}, there exists a smooth diffeomorphism $\phi_0\colon[-1,1]\to[-1,1]$ such that $\tilde\gamma_0=\gamma_0\circ\phi_0\in W^{4(1-\frac1p),p}(I)$ satisfies \eqref{eq:ana-fbc}. Using \Cref{thm:wp-ana-prob,thm:para-smooth}, there exists $T>0$ and 
    \begin{equation}
        \tilde\gamma\in W^{1,p}(0,T;L^p((-1,1)))\cap L^p(0,T;W^{4,p}((-1,1))) \cap C^\infty((0,T)\times [-1,1])
    \end{equation} 
    solving the analytic problem with initial datum $\tilde\gamma_0$. Since $\phi_0$ is smooth, so is $\psi_0=\phi_0^{-1}$. Thus, $\gamma(t,x)=\tilde\gamma(t,\psi_0(x))$ satisfies
    \begin{equation}
        \gamma \in W^{1,p}(0,T;L^p((-1,1)))\cap L^p(0,T;W^{4,p}((-1,1))) \cap C^\infty((0,T)\times [-1,1]).
    \end{equation}
    Moreover, $\gamma(0,\cdot)=\tilde\gamma(0,\psi_0(\cdot))=\gamma_0$. This proves \Cref{thm:existence-of-solution}, using \Cref{rem:sol-of-anaprob-is-sol-of-geo-prob}, \Cref{prop:xtp-mappings-and-embeddings}\eqref{it:buc-emb}, and the fact that the PDE in \eqref{eq:geo-ev-eq} is invariant with respect to reparametrization. For \Cref{cor:ex-no-tang-velocity}, letting $\varepsilon\in(0,T)$, $\gamma|_{[\varepsilon,T)\times[-1,1]}$ solves \eqref{eq:geom-ev-eq}. Thus, the corollary follows from \Cref{prop:ex-rep-ana-to-geo}, using \cite[Lemma~D.1]{ruppspener2020} as in \cite[Proof of Theorem~4.1]{ruppspener2020}.
\end{proof}

%%%%%%%%%%%%%%%%%%%%%%%%%%%%%%%%%%%%%%%%%%%%%%%%%%%%%%%%%%%%%%%%%%%%%%%%%%%%%%%%%%%%%%%%%%%%%%%%
%%%%%%%%%%%%%%%%%%%%%%%%%%%%%%%%%%%%%%%%%%%%%%%%%%%%%%%%%%%%%%%%%%%%%%%%%%%%%%%%%%%%%%%%%%%%%%%%
\section{Analysis of the long-time behavior}\label{sec:long-time}
%%%%%%%%%%%%%%%%%%%%%%%%%%%%%%%%%%%%%%%%%%%%%%%%%%%%%%%%%%%%%%%%%%%%%%%%%%%%%%%%%%%%%%%%%%%%%%%%
%%%%%%%%%%%%%%%%%%%%%%%%%%%%%%%%%%%%%%%%%%%%%%%%%%%%%%%%%%%%%%%%%%%%%%%%%%%%%%%%%%%%%%%%%%%%%%%%

%%%%%%%%%%%%%%%%%%%%%%%%%%%%%%%%%%%%%%%%%%%%%%%%%%%%%%%%%%%%%%%%%%%%%%%%%%%%%%%%%%%%%%%%%%%%%%%%
%%%%%%%%%%%%%%%%%%%%%%%%%%%%%%%%%%%%%%%%%%%%%%%%%%%%%%%%%%%%%%%%%%%%%%%%%%%%%%%%%%%%%%%%%%%%%%%%
\subsection{Geometric Gagliardo--Nirenberg-type interpolation estimates}
%%%%%%%%%%%%%%%%%%%%%%%%%%%%%%%%%%%%%%%%%%%%%%%%%%%%%%%%%%%%%%%%%%%%%%%%%%%%%%%%%%%%%%%%%%%%%%%%
%%%%%%%%%%%%%%%%%%%%%%%%%%%%%%%%%%%%%%%%%%%%%%%%%%%%%%%%%%%%%%%%%%%%%%%%%%%%%%%%%%%%%%%%%%%%%%%%

As in \cite{dziukkuwertschaetzle2002}, consider the following scaling-invariant norms.

\begin{equation}
    \|(\partial_s^{\bot})^i\curv\|_p\vcentcolon= \Ll(\gamma)^{i+1-\frac{1}{p}} \|(\partial_s^{\bot})^i\curv\|_{L^p(\dd s)}\quad \text{and}\quad \|\curv\|_{k,p} \vcentcolon= \sum_{i=0}^k \|(\partial_s^{\bot})^i\curv\|_{p}.
\end{equation}

\begin{proposition}[{\cite[Lemma~3.1 and Corollary~3.2]{dallacqualinpozzi2017}}]\label{prop:inter-dspk}
    Let $\gamma\colon[-1,1]\to\R^n$ be an immersion, $k\in\N$, $p\in[2,\infty]$ and $0\leq i<k$. Then
    \begin{equation}
        \|(\partial_s^{\bot})^i\curv\|_p \leq C \|\curv\|_2^{1-\alpha} \|\curv\|_{k,2}^{\alpha}
    \end{equation}
    for $\alpha=\frac{1}{k}(i+\frac12-\frac1p)$ and $C=C(n,k,p)$. Moreover,
    \begin{equation}
        \|\curv\|_{k,2}\leq C(n,k) \big(\|(\partial_s^{\bot})^k\curv\|_2+\|\curv\|_2\big).
    \end{equation}
\end{proposition}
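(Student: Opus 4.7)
The plan is to exploit the explicit scaling invariance of the normalized norms and reduce both estimates to one-dimensional Gagliardo--Nirenberg interpolation on a bounded interval. Under the homothety $\gamma\mapsto\mu\gamma$ one has $\Ll\mapsto\mu\Ll$, $\curv\mapsto\mu^{-1}\curv$ and $\|(\partial_s^\bot)^i\curv\|_{L^p(\dd s)}\mapsto\mu^{1/p-(i+1)}\|(\partial_s^\bot)^i\curv\|_{L^p(\dd s)}$, so the prefactor $\Ll^{i+1-1/p}$ has been chosen precisely to cancel the scaling. Normalizing $\Ll(\gamma)=1$ and passing to an arc-length parametrization over $[0,1]$, one reduces the first estimate to a Sobolev-type inequality for the vector-valued map $\curv\colon[0,1]\to\R^n$, with $\partial_s=\partial_x$.

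For the first inequality, I would trade $\partial_s^\bot$ for $\partial_s$ using the identity $\partial_s^\bot N=\partial_s N+\langle N,\curv\rangle\partial_s\gamma$ from the proof of \Cref{lem:tens-alg}. Iterating gives an expansion $(\partial_s^\bot)^j\curv=\partial_s^j\curv+\mathbb{P}_2^{j-1,j-1}$ whose tensor coefficients are uniformly bounded because $\partial_s\gamma$ is a unit vector in arc-length. The classical one-dimensional Gagliardo--Nirenberg inequality on $(0,1)$, obtained by a standard reflection extension from the inequality on $\R$, then yields
\begin{equation}
\|\partial_s^i\curv\|_{L^p(\dd s)}\leq C\,\|\partial_s^k\curv\|_{L^2(\dd s)}^{\alpha}\,\|\curv\|_{L^2(\dd s)}^{1-\alpha}+C\,\|\curv\|_{L^2(\dd s)}
\end{equation}
for $\alpha=(i+\tfrac12-\tfrac1p)/k$. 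Substituting the expansions of $(\partial_s^\bot)^i\curv$ and $(\partial_s^\bot)^k\curv$ on the two sides and proceeding by induction on $k$, the nonlinear lower-order remainders $\mathbb{P}_2^{j-1,j-1}$ are estimated term by term: each factor in such a product is a $\partial_s^\ell\curv$ with $\ell<j$ and can be re-estimated by the same inequality at strictly smaller order, after which Young's inequality and absorption yield the claim.

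The second inequality is then a direct corollary of the first applied with $p=2$, giving $\|(\partial_s^\bot)^i\curv\|_2\leq C\|\curv\|_2^{1-i/k}\|\curv\|_{k,2}^{i/k}$ for $0\leq i<k$. Young's inequality converts this into $\|(\partial_s^\bot)^i\curv\|_2\leq \varepsilon\|\curv\|_{k,2}+C_\varepsilon\|\curv\|_2$; summing over $i=0,\dots,k-1$, adding $\|(\partial_s^\bot)^k\curv\|_2$ on both sides, and choosing $\varepsilon=1/(2k)$ absorbs the resulting $k\varepsilon\|\curv\|_{k,2}$ into the left-hand side. The principal technical difficulty lies in the bookkeeping of the remainder polynomials when switching between $\partial_s^\bot$ and $\partial_s$: the multilinear products appearing in $\mathbb{P}_2^{j-1,j-1}$ require matching the interpolation exponents consistently across all factors, and it is precisely the scale-invariance of the $\|\cdot\|_p$ norms after normalizing $\Ll(\gamma)=1$ that guarantees these exponents close up at each step of the induction.
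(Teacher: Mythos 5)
The overall architecture (scaling reduction, then the $p=2$ case of the first inequality plus Young's inequality and absorption for the second) is sound, and the second inequality does follow from the first exactly as you describe. The gap is in your treatment of the first inequality: converting $(\partial_s^\bot)^j\curv$ into $\partial_s^j\curv$ plus a remainder $\mathbb{P}_2^{j-1,j-1}$ and invoking classical Gagliardo--Nirenberg for $\partial_s^j\curv$ cannot yield the stated bound. The right-hand side $\|\curv\|_2^{1-\alpha}\|\curv\|_{k,2}^{\alpha}$ is homogeneous of degree one in the curvature, whereas the remainders are quadratic; estimating a term $P_2^{a,c}$ by the induction hypothesis produces $C\|\curv\|_2^{2-\beta}\|\curv\|_{k,2}^{\beta}$ with $\beta<\alpha$, and the ratio of this to the target is $\|\curv\|_2\,(\|\curv\|_2/\|\curv\|_{k,2})^{\alpha-\beta}$, which is unbounded unless one has an a priori bound on the scale-invariant quantity $\|\curv\|_2^2=\Ll(\gamma)\E(\gamma)$ --- and the proposition assumes no such bound. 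Scale invariance, to which you appeal, only fixes the total dimension of each term, not its degree in $\curv$, so the exponents do not ``close up.'' Concretely, for a (multiply covered) circle of radius $r$ and total length $1$ one has $\partial_s^\bot\curv\equiv 0$ while $\partial_s\curv=-|\curv|^2\partial_s\gamma$, so with $i=1$, $k=2$, $p=\infty$ the left-hand side computed with the ordinary derivative equals $r^{-2}$ whereas $\|\curv\|_2^{1/4}\|\curv\|_{2,2}^{3/4}=r^{-1}$: the ordinary-derivative analogue of the inequality is false, so no absorption argument can recover the covariant statement from it.

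The repair is to prove the inequality directly for the covariant derivative, which is what the cited reference does: $\partial_s^\bot$ is a metric connection on the normal bundle, so $|\partial_s|N||\le|\partial_s^\bot N|$ for normal fields $N$, and the elementary interval-subdivision proof of the one-dimensional interpolation inequality goes through verbatim for the functions $|(\partial_s^\bot)^j\curv|$. The boundary of $[-1,1]$ only contributes an additive lower-order term $C\|\curv\|_2$, which is then absorbed into $\|\curv\|_2^{1-\alpha}\|\curv\|_{k,2}^{\alpha}$ because $\|\curv\|_2\le\|\curv\|_{k,2}$ and $\alpha\in(0,1)$. No conversion between $\partial_s^\bot$ and $\partial_s$ is needed, and no remainder terms arise.
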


\begin{proposition}\label{prop:inter-pbac}
    Consider an immersion $\gamma\colon[-1,1]\to\R^n$ and $a,b,c,k\in\N_0$ with $b,k\geq 2$ and $c\leq k$. %Let $K\subseteq\R^n$ with $\gamma([-1,1])\subseteq K$. 
    If $a<2k$, or if $c\leq k-1$,
    \begin{equation}
        \int_{-1}^1 |P_b^{a,c}|\dd s\leq C\Ll(\gamma)^{1-a-b}\|\curv\|_2^{b-\gamma}\|\curv\|_{k,2}^{\gamma}
    \end{equation}
    where $C=C(C(P_b^{a,c},0),n,k,a,b)$ with $\gamma=\frac{1}{k}(a+\frac12 b-1)$. If $a+\frac12 b < 2k+1$, then, for any $\varepsilon>0$, 
    \begin{align}
        \int_{-1}^1 |P_{b}^{a,c}|\dd s \leq \varepsilon \int_{-1}^1|(\partial_s^{\bot})^k\curv|^2\dd s + C \varepsilon^{-\frac{\gamma}{2-\gamma}}\E(\gamma)^{\frac{b-\gamma}{2-\gamma}} + C \Ll(\gamma)^{1-a-\frac12 b}\E(\gamma)^{\frac12b}
    \end{align}
    where $C=C(C(P_b^{a,c},0), n,k,a,b)$ and $\gamma=\frac{1}{k}(a+\frac12b-1)\in(0,2)$.
\end{proposition}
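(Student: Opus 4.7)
The plan is to follow the Gagliardo--Nirenberg-based scheme for multilinear curvature expressions used in \cite{dziukkuwertschaetzle2002,dallacqualinpozzi2017}, adapted to the tensor-valued structure of $P_b^{a,c}$. First, from the definition of $P_b^{a,c}$ and the uniform tensor bound \eqref{eq:hyp-tensor-unif-bounds}, one obtains the pointwise estimate
\begin{equation}
    |P_b^{a,c}|\leq C(P_b^{a,c},0)\prod_{j=1}^{b}\bigl|(\partial_s^{\bot})^{i_j}\curv\bigr|.
\end{equation}
I then choose Hölder exponents $p_1,\dots,p_b\in[2,\infty]$ with $\sum_j\tfrac{1}{p_j}=1$ and with $p_j=2$ whenever $i_j=k$. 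Under the hypothesis \emph{$a<2k$ or $c\leq k-1$}, this is possible: if $c\leq k-1$, no index equals $k$; if $c=k$, the assumption $a<2k$ forces exactly one index to equal $k$, and the remaining $\sum_{j\geq 2}\tfrac{1}{p_j}=\tfrac{1}{2}$ can be distributed over values in $[2,\infty]$. Hölder yields $\int_{-1}^{1}|P_b^{a,c}|\dd s\leq C\prod_{j}\|(\partial_s^{\bot})^{i_j}\curv\|_{L^{p_j}(\dd s)}$.

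Converting to the scaling-invariant norms of \Cref{prop:inter-dspk} via $\|(\partial_s^{\bot})^{i_j}\curv\|_{L^{p_j}(\dd s)}=\Ll(\gamma)^{-(i_j+1-1/p_j)}\|(\partial_s^{\bot})^{i_j}\curv\|_{p_j}$, and using $\sum_j i_j=a$ and $\sum_j\tfrac{1}{p_j}=1$, the accumulated prefactor is $\Ll(\gamma)^{1-a-b}$. I then apply \Cref{prop:inter-dspk} to each factor with $i_j<k$, together with the trivial bound $\|(\partial_s^{\bot})^k\curv\|_2\leq \|\curv\|_{k,2}$ for the (at most one) factor with $i_j=k$ and $p_j=2$, which matches the formula with $\alpha_j=1$:
\begin{equation}
    \bigl\|(\partial_s^{\bot})^{i_j}\curv\bigr\|_{p_j}\leq C\,\|\curv\|_{2}^{1-\alpha_j}\,\|\curv\|_{k,2}^{\alpha_j},\qquad \alpha_j=\tfrac{1}{k}\bigl(i_j+\tfrac12-\tfrac{1}{p_j}\bigr).
\end{equation}
Summing gives $\sum_j\alpha_j=\tfrac{1}{k}(a+\tfrac{b}{2}-1)=\gamma$, and multiplying the pointwise bounds produces the first inequality $\int|P_b^{a,c}|\dd s\leq C\,\Ll(\gamma)^{1-a-b}\|\curv\|_2^{b-\gamma}\|\curv\|_{k,2}^{\gamma}$.

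For the second bound I expand $\|\curv\|_{k,2}^{\gamma}\leq C(\|(\partial_s^{\bot})^k\curv\|_2^{\gamma}+\|\curv\|_2^{\gamma})$ via \Cref{prop:inter-dspk}, rewrite $\|\curv\|_2=\Ll(\gamma)^{1/2}\E(\gamma)^{1/2}$ and $\|(\partial_s^{\bot})^k\curv\|_2=\Ll(\gamma)^{k+1/2}\|(\partial_s^{\bot})^k\curv\|_{L^2(\dd s)}$, and use the identity $k\gamma=a+\tfrac{b}{2}-1$ to check that on the high-order term all $\Ll(\gamma)$-powers collapse to $\Ll(\gamma)^{0}$, while the residual low-order term carries the factor $\Ll(\gamma)^{1-a-b/2}\E(\gamma)^{b/2}$. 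Finally, Young's inequality with conjugate exponents $\tfrac{2}{\gamma}$ and $\tfrac{2}{2-\gamma}$ (valid since $\gamma\in(0,2)$ by the hypothesis $a+\tfrac{b}{2}<2k+1$) absorbs $\E(\gamma)^{(b-\gamma)/2}\|(\partial_s^{\bot})^k\curv\|_{L^2(\dd s)}^{\gamma}$ into $\varepsilon\int|(\partial_s^{\bot})^k\curv|^2\dd s+C\varepsilon^{-\gamma/(2-\gamma)}\E(\gamma)^{(b-\gamma)/(2-\gamma)}$, yielding the second claim.

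The main obstacle is the careful choice of Hölder exponents under the two alternative hypotheses in Step~1, in particular the fact that the case $c=k$ with $a<2k$ forces exactly one of the $p_j$ to be $2$ while the others must partition $\tfrac12$. Once the exponents are fixed, the identity $\sum_j\alpha_j=\gamma$ and the cancellation of $\Ll(\gamma)$-powers in Step~3 are purely bookkeeping; the edge case $\gamma=0$ ($a=0$, $b=2$) is excluded by the condition $\gamma\in(0,2)$ in the statement of the second bound, and the degenerate upper bound at $\gamma\nearrow 2$ is avoided by the strict inequality $a+\tfrac{b}{2}<2k+1$.
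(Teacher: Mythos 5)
Your proof is correct and follows essentially the same route as the paper: the paper reduces $|P_b^{a,c}|$ to the product $\prod_j|(\partial_s^\bot)^{i_j}\curv|$ via \eqref{eq:hyp-tensor-unif-bounds} and then simply cites Lemmas~3.3 and 3.4 of \cite{dallacqualinpozzi2017}, whose content is exactly the H\"older/Gagliardo--Nirenberg/Young argument you write out in full (including the observation that the hypothesis ``$a<2k$ or $c\leq k-1$'' guarantees at most one factor with $i_j=k$, which is assigned the exponent $p_j=2$). Your bookkeeping of the $\Ll(\gamma)$-powers and of $\sum_j\alpha_j=\gamma$ checks out, so the proposal is a valid self-contained version of the paper's two-line proof.
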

\begin{proof}
    Writing $P_{b}^{a,c}=T_{\gamma}(\partial_s\gamma,\dots,\partial_s\gamma,(\partial_s^{\bot})^{i_1},\dots,(\partial_s^{\bot})^{i_b}\curv)$, \eqref{eq:hyp-tensor-unif-bounds} yields
    \begin{equation}
        |P_{b}^{a,c}|\leq C(P_{b}^{a,c},0) \prod_{j=1}^b|(\partial_s^{\bot})^{i_j}\curv|. 
    \end{equation}
    Now the claim is proved with \cite[Lemmas~3.3 and 3.4]{dallacqualinpozzi2017}.
\end{proof}

\begin{corollary}\label{cor:inter-ppABC}
    Let $\gamma\colon[0,T)\times [-1,1]\to\R^n$ be a FLFB-EF, $L_0=\Ll(\gamma_0)$, and $A,B,c,k\in\N_0$ with $B,k\geq 2$ and $c\leq k$. %Consider $A_1,A_2,B_1,B_2\in\N_0$ with $B_1,B_2\geq 1$ and $B=B_1+B_2$, $A=A_1+A_2$. 
    First, if $c\leq k-1$ and $\gamma=\frac1k(A+\frac12B-1)\in(0,2)$,
    % \begin{equation}
    %     \int_{-1}^1 \langle \mathbb{P}_{B_1}^{A_1,c},\mathbb{P}_{B_2}^{A_2,c}\rangle \dd s\leq C \big(1+\|(\partial_s^{\bot})^k\curv\|_{L^2(\dd s)}^{\gamma}\big)
    % \end{equation}
    \begin{equation}
        \int_{-1}^1 |\mathbb{P}_{B}^{A,c}| \dd s\leq C \big(1+\|(\partial_s^{\bot})^k\curv\|_{L^2(\dd s)}^{\gamma}\big)
    \end{equation}
    with $C$ depending only on $M$, $L_0$, $\E(\gamma_0)$, $A$, $B$, $c$, $k$ and the constants $C(P_{\centerdot}^{\centerdot,\centerdot},0)$ of the terms $P_{\centerdot}^{\centerdot,\centerdot}$ making up $\mathbb{P}_{B}^{A,c}$. Second, if $\gamma=\frac1k(A+\frac12B-1)\in(0,2)$ and $c\leq k$, for any $\varepsilon>0$,
    % \begin{equation}
    %     \int_{-1}^1\langle \mathbb{P}_{B_1}^{A_1,c},\mathbb{P}_{B_2}^{A_2,c}\rangle \dd s\leq \varepsilon \int_{-1}^1|(\partial_s^{\bot})^k\curv|^2\dd s + C_{\varepsilon}
    % \end{equation}
    \begin{equation}
        \int_{-1}^1 |\mathbb{P}_{B}^{A,c}| \dd s\leq \varepsilon \int_{-1}^1|(\partial_s^{\bot})^k\curv|^2\dd s + C_{\varepsilon}
    \end{equation}
    where $C_{\varepsilon}$ only depends on $\varepsilon$ and again $M$, $L_0$, $\E(\gamma_0)$, $A$, $B$, $c$, $k$ and the constants $C(P_{\centerdot}^{\centerdot,\centerdot},0)$ of the terms $P_{\centerdot}^{\centerdot,\centerdot}$ making up $\mathbb{P}_{B}^{A,c}$.
\end{corollary}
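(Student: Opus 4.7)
The strategy is to unfold the abbreviation $\mathbb{P}_B^{A,c}$, apply \Cref{prop:inter-pbac} termwise, and exploit that the FLFB-EF preserves length and dissipates energy in order to convert the scale-invariant norms appearing in \Cref{prop:inter-pbac} into the concrete quantities on the right-hand side of the claim.

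First I would unfold: by definition of $\mathbb P_B^{A,c}$, write
\begin{equation}
    \mathbb P_B^{A,c} = \sum_{j=1}^{N} P_{b_j}^{a_j,c_j},
\end{equation}
with $a_j + \tfrac12 b_j \le A + \tfrac12 B$, $c_j \le c \le k$, and $b_j \ge 1$. Since $B \ge 2$ there is nothing to check when the sum is empty, and I may assume $b_j \ge 2$ after trivially combining with factors $P_1^{0,0} = \curv$ where needed (or more cleanly, I would argue that the case $b_j=1$ is handled analogously using the Sobolev embedding for $\curv$, but in practice such terms are already included in the proposition's scope). Using \Cref{rem:en-mon} and \Cref{prop:inter-dspk},
\begin{equation}
    \Ll(\gamma(t,\cdot)) = L_0, \qquad \|\curv\|_2^2 = L_0\,\E(\gamma(t,\cdot)) \le L_0\,\E(\gamma_0),
\end{equation}
and $\|\curv\|_{k,2} \le C(n,k)(\|(\partial_s^\bot)^k\curv\|_2 + \|\curv\|_2)$, whose scale-invariant second summand is bounded in terms of $L_0$ and $\E(\gamma_0)$.

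For the first statement I would apply the first part of \Cref{prop:inter-pbac} to each $P_{b_j}^{a_j,c_j}$ (which is allowed since $c_j \le c \le k-1$), obtaining
\begin{equation}
    \int_{-1}^1 |P_{b_j}^{a_j,c_j}|\dd s \;\le\; C\,L_0^{1-a_j-b_j}\,\|\curv\|_2^{b_j-\gamma_j}\,\|\curv\|_{k,2}^{\gamma_j}
\end{equation}
with $\gamma_j = \tfrac1k(a_j + \tfrac12 b_j - 1) \le \gamma$. Substituting the above control on $\|\curv\|_2$ and $\|\curv\|_{k,2}$, converting $\|(\partial_s^\bot)^k\curv\|_2 = L_0^{k+\frac12}\|(\partial_s^\bot)^k\curv\|_{L^2(\dd s)}$ and absorbing all quantities depending on $L_0$, $\E(\gamma_0)$, $n$, $k$, $A$, $B$ into a single constant, each term is bounded by $C(1+\|(\partial_s^\bot)^k\curv\|_{L^2(\dd s)}^{\gamma_j})$. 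The elementary inequality $x^{\gamma_j} \le 1 + x^{\gamma}$ valid for $x \ge 0$ and $0 \le \gamma_j \le \gamma$ then reduces the various exponents $\gamma_j$ to the single exponent $\gamma$, yielding the claim after summing the finitely many $j$.

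For the second statement the argument is identical up to the point where \Cref{prop:inter-pbac} is invoked; here I would use its second part, which applies under the weaker assumption $c \le k$ at the price of a $\varepsilon$-dependent constant. Since $a_j + \tfrac12 b_j \le A + \tfrac12 B = k\gamma + 1 < 2k+1$, the hypothesis is met, and each term is bounded by $\tfrac{\varepsilon}{N} \int_{-1}^1 |(\partial_s^\bot)^k \curv|^2\dd s + C_\varepsilon$ after absorbing $L_0$- and $\E(\gamma_0)$-dependent factors. Summing over $j$ gives the claim. No real obstacle arises; the only bookkeeping subtlety is checking that $\gamma_j \in (0,2]$ lies in the allowed range for every summand, which follows from $b_j \ge 1$ and $\gamma_j \le \gamma < 2$.
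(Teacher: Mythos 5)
Your proposal is correct and is essentially the argument the paper leaves implicit: unfold $\mathbb{P}_B^{A,c}$ into finitely many $P_{b_j}^{a_j,c_j}$, apply \Cref{prop:inter-pbac} termwise, and use that a FLFB-EF satisfies $\Ll(\gamma(t,\cdot))=L_0$ and $\E(\gamma(t,\cdot))\le\E(\gamma_0)$ (\Cref{rem:en-mon}) to absorb the scale-invariant norms, finishing with $x^{\gamma_j}\le 1+x^{\gamma}$. The only imprecision is at the edge cases: summands with $b_j=1$ or with $\gamma_j\le 0$ fall outside the literal scope of \Cref{prop:inter-pbac}, but they are of low order and are bounded directly by Cauchy--Schwarz together with \Cref{prop:inter-dspk}, so this does not affect the result.
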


%\todo{With \Cref{hyp:unif-tub-nbhd}, $M$ need not be compact!!!}

\subsection{A priori estimates for the Lagrange multiplier}

Applying \Cref{prop:inter-pbac} to \eqref{eq:def-lambda} yields the following estimate.

\begin{lemma}\label{lem:lamb-brute-force}
    Let $\gamma\colon[0,T)\times[-1,1]\to\R^n$ be a FLFB-EF and $L_0=\Ll(\gamma_0)$. If there exists $\delta>0$ with $\delta < \E(\gamma(t)) < \delta^{-1}$, then, for any $m\in\N_0$,
    \begin{equation}
        |\lambda(\gamma(t,\cdot))|\leq C(L_0,\delta,n,m)\big(1+\|(\partial_s^{\bot})^{m+2}\curv\|_{L^2(\dd s)}^{\frac{2}{m+2}}\big)\quad\text{for all $0\leq t<T$}.
    \end{equation}
\end{lemma}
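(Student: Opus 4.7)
My starting point is the integration-by-parts representation \eqref{eq:lambda-ibp-formula},
\begin{equation}
    \lambda(\gamma) = \E(\gamma)^{-1}\Big(\int_{-1}^1 -2|\partial_s^\bot\curv|^2 + |\curv|^4\dd s + 2\big[\langle\partial_s^\bot\curv,\curv\rangle\big]_{-1}^1\Big).
\end{equation}
Since $\E(\gamma(t))>\delta$ by hypothesis and $\Ll(\gamma(t))\equiv L_0$ by \Cref{rem:en-mon}, it suffices to estimate the numerator by a quantity of the form $C(L_0,\delta,n,m)(1+\|(\partial_s^\bot)^{m+2}\curv\|_{L^2(\dd s)}^{2/(m+2)})$. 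The two interior integrands are immediately of tensor type: $|\partial_s^\bot\curv|^2$ is a $P_2^{2,1}$ and $|\curv|^4$ is a $P_4^{0,0}$, with constants independent of $t$.

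The boundary term is the point where I use the natural boundary condition \eqref{eq:nat-bc}, which yields
\begin{equation}
    \big[\langle\partial_s^\bot\curv,\curv\rangle\big]_{-1}^1 = -\big[\tau_0\langle S_\gamma\curv,\curv\rangle\big]_{-1}^1.
\end{equation}
To turn this into an integral to which \Cref{prop:inter-pbac} applies, I use the fundamental theorem of calculus to write
\begin{equation}
    \big[\langle S_\gamma\curv,\curv\rangle\big]_{-1}^1 = \int_{-1}^1 \partial_s\langle S_\gamma\curv,\curv\rangle \dd s.
\end{equation}
Expanding $\partial_s\curv=\partial_s^\bot\curv-|\curv|^2\partial_s\gamma$ and invoking the uniform bounds \eqref{eq:unif-bound-shape-op} on $S$ and $DS$ to justify that $S_\gamma$ and $DS_\gamma$ qualify as tensor fields in the sense of \eqref{eq:hyp-tensor-unif-bounds}, the integrand decomposes as a sum of the form $\mathbb{P}_2^{1,1} + \mathbb{P}_3^{0,0}$.

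With $k=m+2$, \Cref{prop:inter-pbac} applies to each of the four terms $P_2^{2,1}$, $P_4^{0,0}$, $\mathbb{P}_2^{1,1}$, $\mathbb{P}_3^{0,0}$: the hypothesis $c\leq k-1$ is satisfied in every case since the highest covariant-derivative order is at most $1\leq m+1$. The corresponding exponents $\gamma=\frac{1}{k}(a+\frac12 b - 1)$ come out to $\frac{2}{m+2}$, $\frac{1}{m+2}$, $\frac{1}{m+2}$, and $\frac{1}{2(m+2)}$ respectively, so the dominant exponent is exactly $\frac{2}{m+2}$ from the $|\partial_s^\bot\curv|^2$ term. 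Together with the bound $\|\curv\|_{k,2}\leq C(n,k)(\|(\partial_s^\bot)^k\curv\|_2+\|\curv\|_2)$ from \Cref{prop:inter-dspk}, the fact that $\|\curv\|_2$ is bounded in terms of $L_0$ and $\delta^{-1}$ (using $\|\curv\|_2 = L_0^{1/2}\E(\gamma)^{1/2}$), and the scaling $\|(\partial_s^\bot)^k\curv\|_2 = L_0^{k+\frac12}\|(\partial_s^\bot)^k\curv\|_{L^2(\dd s)}$, I absorb all prefactors into the final constant $C(L_0,\delta,n,m)$ and, after dividing by $\E(\gamma)>\delta$, obtain the claim.

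The only real obstacle is the boundary term, and it is handled by the order-reduction already noted in the paper immediately after \eqref{eq:lambda-ibp-formula}: the third-order quantity is replaced via \eqref{eq:nat-bc} by a quadratic expression in $\curv$, which is precisely what makes the subsequent integration-by-parts/interpolation bookkeeping compatible with an exponent $\frac{2}{m+2}$ rather than something larger.
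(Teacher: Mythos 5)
Your argument is correct in substance, but it takes a genuinely different and longer route than the paper. The paper's proof is a one-liner: it applies \Cref{prop:inter-pbac} directly to the \emph{original} formula \eqref{eq:def-lambda}, whose numerator $\int 2\langle(\partial_s^\bot)^2\curv,\curv\rangle+|\curv|^4\,\dd s$ is already of the form $\int \mathbb{P}_2^{2,2}+P_4^{0,0}\,\dd s$ with no boundary contribution; since $a=2<2k$ the first estimate of \Cref{prop:inter-pbac} applies and gives the exponent $\gamma=\tfrac1k(2+1-1)=\tfrac{2}{m+2}$ immediately. You instead start from the integrated-by-parts representation \eqref{eq:lambda-ibp-formula} and then must undo the boundary term by hand; this works, and your exponent bookkeeping is right (the interior term $|\partial_s^\bot\curv|^2$ has the same weight $(a,b)=(2,2)$ as $\langle(\partial_s^\bot)^2\curv,\curv\rangle$, which is why you land on the same $\tfrac{2}{m+2}$), but it buys nothing and costs two things: extra work, and a constant that now depends on $M$ through $\sup|S|$ and $\sup|DS|$, a dependence the paper's route avoids entirely.

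One small slip in your boundary-term step: the quantity to estimate is $\bigl[\tau_0\langle S_\gamma\curv,\curv\rangle\bigr]_{-1}^1$, and $\tau_0(1)$ and $\tau_0(-1)$ need not coincide (in \Cref{fig:curv-with-free-bdry-on-sphere} they have opposite signs), so this bracket is in general \emph{not} $\pm\bigl[\langle S_\gamma\curv,\curv\rangle\bigr]_{-1}^1$ and your fundamental-theorem identity does not directly compute it. The fix is routine: since $|\tau_0(\pm1)|=1$, bound the bracket by $|\langle S_\gamma\curv,\curv\rangle(1)|+|\langle S_\gamma\curv,\curv\rangle(-1)|$ and use $|f(\pm1)|\le \Ll^{-1}\int|f|\,\dd s+\int|\partial_s f|\,\dd s$, which produces the same integrands $\mathbb{P}_2^{1,1}+\mathbb{P}_3^{0,0}$ plus a harmless $\Ll^{-1}\int|\mathbb{P}_2^{0,0}|\,\dd s\le C\,\E(\gamma)$ term. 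With that patch your proof is complete.
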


By a careful analysis of the scaling argument used in \cite{dziukkuwertschaetzle2002} to control $\lambda$, arguing as in \cite[Lemma~4.3]{dallacqualinpozzi2017}, we have the following.

\begin{lemma}\label{lem:lamb-a priori}
    Let $\gamma\colon[0,T)\times[-1,1]\to\R^n$ be a FLFB-EF and $L_0=\Ll(\gamma_0)$. Suppose that 
    \begin{equation}\label{eq:pos-curv-cond}
        L_0 - |\gamma(t,1)-\gamma(t,-1)| \geq \zeta >0
    \end{equation}
    for all $t\in[0,T)$. Then, for any $m\in\N_0$,
    \begin{equation}
        |\lambda(t)|\leq C(\zeta,n,m,L_0,\E(\gamma_0)) \big(1+\|\partial_t\gamma\|_{L^2(\dd s)} + \|(\partial_s^{\bot})^{m+2}\curv\|_{L^2(\dd s)}^{\frac{1}{m+2}}\big)\quad\text{for all $0\leq t<T$}.
    \end{equation}
\end{lemma}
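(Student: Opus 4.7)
The plan is to combine the non-flatness hypothesis \eqref{eq:pos-curv-cond} with the flow equation, following the scaling analysis of \cite{dziukkuwertschaetzle2002} as refined in \cite[Lemma~4.3]{dallacqualinpozzi2017}. First I would establish a positive lower bound on the energy. Parametrizing by arc-length over $[0,L_0]$ and writing $\partial_s\gamma(t,s)-\partial_s\gamma(t,\sigma)=\int_\sigma^s\curv\,\dd\tau$, Cauchy--Schwarz gives
\begin{equation*}
L_0^2-|\gamma(t,1)-\gamma(t,-1)|^2 = \tfrac12\iint_{[0,L_0]^2}|\partial_s\gamma(t,s)-\partial_s\gamma(t,\sigma)|^2\,\dd s\,\dd\sigma \leq \tfrac{L_0^3}{6}\,\E(\gamma(t,\cdot)),
\end{equation*}
which combined with $L_0^2-|\gamma(t,1)-\gamma(t,-1)|^2\geq L_0\zeta$ (from \eqref{eq:pos-curv-cond} and $L_0+|\gamma(t,1)-\gamma(t,-1)|\leq 2L_0$) yields $\E(\gamma(t,\cdot))\geq 6\zeta/L_0^2$. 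Together with the energy monotonicity $\E(\gamma(t,\cdot))\leq \E(\gamma_0)$ from \Cref{rem:en-mon}, this places the energy in a compact subinterval of $(0,\infty)$ for all $t\in[0,T)$.

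From the FLFB-EF equation $\partial_t\gamma=-\nabla\E+\lambda\curv$ (note that $\partial_t\gamma$ is purely normal) and length preservation $\int_I\langle\partial_t\gamma,\curv\rangle\,\dd s=0$ of \Cref{rem:en-mon}, I would then compute $\|\lambda\curv\|_{L^2(\dd s)}^2=\|\partial_t\gamma+\nabla\E\|_{L^2(\dd s)}^2$ and use the further identity $\int_I\langle\partial_t\gamma,\nabla\E\rangle\,\dd s=-\|\partial_t\gamma\|_{L^2(\dd s)}^2$ (obtained from length preservation after substituting $\nabla\E=-\partial_t\gamma+\lambda\curv$) to derive the key identity
\begin{equation*}
\lambda^2\,\E = \|\nabla\E\|_{L^2(\dd s)}^2 - \|\partial_t\gamma\|_{L^2(\dd s)}^2.
\end{equation*}
In particular $|\lambda|\sqrt{\E}\leq\|\nabla\E\|_{L^2(\dd s)}$, and the energy lower bound from the first paragraph reduces the task to estimating $\|\nabla\E\|_{L^2(\dd s)}$ by the right-hand side of the claim.

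For this final step, a brute-force interpolation of $\|\nabla\E\|_{L^2(\dd s)}$ via $\nabla\E=2(\partial_s^\bot)^2\curv+|\curv|^2\curv$ using \Cref{prop:inter-dspk,prop:inter-pbac} only yields the suboptimal exponent $\tfrac{2}{m+2}$ on the top-order seminorm --- this is essentially the content of \Cref{lem:lamb-brute-force}. To sharpen to $\tfrac{1}{m+2}$, I would use the flow equation once more to rewrite $2(\partial_s^\bot)^2\curv=-\partial_t\gamma+\lambda\curv-|\curv|^2\curv$, thereby trading the factor of $\|(\partial_s^\bot)^2\curv\|_{L^2(\dd s)}$ appearing in $\|\nabla\E\|_{L^2(\dd s)}$ for $\|\partial_t\gamma\|_{L^2(\dd s)}$ and a $|\lambda|\sqrt{\E}$ contribution that is to be absorbed. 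Only the subcritical factor $\|\curv\|_{L^6(\dd s)}^3$ then remains to be interpolated, which by \Cref{prop:inter-dspk} with $i=0$, $p=6$, $k=m+2$ (so $\alpha=\tfrac{1}{3(m+2)}$) produces the sharp exponent $\tfrac{1}{m+2}$ after cubing. The main obstacle is this final absorption step: the $|\lambda|\sqrt{\E}$ contribution arising from the flow-equation substitution must be carefully moved to the left-hand side, and this is only possible thanks to the positive lower bound on $\E$ from the first paragraph. Without \eqref{eq:pos-curv-cond}, the elastic energy could approach zero and the absorption would fail, which is precisely why this hypothesis enters \Cref{thm:intro-main-asymptotics}.
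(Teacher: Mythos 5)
Your first paragraph is correct (it is the quantitative version of the implication ``(B)$\Rightarrow$(A)'' in \Cref{rem:pos-curv-cond}), and so is the identity $\lambda^2\E=\|\nabla\E\|_{L^2(\dd s)}^2-\|\partial_t\gamma\|_{L^2(\dd s)}^2$: since by \eqref{eq:def-lambda} $\lambda\curv$ is exactly the $L^2(\dd s)$-orthogonal projection of $\nabla\E$ onto $\mathrm{span}\{\curv\}$, this is just Pythagoras for the decomposition $\nabla\E=\lambda\curv-\partial_t\gamma$. The gap is in the final step, which is circular. Because $\nabla\E=-\partial_t\gamma+\lambda\curv$ holds \emph{exactly}, substituting the flow equation into $\|\nabla\E\|_{L^2(\dd s)}$ returns $\|\nabla\E\|_{L^2(\dd s)}\leq\|\partial_t\gamma\|_{L^2(\dd s)}+|\lambda|\sqrt{\E}$, where $|\lambda|\sqrt{\E}$ appears with coefficient exactly $1$ --- the same coefficient with which it stands on the left after you invoke $|\lambda|\sqrt{\E}\leq\|\nabla\E\|_{L^2(\dd s)}$. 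Absorption requires a coefficient strictly less than $1$ on the right; no lower bound on $\E$ can change a coefficient, and one is left with the vacuous statement $0\leq\|\partial_t\gamma\|_{L^2(\dd s)}+\dots$. This is not a fixable bookkeeping issue: your own identity shows $\|\nabla\E\|_{L^2(\dd s)}^2=\lambda^2\E+\|\partial_t\gamma\|_{L^2(\dd s)}^2$, so any valid upper bound for $\|\nabla\E\|_{L^2(\dd s)}$ is automatically $\geq|\lambda|\sqrt{\E}$, and the route ``bound $|\lambda|\sqrt{\E}$ by $\|\nabla\E\|_{L^2(\dd s)}$, then bound $\|\nabla\E\|_{L^2(\dd s)}$ via the equation'' cannot produce information on $\lambda$. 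More generally, any argument using only \eqref{eq:def-lambda}, the PDE, and integration by parts is tautological, because $\lambda$ is defined precisely so that $\int\langle\partial_t\gamma,\curv\rangle\dd s=0$; the only non-tautological version of your route is to interpolate $\|(\partial_s^\bot)^2\curv\|_{L^2(\dd s)}$ directly, which gives the exponent $\frac{1}{m+2}(2+\frac12-\frac12)=\frac{2}{m+2}$ of \Cref{lem:lamb-brute-force}.

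The missing idea is to pair the equation $\lambda\curv=\partial_t\gamma+\nabla\E$ with a field \emph{other than} $\curv$ --- this is the content of the scaling argument of \cite{dziukkuwertschaetzle2002} as adapted in \cite[Lemma~4.3]{dallacqualinpozzi2017}. Translating so that $\gamma(t,1)=0$ and testing with $\gamma$ itself, integration by parts gives $\int_{-1}^1\langle\curv,\gamma\rangle\dd s=[\langle\partial_s\gamma,\gamma\rangle]_{-1}^1-L_0\leq|\gamma(t,-1)-\gamma(t,1)|-L_0\leq-\zeta$, which is where \eqref{eq:pos-curv-cond} genuinely enters (not through the energy lower bound). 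Hence $|\lambda|\leq\zeta^{-1}\big(|\int\langle\partial_t\gamma,\gamma\rangle\dd s|+|\int\langle\nabla\E,\gamma\rangle\dd s|\big)$, and in the second integral two further integrations by parts move all derivatives off $\curv$ onto the bounded field $\gamma$, the boundary terms being reduced to zeroth order in $\curv$ via \eqref{eq:nat-bc}. What survives are quantities such as $\|\curv\|_{L^2(\dd s)}\|\partial_s^\bot\curv\|_{L^2(\dd s)}$, $\|\curv\|_{L^3(\dd s)}^3$, and $\|\curv\|_{L^\infty(\dd s)}^2$, each of which interpolates by \Cref{prop:inter-dspk} with exponent at most $\frac{1}{m+2}$ on $\|\curv\|_{m+2,2}$; your interpolation of $\|\curv\|_{L^6(\dd s)}^3$ is of this benign type, but the second-derivative term must be handled through the test field, not through the equation.
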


\begin{remark}\label{rem:pos-curv-cond}
    Notice that we work with two related but not equivalent uniform non-flatness conditions in \Cref{lem:lamb-brute-force,lem:lamb-a priori}. More precisely, let $L_0>0$ and $\gamma\colon[0,T)\times [-1,1]\to\R^n$ be a family of immersions of length $L_0$. We consider the following two conditions:
    \begin{itemize}
        \item[(A)] There exists $\delta>0$ with $\E(\gamma(t,\cdot))>\delta$ for all $0\leq t<T$. 
        \item[(B)] There exists $\zeta>0$ with $|\gamma(t,1)-\gamma(t,-1)|\leq L_0-\zeta$ for all $0\leq t<T$.  
    \end{itemize}
    It is a simple application of the direct method to verify that (B) implies (A). In general, the converse is not as straightforward. For a discussion in our setting of curves with bounded elastic energy satisfying orthogonal free boundary conditions, see \Cref{prop:unif-non-flatness} below.

    We would like to point out that it is the \emph{uniformity}, i.e.\ the uniform estimates with $\delta$ resp.\ $\zeta$ in (A) resp.\ (B), which is crucial here. In fact, $\E(\gamma(t,\cdot))>0$ for all $0\leq t<T$ if and only if $|\gamma(t,1)-\gamma(t,-1)| < L_0$ for all $0\leq t<T$ is clearly true.
\end{remark}

% \begin{lemma}[{\cite[Lemma~4.5]{dallacqualinpozzi2017}}]
%     Consider an elastic flow $\gamma\colon[0,T)\times[-1,1]\to\R^n$ with fixed length $L_0$ and orthogonal boundary on $M$ such that $0<\delta<\E(\gamma(t))<\delta^{-1}$. Then, for $m\in\N$ and $0\leq l\leq m$,
%     \begin{equation}
%         \big|\Big(\frac{\dd}{\dd t}\Big)^l\lambda\big| \leq C\Big(1+\|(\partial_s^{\bot})^{4m+2}\curv\|_{L^2(\dd s)}^{\frac{4l+2}{4m+2}}\Big)
%     \end{equation}
%     where $C=C(\delta,L_0,m,n)$.
% \end{lemma}

%%%%%%%%%%%%%%%%%%%%%%%%%%%%%%%%%%%%%%%%%%%%%%%%%%%%%%%%%%%%%%%%%%%%%%%%%%%%%%%%%%%%%%%%%%%%%%%%
%%%%%%%%%%%%%%%%%%%%%%%%%%%%%%%%%%%%%%%%%%%%%%%%%%%%%%%%%%%%%%%%%%%%%%%%%%%%%%%%%%%%%%%%%%%%%%%%
\subsection{Energy estimates for $\|(\partial_s^{\bot})^3\curv\|_{L^2(\dd s)}$ and $\|\partial_s^3\curv\|_{L^2(\dd s)}$}
%%%%%%%%%%%%%%%%%%%%%%%%%%%%%%%%%%%%%%%%%%%%%%%%%%%%%%%%%%%%%%%%%%%%%%%%%%%%%%%%%%%%%%%%%%%%%%%%
%%%%%%%%%%%%%%%%%%%%%%%%%%%%%%%%%%%%%%%%%%%%%%%%%%%%%%%%%%%%%%%%%%%%%%%%%%%%%%%%%%%%%%%%%%%%%%%%

%Choose some $\chi\in C^{\infty}([-1,1],[0,1])$ with
%\begin{equation}
%    \chi_{[-1,-\frac14]} + \chi_{[\frac14,1]} \leq \chi \leq \chi_{[-1,-\frac12]} + \chi_{[\frac12,1]}
%\end{equation}
%and define 
%\begin{equation}
%    \widetilde{\tau}(x)=\begin{cases}
%        \chi(x)\tau_0(-1),&\text{for $x\in[-1,0]$}\\
%        \chi(x)\tau_0(1),&\text{for $x\in(0,1]$}
%    \end{cases}
%\end{equation}
%as well as
%\begin{equation}
%    N_3(t,x) = (\partial_s^{\bot})^3\curv(t,x) + \widetilde{\tau}(x) (S_{\gamma(t,x)}(\partial_s^{\bot})^2\curv(t,x))^{\bot} + \langle \curv(t,x),\partial_s^{\bot}\curv(t,x)\rangle \curv(t,x).
%\end{equation}

For the remainder of this section, the following hypothesis plays a major role in the analysis of the flow's long-time behavior.

\begin{hypothesis}\label{hyp:L0-zeta}
    Let $\gamma\colon[0,T)\times[-1,1]\to\R^n$ be a FLFB-EF, $L_0=\Ll(\gamma_0)$ and suppose that there exists $\zeta>0$ with
    \begin{equation}\label{eq:unif-non-flatness-condition}
        L_0-|\gamma(t,-1)-\gamma(t,1)|\geq \zeta\quad\text{for all $0\leq t<T$}.
    \end{equation}
\end{hypothesis}

We first prove the following proposition.

\begin{proposition}\label{prop:dsbot3curv-l2-bound}
    % Consider an elastic flow $\gamma\colon[0,T)\times[-1,1]\to\R^n$ with fixed length $L_0$ and orthogonal boundary on $M$ and suppose that there exists $\zeta>0$ with
    % \begin{equation}
    %     L_0-|\gamma(t,-1)-\gamma(t,1)|\geq \zeta\quad\text{for all $0\leq t<T$}.
    % \end{equation}
    Assume \Cref{hyp:L0-zeta}. Then, we have
    \begin{equation}\label{eq:dsbot3-estimate}
        \|(\partial_s^{\bot})^3\curv\|_{L^2(\dd s)} \leq C(L_0,\E(\gamma_0),\zeta,n)\quad\text{on $[0,T)$}.
    \end{equation}
    Especially, $|\lambda(\gamma(t,\cdot))|\leq C(L_0,\E(\gamma_0),\zeta,n)$ for all $0\leq t<T$.
\end{proposition}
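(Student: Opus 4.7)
The plan is to apply the parabolic energy identity of \Cref{lem:ibp} to a carefully chosen normal field $N_3$ satisfying $N_3(t,\pm1)=0$, and to close the estimate via the interpolation inequalities \Cref{prop:inter-dspk,cor:inter-ppABC} together with the a priori bound on $\lambda$ from \Cref{lem:lamb-a priori}. As a preliminary, I would upgrade \Cref{hyp:L0-zeta} to a uniform lower bound $\E(\gamma(t,\cdot))\ge c(\zeta,L_0,n)>0$: letting $e(t)$ be any unit vector parallel to $\gamma(t,1)-\gamma(t,-1)$, the identity $L_0-|\gamma(t,1)-\gamma(t,-1)|=\int_{-1}^1\bigl(1-\langle\partial_s\gamma,e(t)\rangle\bigr)\,\dd s$ combined with $\partial_s\langle\partial_s\gamma,e(t)\rangle=\langle\curv,e(t)\rangle$ and a short Cauchy--Schwarz/compactness argument (a length-$L_0$ immersion with $\E=0$ is a straight segment, so $|\gamma(1)-\gamma(-1)|=L_0$) yields the desired bound. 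Combined with \Cref{rem:en-mon} the energy is then trapped between two positive constants depending only on $(L_0,\E(\gamma_0),\zeta,n)$.

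Guided by \Cref{lem:n3} I would introduce the normal field
\[
N_3\vcentcolon=(\partial_s^\bot)^3\curv+\tau_0\bigl(S_\gamma(\partial_s^\bot)^2\curv\bigr)^\bot+\langle\curv,\partial_s^\bot\curv\rangle\curv=(\partial_s^\bot)^3\curv+\mathbb{P}^{2,2}_1+\mathbb{P}^{0,0}_3,
\]
which by \Cref{lem:n3} satisfies $N_3(t,\pm1)=0$ and, crucially, involves no $\lambda$. Cauchy--Schwarz together with \Cref{cor:inter-ppABC} reduces \eqref{eq:dsbot3-estimate} to a uniform bound on $\|N_3\|_{L^2(\dd s)}$. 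Applying \Cref{lem:ibp} with $N=N_3$, the first boundary term $[\langle N_3,(\partial_s^\bot)^3N_3\rangle]_{-1}^1$ vanishes and one arrives at
\[
\tfrac{\dd}{\dd t}\tfrac12\|N_3\|_{L^2(\dd s)}^2+\|(\partial_s^\bot)^2 N_3\|_{L^2(\dd s)}^2=\int_{-1}^1\langle Y,N_3\rangle\,\dd s-\tfrac12\int_{-1}^1|N_3|^2\langle\curv,\partial_t\gamma\rangle\,\dd s+\bigl[\langle\partial_s^\bot N_3,(\partial_s^\bot)^2 N_3\rangle\bigr]_{-1}^1,
\]
with $Y=\partial_t^\bot N_3+(\partial_s^\bot)^4 N_3=\lambda(\partial_s^\bot)^5\curv+\mathbb{P}^{5,5}_3+\mathbb{P}^{6,5}_1+\mathbb{P}^{4,3}_3+\lambda(\mathbb{P}^{3,3}_3+\mathbb{P}^{4,4}_1+\mathbb{P}^{2,2}_3)$ by \Cref{lem:control-Y}. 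The bulk integrals are handled by integrating by parts until the highest derivative on the test slot is at most $(\partial_s^\bot)^2 N_3$ (using $N_3(\pm1)=0$ to kill one boundary term, the others being of the same type as the surviving one) and then invoking \Cref{cor:inter-ppABC}; this bounds everything by $\varepsilon\|(\partial_s^\bot)^2 N_3\|_{L^2(\dd s)}^2+C_\varepsilon$, each factor of $\lambda$ being controlled pointwise via \Cref{lem:lamb-a priori} (with $m=1$ and $\|\partial_t\gamma\|_{L^2(\dd s)}$ in turn estimated from the evolution equation and \Cref{prop:inter-dspk}).

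The main obstacle is the surviving boundary term $\bigl[\langle\partial_s^\bot N_3,(\partial_s^\bot)^2 N_3\rangle\bigr]_{-1}^1$, which a priori has order six in $\curv$. Here \Cref{lem:n5} is decisive: writing $(\partial_s^\bot)^2 N_3=(\partial_s^\bot)^5\curv+\mathbb{P}^{4,4}_1+\mathbb{P}^{2,2}_3$ and using the pointwise reduction $(\partial_s^\bot)^5\curv|_{\pm1}=-\mathbb{P}^{4,4}_1|_{\pm1}-\lambda\mathbb{P}^{1,1}_3|_{\pm1}$, the value $|(\partial_s^\bot)^2 N_3(t,\pm1)|$ is controlled pointwise by $|(\partial_s^\bot)^j\curv(t,\pm1)|$ for $j\le4$ together with $|\lambda|$. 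The factor $\partial_s^\bot N_3=(\partial_s^\bot)^4\curv+\mathbb{P}^{3,3}_1+\mathbb{P}^{1,1}_3$ admits no pointwise boundary reduction, but the standard trace inequality $|f(\pm1)|^2\le\varepsilon\|\partial_s f\|_{L^2(\dd s)}^2+C_\varepsilon\|f\|_{L^2(\dd s)}^2$ combined with Young and \Cref{prop:inter-dspk} reduces it to $\|N_3\|_{L^2(\dd s)}$ and a small multiple of $\|(\partial_s^\bot)^2 N_3\|_{L^2(\dd s)}^2$, absorbable into the dissipation. Putting everything together one derives
\[
\tfrac{\dd}{\dd t}\|N_3\|_{L^2(\dd s)}^2+c\|(\partial_s^\bot)^2 N_3\|_{L^2(\dd s)}^2\le C(L_0,\E(\gamma_0),\zeta,n),
\]
and since $N_3(t,\pm1)=0$ an iterated Poincaré inequality (handling the tangential correction $\langle N_3,\curv\rangle$ by interpolation) upgrades the dissipation into the coercive term $c'\|N_3\|_{L^2(\dd s)}^2$, so that elementary integration yields the claimed uniform bound. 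The addendum $|\lambda|\le C$ then follows from \Cref{lem:lamb-brute-force} with $m=1$.
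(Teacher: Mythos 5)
Your overall architecture coincides with the paper's: the same auxiliary field $N_3=(\partial_s^\bot)^3\curv+\tau_0(S_\gamma(\partial_s^\bot)^2\curv)^\bot+\langle\curv,\partial_s^\bot\curv\rangle\curv$ with $N_3(t,\pm1)=0$, the identity of \Cref{lem:ibp}, the reduction of the surviving boundary term via \Cref{lem:n5}, and closure through \Cref{prop:inter-dspk,cor:inter-ppABC}. Your minor deviations (pointwise trace estimates for $\bigl[\langle\partial_s^\bot N_3,(\partial_s^\bot)^2N_3\rangle\bigr]_{-1}^1$ instead of rewriting it as $\int_{-1}^1\partial_s(\cdot)\,\dd s$ and applying \Cref{cor:inter-ppABC}; an iterated Poincar\'e inequality instead of the direct interpolation $\int_{-1}^1|N_3|^2\dd s\le\tfrac18\int_{-1}^1|(\partial_s^\bot)^5\curv|^2\dd s+C$) are workable, though more delicate than what the paper does.

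There is, however, a genuine gap in your treatment of the Lagrange multiplier, which is the crux of the length-preserving case. The term $\lambda\int_{-1}^1\langle N_3,(\partial_s^\bot)^5\curv\rangle\dd s$ produced by $Y$ cannot be reduced to $\varepsilon\|(\partial_s^\bot)^5\curv\|_{L^2(\dd s)}^2+C_\varepsilon$: with the $\partial_t\gamma$-free bound of \Cref{lem:lamb-brute-force} one has $|\lambda|\le C(1+\|(\partial_s^\bot)^5\curv\|_{L^2(\dd s)}^{2/5})$ and $\|N_3\|_{L^2(\dd s)}\le C(1+\|(\partial_s^\bot)^5\curv\|_{L^2(\dd s)}^{3/5})$, so Cauchy--Schwarz yields the exactly critical power $\|(\partial_s^\bot)^5\curv\|_{L^2(\dd s)}^{2/5+3/5+1}=\|(\partial_s^\bot)^5\curv\|_{L^2(\dd s)}^{2}$ with a constant that is not small; integrating by parts first does not lower the total order. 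Your proposed remedy --- \Cref{lem:lamb-a priori} with $\|\partial_t\gamma\|_{L^2(\dd s)}$ ``estimated from the evolution equation'' --- is circular, since $\partial_t\gamma=-2(\partial_s^\bot)^2\curv-|\curv|^2\curv+\lambda\curv$ reintroduces $\lambda$ with the non-small coefficient $\E(\gamma_0)^{1/2}$, and even if it closed it would only reproduce the critical-power bound above. Consequently your asserted inequality $\frac{\dd}{\dd t}\|N_3\|_{L^2(\dd s)}^2+c\|(\partial_s^\bot)^2N_3\|_{L^2(\dd s)}^2\le C$ with a constant right-hand side is not justified. The paper instead keeps $\|\partial_t\gamma\|_{L^2(\dd s)}^2$ as a time-dependent coefficient: \Cref{lem:lamb-a priori} gives $\lambda^2\le C(1+\|\partial_t\gamma\|_{L^2(\dd s)}^2+\|(\partial_s^\bot)^5\curv\|_{L^2(\dd s)}^{2/5})$, the subcritical part is absorbed, and the resulting inequality $\frac{\dd}{\dd t}\int_{-1}^1|N_3|^2\dd s\le(C\|\partial_t\gamma\|_{L^2(\dd s)}^2-1)\int_{-1}^1|N_3|^2\dd s+C$ is closed by Gronwall using $\int_0^T\|\partial_t\gamma\|_{L^2(\dd s)}^2\dd t\le\E(\gamma_0)$ from \Cref{rem:en-mon}. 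You need this step (or an equivalent use of the integrability of $\|\partial_t\gamma\|_{L^2(\dd s)}^2$ in time) for the argument to close.
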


Before proving \Cref{prop:dsbot3curv-l2-bound}, we collect some technical computations. Let $\gamma$ be as in \Cref{hyp:L0-zeta} and define 
\begin{align}
	N_3(t,x) &= (\partial_s^{\bot})^3\curv(t,x) + \langle\partial_s\gamma(t,x),\xi(\gamma(t,x))\rangle\cdot (S_{\gamma(t,x)}(\partial_s^{\bot})^2\curv(t,x))^{\bot} \\
	&\quad + \langle \curv(t,x),\partial_s^{\bot}\curv(t,x)\rangle \curv(t,x)
\end{align}
with $\xi$ as in \eqref{eq:defxi} and using \Cref{rem:extshape}. We observe that 
\begin{equation}\label{eq:defN3}
N_3= (\partial_s^{\bot})^3\curv  +\mathbb{P}_1^{2,2}= \mathbb{P}_1^{3,3}
\end{equation}
and, most importantly, that $N_3(t,\pm1)=0$ for $0\leq t<T$ by \Cref{lem:n3}.

\begin{claim}
    We have 
    \begin{equation}
        \int_{\S^2} |(\partial_s^{\bot})^5\curv|^2\dd s  \leq C\int_{\S^2} |(\partial_s^{\bot})^2N_3|^2\dd s + C(L_0,\E(\gamma_0),n).
    \end{equation}
\end{claim}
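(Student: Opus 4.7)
The plan is to express $(\partial_s^\bot)^5\curv$ as $(\partial_s^\bot)^2N_3$ plus a lower-order remainder, bound the $L^2$-norm-squared of the remainder by Gagliardo--Nirenberg-type interpolation (\Cref{cor:inter-ppABC}), and absorb the resulting $\|(\partial_s^\bot)^5\curv\|_{L^2(\dd s)}^2$-contribution into the left-hand side. First I would isolate the correction
\begin{equation}
Q \vcentcolon= N_3 - (\partial_s^\bot)^3\curv = \langle\partial_s\gamma,\xi\circ\gamma\rangle\,(S_\gamma(\partial_s^\bot)^2\curv)^\bot + \langle\curv,\partial_s^\bot\curv\rangle\curv,
\end{equation}
which is a normal field along $\gamma$ belonging to $\mathbb{P}_1^{2,2}$ by \eqref{eq:defN3} and the uniform tensor bounds from \Cref{rem:extshape} and \eqref{eq:unif-bound-shape-op}. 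Applying $(\partial_s^\bot)^2$, identity \eqref{eq:tens-alg-1} of \Cref{lem:tens-alg} with $m=2$ upgrades the top-order curvature factor and absorbs commutator errors into $\mathbb{P}_1^{4,3}\subseteq\mathbb{P}_1^{4,4}$, yielding
\begin{equation}
(\partial_s^\bot)^5\curv = (\partial_s^\bot)^2 N_3 + R, \qquad R \in \mathbb{P}_1^{4,4}.
\end{equation}

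Squaring and using the triangle inequality gives
\begin{equation}
\int_{-1}^1 |(\partial_s^\bot)^5\curv|^2\dd s \leq 2\int_{-1}^1 |(\partial_s^\bot)^2 N_3|^2\dd s + 2\int_{-1}^1 |R|^2\dd s.
\end{equation}
To control the last term, I would use that each summand $P_b^{a,c}$ in $\mathbb{P}_1^{4,4}$ is pointwise bounded by $C\prod_{j=1}^b|(\partial_s^\bot)^{i_j}\curv|$ with $\sum_j i_j=a$, $\max_j i_j = c$, so that squaring doubles the number of curvature factors and the total order while preserving the maximal individual order; concretely $|R|^2\leq C\,|\mathbb{P}_2^{8,4}|$ in the integrand sense relevant for \Cref{cor:inter-ppABC}. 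The interpolation parameter is $\gamma = \frac{1}{5}(8+\tfrac{1}{2}\cdot 2 - 1) = \tfrac{8}{5}\in(0,2)$, and $c=4\leq k=5$, so the second version of \Cref{cor:inter-ppABC} applies with $k=5$ and yields, for every $\varepsilon>0$,
\begin{equation}
\int_{-1}^1 |R|^2\dd s \leq \varepsilon \int_{-1}^1 |(\partial_s^\bot)^5\curv|^2\dd s + C_\varepsilon(L_0,\E(\gamma_0),n),
\end{equation}
where $\Ll(\gamma(t,\cdot))=L_0$ and $\E(\gamma(t,\cdot))\leq\E(\gamma_0)$ by \Cref{rem:en-mon}. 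Choosing $\varepsilon=\tfrac14$ and absorbing gives the claim.

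The only real obstacle is the bookkeeping of $\mathbb{P}$-indices: I must verify carefully that every application of \Cref{lem:tens-alg} leaves the second-order index $c$ strictly below $k=5$ and that squaring $\mathbb{P}_1^{4,4}$ really fits into a $\mathbb{P}_2^{8,4}$-scheme so that the interpolation exponent $\gamma$ stays below $2$. Once the index accounting is in place, the remaining inequalities are routine.
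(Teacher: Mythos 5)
Your proof is correct and follows essentially the same route as the paper: the paper's (very terse) argument likewise writes $(\partial_s^{\bot})^5\curv = (\partial_s^{\bot})^2N_3 + \mathbb{P}_1^{4,4}$ and invokes \Cref{cor:inter-ppABC}. Your index bookkeeping (squaring $\mathbb{P}_1^{4,4}$ into $\mathbb{P}_2^{8,4}$, $\gamma=\tfrac85\in(0,2)$ with $k=5$, then absorbing the $\varepsilon$-term) just spells out the details the paper leaves implicit.
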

\begin{proof}
    Using 
    \begin{equation}
        (\partial_s^{\bot})^5\curv = (\partial_s^{\bot})^2N_3 + \mathbb{P}_1^{4,4},
    \end{equation}
    the claim follows from \Cref{cor:inter-ppABC}.
\end{proof}

\begin{claim}
    Writing $Y=\partial_t^{\bot}N_3+(\partial_s^{\bot})^4N_3$ and using \eqref{eq:DefV}, we have
    \begin{equation}
        \int_{\S^2} \langle Y,N_3\rangle \dd s - \frac12\int_{-1}^1 |N_3|^2\langle\curv,V\rangle\dd s = \int_{-1}^1 \langle \mathbb{P}_1^{6,5},N_3\rangle \dd s+\lambda \int_{-1}^1 \langle N_3,(\partial_s^{\bot})^5\curv + \mathbb{P}_1^{4,4}\rangle \dd s.
    \end{equation}
\end{claim}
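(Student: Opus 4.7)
The plan is to compute $Y=\partial_t^{\bot}N_3+(\partial_s^{\bot})^4N_3$ directly via \Cref{lem:control-Y}, and then to absorb the scalar integrand $-\tfrac12|N_3|^2\langle\curv,V\rangle$ into the same tensor classes that appear on the right-hand side. Since $N_3=(\partial_s^{\bot})^3\curv+\mathbb{P}_1^{2,2}$ by \eqref{eq:defN3}, I would split the computation of $Y$ into two parts. Applying the first identity in \Cref{lem:control-Y} with $m=3$ gives
\begin{equation}
    (\partial_t^{\bot}+(\partial_s^{\bot})^4)(\partial_s^{\bot})^3\curv = \lambda\,(\partial_s^{\bot})^5\curv + \mathbb{P}_3^{5,5} + \lambda\,\mathbb{P}_3^{3,3},
\end{equation}
and applying the second identity to the $\mathbb{P}_1^{2,2}$-part yields
\begin{equation}
    (\partial_t^{\bot}+(\partial_s^{\bot})^4)\mathbb{P}_1^{2,2} = \mathbb{P}_1^{6,5}+\lambda\,\mathbb{P}_1^{4,4}.
\end{equation}

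Next I would observe that the auxiliary terms combine cleanly: the class $\mathbb{P}_3^{5,5}$ satisfies $a+\tfrac12 b\leq 6.5$ and $c\leq 5$, so it is a sub-collection of $\mathbb{P}_1^{6,5}$; similarly $\mathbb{P}_3^{3,3}\subseteq \mathbb{P}_1^{4,4}$. Consequently
\begin{equation}
    Y = \lambda(\partial_s^{\bot})^5\curv + \mathbb{P}_1^{6,5} + \lambda\,\mathbb{P}_1^{4,4}.
\end{equation}
Taking the pointwise inner product with $N_3$ and integrating in $\dd s$ therefore reproduces the right-hand side of the claim modulo the $-\tfrac12\int|N_3|^2\langle\curv,V\rangle\dd s$ term.

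For the remaining term, I would use \eqref{eq:DefV} to decompose $V=\mathbb{P}_1^{2,2}+\lambda P_1^{0,0}$ and write $|N_3|^2=\langle N_3,N_3\rangle$, so that
\begin{equation}
    -\tfrac12|N_3|^2\langle\curv,V\rangle = \langle -\tfrac12 N_3\,\langle\curv,\mathbb{P}_1^{2,2}\rangle,\,N_3\rangle + \lambda\,\langle -\tfrac12 N_3|\curv|^2,\,N_3\rangle.
\end{equation}
The first vector factor is built from three curvature factors of respective orders at most $3,2,0$, putting it in $\mathbb{P}_3^{5,3}\subseteq\mathbb{P}_1^{6,5}$, and the second one is in $\mathbb{P}_3^{3,3}\subseteq\mathbb{P}_1^{4,4}$. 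Absorbing these two contributions into the already present $\mathbb{P}_1^{6,5}$ and $\lambda\mathbb{P}_1^{4,4}$-classes on the right finishes the proof. The only non-routine point is keeping track of the defining inequalities $a+\tfrac12 b\le A+\tfrac12 B$ and $c\le C$ of the $\mathbb{P}_B^{A,C}$-classes so that the various sub-collection containments are applied correctly; no actual integration by parts or Sobolev estimates are needed here since the statement is a purely algebraic reorganisation.
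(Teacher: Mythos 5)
Your argument is correct and follows essentially the same route as the paper: compute $Y$ by applying both identities of \Cref{lem:control-Y} to the decomposition $N_3=(\partial_s^{\bot})^3\curv+\mathbb{P}_1^{2,2}$ and then check the containments of the resulting $\mathbb{P}$-classes. The paper's proof leaves the absorption of $-\tfrac12|N_3|^2\langle\curv,V\rangle$ implicit ("the claim immediately follows"), whereas you carry it out explicitly via \eqref{eq:DefV}; your bookkeeping of the $a+\tfrac12 b$ and $c$ constraints is accurate throughout.
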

\begin{proof}
    Using \Cref{lem:control-Y}, 
    \begin{align}
        Y=\big[\partial_t^{\bot}+(\partial_s^{\bot})^4\big]\big((\partial_s^{\bot})^3\curv + P_{1}^{2,2} + P^{1,1}_3\big)= \lambda \big((\partial_s^{\bot})^5\curv + \mathbb{P}_1^{4,4}\big) + \mathbb{P}_1^{6,5}.
    \end{align}
    Then the claim immediately follows.
\end{proof}
% Best to compute terms of type $(\partial_t^{\bot}+(\partial_s^{\bot})^4)P_b^{a,c}$ as general as possible (will have $\lambda$-contributions!). \todo{Make this a lemma.} An exemplary computation how the highest-order $\lambda$-term on the r.h.s. in Claim 2 can be dealt with:
% \begin{align}
%     \lambda\int_{-1}^1 &\langle (\partial_s^{\bot})^3\curv,(\partial_s^{\bot})^5\curv\rangle\dd s \leq \varepsilon \int_{-1}^1|(\partial_s^{\bot})^5\curv|^2\dd s+C_{\varepsilon} \lambda^2 \int_{-1}^1 |(\partial_s^{\bot})^3\curv|^2\dd s\\
%     &\leq \varepsilon \int_{-1}^1|(\partial_s^{\bot})^5\curv|^2\dd s+C_{\varepsilon} (1+\|V\|_{L^2(\dd s)}^2+\|(\partial_s^{\bot})^5\curv\|_{L^2(\dd s)}^{\frac{2}{5}}) \int_{-1}^1 |(\partial_s^{\bot})^3\curv|^2\dd s.
% \end{align}
% The single term $C_{\varepsilon}\int_{-1}^1|(\partial_s^{\bot})^3\curv|^2\dd s$ can be absorbed after interpolation. Moreover, estimating the integral by interpolation,
% \begin{align}
%     \|(\partial_s^{\bot})^5\curv\|_{L^2(\dd s)}^{\frac{2}{5}}\int_{-1}^1 |(\partial_s^{\bot})^3\curv|^2\dd s &\leq C \|(\partial_s^{\bot})^5\curv\|_{L^2(\dd s)}^{\frac{2}{5}} \Big(1+\|(\partial_s^{\bot})^5\curv\|_{L^2(\dd s)}^{\frac65}\Big)\\
%     &\leq \varepsilon \|(\partial_s^{\bot})^5\curv\|_{L^2(\dd s)}^2 + C_{\varepsilon}.
% \end{align}
% For the Gronwall-term $\|V\|_{L^2(\dd s)}^2\int_{-1}^1|(\partial_s^{\bot})^3\curv|^2\dd s$: use that $\|V\|_{L^2(\dd s)}^2\in L^1(\dd t)$ and
% \begin{equation}
%     \int_{-1}^1|(\partial_s^{\bot})^3\curv|^2\dd s\leq 2\int_{-1}^1 |N_3|^2\dd s + C(L_0^{-1})\E(\gamma).
% \end{equation}
\begin{claim}
    Finally, for each $\varepsilon\in(0,1)$, there is a constant $C_\varepsilon=C(\varepsilon,L_0,\E(\gamma_0),\zeta,n)$ with
    \begin{equation}
        \langle \partial_s^{\bot}N_3,(\partial_s^{\bot})^2N_3\rangle \Big|_{-1}^1 \leq \varepsilon \int_{-1}^1|(\partial_s^{\bot})^5\curv|^2\dd s + C_{\varepsilon}.
    \end{equation}
\end{claim}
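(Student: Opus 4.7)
The plan is to exploit the boundary order reduction provided by \Cref{lem:n5} to rewrite the trace $\langle \partial_s^\bot N_3,(\partial_s^\bot)^2 N_3\rangle|_{-1}^1$ in terms of quantities of type $\mathbb{P}_b^{a,c}$ with $c\leq 4$ only, and then to apply Sobolev-type interpolation at the level $k=5$ to conclude. Observe that $\partial_s^\bot N_3 = (\partial_s^\bot)^4\curv + \mathbb{P}_1^{3,3}$ already belongs to $\mathbb{P}_1^{4,4}$ everywhere, while $(\partial_s^\bot)^2 N_3 = (\partial_s^\bot)^5\curv + \mathbb{P}_1^{4,4}$ contains a fifth-order piece. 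The latter is eliminated at $\pm1$ by \Cref{lem:n5}, yielding $(\partial_s^\bot)^5\curv|_{\pm1} \in \mathbb{P}_1^{4,4}|_{\pm1} + \lambda\,\mathbb{P}_3^{1,1}|_{\pm1}$. Consequently, at the boundary $(\partial_s^\bot)^2 N_3|_{\pm1}$ itself belongs to $\mathbb{P}_1^{4,4}|_{\pm1} + \lambda\,\mathbb{P}_3^{1,1}|_{\pm1}$, and the boundary trace in question decomposes as a finite sum of pointwise evaluations $\mathbb{P}_B^{A,C}|_{\pm1}$ and $\lambda\,\mathbb{P}_B^{A,C}|_{\pm1}$, each with $C \leq 4$.

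Next, I would turn each pointwise boundary value into a bulk quantity amenable to absorption. For each factor one bounds $|\mathbb{P}_b^{a,c}(\pm1)| \leq \|\mathbb{P}_b^{a,c}\|_{L^\infty(\dd s)}$ and applies \Cref{prop:inter-dspk} to the individual $(\partial_s^\bot)^i\curv$ pieces with $p=\infty$ and $k=5$. Since all relevant factors satisfy $c \leq 4 = k-1$, the resulting interpolation exponent in $\|(\partial_s^\bot)^5\curv\|_{L^2(\dd s)}$ is strictly less than $2$: for example, the leading contribution $|(\partial_s^\bot)^4\curv(\pm1)|^2$ satisfies, via the scaled Gagliardo--Nirenberg estimate with $\alpha = 9/10$, the bound $\|(\partial_s^\bot)^4\curv\|_{L^\infty(\dd s)}^2 \leq C(L_0,\E(\gamma_0))\bigl(1 + \|(\partial_s^\bot)^5\curv\|_{L^2(\dd s)}^{9/5}\bigr)$. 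Young's inequality then absorbs this into $\varepsilon \int_{-1}^1 |(\partial_s^\bot)^5\curv|^2\,\dd s + C_\varepsilon$.

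For the contributions carrying a factor $\lambda$, I would first note that \Cref{hyp:L0-zeta} combined with the energy monotonicity of \Cref{rem:en-mon} and the implication (B)$\Rightarrow$(A) in \Cref{rem:pos-curv-cond} yields $0 < \delta \leq \E(\gamma(t,\cdot)) \leq \E(\gamma_0)$ for all $t \in [0,T)$. Hence \Cref{lem:lamb-brute-force} applies and, with $m = 3$, gives $|\lambda(t)| \leq C(L_0,\E(\gamma_0),\zeta,n)\bigl(1 + \|(\partial_s^\bot)^5\curv\|_{L^2(\dd s)}^{2/5}\bigr)$. Interpolating $\|\mathbb{P}_3^{1,1}\|_{L^\infty(\dd s)}$ (which involves only $\curv$ and $\partial_s^\bot\curv$) and $\|\mathbb{P}_1^{4,4}\|_{L^\infty(\dd s)}$ via \Cref{prop:inter-dspk} with $k=5$, the combined exponent in $\|(\partial_s^\bot)^5\curv\|_{L^2(\dd s)}$ is at most $\tfrac{2}{5} + \tfrac{1}{2} + \tfrac{9}{10} = \tfrac{9}{5} < 2$, which Young's inequality again absorbs into $\varepsilon \int_{-1}^1 |(\partial_s^\bot)^5\curv|^2\,\dd s + C_\varepsilon$.

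The main technical obstacle will be the bookkeeping of the various interpolation exponents under the scaled norms $\|\cdot\|_p$ and $\|\cdot\|_{k,p}$, to verify that every factor arising from the boundary decomposition --- including those multiplied by $\lambda$ --- produces a strictly subcritical power of $\|(\partial_s^\bot)^5\curv\|_{L^2(\dd s)}$. The decisive input making this work is precisely \Cref{lem:n5}: without it, $(\partial_s^\bot)^2 N_3|_{\pm1}$ would retain a fifth-order contribution, the interpolation exponent would reach the critical value $2$, and no absorption into the leading-order term on the right-hand side would be possible.
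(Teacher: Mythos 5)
Your argument is correct, and it shares the decisive idea with the paper: both proofs use the boundary identity \eqref{eq:5o-3} (equivalently, \Cref{lem:n5}) to replace $(\partial_s^{\bot})^5\curv\big|_{\pm1}$ by $\mathbb{P}_1^{4,4}+\lambda\mathbb{P}_3^{1,1}$ at the endpoints, so that the trace $\langle\partial_s^\bot N_3,(\partial_s^\bot)^2N_3\rangle\big|_{-1}^1$ only involves quantities of order at most four plus a factor of $\lambda$. Where you diverge is in the final estimation of these reduced boundary terms. The paper converts the endpoint evaluation into a bulk integral via the fundamental theorem of calculus, writing $\langle\mathbb{P}_1^{4,4},\mathbb{P}_1^{4,4}\rangle\big|_{-1}^1=\int_{-1}^1\langle\mathbb{P}_1^{5,5},\mathbb{P}_1^{4,4}\rangle\dd s$ (and similarly for the $\lambda$-terms), and then invokes the $L^1$-machinery of \Cref{cor:inter-ppABC} together with \Cref{lem:lamb-brute-force}. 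You instead estimate the pointwise boundary values directly by $L^\infty$-norms and interpolate each factor via \Cref{prop:inter-dspk} with $p=\infty$, $k=5$; since every factor has $c\le 4<k$ and every term of $\mathbb{P}_1^{4,4}$ satisfies $a+\tfrac12 b\le \tfrac92$, each product carries a total exponent at most $\tfrac95<2$ (your exponent accounting, including the $\tfrac25$ from $|\lambda|$ via \Cref{lem:lamb-brute-force} with $m=3$, checks out), so Young's inequality absorbs as claimed. Your route is slightly more direct in that it avoids differentiating the $\mathbb{P}$-terms once more; the paper's route keeps every estimate inside the already-established integral interpolation framework. Both are valid, and your justification for applying \Cref{lem:lamb-brute-force} (via \Cref{rem:pos-curv-cond} and energy monotonicity) matches the paper's.
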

\begin{proof}
    Throughout this proof, $C$ (resp.\ $C_\varepsilon$) is a constant depending only on $L_0$, $\E(\gamma_0)$, $\zeta$ and $n$ (and $\varepsilon$, resp.), changing from line to line. 
    Using \eqref{eq:5o-3}, on $[0,T)\times\{\pm1\}$,
    \begin{align}
        \langle \partial_s^{\bot}N_3,(\partial_s^{\bot})^2N_3\rangle\Big|_{-1}^1 &= \langle \mathbb{P}_1^{4,4}, (\partial_s^{\bot})^5\curv + \mathbb{P}_1^{4,4}\rangle\Big|_{-1}^1 = \langle \mathbb{P}_1^{4,4}, \mathbb{P}_1^{4,4}\rangle + \lambda \langle \mathbb{P}_1^{4,4},\mathbb{P}_3^{1,1}\rangle \ \Big|_{-1}^1 \\
        &= \int_{-1}^1 \langle \mathbb{P}_1^{5,5},\mathbb{P}_1^{4,4} \rangle \dd s + \lambda \int_{-1}^1 \langle \mathbb{P}_1^{5,5},\mathbb{P}_3^{1,1} \rangle+\langle \mathbb{P}_1^{4,4},\mathbb{P}_3^{2,2} \rangle\dd s.
    \end{align}
    We now focus on the terms involving $\lambda$. Notice that we can estimate $$|\langle \mathbb{P}_1^{5,5},\mathbb{P}_3^{1,1} \rangle|\leq  |\langle (\partial_s^\bot)^5\curv,\mathbb{P}_3^{1,1} \rangle | + |\mathbb P_4^{6,4}|.$$ 
    Therefore, applying Young's inequality yields 
    \begin{align}
        \lambda &\int_{-1}^1 \langle \mathbb{P}_1^{5,5},\mathbb{P}_3^{1,1} \rangle+\langle \mathbb{P}_1^{4,4},\mathbb{P}_3^{2,2} \rangle\dd s\\
        &\leq \frac13\varepsilon \int_{-1}^1 |(\partial_s^\bot)^5\curv|^2\dd s + C_{\varepsilon} \lambda^2\int_{-1}^1 |\mathbb P_6^{2,1}|\dd s + |\lambda| \int_{-1}^1 |\mathbb P_4^{6,4}|\dd s.
    \end{align}
    Using \Cref{lem:lamb-brute-force} (with \Cref{rem:pos-curv-cond} and \eqref{eq:unif-non-flatness-condition}) and \Cref{cor:inter-ppABC},
    \begin{align}
         C_{\varepsilon} &\lambda^2\int_{-1}^1 |\mathbb P_6^{2,1}|\dd s + |\lambda| \int_{-1}^1 |\mathbb{P}_4^{6,4}|\dd s \\
         &\leq C_\varepsilon\big(1+\|(\partial_s^{\bot})^5\curv\|_{L^2(\dd s)}^{\frac45}\big)\big(1+\|(\partial_s^{\bot})^5\curv\|_{L^2(\dd s)}^{\frac45}\big) \\
         &\qquad + C\big(1+\|(\partial_s^{\bot})^5\curv\|_{L^2(\dd s)}^{\frac25}\big)\big(1+\|(\partial_s^{\bot})^5\curv\|_{L^2(\dd s)}^{\frac75}\big) \leq \frac13\varepsilon \int_{-1}^1 |(\partial_s^{\bot})^5\curv|^2\dd s + C_{\varepsilon}.
    \end{align}
    The claim then follows after applying \Cref{cor:inter-ppABC} to the remaining term $\int_{-1}^1\langle \mathbb{P}_1^{5,5},\mathbb{P}_1^{4,4} \rangle \dd s$.
\end{proof}

\begin{proof}[Proof of \Cref{prop:dsbot3curv-l2-bound}]
    In the following, write $N=N_3$ and denote by $C$ a constant changing from line to line which may depend on $L_0$, $\E(\gamma_0)$, $\zeta$ and $n$. Combining \Cref{lem:ibp} with the above claims, especially using that $N=0$ on $[0,T)\times\{\pm1\}$, we get
    \begin{equation}
        \frac{\dd}{\dd t} \frac12\int_{-1}^1|N|^2\dd s + \frac34\int_{-1}^1 |(\partial_s^{\bot})^5\curv|^2\dd s\leq \int_{-1}^1\langle N,\mathbb{P}_1^{6,5}\rangle\dd s + \lambda \int_{-1}^1 \langle N,(\partial_s^{\bot})^5\curv + \mathbb{P}_1^{4,4}\rangle\dd s+C.
    \end{equation}
    Especially, using Young's inequality in the $\lambda$-term on the right hand side,
    \begin{align}
        \frac{\dd}{\dd t} \frac12\int_{-1}^1|N|^2\dd s & + \frac12\int_{-1}^1 |(\partial_s^{\bot})^5\curv|^2\dd s\\
        &\leq \int_{-1}^1\langle N,\mathbb{P}_1^{6,5}\rangle\dd s + C \lambda^2 \int_{-1}^1 |N|^2\dd s + C \int_{-1}^1\langle \mathbb{P}_1^{4,4},\mathbb{P}_1^{4,4}\rangle\dd s+C.
    \end{align}
    By \Cref{cor:inter-ppABC} and \eqref{eq:defN3}, this yields
    \begin{align}
        \frac{\dd}{\dd t} \frac12\int_{-1}^1|N|^2\dd s + \frac14\int_{-1}^1 |(\partial_s^{\bot})^5\curv|^2\dd s \leq C \lambda^2 \int_{-1}^1 |N|^2\dd s + C.\label{eq:int3o-1}
    \end{align}
    With \Cref{lem:lamb-a priori,cor:inter-ppABC}, using \Cref{hyp:L0-zeta},
    \begin{align}
        C\lambda^2 \int_{-1}^1|N|^2\dd s &\leq C (1+\|V\|_{L^2(\dd s)}^2 + \|(\partial_s^{\bot})^5\curv\|_{L^2(\dd s)}^{\frac25}) \int_{-1}^1|N|^2\dd s\\
        &\leq C\|V\|_{L^2(\dd s)}^2 \int_{-1}^1|N|^2\dd s + C (1+ \|(\partial_s^{\bot})^5\curv\|_{L^2(\dd s)}^{\frac25})(1+\|(\partial_s^\bot)^5\curv\|^{\frac65}_{L^2(\dd s)})\\
        % &\leq \varepsilon \int_{-1}^1|(\partial_s^{\bot})^5\curv|^2\dd s + C\|V\|_{L^2(\dd s)}^2 \int_{-1}^1|N|^2\dd s + C_{\varepsilon} \\
        % &\quad + C\|(\partial_s^{\bot})^5\curv\|_{L^2(\dd s)}^{\frac25} (1+\|(\partial_s^{\bot})^5\curv\|_{L^2(\dd s)}^{\frac65})\\
        &\leq \frac18 \int_{-1}^1|(\partial_s^{\bot})^5\curv|^2\dd s + C\|V\|_{L^2(\dd s)}^2 \int_{-1}^1|N|^2\dd s + C
    \end{align}
    with Young's inequality so that \eqref{eq:int3o-1} yields
    \begin{align}
        \frac{\dd}{\dd t} \frac12\int_{-1}^1|N|^2\dd s + \frac18\int_{-1}^1 |(\partial_s^{\bot})^5\curv|^2\dd s \leq C \|V\|^2_{L^2(\dd s)} \int_{-1}^1 |N|^2\dd s + C.\label{eq:int3o-2}
    \end{align}
    Finally, adding $\int_{-1}^1|N|^2\dd s$ on both sides of \eqref{eq:int3o-2} and again using \Cref{cor:inter-ppABC} to estimate $$\int_{-1}^1|N|^2\dd s\leq \frac18 \int_{-1}^1|(\partial_s^{\bot})^5\curv|^2\dd s+C$$ on the right hand side of \eqref{eq:int3o-2} yields
    \begin{equation}
        \frac{\dd}{\dd t} \int_{-1}^1|N|^2\dd s \leq \big(C\|V\|_{L^2(\dd s)}^2-1\big) \int_{-1}^1|N|^2\dd s + C.
    \end{equation}
    Now $\int_{-1}^1|N|^2\dd s\leq C$ on $[0,T)$ follows with Gronwall's inequality, using $\int_0^T\|V\|_{L^2(\dd s)}^2\dd t \leq \E(\gamma_0)$, cf.\ \Cref{rem:en-mon}. The estimate \eqref{eq:dsbot3-estimate} follows from \Cref{cor:inter-ppABC}, arguing as in Claim~1. Finally, the bound for $\lambda$ follows from \Cref{lem:lamb-brute-force} with $m=1$ (using \Cref{rem:pos-curv-cond} and \eqref{eq:unif-non-flatness-condition}).%from \Cref{hyp:L0-zeta,rem:pos-curv-cond} and the $L^\infty$ estimates one obtains from \Cref{prop:inter-dspk} and \eqref{eq:dsbot3-estimate}. 
\end{proof}

\begin{corollary}\label{cor:ds3curv-l2-bound}
    Assume \Cref{hyp:L0-zeta}. Then,
    \begin{equation}
        \sum_{k=0}^3\|\partial_s^k\curv\|_{L^2(\dd s)}\leq C(L_0,\E(\gamma_0),\zeta,n)\quad\text{on $[0,T)$}.
    \end{equation}
\end{corollary}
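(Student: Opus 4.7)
The plan is to reduce the corollary to Proposition 4.5 by exchanging the full arc-length derivatives $\partial_s^k\curv$ for the normal covariant derivatives $(\partial_s^\bot)^k\curv$ via the tensor-algebraic identity \eqref{eq:tens-alg-4} from the proof of \Cref{lem:tens-alg}, namely $(\partial_s^\bot)^k\curv = \partial_s^k\curv + \mathbb{P}_2^{k-1,k-1}$, which rearranges to
\begin{equation}
\partial_s^k\curv = (\partial_s^\bot)^k\curv - \mathbb{P}_2^{k-1,k-1}\quad\text{for $k\in\N$}.
\end{equation}

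First I would collect the $L^2(\dd s)$-bounds on the normal derivatives. For $k=0$, $\|\curv\|_{L^2(\dd s)}^2 = \E(\gamma(t,\cdot)) \leq \E(\gamma_0)$ by \Cref{rem:en-mon}. For $k=3$, \Cref{prop:dsbot3curv-l2-bound} gives the uniform bound $\|(\partial_s^\bot)^3\curv\|_{L^2(\dd s)} \leq C(L_0,\E(\gamma_0),\zeta,n)$. The intermediate cases $k=1,2$ follow from \Cref{prop:inter-dspk}: since $\|\curv\|_{3,2} \leq C(n)\bigl(\|(\partial_s^\bot)^3\curv\|_{2} + \|\curv\|_{2}\bigr)$, and both terms on the right are bounded in the scale-invariant norms (using that $\Ll(\gamma(t,\cdot)) \equiv L_0$ turns the scale-invariant bounds into $L^2(\dd s)$-bounds and vice versa), we obtain $\|(\partial_s^\bot)^k\curv\|_{L^2(\dd s)} \leq C$ for $k=0,1,2,3$.

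Next I would bound the tensorial remainder $\mathbb{P}_2^{k-1,k-1}$ in $L^2(\dd s)$ for $k=1,2,3$. Since $|\mathbb{P}_2^{k-1,k-1}|^2$ is a sum of terms of type $\mathbb{P}_4^{2(k-1),k-1}$, I would apply the first part of \Cref{cor:inter-ppABC} with $B=4$, $A=2(k-1)$, $c=k-1$, and differentiation order $3$: in each of the three cases one has $c = k-1 \leq 2$ and
\begin{equation}
\gamma = \tfrac{1}{3}\bigl(2(k-1) + \tfrac{1}{2}\cdot 4 - 1\bigr) = \tfrac{1}{3}(2k-1) \in (0,2),
\end{equation}
so that $\int_{-1}^1|\mathbb{P}_4^{2(k-1),k-1}|\,\dd s \leq C(1 + \|(\partial_s^\bot)^3\curv\|_{L^2(\dd s)}^{\gamma}) \leq C(L_0,\E(\gamma_0),\zeta,n)$. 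Combining the two bounds via the triangle inequality then yields the claim.

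There is no real obstacle: the work has already been done in \Cref{prop:dsbot3curv-l2-bound}, and the corollary is a routine conversion exploiting that length-preservation makes the scale-invariant norms and the $L^p(\dd s)$-norms equivalent up to constants depending only on $L_0$.
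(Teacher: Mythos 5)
Your proposal is correct and follows essentially the same route as the paper: the identity $\partial_s^k\curv=(\partial_s^\bot)^k\curv+\mathbb{P}_2^{k-1,k-1}$ from \eqref{eq:tens-alg-4}, the interpolation bound of \Cref{prop:inter-dspk} combined with \Cref{prop:dsbot3curv-l2-bound} for the normal derivatives, and \Cref{cor:inter-ppABC} applied to the squared remainder term. The only cosmetic difference is that you interpolate against $(\partial_s^\bot)^3\curv$ throughout, whereas the paper phrases the last step with differentiation order $k$ (whence its condition $\frac{2k-1}{k}<2$); both choices are valid.
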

\begin{proof}
    The case $k=0$ is clear since $\|\curv\|_{L^2(\dd s)}^2=\E(\gamma)$. Let $k\in\{1,2,3\}$. By \eqref{eq:tens-alg-4},
    \begin{equation}\label{eq:ds3curv-l2-bound-help}
        \partial_s^k\curv = (\partial_s^\bot)^k\curv + \mathbb{P}_2^{k-1,k-1}.
    \end{equation}
    Moreover, combining \Cref{prop:inter-dspk,prop:dsbot3curv-l2-bound}, $$\|(\partial_s^\bot)^k\curv\|_{L^2(\dd s)}\leq C(\E(\gamma_0),L_0,n)(1+\|(\partial_s^\bot)^3\curv\|_{L^2(\dd s)}) \leq C(L_0,\E(\gamma_0),\zeta,n).$$
    Thus, the claim follows from \eqref{eq:ds3curv-l2-bound-help} with \Cref{cor:inter-ppABC} since $\frac{2k-1}{k}<2$.
\end{proof}

%%%%%%%%%%%%%%%%%%%%%%%%%%%%%%%%%%%%%%%%%%%%%%%%%%%%%%%%%%%%%%%%%%%%%%%%%%%%%%%%%%%%%%%%%%%%%%%%
%%%%%%%%%%%%%%%%%%%%%%%%%%%%%%%%%%%%%%%%%%%%%%%%%%%%%%%%%%%%%%%%%%%%%%%%%%%%%%%%%%%%%%%%%%%%%%%%
\subsection{Global existence and subconvergence}\label{sec:asymptotics}
%%%%%%%%%%%%%%%%%%%%%%%%%%%%%%%%%%%%%%%%%%%%%%%%%%%%%%%%%%%%%%%%%%%%%%%%%%%%%%%%%%%%%%%%%%%%%%%%
%%%%%%%%%%%%%%%%%%%%%%%%%%%%%%%%%%%%%%%%%%%%%%%%%%%%%%%%%%%%%%%%%%%%%%%%%%%%%%%%%%%%%%%%%%%%%%%%
Let $\gamma\colon[0,T)\times[-1,1]\to\R^n$ be a time-maximal FLFB-EF and $L_0=\Ll(\gamma_0)$. Consider the smooth family of positive (that is, orientation-preserving) diffeomorphisms of $[-1,1]$ given by
\begin{equation}\label{eq:phipsi}
    \phi\colon[0,T)\times[-1,1]\to[-1,1], \quad \phi(t,\cdot)^{-1}\big|_y \vcentcolon= \psi(t,y) \vcentcolon= \frac{2}{L_0}\int_{-1}^y |\partial_x\gamma(t,x)|\dd x - 1.
\end{equation}
Throughout this section, denote $\tilde\gamma(t,x)=\gamma(t,\phi(t,x))$ and note that $|\partial_x\tilde\gamma|\equiv \frac{L_0}{2}$, so $\tilde \gamma$ is the family of (positive) reparametrizations by constant speed. The following a priori bounds for this special parametrization can be concluded from the previous section's energy estimates.

\begin{lemma}\label{lem:prop-of-arclengthrep}
    % If there exists $\zeta>0$ with
    % \begin{equation}
    %     L_0-|\gamma(t,-1)-\gamma(t,1)|\geq \zeta\quad\text{for all $0\leq t<T$},
    % \end{equation}
    Assume \Cref{hyp:L0-zeta}. Then $\|\partial_x^k\tilde\gamma(t,\cdot)\|_{L^{2}((-1,1))}\leq C(k,L_0,\E(\gamma_0),\zeta,n)$ for all $k=1,\dots,5$ and for all $0\leq t<T$. Moreover, for any sequence $t_j\nearrow T$, $j\in \N$, $$p_j=\begin{cases}
        0&\text{if $M$ is compact}\\
        \gamma(t_j,1)&\text{otherwise}
    \end{cases}$$ and $\delta>0$, we have
    \begin{equation}\label{eq:prop-of-arclengthrep}
        \|\tilde \gamma(t_j+\cdot,\cdot)-p_j\|_{L^\infty(0,\min\{\delta,T-t_j\};W^{5,2}([-1,1]))} \leq C(\delta,L_0,\E(\gamma_0),\zeta,n)\quad\text{for all $j\in\N$}.
    \end{equation}
\end{lemma}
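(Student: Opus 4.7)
The plan is to derive both bounds by exploiting the identity $|\partial_x\tilde\gamma|\equiv L_0/2$ guaranteed by \eqref{eq:phipsi}: under the constant-speed reparametrization, $x$-derivatives become scalar multiples of arc-length derivatives along $\tilde\gamma$, and $L^2$-norms in $x$ reduce, up to a Jacobian factor, to $L^2(\dd s)$-norms, which are controlled uniformly via \Cref{cor:ds3curv-l2-bound}.

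More precisely, since $\partial_{\tilde s}=\frac{2}{L_0}\partial_x$ along $\tilde\gamma$ and $\dd\tilde s=(L_0/2)\dd x$, one has
\[
\|\partial_x^k\tilde\gamma(t,\cdot)\|_{L^2([-1,1])}^2 \,=\, \frac{2}{L_0}\bigl(L_0/2\bigr)^{2k}\|\partial_{\tilde s}^k\tilde\gamma(t,\cdot)\|_{L^2(\dd\tilde s)}^2.
\]
The case $k=1$ is trivial. For $2\leq k\leq 5$ I would use $\partial_{\tilde s}^2\tilde\gamma=\curv_{\tilde\gamma}$ and iterate \eqref{eq:tens-alg-4} to express $\partial_{\tilde s}^k\tilde\gamma$ as a polynomial in $\partial_{\tilde s}\tilde\gamma,\curv_{\tilde\gamma},(\partial_{\tilde s}^\bot)\curv_{\tilde\gamma},\dots,(\partial_{\tilde s}^\bot)^{k-2}\curv_{\tilde\gamma}$. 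Since $\tilde\gamma$ is a reparametrization of $\gamma$, the geometric quantities $(\partial_{\tilde s}^\bot)^j\curv_{\tilde\gamma}$ agree with $(\partial_s^\bot)^j\curv$ at corresponding points, so \Cref{cor:ds3curv-l2-bound}, combined with \Cref{cor:inter-ppABC} to absorb lower-order polynomial corrections, yields the first claim.

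For the second estimate, the contributions with $k\geq 1$ are already controlled by the first bound uniformly in $t$, since $\partial_x^k(\tilde\gamma-p_j)=\partial_x^k\tilde\gamma$. The only remaining ingredient is a uniform $L^2$-bound on $\tilde\gamma(t,\cdot)-p_j$ for $t\in[t_j,t_j+\min\{\delta,T-t_j\}]$. Using $\phi(t,\pm 1)=\pm 1$, hence $\tilde\gamma(t,\pm 1)=\gamma(t,\pm 1)\in M$, and the length constraint $\Ll(\tilde\gamma(t,\cdot))=L_0$, I would estimate
\[
|\tilde\gamma(t,x)-p_j|\,\leq\,|\tilde\gamma(t,x)-\gamma(t,1)|+|\gamma(t,1)-p_j|\,\leq\, L_0+|\gamma(t,1)-p_j|.
\]
If $M$ is compact, $p_j=0$ and $|\gamma(t,1)|\leq\max_{p\in M}|p|$. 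If $M$ is non-compact, $p_j=\gamma(t_j,1)$, and I would bound the endpoint motion by $|\gamma(t,1)-\gamma(t_j,1)|\leq\int_{t_j}^t|\partial_t\gamma(r,1)|\dd r$. At the boundary, \eqref{eq:varphi 0 rand} gives $\partial_t\gamma=V=-2(\partial_s^\bot)^2\curv-|\curv|^2\curv+\lambda\curv$, and Sobolev embedding $W^{1,2}([-1,1],\dd s)\hookrightarrow C^0([-1,1])$ applied to $(\partial_s^\bot)^2\curv$ and $\curv$, together with the a priori bounds of \Cref{prop:dsbot3curv-l2-bound,cor:ds3curv-l2-bound} and the uniform bound on $\lambda$ therein, yields $|V(r,\pm 1)|\leq C$ uniformly in $r$. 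Therefore $|\gamma(t,1)-p_j|\leq C\delta$, completing the proof.

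The main obstacle is exactly this endpoint-motion estimate in the non-compact case: it crucially requires the uniform $L^\infty$-bound on $\lambda$ and the uniform $L^2(\dd s)$-bound on $(\partial_s^\bot)^3\curv$ from \Cref{prop:dsbot3curv-l2-bound} — without them, the length constraint alone would allow the endpoint to drift to infinity over a finite time interval.
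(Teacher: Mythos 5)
Your proposal is correct and follows essentially the same route as the paper: the constant-speed identity reduces the $\|\partial_x^k\tilde\gamma\|_{L^2}$-bounds to the arc-length curvature bounds of \Cref{cor:ds3curv-l2-bound} (your detour through \eqref{eq:tens-alg-4} is unnecessary, since that corollary already controls the full derivatives $\partial_s^{k-2}\curv=(2/L_0)^{k}\partial_x^{k}\tilde\gamma$), and the zeroth-order bound follows from the uniform $L^\infty$-control of the velocity obtained from \Cref{prop:dsbot3curv-l2-bound} by interpolation/Sobolev embedding, combined with $\tilde\gamma(t,\pm1)=\gamma(t,\pm1)\in M$ and the length constraint. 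The paper bounds $\|\partial_t\gamma\|_{L^\infty((0,T)\times I)}$ globally rather than only at the endpoint, but this is the same estimate and the same use of the a priori bounds on $\lambda$ and $(\partial_s^\bot)^3\curv$.
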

\begin{proof}
    Since $|\partial_x\tilde\gamma|\equiv \frac{L_0}2$, one computes $\partial_{\tilde s}^j \curv_{\tilde\gamma} = (2/L_0)^{j+2}\partial_x^{j+2}\tilde\gamma$ for $j\in\N_0$. In particular, since
    \begin{equation}
        \int_{-1}^{1} |\partial_s^k\curv_{\gamma}(t,\cdot)|^2\dd s = \int_{-1}^{1} |\partial_{\tilde s}^k\curv_{\tilde \gamma}(t,\cdot)|^2\dd \tilde s
    \end{equation}
    for all $k\in\N_0$, \Cref{cor:ds3curv-l2-bound} yields the desired $L^2$-bound for the space-derivatives of $\tilde\gamma$. 

    Using \Cref{prop:dsbot3curv-l2-bound}, interpolating with \Cref{prop:inter-dspk} with $p=\infty$ and using \eqref{eq:DefV}%\eqref{eq:ev-eq}
    , we find 
    \begin{equation}\label{eq:prop-of-arclengthrep-2}
        \|\partial_t\gamma\|_{L^\infty((0,T)\times I)}\leq C(L_0,\E(\gamma_0),\zeta,n)
    \end{equation}
    so that 
    \begin{equation}\label{eq:prop-of-arclengthrep-1}
        \gamma\in L^\infty((0,T')\times(-1,1))\quad\text{for all $T'\in (0,T]\cap (0,\infty)$}
    \end{equation}
    by the fundamental theorem of calculus since $T'$ is finite. Moreover, using $\gamma(t_j,1)\in M$ and $|\gamma(t_j,x)-\gamma(t_j,1)|\leq L_0$ for all $j\in\N$ and $x\in[-1,1]$, there exists $R=R(\delta,L_0,\E(\gamma_0),\zeta,n,M)$ such that $|\gamma(t,x)-p_j|\leq R$ for all $t_j\leq t < \min\{t_j+\delta,T\}$, for all $j\in\N$. This yields \eqref{eq:prop-of-arclengthrep}.
\end{proof}

\begin{proposition}\label{prop:lte}
    Let $\gamma\colon[0,T)\times[-1,1]\to\R^n$ be a time-maximal FLFB-EF and assume \Cref{hyp:L0-zeta}. Then $T=\infty$.
\end{proposition}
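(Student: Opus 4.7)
The plan is to argue by contradiction. Suppose $T < \infty$; we shall construct a strict extension of $\gamma$ past $T$ as a FLFB-EF, contradicting time-maximality. The strategy combines the uniform regularity encoded in \Cref{lem:prop-of-arclengthrep} with the short-time existence and continuous-dependence results of \Cref{thm:existence-of-solution,thm:wp-ana-prob}, pivoting between the geometric equation \eqref{eq:geo-ev-eq} (which is reparametrization-invariant) and the tangential-speed-free equation \eqref{eq:ev-eq-intro} via \Cref{prop:ex-rep-ana-to-geo,lem:ex-rep}.

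\textbf{Extracting a limit at $T$.} By \Cref{lem:prop-of-arclengthrep} the shifted constant-speed reparametrizations $\tilde\gamma(t,\cdot) - p(t)$, with $p(t) = 0$ if $M$ is compact and $p(t) = \gamma(t,1)$ otherwise, are uniformly bounded in $W^{5,2}([-1,1])$ for $t \in [0,T)$. Fix any sequence $t_j \nearrow T$ and set $p_j = p(t_j)$. Since in one dimension $W^{5,2}([-1,1]) \hookrightarrow W^{4(1-\frac{1}{p}),p}([-1,1])$ is compact for every $p>5$ (the Sobolev exponents satisfy $5 - \tfrac12 > 4 - \tfrac5p$), a subsequence (not relabeled) satisfies $\tilde\gamma(t_j,\cdot) - p_j \to \tilde\gamma_\infty$ in $W^{4(1-\frac{1}{p}),p}([-1,1])$. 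The limit inherits $|\partial_x\tilde\gamma_\infty| \equiv \tfrac{L_0}{2}$, so it is an immersion of length $L_0$, and \Cref{hyp:L0-zeta} passes to the limit to give $|\tilde\gamma_\infty(1) - \tilde\gamma_\infty(-1)| \leq L_0 - \zeta$, so $\tilde\gamma_\infty$ is not a straight segment and $\E(\tilde\gamma_\infty) > 0$. By continuity of the trace $W^{4(1-\frac{1}{p}),p}\hookrightarrow C^{3+\alpha}$, the translated limit $\tilde\gamma_\infty + p_j$ satisfies \eqref{eq:intro-free-bcs} for every large $j$.

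\textbf{Short-time extension and concatenation.} Apply \Cref{thm:existence-of-solution} to $\tilde\gamma(t_j,\cdot)$ as initial datum. The continuous dependence of the existence time in the sense of \Cref{def:cts-dependence-on-gamma0} (part of \Cref{thm:wp-ana-prob}) furnishes $\tau > 0$ and, for $j$ sufficiently large, smooth solutions $\hat\gamma^j$ of \eqref{eq:geo-ev-eq} on $[0,\tau]$ starting at $\tilde\gamma(t_j,\cdot)$. Since $\tilde\gamma(t_j,\cdot) = \gamma(t_j,\phi(t_j,\cdot))$ is a smooth reparametrization of $\gamma(t_j,\cdot)$, and $\gamma|_{[t_j,T)}$ is also a smooth solution of the geometric equation \eqref{eq:geo-ev-eq}, an application of \Cref{lem:ex-rep} together with the uniqueness part of \Cref{thm:wp-ana-prob} identifies $\hat\gamma^j$ with a smooth reparametrization of $\gamma$ on $[t_j,\min\{T,t_j+\tau\})$. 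Choosing $j$ so large that $t_j + \tau > T$ and using \Cref{prop:ex-rep-ana-to-geo} to restore zero tangential speed yields, after concatenation with $\gamma|_{[0,t_j]}$, a FLFB-EF defined on $[0,t_j+\tau)$, contradicting the maximality of $T$.

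\textbf{Main obstacle.} The most delicate point is the non-compact case, where $p_j \in M$ may be unbounded: the data $\tilde\gamma(t_j,\cdot)$ itself then escape in $W^{4(1-\frac{1}{p}),p}([-1,1],\R^n)$, so the continuous-dependence statement of \Cref{thm:wp-ana-prob}, which is local around a single fixed $\gamma_0$, does not apply directly. This is overcome by exploiting the translation invariance of \eqref{eq:geo-ev-eq} in combination with \Cref{hyp:unif-tub-nbhd}: working with the shifted data $\tilde\gamma(t_j,\cdot) - p_j$ and the translated hypersurfaces $M - p_j$, the uniform bounds \eqref{eq:unif-bd-derivatives} on the nearest-point projection and the extended shape operator (cf.\ \Cref{rem:extshape}) ensure that all constants appearing in the maximal-regularity estimate \Cref{prop:wp-lin-system} and in the fixed-point argument of \Cref{prop:ana-prob-local-wp} can be taken uniformly along $M$. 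Hence $\tau$ may indeed be chosen uniformly in $j$, completing the contradiction.
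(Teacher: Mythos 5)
Your overall strategy is the same as the paper's: extract a limit of the constant-speed reparametrizations at time $T$ using the uniform $W^{5,2}$-bounds from \Cref{lem:prop-of-arclengthrep}, restart the flow from a time close to $T$ with an existence time that is uniform by continuous dependence, identify the restarted solution with the original one, and contradict maximality. Two points deserve comment.

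First, your ``main obstacle'' is not an obstacle here. Since you are assuming $T<\infty$, the bound $\|\partial_t\gamma\|_{L^\infty((0,T)\times I)}\leq C$ from \eqref{eq:prop-of-arclengthrep-2} together with the fundamental theorem of calculus gives $\gamma\in L^\infty((0,T)\times(-1,1))$, cf.\ \eqref{eq:prop-of-arclengthrep-1}; hence the points $p_j$ stay bounded and no translation of $M$ is needed. (The translations $p_j$ are only relevant for the subconvergence analysis as $t\to\infty$.) Relatedly, before invoking the continuous dependence of the existence time from \Cref{thm:wp-ana-prob}, you must first convert the constant-speed data into data satisfying the \emph{analytic} boundary conditions \eqref{eq:ana-fbc}: this requires composing with the diffeomorphisms $\theta(t_j,\cdot)$ of \Cref{lem:geo-vs-ana-fbc}(b) and checking that their norms are uniformly controlled (which follows from the $W^{5,2}$-bounds), so that $\tilde\gamma(t_j,\theta(t_j,\cdot))$ converges to a single admissible datum.

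Second, and more substantively, the concatenation step is exactly where the real work lies, and you gloss over it. Producing \emph{some} FLFB-EF on $[0,t_j+\tau)$ that agrees with $\gamma$ only up to reparametrization on $(t_j,T)$ does not contradict the maximality of $\gamma$; one must show that the restarted solution, after restoring zero tangential speed via \Cref{prop:ex-rep-ana-to-geo}, coincides with $\gamma$ itself on the overlap. The paper does this by reparametrizing $\gamma$ to a solution $\gamma(t,\alpha(t,\cdot))$ of the analytic problem (\Cref{lem:ex-rep}), identifying it with the restarted solution $\hat\gamma$ by the uniqueness in \Cref{thm:wp-ana-prob}, and then showing via the chain rule and an ODE argument that the diffeomorphism $\eta$ produced by \Cref{prop:ex-rep-ana-to-geo} is exactly $\alpha(t,\cdot)^{-1}$, whence $\hat\gamma(t,\eta(t,\cdot))=\gamma(t,\cdot)$ on the overlap. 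Your proposal asserts the conclusion of this chain (``identifies $\hat\gamma^j$ with a smooth reparametrization of $\gamma$ \ldots after concatenation'') without the matching argument; as written, the contradiction with maximality is not yet established.
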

\begin{proof}
    For the sake of contradiction, suppose that $T<\infty$. 
    % Using \Cref{prop:dsbot3curv-l2-bound,prop:inter-dspk} and \eqref{eq:ev-eq}, we have $\|\partial_t\gamma\|_{L^\infty((0,T)\times I)}\leq C(T,L_0,\E(\gamma_0))$ so that $\gamma\in L^\infty((0,T)\times(-1,1))$ by the fundamental theorem of calculus. 
    %
    % Let $\tilde\gamma\colon[0,T)\times[-1,1]\to\R^n$ be the smooth family of immersions where $\tilde\gamma(t,\cdot)$ is the (positive) reparametrization of $\gamma(t,\cdot)$ by constant speed. Since $|\partial_x\tilde\gamma|\equiv \frac{L_0}2$, one computes $\partial_s^j \curv_{\tilde\gamma} = (2/L_0)^{j+2}\partial_x^{j+2}\tilde\gamma$ for $j\in\N_0$. In particular, \Cref{cor:ds3curv-l2-bound} together with $\gamma\in L^\infty((0,T)\times(-1,1))$ yields
    % \begin{equation}
    %     \tilde\gamma\in L^\infty(0,T;W^{5,2}(I)).
    % \end{equation} 
    Using \Cref{lem:prop-of-arclengthrep} and \eqref{eq:prop-of-arclengthrep-1}, we have $\tilde\gamma\in L^\infty(0,T;W^{5,2}(I))$. 
    
    For each $t\in[0,T)$, denote by $\theta(t,\cdot)$ the diffeomorphism in \Cref{lem:geo-vs-ana-fbc}(b). In particular, $\sup_{0\leq t<T}\|\theta(t,\cdot)\|_{C^5([-1,1])}<\infty$ so that $\sup_{0\leq t<T}\|\tilde\gamma(t,\theta(t,\cdot))\|_{W^{5,2}(I)}<\infty$. 
    
    Let $\alpha\in(0,\frac12)$. The compact Sobolev embedding $W^{5,2}((-1,1))\hookrightarrow C^{4+\alpha}([-1,1])$ shows that there exist $t_j\nearrow T$ and $\tilde\gamma_T\in C^{4+\alpha}(I)$ with $\tilde\gamma(t_j,\theta(t_j,\cdot))\to \tilde\gamma_T$ in $C^{4+\alpha}(I)$. In particular, since each $\tilde\gamma(t_j,\theta(t_j,\cdot))$ satisfies \eqref{eq:ana-fbc}, so does $\tilde\gamma_T$. Furthermore, using \Cref{hyp:L0-zeta}, we have $\E(\tilde\gamma_T)>0$.

    Applying \Cref{thm:wp-ana-prob,thm:para-smooth} at the ``initial datum'' $\tilde\gamma_T$ and using $\tilde\gamma(t_j,\theta(t_j,\cdot))\to\tilde\gamma_T$, there exist $J\in\N$ and $\delta>0$ and a solution 
    \begin{equation}
        \hat\gamma \in W^{1,p}(t_J,T+\delta;L^p(I))\cap L^p(t_J,T+\delta;W^{4,p}(I)) \cap C^\infty((t_J,T+\delta)\times I)
    \end{equation}
    of the analytic problem \eqref{eq:ana-prob} with $\hat\gamma(t_J,\cdot)=\tilde\gamma(t_J,\theta(t_J,\cdot))$. By \Cref{lem:ex-rep}, there exists $t_J'\in (t_J,T)$ and a family 
    \begin{equation}
        \alpha\in W^{1,p}(t_J,t_J';L^p(I))\cap L^p(t_J,t_J';W^{4,p}(I))
    \end{equation}
    of positive $C^1$-diffeomorphisms of $I$ which is smooth on $(t_J,t_J')\times I$ such that $(t,y)\mapsto \gamma(t,\alpha(t,y))$ solves the analytic equation \eqref{eq:ana-prob} and $\gamma(t_J,\alpha(t_J,\cdot))=\tilde\gamma(t_J,\theta(t_J,\cdot))$. Using that the solution of the analytic equation is unique by \Cref{thm:wp-ana-prob}, we have 
    \begin{equation}\label{eq:pf-lte-1}
        \hat\gamma(t,y)=\gamma(t,\alpha(t,y))\quad\text{for all $t\in[t_J,t_J']$ and $y\in I$}.
    \end{equation}
    %In particular, $\hat\gamma\in C^{\infty}([t_J,T+\delta)\times I)$. 
    Let $\tau_J=\frac12(t_J+t_J')$. By \Cref{prop:ex-rep-ana-to-geo}, there exists $\eta\colon[\tau_J,T+\delta)\times I\to I$, a smooth family of diffeomorphisms, such that $(t,x)\mapsto \hat\gamma(t,\eta(t,x))$ solves \eqref{eq:ev-eq} with 
    \begin{equation}\label{eq:pf-lte-2}
        \eta(\tau_J,\cdot) = \alpha(\tau_J,\cdot)^{-1}.
    \end{equation}
    Using \eqref{eq:pf-lte-1} and the fact that $\hat\gamma(t,\eta(t,x))$ solves \eqref{eq:ev-eq}, we have 
    \begin{equation}
        \partial_t\big(\gamma(t,\alpha(t,\eta(t,\cdot)))\big) = - \big(\nabla\E(\gamma(t,\alpha(t,\eta(t,\cdot)))) -\lambda(\gamma(t,\alpha(t,\eta(t,\cdot))))\curv_{\gamma(t,\alpha(t,\eta(t,\cdot)))}\big)
    \end{equation}
    for all $t\in[\tau_J,t_J')$. Thus, since $\partial_t\gamma = -\big(\nabla\E(\gamma) -\lambda(\gamma)\curv_{\gamma}\big)$ and using that the right hand side is geometric, i.e.\ invariant with respect to reparametrizations, the chain rule yields
    \begin{equation}\label{eq:pf-lte-3}
        \partial_x\gamma(t,\alpha(t,\eta(t,\cdot)))\cdot\partial_t\big(\alpha(t,\eta(t,\cdot))\big) = 0\quad\text{for all $t\in [\tau_J,t_J')$}.
    \end{equation}
    Combined with \eqref{eq:pf-lte-2}, \eqref{eq:pf-lte-3} yields $\alpha(t,\cdot)^{-1}=\eta(t,\cdot)$ for all $t\in[\tau_J,t_J')$ and therefore, by \eqref{eq:pf-lte-1}, $\hat\gamma(t,\eta(t,\cdot))=\gamma(t,\cdot)$ for all $t\in [\tau_J,t_J')$. So we can smoothly extend $\gamma$ onto $[0,T+\delta)\times I$ by gluing with $\hat\gamma(t,\eta(t,\cdot))$ on $[\tau_J,T+\delta)$ which shows that $\gamma$ is not maximal, a contradiction!
\end{proof}

\begin{proposition}\label{prop:subc}
    Let $\gamma\colon[0,\infty)\times[-1,1]\to\R^n$ be a FLFB-EF and assume \Cref{hyp:L0-zeta}. Then the flow subconverges to solutions of the stationary problem as follows. For every $t_j\nearrow \infty$, there exists a smooth elastica $\tilde\gamma_\infty$ satisfying $\Ll(\tilde\gamma_\infty)=L_0$ such that the (positive) reparametrizations of $\gamma(t_j,\cdot)$ by constant speed converge to $\tilde\gamma_\infty$ in $C^{4+\alpha}([-1,1])$ up to subsequence and translation, for $j\to\infty$ where $\alpha\in(0,\frac12)$ is arbitrary.

    If $M$ is compact, $\tilde\gamma_\infty$ satisfies \eqref{eq:orth-bc} and \eqref{eq:nat-bc} and $\gamma(t_j,\cdot)\to\tilde\gamma_\infty$ in $C^{4+\alpha}([-1,1])$ up to subsequence and reparametrization.
\end{proposition}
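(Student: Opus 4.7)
My plan is to combine the uniform $W^{5,2}$-bounds on the constant-speed reparametrizations $\tilde\gamma(t,\cdot)$ from \Cref{lem:prop-of-arclengthrep} with the $L^2$-in-time control of $\partial_t\tilde\gamma$ provided by \Cref{rem:en-mon} together with the $L^\infty$-version of \cite[Lemma~4.10]{ruppspener2020}. First I would extract a subsequence along which $\tilde\gamma(t_j,\cdot)-p_j\to\tilde\gamma_\infty$ in $C^{4+\alpha}([-1,1])$ via the compact Sobolev embedding $W^{5,2}([-1,1])\hookrightarrow\hookrightarrow C^{4+\alpha}([-1,1])$. Since $|\partial_x\tilde\gamma|\equiv L_0/2$ is preserved in the limit, $\tilde\gamma_\infty$ is automatically an immersion with $\Ll(\tilde\gamma_\infty)=L_0$.

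The hard part will be showing that $\tilde\gamma_\infty$ is an elastica, because the limit is taken at the \emph{specific} times $t_j$ rather than at times where $\partial_t\tilde\gamma$ is a priori small. To handle this, I would use $\int_0^\infty\|\partial_t\gamma\|_{L^2(\dd s)}^2\dd t\leq\E(\gamma_0)$ from \Cref{rem:en-mon} together with the $L^\infty$-version of Rupp--Spener (applicable thanks to the uniform curvature controls from \Cref{cor:ds3curv-l2-bound}) to obtain $\int_0^\infty\|\partial_t\tilde\gamma\|_{L^2(-1,1)}^2\dd t<\infty$. A mean-value argument then provides $\tau_j\in[t_j,t_j+1]$ with $\|\partial_t\tilde\gamma(\tau_j,\cdot)\|_{L^2(-1,1)}\to 0$. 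Combining the fundamental theorem of calculus, Cauchy--Schwarz, and Gagliardo--Nirenberg interpolation with the uniform $W^{5,2}$-bound yields $\tilde\gamma(\tau_j,\cdot)-\tilde\gamma(t_j,\cdot)\to 0$ in $C^{4+\alpha}$, so that $\tilde\gamma(\tau_j,\cdot)-p_j\to\tilde\gamma_\infty$ along the same subsequence.

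At each $\tau_j$, the geometric equation gives $(\partial_t\tilde\gamma)^\bot=-\big(\nabla\E(\tilde\gamma)-\lambda(\tilde\gamma)\curv_{\tilde\gamma}\big)$. The left-hand side tends to zero in $L^2(-1,1)$ by construction, while the right-hand side depends continuously on $\tilde\gamma\in C^4$ (using the equivalent formula \eqref{eq:lambda-ibp-formula} for $\lambda$, whose denominator stays bounded away from zero thanks to \Cref{hyp:L0-zeta}), and therefore converges in $C^0$ to $-\big(\nabla\E(\tilde\gamma_\infty)-\lambda(\tilde\gamma_\infty)\curv_{\tilde\gamma_\infty}\big)$. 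Hence $\tilde\gamma_\infty$ solves the elastica equation $\nabla\E(\tilde\gamma_\infty)=\lambda(\tilde\gamma_\infty)\curv_{\tilde\gamma_\infty}$, a fourth-order semilinear ODE, and smoothness of $\tilde\gamma_\infty$ follows from standard ODE bootstrapping.

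Finally, in the compact $M$ case $p_j=0$, so $\tilde\gamma(t_j,\cdot)\to\tilde\gamma_\infty$ in $C^{4+\alpha}$ directly. The endpoint conditions $\tilde\gamma_\infty(\pm 1)\in M$, the orthogonality in \eqref{eq:orth-bc}, and the third-order condition \eqref{eq:nat-bc} then pass to the limit by $C^{4+\alpha}$-convergence and closedness of $M$. Since $\tilde\gamma(t_j,\cdot)=\gamma(t_j,\phi(t_j,\cdot))$ by the definition \eqref{eq:phipsi} of the reparametrization, this is exactly the desired convergence of $\gamma(t_j,\cdot)$ up to reparametrization.
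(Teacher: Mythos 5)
Your proposal is correct, and it reaches the conclusion by a genuinely different mechanism in the key step of identifying the limit as an elastica. The paper works with the whole shifted trajectories $\tilde\gamma_j(t,x)=\tilde\gamma(t_j+t,x)-p_j$ on a fixed window $[0,T]$: it upgrades the uniform $W^{5,2}$-in-space and $L^\infty$-in-time bounds to compactness in $C^0([0,T];C^{4+\alpha})$ via the Aubin--Lions--Simon lemma, and then uses the exact dissipation identity $\int_{t_j}^{t_j+T}\int|\partial_t\gamma|^2\dd s\dd t=\E(\gamma(t_j,\cdot))-\E(\gamma(t_j+T,\cdot))\to 0$ (energy monotone and bounded below, hence convergent) together with uniform convergence of $\nabla\E(\tilde\gamma_j)-\lambda(\tilde\gamma_j)\curv_{\tilde\gamma_j}$ to force the limiting trajectory to be stationary. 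You instead take limits only at the single times $t_j$, and recover stationarity by a mean-value selection of nearby times $\tau_j\in[t_j,t_j+1]$ with $\|\partial_t\tilde\gamma(\tau_j,\cdot)\|_{L^2}\to0$, transferring back to $t_j$ via the fundamental theorem of calculus in $L^2$ and Gagliardo--Nirenberg interpolation against the uniform $W^{5,2}$-bound (which is where the restriction $\alpha<\tfrac12$ enters, exactly as in the statement). Both routes rest on the same two ingredients --- the a priori bounds of \Cref{lem:prop-of-arclengthrep} and $\int_0^\infty\|\partial_t\gamma\|^2_{L^2(\dd s)}\dd t\leq\E(\gamma_0)$ from \Cref{rem:en-mon}, transferred to the constant-speed parametrization through the computation \eqref{eq:sc-s1-1}--\eqref{eq:sc-s1-2} (your ``$L^\infty$-version'' of the Rupp--Spener lemma, which for the $L^2$-in-time statement only needs $\|\curv\|_{L^2(\dd s)}^2\le\E(\gamma_0)$). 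The paper's version avoids the interpolation step and the auxiliary times $\tau_j$; yours avoids Aubin--Lions--Simon and any compactness in time. Your remaining steps (lower bound on $\E$ along the flow from \Cref{hyp:L0-zeta} via \Cref{rem:pos-curv-cond} so that $\lambda$ is continuous in $C^4$, ODE bootstrapping for smoothness using the constant-speed constraint, and passage to the limit in the boundary conditions when $M$ is compact) coincide with the paper's.
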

\begin{proof}
    %Denote $L_0=\Ll(\gamma_0)$ and recall $\Ll(\gamma(t,\cdot))=L_0$ for all $0\leq t<\infty$. 
    % Consider the smooth family of positive diffeomorphisms of $[-1,1]$ given by
    % \begin{equation}
    %     \phi\colon[0,\infty)\times[-1,1]\to[-1,1], \quad \phi(t,\cdot)^{-1}\big|_y \vcentcolon= \psi(t,y) \vcentcolon= \frac{2}{L_0}\int_0^y |\partial_x\gamma(t,x)|\dd x.
    % \end{equation}
    % Then define $\tilde\gamma(t,x)=\gamma(t,\phi(t,x))$ and note that $|\partial_x\tilde\gamma|\equiv \frac{L_0}{2}$, so $\tilde \gamma$ is the family of (positive) reparametrizations by constant speed.
    %
    % Choose $p_j=\gamma(t_j,0)$ and note that $\Ll(\gamma(t,\cdot))\equiv L$ yields $\gamma(t_j)-p_j\in B_{L_0}(0)$ for all $j\in\N$. Moreover, by \Cref{prop:dsbot3curv-l2-bound}, we have
    % \begin{equation}\label{eq:sc-1}
    %     \|\partial_t (\gamma(t,\cdot))\|_{L^\infty([-1,1])} \leq C(L_0,\E(\gamma_0),\zeta) \quad\text{for all $t\in[0,\infty)$}.
    % \end{equation}   
    % so that, for each $T>0$, there exists $R=R(L_0,\E(\gamma_0),\zeta,T)$ such that 
    % \begin{equation}\label{eq:sc-2}
    %     \gamma(t,x) - p_j\in B_R(0) \quad\text{for all $t_j\leq t\leq t_j+T$ and $x\in[-1,1]$}.
    % \end{equation}
    Fix any $T\in(0,\infty)$, recall the notation of \Cref{lem:prop-of-arclengthrep} and define the sequence
    \begin{equation}
        \tilde\gamma_j\colon[0,T]\times[-1,1]\to\R^n,\quad\tilde \gamma_j(t,x)=\tilde\gamma(t_j+t,x)-p_j.
    \end{equation}
    Recall that $p_j=0$ for all $j\in\N$ if $M$ is compact.

    % \textbf{Step 1.} We have that $(\tilde\gamma_j)_{j\in\N}\subseteq\mathbb X_{T,p}$ is bounded for any $p>5$.
    \textbf{Step 1.} We have that 
    \begin{equation}\label{eq:glob-ex-step1}
        \sup_{j\in\N} \|\tilde\gamma_j\|_{L^\infty(0,T;W^{5,2}(-1,1))}+\|\partial_t\tilde\gamma_j\|_{L^\infty(0,T;L^\infty(-1,1))} < \infty.
    \end{equation}

    By \eqref{eq:prop-of-arclengthrep}, $(\tilde\gamma_j)_{j\in\N}$ is bounded in $L^\infty(0,T;W^{5,2}(-1,1))$. Next, note that by \eqref{eq:phipsi}
    \begin{align}
        0&=\partial_t \big(\phi(t,\psi(t,y))\big)=\partial_t\phi(t,\psi(t,y)) 
        +\partial_x\phi(t,\psi(t,y))\partial_t\psi(t,y) \\
        &= \partial_t\phi(t,\psi(t,y)) + \frac{L_0}{2|\partial_x\gamma(t,y)|}\partial_t\psi(t,y) .
    \end{align}
    Moreover, 
    \begin{equation}\label{eq:sc-s1-1}
        \partial_t\psi(t,y) = \frac{2}{L_0} \partial_t \int_{-1}^y 1 \dd s = -\frac{2}{L_0} \int_{-1}^y \langle\curv,\partial_t\gamma\rangle \dd s
    \end{equation}
    and thus, by \eqref{eq:prop-of-arclengthrep-2}, $\|\partial_t\psi(t,\cdot)\|_{L^\infty([-1,1])} \leq C(L_0,\E(\gamma_0),\zeta,n)$. In particular, we obtain
    \begin{align}
        \partial_t\tilde\gamma(t,x)&= \partial_t\gamma(t,\phi(t,x)) + \partial_x\gamma(t,\phi(t,x))\partial_t\phi(t,x)\\
        &= \partial_t\gamma(t,\phi(t,x)) - \frac{L_0}{2}\frac{\partial_x\gamma(t,\phi(t,x))}{|\partial_x\gamma(t,\phi(t,x))|} \partial_t\psi(t,\phi(t,x))\label{eq:sc-s1-2}
    \end{align}
    so that $\|\partial_t\tilde\gamma(t,x)\|_{L^\infty((0,\infty)\times(-1,1))}\leq C(L_0,\E(\gamma_0),\zeta,n)$ by the above and \eqref{eq:prop-of-arclengthrep-2}. This yields \emph{Step~1}. %Using \emph{Step 1}, we get the following.%the following is immediate combining \eqref{eq:mixed-sobolev-embedding} (with $k=0$ and $\theta\in(\frac1p,\frac14(1-\frac1p))$) and \eqref{eq:morrey}.

    \textbf{Step 2.} Fix any $\alpha\in(0,\frac12)$. There exists a family of immersions $\tilde\gamma_\infty\in C^0([0,T];C^{4+\alpha}([-1,1]))$ parametrized by constant speed such that, after passing to a subsequence without relabeling, 
    % \begin{enumerate}[(i)]
    %     \item $\tilde\gamma_j\rightharpoonup \tilde\gamma_\infty$ in $W^{1,2}(0,T;L^2(-1,1))$ %$\mathbb{X}_{T,p}$;
    %     \item $\tilde\gamma_j \to \tilde\gamma_\infty$ in $C^{0}([0,T];C^{4+\alpha}([-1,1]))$.
    % \end{enumerate}
    \begin{equation}
        \tilde\gamma_j \to \tilde\gamma_\infty \quad\text{in $C^{0}([0,T];C^{4+\alpha}([-1,1]))$}.
    \end{equation}

    \emph{Step 2} is a consequence of the Aubin--Lions--Simon Lemma \cite[Corollary~4]{simon1987}. Indeed, we have $W^{5,2}(-1,1)\hookrightarrow C^{4+\alpha}([-1,1])$ compactly and $ C^{4+\alpha}([-1,1])\hookrightarrow L^\infty(-1,1)$. Moreover, \eqref{eq:glob-ex-step1} especially yields for $q>1$
    \begin{equation}
        \sup_{j\in\N}\|\tilde\gamma_j\|_{L^\infty(0,T;W^{5,2}(-1,1))}+\|\partial_t\tilde\gamma_j\|_{L^q(0,T;L^\infty(-1,1))}< \infty
    \end{equation}
    so that we can take $p=\infty$, $q>1$, $X=W^{5,2}(-1,1)$, $B=C^{4+\alpha}([-1,1])$, and $Y=L^\infty(-1,1)$ in \cite[Corollary~4]{simon1987}. The fact that $\tilde\gamma_\infty$ is a family of immersions follows from $|\partial_x\tilde\gamma_j|\equiv \frac{L_0}{2}$.

    % \textbf{Step 3.} We have $\tilde\gamma_\infty(t,\cdot)=\tilde\gamma_\infty(0,\cdot)$ for all $0\leq t\leq T$ and $\tilde\gamma_\infty(0,\cdot)$ is a smooth elastica.
    \textbf{Step 3.} We have that $\tilde\gamma_\infty(t,\cdot)$ is a smooth elastica for all $0\leq t\leq T$.

    % First, by \emph{Step 2}, we find for any $\varphi\in C^\infty_0((0,T)\times(-1,1),\R^n)$
    % \begin{align}
    %     \int_{0}^{T} \int_{-1}^1 &\langle \nabla\E(\tilde\gamma_j)(t,x)-\lambda(\tilde\gamma_j(t,\cdot))\curv_{\tilde\gamma_j}(t,x),\varphi(t,x)\rangle\dd \tilde s_j\dd t \\
    %     &= \int_{0}^{T} \int_{-1}^1 2\langle\curv_{\tilde\gamma_j},(\partial_{\tilde s_j}^\bot)^2\varphi\rangle + |\curv_{\tilde\gamma_j}|^2\langle\curv_{\tilde\gamma_j},\varphi\rangle -\lambda(\tilde\gamma_j)\langle \curv_{\tilde\gamma_j},\varphi\rangle \dd \tilde s_j\dd t \\
    %     &\to \int_{0}^{T} \int_{-1}^1 2\langle\curv_{\tilde\gamma_\infty},(\partial_{\tilde s_\infty}^\bot)^2\varphi\rangle + |\curv_{\tilde\gamma_\infty}|^2\langle\curv_{\tilde\gamma_\infty},\varphi\rangle -\lambda(\tilde\gamma_\infty)\langle \curv_{\tilde\gamma_\infty},\varphi\rangle \dd \tilde s_\infty\dd t\\
    %     &= \int_{0}^{T} \int_{-1}^1 \langle \nabla\E(\tilde\gamma_\infty)(t,x)-\lambda(\tilde\gamma_\infty(t,\cdot))\curv_{\tilde\gamma_\infty}(t,x),\varphi(t,x)\rangle\dd \tilde s_\infty\dd t.\label{eq:sc-s3-1}
    % \end{align}
    By \emph{Step~2}, we have
    \begin{equation}\label{eq:sc-s3-1}
        \nabla\E(\tilde\gamma_j)-\lambda(\tilde\gamma_j(t,\cdot))\curv_{\tilde\gamma_j} \to \nabla\E(\tilde\gamma_\infty)-\lambda(\tilde\gamma_\infty(t,\cdot))\curv_{\tilde\gamma_\infty} \quad\text{in $C^0([0,T]\times[-1,1])$}.
    \end{equation}
    Moreover, we compute
    \begin{align}
        &\int_0^T \int_{-1}^{1} |\nabla\E(\tilde\gamma_j)(t,x)-\lambda(\tilde\gamma_j(t,\cdot))\curv_{\tilde\gamma_j}(t,x)|^2\dd\tilde s_j\dd t \\
        &= \int_{t_j}^{t_j+T}\int_{-1}^{1} |\nabla\E(\gamma(t,\cdot))-\lambda(\gamma(t,\cdot))\curv_{\gamma}(t,\cdot)|^2\dd s\dd t = \int_{t_j}^{t_j+T} \int_{-1}^{1} |\partial_t\gamma|^2\dd s\dd t \label{eq:sc-s3-2}\\
        &= \int_{t_j}^{t_j+T} (-\partial_t\E(\gamma(t,\cdot)))\dd t = \E(\gamma(t_j,\cdot))-\E(\gamma(t_j+T,\cdot)) \to 0 \label{eq:sc-s3-3}.
    \end{align}
    Together, \eqref{eq:sc-s3-1} and \eqref{eq:sc-s3-3} imply% with the Cauchy--Schwarz inequality yields
    \begin{equation}\label{eq:sc-s3-5}
        \nabla\E(\tilde\gamma_\infty)(t,x)-\lambda(\tilde\gamma_\infty(t,\cdot))\curv_{\tilde\gamma_\infty}(t,x)=0 \quad\text{for all $(t,x)\in [0,T]\times[-1,1]$}.
    \end{equation}
    The claim follows up to smoothness of $\tilde\gamma_\infty(t,\cdot)$ for each $0\leq t\leq T$. The smoothness is obtained from \eqref{eq:sc-s3-5} via bootstrapping, using $\tilde\gamma_\infty(t,\cdot)\in C^{4+\alpha}([-1,1])$ and $|\partial_x\tilde\gamma_\infty(t,\cdot)|\equiv \frac12L_0$.%, see \cite[Section~5]{eichmanngrunau2019} and \cite{langersinger1984}.
\end{proof}

\begin{proof}[Proof of \Cref{thm:intro-main-asymptotics}] 
The claim follows combining \Cref{prop:lte,prop:subc}.
\end{proof}

%%%%%%%%%%%%%%%%%%%%%%%%%%%%%%%%%%%%%%%%%%%%%%%%%%%%%%%%%%%%%%%%%%%%%%%%%%%%%%%%%%%%%%%%%%%%%%%%
%%%%%%%%%%%%%%%%%%%%%%%%%%%%%%%%%%%%%%%%%%%%%%%%%%%%%%%%%%%%%%%%%%%%%%%%%%%%%%%%%%%%%%%%%%%%%%%%
\subsection{Discussion of the uniform non-flatness hypothesis}\label{sec:non-flatness}
%%%%%%%%%%%%%%%%%%%%%%%%%%%%%%%%%%%%%%%%%%%%%%%%%%%%%%%%%%%%%%%%%%%%%%%%%%%%%%%%%%%%%%%%%%%%%%%%
%%%%%%%%%%%%%%%%%%%%%%%%%%%%%%%%%%%%%%%%%%%%%%%%%%%%%%%%%%%%%%%%%%%%%%%%%%%%%%%%%%%%%%%%%%%%%%%%

In order to make conclusive statements about the asymptotics of \eqref{eq:ev-eq}, we require the uniform non-flatness assumption in \Cref{hyp:L0-zeta} to obtain control of the nonlocal Lagrange multiplier. In this section, we discuss assumptions on the data or on $M$ under which this hypothesis is automatically satisfied.

First, recall the data for our problem: The hypersurface $M$ with unit normal field $\xi$, a mapping $\tau_0\colon\{\pm1\}\to\{\pm1\}$, a fixed length $L_0>0$ and an initial energy $E>0$. An immersion $\gamma\colon[-1,1]\to\R^n$ with $\Ll(\gamma)=L_0$, $\E(\gamma)\leq E$ satisfies the orthogonal boundary conditions determined by this data if
\begin{equation}\label{eq:orth-bc-tau0}
    \gamma(\pm1)\in M\quad\text{and}\quad\partial_s\gamma(\pm1)=\tau_0(\pm1)\xi(\gamma(\pm1)).
\end{equation}
We always assume that $E>0$ is chosen sufficiently large such that there exists an immersion $\gamma\in W^{2,2}([-1,1])$ with $\Ll(\gamma)=L_0$, $\E(\gamma)\leq E$ and \eqref{eq:orth-bc-tau0}.

\begin{proposition}\label{prop:unif-non-flatness}
    Let $M$ be flat or compact. Then
    \begin{align}
        E_{\min}\vcentcolon=\min \{\E(\gamma)\mid\ &\gamma\in W^{2,2}([-1,1],\R^n)\text{ is an immersion} \\
        &\text{ with $\Ll(\gamma)=L_0$ and satisfying \eqref{eq:orth-bc-tau0}}\}
    \end{align}
    exists and the following are equivalent.
    \begin{enumerate}[(a)]
        \item We have that $E_{\min}>0$.
        \item There exists $\zeta>0$ such that, for any immersion $\gamma\in W^{2,2}([-1,1],\R^n)$ with $\Ll(\gamma)=L_0$, $\E(\gamma)\leq E$ and \eqref{eq:orth-bc-tau0}, we have
        \begin{equation}
            L_0-|\gamma(1)-\gamma(-1)| \geq \zeta\quad\text{for all $t\in(0,T)$}.
        \end{equation}
    \end{enumerate}
    If $M$ is flat, then $E_{\min} = \frac{\pi^2}{L_0}$. Furthermore, if $M=\partial\Omega$ is the boundary of a compact convex set $\Omega$, $\xi$ is outward-pointing and $\tau_0(-1)=1$, then $E_{\min}>0$.
\end{proposition}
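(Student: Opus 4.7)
The backbone of the argument is the direct method: for both the existence of $E_{\min}$ and the implication (a)$\Rightarrow$(b) (which I would prove contrapositively), I need to take a sequence of admissible immersions $\gamma_k \in W^{2,2}([-1,1],\R^n)$ with $\Ll(\gamma_k)=L_0$, $\E(\gamma_k)\leq E$, satisfying \eqref{eq:orth-bc-tau0}, and extract a convergent subsequence. After reparametrizing each $\gamma_k$ by constant speed, the bounds on length and elastic energy give a uniform $W^{2,2}$-bound on $\gamma_k - \gamma_k(-1)$. The anchor $\gamma_k(-1) \in M$ lies in a compact set when $M$ is compact; when $M$ is flat, I use translation invariance of the problem to replace $\gamma_k$ by $\gamma_k - \gamma_k(-1)$ (which is again admissible because \eqref{eq:orth-bc-tau0} is invariant under translations of a hyperplane). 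This is the main technical obstacle and is precisely the reason for the hypothesis that $M$ be either flat or compact. After this reduction, extract a subsequence converging weakly in $W^{2,2}$ and strongly in $C^1$ to some $\gamma_\infty$; the boundary conditions \eqref{eq:orth-bc-tau0} pass to the limit by $C^1$-convergence (using that $M$ is closed and $\xi$ is continuous), the length constraint is preserved since the $\gamma_k$ are parametrized by constant speed, and $\E$ is lower semicontinuous under weak $W^{2,2}$-convergence. Applied to a minimizing sequence, this yields $E_{\min}$ is attained.

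Next, for the equivalence, the crucial observation is that $\E(\gamma) = 0$ forces $\gamma$ to be a straight line, in which case $|\gamma(1)-\gamma(-1)| = \Ll(\gamma) = L_0$. For (b)$\Rightarrow$(a): if $E_{\min} = 0$, the minimizer produced above is a straight line admissible in the class, contradicting (b). For (a)$\Rightarrow$(b) contrapositively: if there is a sequence $\gamma_k$ with $L_0-|\gamma_k(1)-\gamma_k(-1)| \to 0$, applying the compactness argument produces a limit $\gamma_\infty$ admissible in the class with $|\gamma_\infty(1)-\gamma_\infty(-1)| = L_0$, i.e.\ a straight line, forcing $E_{\min} \leq \E(\gamma_\infty) = 0$.

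For the flat case, I would use a reflection trick. Since $M$ is a hyperplane and $\gamma(\pm 1)\in M$ with $\partial_s\gamma(\pm 1)$ normal to $M$, reflecting $\gamma$ across $M$ and concatenating produces a closed curve $\bar\gamma\in W^{2,2}(\S^1,\R^n)$ of length $2L_0$ with $\E(\bar\gamma) = 2\E(\gamma)$; the orthogonality of $\partial_s\gamma(\pm 1)$ to $M$ is exactly what guarantees $C^1$-matching at the two junction points. Applying the sharp inequality $\E(\bar\gamma)\cdot\Ll(\bar\gamma)\geq 4\pi^2$ for closed $W^{2,2}$-curves (Cauchy--Schwarz together with Fenchel's theorem, as recalled in the introduction) gives $\E(\gamma)\geq \pi^2/L_0$, with equality realized by a half-circle of radius $L_0/\pi$ meeting $M$ orthogonally; this yields $E_{\min} = \pi^2/L_0$.

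Finally, for $M=\partial\Omega$ with $\Omega$ compact convex, $\xi$ outward, and $\tau_0(-1)=1$, I would show that no admissible straight line exists, whence $E_{\min}>0$ by the equivalence already established. Indeed, if $\bar\gamma$ were such a line, then $\partial_s\bar\gamma(-1)=\xi(\bar\gamma(-1))$ points into the open outer half-space of the supporting hyperplane of $\Omega$ at $\bar\gamma(-1)$; by convexity, $\Omega$ is contained in the closed opposite half-space, so $\bar\gamma(t)\notin \Omega$ for $t>-1$, and in particular $\bar\gamma(1)\notin \partial\Omega=M$, a contradiction.
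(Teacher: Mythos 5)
Your proposal is correct and follows essentially the same route as the paper: the direct method with constant-speed parametrization and translation/compactness to anchor the curves, the characterization of $E_{\min}=0$ via admissible straight lines to get the equivalence of (a) and (b), the reflection to a closed curve of length $2L_0$ combined with Fenchel and Cauchy--Schwarz for the flat case, and the observation that $z+L_0\xi(z)$ cannot lie on $\partial\Omega$ for convex $\Omega$. No substantive differences.
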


If $M$ is compact, notice that  $E_{\min}=0$ if and only if there exists a straight line of length $L_0$ satisfying \eqref{eq:orth-bc-tau0}. In many cases, not only in the flat or convex case considered in \Cref{prop:unif-non-flatness},  $E_{\min}>0$ is satisfied. For instance if $M$ is compact and $L_0>\diam (M)$. An example where $E_{\min}=0$ only for one value of the length is illustrated in \Cref{fig:example-nonconvex}. 

\begin{proof}[Proof of \Cref{prop:unif-non-flatness}]
    Proving the equivalence of (a) and (b) is a simple application of the direct method and compact Sobolev embeddings.

    Indeed, consider a sequence of immersions $(\gamma_j)_{j\in\N}\subseteq W^{2,2}([-1,1])$ with $\E(\gamma_j)\leq E$, $\Ll(\gamma_j)=L_0$, all satisfying \eqref{eq:orth-bc-tau0} and suppose that each $\gamma_j$ is parametrized by constant speed as at the beginning of \Cref{sec:asymptotics}, i.e., $|\partial_x \gamma_j(x)|=L_0/2$ for all $x \in [-1,1]$. If $M$ is compact, \eqref{eq:orth-bc-tau0} and $\Ll(\gamma_0)=L_0$ yield $\gamma_j([-1,1])\subseteq \bar B_{L_0}(M)$. If $M$ is flat, fix any $z\in M$ and notice that $z+\gamma_j-\gamma_j(-1)$ still satisfies \eqref{eq:orth-bc-tau0} and $z+\gamma_j(x)-\gamma_j(-1)\in \bar B_{L_0}(z)$ for all $j\in\N$. So replacing $\gamma_j$ by $z+\gamma_j(x)-\gamma_j(-1)$ in the noncompact case, we may without loss of generality suppose 
    \begin{equation}
        \|\gamma_j\|_{L^\infty}\leq C(M,L_0)\quad \text{for all $j\in\N$}.
    \end{equation}
    Moreover, using $\E(\gamma_j)\leq E$ and $\Ll(\gamma_j)=L_0$ and arguing as in the proof of \Cref{lem:prop-of-arclengthrep}, we thus have $\|\gamma_j\|_{W^{2,2}([-1,1])}\leq C(M,L_0,E)$ for all $j\in\N$. Fix $\alpha\in(0,\frac12)$. By the compact Sobolev embedding $W^{2,2}([-1,1])\hookrightarrow C^{1+\alpha}([-1,1])$, there exists $\gamma_\infty \in W^{2,2}([-1,1])$ such that, after passing to a subsequence, 
    \begin{equation}\label{eq:on-hyp-2}
        \gamma_j\rightharpoonup \gamma\text{ in $W^{2,2}([-1,1])$}\quad\text{and}\quad \gamma_j\to \gamma\text{ in $C^{1+\alpha}([-1,1])$}.
    \end{equation}
    In particular, $\gamma$ satisfies $|\partial_x\gamma|\equiv \frac{L_0}{2}>0$, $\Ll(\gamma)=L_0$, \eqref{eq:orth-bc-tau0} and we have
    \begin{equation}\label{eq:on-hyp-1}
        \E(\gamma)\leq\liminf_{j\to\infty}\E(\gamma_j)\leq E.
    \end{equation}
    Applying these arguments to a minimizing sequence $(\gamma_j)_{j\in\N}$ for $E_{\min}$, especially using \eqref{eq:on-hyp-1}, the minimum in the definition of $E_{\min}$ is indeed attained.

    \textbf{Claim 1.} We have $E_{\min}=0$ if and only if there exists an immersion $\gamma\in W^{2,2}(I)$ of length $L_0$ that parametrizes a straight line and satisfies \eqref{eq:orth-bc-tau0}.

    Let $\gamma\in W^{2,2}(I)$ be an immersion of length $L_0$ that satisfies \eqref{eq:orth-bc-tau0}. Taking $\gamma$ to be parametrized by constant speed, we have $\E(\gamma)=0$ if and only if $\partial_x^2\gamma=0$ which is true if and only if $\gamma$ parametrizes a line segment, i.e.\ if and only if $L_0-|\gamma(1)-\gamma(-1)|=0$.

    \textbf{Claim 2.} Statement (b) is false if and only if there exists an immersion $\gamma\in W^{2,2}([-1,1])$ of length $L_0$ parametrizing a straight line and satisfying \eqref{eq:orth-bc-tau0}.

    If (b) is false, we obtain a sequence $(\gamma_j)$ as considered above that additionally satisfies 
    \begin{equation}
        L_0-|\gamma_j(1)-\gamma_j(-1)|\to 0.
    \end{equation}
    Using \eqref{eq:on-hyp-1}, we get $L_0-|\gamma(1)-\gamma(-1)|=0$ for the limit $\gamma$ obtained above. In particular, $\gamma$ is a geodesic, i.e.\ parametrizes a straight line which yields \emph{Claim~2}.

    The equivalence of (a) and (b) is a direct consequence of \emph{Claims~1 and 2}. 

    Next, suppose that $M$ is flat and let $P\in\R^{n\times n}$ denote the reflection by $M$. Let $\gamma\in W^{2,2}([-1,1])$ be an immersion with $\Ll(\gamma)=L_0$ satisfying \eqref{eq:orth-bc-tau0}. Identify $\S^1=\R/4\Z$ and define $\mathcal R\gamma\colon\S^1\to\R^n$,
    \begin{equation}\label{eq:def-R-gamma}
        \mathcal R\gamma(x)=\begin{cases}
        \gamma(x)&x\in[-1,1]\\
        P\gamma(2-x)&x\in[1,3].
    \end{cases}%(1+x) = P\gamma(1-x)\quad\text{and}\quad \mathcal R\gamma(-1-x)=P\gamma(-1+x)\quad\text{for $x\in[0,1]$}.
    \end{equation}
    Then \eqref{eq:orth-bc-tau0} yields $\mathcal R\gamma\in W^{2,2}(\S^1,\R^n)$ and, since $P$ is an isometry of $\R^n$, 
    \begin{equation}
       \E(\mathcal R\gamma) = 2\E(\gamma)\quad\text{and}\quad \Ll(\mathcal R\gamma)=2L_0.
    \end{equation}
    It is a classical application of Cauchy--Schwarz paired with Fenchel's Theorem in $\R^n$ %\cite[Proof and statement of Theorem~3 in Section~5-7]{docarmo1994} 
    \cite{fenchel1929,borsuk1947} that the (up to reparametrization and isometries) unique minimizer of $\E$ among all $W^{2,2}(\S^1)$-immersions of length $2L_0$ is a round planar circle of radius $\frac{L_0}{\pi}$. So $\E(\gamma)\geq \frac{\pi^2}{L_0}$ follows.

    Finally, let $M=\partial\Omega$ be the boundary of a compact convex set $\Omega$, $\xi$ be outward pointing and $\tau_0(-1)=1$. Suppose that either statement (a) or (b) fails, i.e.\ by the above claims, there exists $z\in M$ such that $\gamma\colon[-1,1]\to\R^n$ with $\gamma(x)=z+\frac{L_0}{2}\xi(z)(x+1)$ satisfies \eqref{eq:orth-bc-tau0}. In particular, $\gamma(1)=z+L_0\xi(z)\in M$ which contradicts the convexity of $\Omega$.
\end{proof}

%%%%%%%%%%%%%%%%%%%%%%%%%%%%%%%%%%%%%%%%%%%%%%%%%%%%%%%%%%%%%%%%%%%%%%%%%%%%%%%%%%%%%%%%%%%%%%%%
%%%%%%%%%%%%%%%%%%%%%%%%%%%%%%%%%%%%%%%%%%%%%%%%%%%%%%%%%%%%%%%%%%%%%%%%%%%%%%%%%%%%%%%%%%%%%%%%
\subsection{Full convergence and classification of the limit in a special case}
%%%%%%%%%%%%%%%%%%%%%%%%%%%%%%%%%%%%%%%%%%%%%%%%%%%%%%%%%%%%%%%%%%%%%%%%%%%%%%%%%%%%%%%%%%%%%%%%
%%%%%%%%%%%%%%%%%%%%%%%%%%%%%%%%%%%%%%%%%%%%%%%%%%%%%%%%%%%%%%%%%%%%%%%%%%%%%%%%%%%%%%%%%%%%%%%%

In this section, we consider the special case where $M=\R\times\{0\}\subseteq\R^2$ is the $x$-axis in the plane. Moreover, choose $\xi(x)=(0,1)$ for $x\in M$. Then, using a reflection argument similar to \cite{kuwertlamm2021} and the classification of closed planar elastica \cite{langersinger1985}, we can obtain full convergence of the FLFB-EF and characterize the limit. 

To make this more precise, let $P\colon\R^2\to\R^2$ be the reflection by $M$, that is,
\begin{equation}
    P(x)= (x^{(1)},-x^{(2)})\quad\text{for $x=(x^{(1)},x^{(2)})\in\R^2$}.
\end{equation}
Identify $\S^1=\R / 4\Z$ and, for an immersion $\gamma\colon[-1,1]\to\R^2$ satisfying the orthogonal free boundary conditions \eqref{eq:orth-bc}, consider the reflection $\mathcal R\gamma\colon\S^1\to\R^2$ given by \eqref{eq:def-R-gamma}.
% \begin{equation}
%     \mathcal R\gamma(x)=\begin{cases}
%         \gamma(x)&x\in[-1,1]\\
%         P\gamma(2-x)&x\in[1,3].
%     \end{cases}
% \end{equation}
%It is a simple computation using \eqref{eq:orth-bc} to verify that $\gamma\in W^{2,2}(\S^1,\R^2)$. %If further the third-order condition \eqref{eq:nat-bc} is satisfied and $\gamma$ is parametrized by constant speed, then $\partial_x^3\gamma^{(1)}(\pm1)=0$, so that one even obtains $\mathcal R\gamma\in W^{4,2}(\S^1)$.

\begin{remark}[Regularity of the reflection]\label{rem:reg-of-reflection}
    Let $\gamma\colon[-1,1]\to\R^2$ be smooth and satisfy \eqref{eq:orth-bc}. Then $\mathcal R\gamma\in W^{2,\infty}(\S^1)$. Moreover, for $m\in\N$, we have
    \begin{equation}\label{eq:reg-of-refl-1}
        \partial_x^{2k}\gamma^{(2)}(\pm1)=\partial_x^{2k-1}\gamma^{(1)}(\pm1)=0 \text{ for all $1\leq k\leq m$},\quad \text{if and only if $\mathcal R\gamma\in W^{2m+1,\infty}(\S^1)$}.
    \end{equation}
    Now suppose that $\gamma$ is parametrized by constant speed, i.e., $|\partial_x\gamma|\equiv\mathrm{const}$. Let $m\in\N$.
    \begin{equation}\label{eq:reg-of-refl-2}
        \text{If } (\partial_s^\bot)^{2k-1}\curv(\pm1) = 0  \text{ for all $1\leq k\leq m$},\quad \text{then $\mathcal R\gamma\in W^{2m+3,\infty}(\S^1)$}.
    \end{equation}
    Indeed, without loss of generality, we may arrange that $|\partial_x\gamma|\equiv 1$. By \cite[Equation~(2.20)]{dziukkuwertschaetzle2002}, we have
    \begin{equation}\label{eq:reg-of-refl-4}
        \langle \partial_s^{2k}\curv,\partial_s\gamma\rangle = \sum_{i=1}^{k} Q_{2i}^{2k+1-2i}(\curv)
    \end{equation}
    where each term $Q_\nu^\mu$ is a scalar multilinear map in $(\partial_s^\bot)^{i_1}\curv, \dots ,(\partial_s^\bot)^{i_\nu}\curv$ where $i_1+\cdots+i_\nu=\mu$. In particular, if $k\leq m$, $\mu=2k+1-2i$ and $\nu=2i$, in each such term, there exists $1\leq j\leq \nu$ such that $i_j$ is odd and thus $(\partial_s^\bot)^{i_j}\curv(\pm1) = 0$ by \eqref{eq:reg-of-refl-2}. That is, \eqref{eq:reg-of-refl-4} yields
    \begin{equation}\label{eq:reg-of-refl-3}
        \langle \partial_s^{2k}\curv,\partial_s\gamma\rangle(\pm1) = 0\quad\text{for all $k=0,\dots,m$}.
    \end{equation}
    Moreover, again using \cite[Equation~(2.20)]{dziukkuwertschaetzle2002}, we have
    \begin{equation}
        (\partial_s^{2k-1}\curv)^\bot(\pm1) = (\partial_s^\bot)^{2k-1}\curv(\pm1) + \sum_{i=1}^{k-1} \vec Q_{2i+1}^{2k-1-2i} (\curv)(\pm1) \underset{\eqref{eq:reg-of-refl-2}}{=} \sum_{i=1}^{k-1} \vec Q_{2i+1}^{2k-1-2i} (\curv)(\pm1)
    \end{equation}
    where $\vec Q_\nu^\mu$ is the vectorial analog of $Q_\nu^\mu$. Again, if $k\leq m$, $\mu=2k-1-2i$ and $\nu=2i+1$, in each such term, there exists $1\leq j\leq \nu$ such that $i_j$ is odd and thus, using \eqref{eq:reg-of-refl-2},
    \begin{equation}\label{eq:reg-of-refl-5}
        (\partial_s^{2k-1}\curv)^\bot(\pm1) = 0\quad\text{for all $k=1,\dots,m$}.
    \end{equation}
    Using $\partial_s=\partial_x$, $\curv=\partial_x^2\gamma$ and \eqref{eq:orth-bc}, \eqref{eq:reg-of-refl-3} and \eqref{eq:reg-of-refl-5}, we can apply \eqref{eq:reg-of-refl-1} (with $m$ replaced by $m+1$).
\end{remark}

\begin{theorem}\label{thm:special-case}
    Consider a smooth immersion $\gamma_0\colon[-1,1]\to\R^2$ satisfying \eqref{eq:orth-bc} and \eqref{eq:nat-bc}. % with $\tau_0\colon\{\pm1\}\to\{\pm1\}$ as in \eqref{eq:dsgamma-at-boundary}. 
    Let $\gamma\colon[0,T)\times[-1,1]\to\R^2$ be a maximal FLFB-EF starting at $\gamma_0$. Then $T=\infty$ and, for the positive reparametrizations $\tilde\gamma(t,\cdot)$ by constant speed, we have $\tilde\gamma(t,\cdot)\to\tilde\gamma_\infty$ in the $C^\infty$-topology for $t\to\infty$ where $\tilde\gamma_\infty$ is an elastica that satisfies \eqref{eq:orth-bc} and \eqref{eq:nat-bc} and, either
    \begin{enumerate}
        \item $\mathcal R\tilde\gamma_\infty$ is a (multiple cover of a) round circle of length $2\cdot \Ll(\gamma_0)$; or
        \item $\mathcal R\tilde\gamma_\infty$ is a (multiple cover of a) figure-eight elastica of length $2\cdot\Ll(\gamma_0)$.% if $\tau_0(-1)\cdot\tau_0(1)=1$.
    \end{enumerate}
    Moreover, the turning numbers of $\mathcal R\tilde\gamma_\infty$ and $\mathcal R\gamma_0$ coincide. In particular, Case~(2) occurs if and only if the turning number of $\mathcal R\gamma_0$ is zero.
\end{theorem}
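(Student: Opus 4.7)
My plan proceeds in four steps. \textbf{Step 1 (global existence and $C^\infty$-subconvergence).} Since $M=\R\times\{0\}$ is flat, \Cref{prop:unif-non-flatness} yields $E_{\min}=\pi^2/L_0 > 0$, so the uniform non-flatness condition in \Cref{hyp:L0-zeta} holds for some $\zeta>0$ (using that $\E(\gamma(t,\cdot))$ is decreasing). Thus \Cref{prop:lte,prop:subc} give $T=\infty$ and $C^{4+\alpha}$-subconvergence of the constant-speed reparametrizations $\tilde\gamma(t_j,\cdot)$ (up to subsequence) to a smooth elastica $\tilde\gamma_\infty$ satisfying \eqref{eq:orth-bc} and \eqref{eq:nat-bc}. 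To upgrade to $C^\infty$, I would iterate the energy-estimate scheme of \Cref{prop:dsbot3curv-l2-bound} at higher orders using normal fields $N_{2k+1}$ analogous to those of \Cref{lem:n3,lem:n5}; the flat-$M$ assumption collapses most of the boundary terms and lets us propagate $\|(\partial_s^\bot)^k\curv\|_{L^2(\dd s)}\leq C(k)$ uniformly in $t$. Interpolation then yields $C^\infty$-subconvergence.

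\textbf{Step 2 (classification of limits via reflection).} For any subsequential limit $\tilde\gamma_\infty$, set $\bar\gamma\vcentcolon=\mathcal R\tilde\gamma_\infty\colon\S^1\to\R^2$. The orthogonality \eqref{eq:orth-bc} makes $\bar\gamma\in C^1(\S^1)$. Specializing \eqref{eq:nat-bc} to $S\equiv 0$ yields $\partial_s^\bot\curv(\pm 1)=0$, which in the 2D setting reads $\kappa'_{\tilde\gamma_\infty}(\pm1)=0$. The elastica equation becomes the scalar ODE $2\kappa''+\kappa^3-\lambda\kappa=0$, and by induction on this ODE all odd-order derivatives of $\kappa_{\tilde\gamma_\infty}$ vanish at $s=\pm 1$. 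A direct computation (using $PJ=-JP$ for $P$ the reflection and $J$ the $90^\circ$-rotation) gives $\kappa_{\bar\gamma}(s)=\kappa_{\tilde\gamma_\infty}(2-s)$ on $[1,3]$, so all derivatives of $\kappa_{\bar\gamma}$ match across the gluing points. Hence $\bar\gamma$ is a smooth closed planar elastica of length $2L_0$, and by \cite{langersinger1985} it is a multiply covered round circle or figure-eight, which gives the dichotomy between cases (1) and (2).

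\textbf{Step 3 (turning number and full convergence).} The turning number $\frac{1}{2\pi}\int_{\S^1}\kappa\,\dd\tilde s$ is a regular-homotopy invariant, and the smooth family $\mathcal R\gamma(t,\cdot)$ is a regular homotopy in $t$ with values in $C^1(\S^1)$-immersions, so $\tau(\bar\gamma)=\tau(\mathcal R\gamma_0)$. Figure-eights have turning number $0$ while $k$-fold covered circles have turning number $\pm k$, so case (2) occurs exactly when $\tau(\mathcal R\gamma_0)=0$. For full (i.e.\ non-subsequential) $C^\infty$-convergence, I would establish a \L ojasiewicz--Simon gradient inequality for the elastic energy at the critical point $\tilde\gamma_\infty$, modeled on \cite{ruppspener2020,pozzetta2022,dallacquapozzispener2016}. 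Combined with the energy identity $\frac{\dd}{\dd t}\E(\gamma)=-\|\partial_t\gamma\|_{L^2(\dd s)}^2$ of \Cref{rem:en-mon}, this yields $\int_0^\infty\|\partial_t\gamma\|_{L^2(\dd s)}\dd t<\infty$, hence $L^2$-convergence of $\tilde\gamma(t,\cdot)$; the uniform $C^k$-bounds from Step~1 then upgrade this to $C^\infty$.

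\textbf{Main obstacle.} The hardest part is the \L ojasiewicz--Simon inequality in this setting. One must set up an analytic chart around $\tilde\gamma_\infty$ in an $H^s$-manifold of immersions satisfying the free boundary conditions, verify real-analyticity of $\E$, and establish the Fredholm property of the second variation at the critical point. The Fredholm property should follow from the same Lopatinskii--Shapiro verification already used in \Cref{prop:wp-lin-system}, but one has to track carefully the interaction of the nonlocal Lagrange multiplier $\lambda(\gamma)$ with the nonlinear third-order boundary condition \eqref{eq:nat-bc}. Alternatively, one might attempt to import full convergence for the reflected closed length-preserving elastic flow from \cite{mantegazzapozzetta2021}, at the cost of extra bookkeeping of the $P$-symmetry to rule out horizontal drift.
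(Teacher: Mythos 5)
Your skeleton (global existence via \Cref{prop:unif-non-flatness} and \Cref{prop:lte}, classification of limits by reflecting and invoking \cite{langersinger1985}, turning number by regular-homotopy invariance) is sound, but your route diverges from the paper's at the decisive step, and as written it has a gap exactly there. The paper does not reflect only the \emph{limit}: it reflects the \emph{evolving flow}. Since $S\equiv 0$, \eqref{eq:nat-bc} gives $\partial_s^\bot\curv(t,\pm1)=0$ for all $t$, \Cref{lem:n3} gives $(\partial_s^\bot)^3\curv(t,\pm1)=0$, and time-differentiating the boundary conditions together with the commutator identities of \cite[Lemma~2.3]{dziukkuwertschaetzle2002} propagates this inductively to $(\partial_s^\bot)^{2k-1}\curv(t,\pm1)=0$ for all $k$ and all $t\geq 0$. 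By \eqref{eq:reg-of-refl-2}, $\tilde u(t,\cdot)=\mathcal R\tilde\gamma(t,\cdot)$ is then a \emph{smooth closed} curve for every $t$, and since $P$ is an isometry, $\lambda(\tilde u)=\lambda(\tilde\gamma)$, so $\tilde u$ is (a constant-speed reparametrization of) the length-preserving elastic flow of closed curves. Everything else is then imported off the shelf: $C^\infty$-subconvergence from \cite[Theorem~3.3]{dziukkuwertschaetzle2002}, \emph{full} convergence from the length-constrained \L ojasiewicz--Simon inequality for \emph{closed} curves (\cite{mantegazzapozzetta2021}, \cite{dallacquamuellerruppschlierf2025}), the dichotomy from \cite[Theorem~0.1]{langersinger1985}, and the turning-number statement from Whitney--Graustein along $t\mapsto\tilde u(t,\cdot)$. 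In particular, no higher-order energy estimates with new boundary fields $N_{2k+1}$ are needed for the $C^\infty$-upgrade, and the horizontal-drift issue you worry about is resolved by the closed-curve \L ojasiewicz--Simon argument itself.

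The gap in your primary route is that full convergence rests on a \L ojasiewicz--Simon inequality for the \emph{free-boundary, length-constrained} functional, which you correctly flag as the hardest part but do not establish; neither the paper nor the references you cite provide it, so your Step~3 is incomplete as stated. Your ``alternative'' (importing full convergence for the reflected closed flow) is in fact the paper's proof, and the observation that makes it work is precisely the all-orders vanishing of odd normal derivatives of $\curv$ at $\pm1$ \emph{along the flow}, not merely at the limit. Two smaller points: your Step~1 claim of $C^\infty$-subconvergence by iterating \Cref{prop:dsbot3curv-l2-bound} at higher orders is plausible but unproven and unnecessary on the paper's route; and when you apply \Cref{prop:subc} for the non-compact $M=\R\times\{0\}$ you should note that the translations $p_j$ lie in $M$ and $M$ is translation-invariant in the horizontal direction, so the boundary conditions survive in the limit --- a step the paper again bypasses by working with the closed reflected flow.
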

For some background on the figure-eight elastica, see \cite[Theorem~0.1]{langersinger1985}.
\begin{proof}[Proof of \Cref{thm:special-case}]
    Using \Cref{prop:lte,prop:unif-non-flatness}, $T=\infty$.  
    We have $\partial_s^\bot\curv(t,\pm1)=0$ by \eqref{eq:nat-bc} for all $t\geq 0$. Moreover, \Cref{lem:n3} yields $(\partial_s^\bot)^3\curv(t,\pm1)=0$ for all $t\geq 0$. So using \cite[Lemma~2.3]{dziukkuwertschaetzle2002}, differentiating in time and arguing by induction yields
    \begin{equation}
        (\partial_s^\bot)^{2k-1}\curv(t,\pm1)=0\quad\text{for all $k\in\N$ and $t\geq 0$}.
    \end{equation}
    Thus, \eqref{eq:reg-of-refl-2} shows that $\tilde u(t,\cdot)=\mathcal R\tilde\gamma(t,\cdot)$ is smooth for all $t\geq 0$. Using that $P$ is an isometry, one finds $\lambda(\tilde u(t,\cdot))=\lambda(\tilde\gamma(t,\cdot))$ for all $t\geq 0$. So, using
    \begin{equation}
        \partial_t^\bot \tilde\gamma = -\big(\nabla\E(\tilde\gamma)-\lambda(\tilde\gamma)\curv_{\tilde\gamma}\big),
    \end{equation}
    $\tilde u\in C^\infty([0,\infty)\times \S^1)$ is a family of reparametrizations by constant speed of the length-preserving elastic flow of closed curves starting at $\tilde u_0\vcentcolon=\mathcal R\tilde\gamma(0,\cdot)$.  

    Using \cite[Theorem~3.3]{dziukkuwertschaetzle2002}, $\tilde u(t,\cdot)$ subconverges to elastica (in the $C^\infty$-topology) up to horizontal translation for $t\to\infty$. Applying a length-constrained \L ojasiewicz--Simon inequality for \emph{closed} curves (see \cite{mantegazzapozzetta2021} for a version without length-constraint, and \cite[Theorem~4.10]{dallacquamuellerruppschlierf2025} for a version with length-constraint in $\H^2$), this improves to full convergence. Indeed, adapting \cite[proof of Theorem~4.8]{dallacquamuellerruppschlierf2025}, we get that $\tilde u(t,\cdot)$ converges to an elastica $\tilde u_\infty=\mathcal R\tilde\gamma_\infty$ in the $C^\infty$-topology as $t\to\infty$. Finally, \cite[Theorem~0.1]{langersinger1985} yields the claim, using that the turning number is preserved along $t\mapsto \tilde u(t,\cdot)$ by the Whitney--Graustein Theorem.
\end{proof}

\appendix
%%%%%%%%%%%%%%%%%%%%%%%%%%%%%%%%%%%%%%%%%%%%%%%%%%%%%%%%%%%%%%%%%%%%%%%%%%%%%%%%%%%%%%%%%%%%%%%%
%%%%%%%%%%%%%%%%%%%%%%%%%%%%%%%%%%%%%%%%%%%%%%%%%%%%%%%%%%%%%%%%%%%%%%%%%%%%%%%%%%%%%%%%%%%%%%%%
\section{Background on function spaces and embeddings, and technical proofs}
%%%%%%%%%%%%%%%%%%%%%%%%%%%%%%%%%%%%%%%%%%%%%%%%%%%%%%%%%%%%%%%%%%%%%%%%%%%%%%%%%%%%%%%%%%%%%%%%
%%%%%%%%%%%%%%%%%%%%%%%%%%%%%%%%%%%%%%%%%%%%%%%%%%%%%%%%%%%%%%%%%%%%%%%%%%%%%%%%%%%%%%%%%%%%%%%%

%%%%%%%%%%%%%%%%%%%%%%%%%%%%%%%%%%%%%%%%%%%%%%%%%%%%%%%%%%%%%%%%%%%%%%%%%%%%%%%%%%%%%%%%%%%%%%%%
%%%%%%%%%%%%%%%%%%%%%%%%%%%%%%%%%%%%%%%%%%%%%%%%%%%%%%%%%%%%%%%%%%%%%%%%%%%%%%%%%%%%%%%%%%%%%%%%
\subsection{Function spaces and embeddings}\label{app:fct-spaces}
%%%%%%%%%%%%%%%%%%%%%%%%%%%%%%%%%%%%%%%%%%%%%%%%%%%%%%%%%%%%%%%%%%%%%%%%%%%%%%%%%%%%%%%%%%%%%%%%
%%%%%%%%%%%%%%%%%%%%%%%%%%%%%%%%%%%%%%%%%%%%%%%%%%%%%%%%%%%%%%%%%%%%%%%%%%%%%%%%%%%%%%%%%%%%%%%%

In this section, we provide references for facts concerning the function spaces used in the short-time existence theory. To this end, let $E$ be a complex Banach space \emph{of class} $\mathcal{H}\mathcal{T}$. For our purposes, it is sufficient to note that finite dimensional spaces, Hilbert spaces and $L^p$, Sobolev, Slobodetskii, Besov and Bessel potential spaces in the reflexive range, provided they take values in a space of class $\mathcal{H}\mathcal{T}$, are of this class. For more details, refer to \cite[Appendix A.3]{meyries2010}. 

Consider any bounded open interval $J\subseteq\R$, $p,q\in[1,\infty)$ and $s\in (0,\infty)$. If $[s]\in\N_0$ denotes the unique integer with $[s]\leq s<[s]+1$, we define the \emph{Besov space}
\begin{equation}
    B^s_{p,q}(J,E) \vcentcolon= \left(L^p(J,E),W^{[s]+1,p}(J,E)\right)_{\frac{s}{[s]+1},q}
\end{equation}
resp.\ the \emph{Bessel potential space}
\begin{equation}
    H^{s,p}(J,E) \vcentcolon= \left[ L^p(J,E),W^{[s]+1,p}(J,E) \right]_{\frac{s}{[s]+1}}
\end{equation}
as real resp.\ complex interpolation spaces where $W^{k,p}(J,E)$ is the usual Bochner--Sobolev space for $k\in\N_0$. By \cite[(A.4.1)]{meyries2010}, the above assumptions on $E$ yield
\begin{equation}
    W^{k,p}(J,E) = H^{k,p}(J,E)\quad\text{for any }k\in\N,\,p\in[1,\infty).
\end{equation}
Furthermore, for $p\in [1,\infty)$ and $s>0$, the \emph{Sobolev--Slobodetskii space} $W^{s,p}(J,E)$ is defined by
\begin{equation}\label{eq:SobSlobSpaces}
    W^{s,p}(J,E)\vcentcolon=\begin{cases}
        W^{k,p}(J,E)&\text{if $s=k\in\N$}\\
        B^s_{p,p}(J,E)&\text{if $s\notin\N$}.
    \end{cases}
\end{equation}
By \cite[Theorem 2.3.2(d)]{triebel1978}, $B^k_{2,2}(J,\R^N)=W^{k,2}(J,\R^N)$ for $k\in\N$. Moreover, for $f\in L^{p}(J,E)$ and $\theta\in (0,1)$, define
\begin{equation}\label{eq:def-sobslob-seminorm}
    [f]_{W^{\theta,p}} \vcentcolon= \left( \int_{J}\int_J \frac{|f(x)-f(y)|_E^p}{|x-y|^{1+\theta p}} \dd x\dd y \right)^{\frac{1}{p}}.
\end{equation}
By \cite[Appendix A.4]{meyries2010}, for $s\in (0,\infty)\setminus\N$, we have
\begin{equation}
    W^{s,p}(J,E) = \{ u\in W^{[s],p}(J,E): [\partial_x^{[s]}u]_{W^{s-[s],p}}<\infty \}.
\end{equation}
Further, for $u\in W^{[s],p}(J,E)$,  
\begin{equation}
    \|u\|_{W^{s,p}(J,E)} \vcentcolon= \left( \|u\|_{W^{[s],p}(J,E)}^p + [\partial_x^{[s]}u]_{W^{s-[s],p}}^p \right)^{\frac{1}{p}}
\end{equation}
defines an equivalent norm on $W^{s,p}(J,E)$. Again by \cite[Appendix A.4]{meyries2010}, the usual Sobolev embeddings apply. That is
\begin{equation}
    W^{s,p}(J,E)\hookrightarrow W^{\tau,q}(J,E)\quad\text{for }s-\frac{1}{p}\geq \tau-\frac{1}{q},\;s\geq\tau,\;p\geq q,
\end{equation}
and
\begin{equation}\label{eq:morrey}
    W^{s,p}(J,E)\hookrightarrow C^{k,\alpha}(J,E)\quad\text{for }s-\frac{1}{p}\geq k+\alpha,\,k\in\N,\alpha\in [0,1).
\end{equation}

\subsubsection{Parabolic maximal $L^p$-regularity: Spaces, traces and embeddings}
%%%%%%%%%%%%%%%%%%%%%%%%%%%%%%%%%%%%%%%%%%%%%%%%%%%%%%%%%%%%%%%%%%%%%%%%%%%%%%%%%%%%%%%%%%%%%%%%

For $T\in(0,\infty)$, $p\in(1,\infty)$ and a compact interval $I\subseteq \R$, define
\begin{equation}
    \mathbb X_{T,p} = W^{1,p}(0,T;L^p(I,\R^n))\cap L^p(0,T;W^{4,p}(I,\R^n)).
\end{equation}
This space equipped with the norm
\begin{equation}
    \|\cdot\|_{\mathbb X_{T,p}} \vcentcolon= \|\cdot \|_{W^{1,p}(0,T;L^p(I,\R^n))} + \|\cdot\|_{L^p(0,T;W^{4,p}(I,\R^n))},
\end{equation}
is a Banach space. 
% For the spaces of boundary data of order $k\in \{0,1,3\}$, consider
% \begin{equation}
%     \mathcal D^k_{T,p} \vcentcolon %= W^{1-\frac14k-\frac1p,p}(0,T;L^p(\partial I,\R^n))
%     = W^{1-\frac14k-\frac1{4p},p}(0,T;\R^n\times\R^n),
% \end{equation}
% see \Cref{prop:xtp-mappings-and-embeddings} below. 
As usual, restricting to subspaces of functions with vanishing temporal trace $\mathrm{tr}_{t=0}$ is indicated by a pre-script $0$, for instance $\prescript{}{0}{\mathbb X}_{T,p}$ or $\prescript{}{0}{W}^{s,p}([0,T],\R^n\times\R^n)$. 

The following extension lemma for functions with vanishing temporal trace is fundamental, see \cite[Lemma~3.10]{garckemenzelpluda2020} and \cite[definition of $Ev(t)$ on p.~441]{pruesssimonett2016}.
\begin{lemma}[Extension lemma]\label{lem:0xtp-extension-lemma}
    Let $p>5$ and $\bar T\in(0,\infty)$. For each $T\in(0,\bar T]$ and $\varphi\in \prescript{}{0}{\mathbb X}_{T,p}$, there exists $\bar\varphi\in \prescript{}{0}{\mathbb X}_{\bar T,p}$ with $\bar\varphi|_{[0,T]\times [-1,1]}=\varphi$ and 
    \begin{equation}
        \|\varphi\|_{\mathbb X_{T,p}}\leq \|\bar\varphi\|_{\mathbb X_{\bar T,p}}\leq 2 \|\varphi\|_{\mathbb X_{T,p}}.
    \end{equation}
\end{lemma}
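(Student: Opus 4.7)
The plan is to construct $\bar\varphi$ as an even reflection of $\varphi$ across $t=T$, composed (in the case $\bar T > 2T$) with multiplication by a temporal cutoff. The lower bound $\|\varphi\|_{\mathbb X_{T,p}} \le \|\bar\varphi\|_{\mathbb X_{\bar T,p}}$ is automatic since restriction is bounded by inclusion: $\bar\varphi|_{[0,T]\times I} = \varphi$ forces $\|\varphi\|_{\mathbb X_{T,p}} = \|\bar\varphi|_{[0,T]\times I}\|_{\mathbb X_{T,p}} \le \|\bar\varphi\|_{\mathbb X_{\bar T,p}}$. Only the upper bound is nontrivial.

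First I would handle the base case $\bar T \le 2T$ via the reflection
\begin{equation*}
    (R\varphi)(t,x) = \begin{cases} \varphi(t,x) & t \in [0,T], \\ \varphi(2T-t,x) & t \in (T,2T]. \end{cases}
\end{equation*}
Since $p>1$, the embedding $W^{1,p}(0,T;L^p(I)) \hookrightarrow C([0,T];L^p(I))$ shows that the two pieces agree at $t = T$ as $L^p(I)$-valued functions, so $R\varphi$ is continuous. Its weak temporal derivative equals $\partial_t\varphi(t)$ on $(0,T)$ and $-\partial_t\varphi(2T-t)$ on $(T,2T)$; the substitution $s = 2T-t$ then yields
\begin{equation*}
    \|\partial_t R\varphi\|_{L^p((0,2T)\times I)}^p = 2\,\|\partial_t\varphi\|_{L^p((0,T)\times I)}^p,
\end{equation*}
and analogously $\|R\varphi\|_{L^p(0,2T;W^{4,p}(I))}^p = 2\,\|\varphi\|_{L^p(0,T;W^{4,p}(I))}^p$. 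Since $R\varphi(0,\cdot) = \varphi(0,\cdot) = 0$, we obtain $R\varphi \in \prescript{}{0}{\mathbb X}_{2T,p}$ with $\|R\varphi\|_{\mathbb X_{2T,p}} \le 2^{1/p}\|\varphi\|_{\mathbb X_{T,p}} \le 2\|\varphi\|_{\mathbb X_{T,p}}$, and I set $\bar\varphi := R\varphi|_{[0,\bar T]}$.

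In the case $\bar T > 2T$, I would fix once and for all a cutoff $\chi \in C^\infty(\mathbb{R};[0,1])$ with $\chi \equiv 1$ on $(-\infty,T]$ and $\chi \equiv 0$ on $[2T,\infty)$, and set $\bar\varphi(t,x) = \chi(t) R\varphi(t,x)$ for $t \in [0,2T]$, extended by zero to $[2T,\bar T]$. The extra commutator term $\chi'(t) R\varphi$ in $\partial_t \bar\varphi$ is controlled using that $\varphi(0) = 0$, which via $\varphi(t) = \int_0^t \partial_t\varphi(\sigma)\,d\sigma$ gives a temporal Poincaré-type bound $\|R\varphi\|_{L^p((0,2T)\times I)} \le C(T)\|\partial_t\varphi\|_{L^p((0,T)\times I)}$. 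The main obstacle is that, a priori, this yields only some finite constant in the inequality rather than exactly $2$; the iterated reflection argument is no help because it degrades the constant like $2^{k/p}$. Producing the sharp constant $2$ uniformly in $\bar T$ requires a careful, dimension-specific choice of $\chi$ (or invoking directly the Hestenes-type higher-order reflection construction of the extension operator $\mathrm{Ev}(t)$ from \cite[Section~9, p.~441]{pruesssimonett2016}, which is engineered for precisely this purpose). Given $p > 5$, the intrinsic regularity margin is ample, so the reference construction applies verbatim.
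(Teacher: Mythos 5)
Your lower bound and your base case $\bar T\le 2T$ are correct, and in fact the even reflection already contains the entire proof; the paper itself gives no argument but simply cites \cite[Lemma~3.10]{garckemenzelpluda2020} and the operator $\mathrm{Ev}(t)$ of \cite[p.~441]{pruesssimonett2016}, which implement exactly the construction below. The gap is in your case $\bar T>2T$: the cutoff $\chi$ is both unnecessary and the sole source of the constant problem you then cannot resolve. The point you are missing is that the zero temporal trace does double duty. Since $\varphi(0,\cdot)=0$ in $L^p(I)$, the reflection satisfies $R\varphi(2T,\cdot)=\varphi(0,\cdot)=0$, so you may define $\bar\varphi=R\varphi$ on $[0,2T]$ and $\bar\varphi\equiv 0$ on $[2T,\bar T]$ with \emph{no} cutoff: the resulting map $t\mapsto\bar\varphi(t,\cdot)$ is continuous into $L^p(I)$ on all of $[0,\bar T]$ (matching at $t=T$ by reflection and at $t=2T$ by the vanishing trace), is piecewise $W^{1,p}$ in time, and hence lies in $W^{1,p}(0,\bar T;L^p(I))$ with the piecewise derivative; membership in $L^p(0,\bar T;W^{4,p}(I))$ requires no matching at all. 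The norm identities you already computed then give
\begin{equation*}
\|\bar\varphi\|_{\mathbb X_{\bar T,p}}=2^{1/p}\,\|\varphi\|_{\mathbb X_{T,p}}\leq 2\,\|\varphi\|_{\mathbb X_{T,p}},
\end{equation*}
uniformly in $\bar T$, with no commutator term and no Poincar\'e estimate. Your diagnosis that a ``careful, dimension-specific choice of $\chi$'' or a Hestenes-type higher-order reflection is needed is therefore a misreading of the difficulty: the Pr\"uss--Simonett operator is precisely reflect-then-extend-by-zero, and it works because one is in the zero-trace class $\prescript{}{0}{\mathbb X}_{T,p}$. As written, your argument for $\bar T>2T$ only yields a constant of the form $2^{1/p}(1+C\|\chi'\|_\infty T)$, which need not be $\leq 2$, so the proof is incomplete until you either drop the cutoff or genuinely verify the constant for the reference construction rather than asserting it.
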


\begin{proposition}\label{prop:xtp-mappings-and-embeddings}
    Let $T\in (0,\infty)$, $p\geq 2$ and let $0\leq k\leq 4$ be an integer. Then
    \begin{enumerate}[(i)]
        \item\label{it:buc-emb} $\mathbb X_{T,p}\hookrightarrow \mathrm{BUC}([0,T],B_{p,p}^{4(1-\frac1p)}(I,\R^n))$ with the estimate 
        \begin{equation}
            \|f\|_{\mathrm{BUC}([0,T],B_{p,p}^{4(1-\frac1p)}(I,\R^n))} \leq C(p) \|f\|_{\mathbb X_{T,p}}\quad\text{for all $f\in\prescript{}{0}{\mathbb X}_{T,p}$},
        \end{equation}
        \item\label{it:kder-emb} the $k$-th spacial derivative is continuous as a map
        \begin{align}
            \partial_x^k \colon\mathbb X_{T,p} &\to H^{1-\frac{1}{4}k,p}(0,T;L^p(I,\R^n))\cap L^p(0,T;H^{4-k,p}(I,\R^n))\\
            &\hookrightarrow W^{\theta(1-\frac14k),p}(0,T;W^{(1-\theta)(4-k),p}(I,\R^n))\quad\text{for all $\theta\in(0,1)$}\label{eq:mixed-sobolev-embedding}
        \end{align}
        with the estimate
        \begin{equation}
            \|\partial_x^kf\|_{H^{1-\frac{1}{4}k,p}(0,T;L^p(I,\R^n))\cap L^p(0,T;H^{4-k,p}(I,\R^n))} \leq C(p,k) \|f\|_{\mathbb X_{T,p}}\quad\text{for all $f\in\prescript{}{0}{\mathbb X}_{T,p}$},
        \end{equation}
        \item\label{it:trkder-emb} the spatial trace of the $k$-th spatial derivative is continuous as a map 
        \begin{equation}
            \mathrm{tr}_{\partial I}\partial_x^k \colon\mathbb{X}_{T,p} \to %W^{1-\frac14k-\frac{1}{4p},p}(0,T;L^p(\partial I,\R^n)) 
             W^{1-\frac14k-\frac{1}{4p},p}(0,T;\R^n\times\R^n)
        \end{equation}
        with the estimate
        \begin{equation}
            \|\mathrm{tr}_{\partial I}\partial_x^kf\|_{W^{1-\frac14k-\frac{1}{4p},p}(0,T;\R^n\times\R^n)} \leq C(p,k) \|f\|_{\mathbb X_{T,p}}\quad\text{for all $f\in\prescript{}{0}{\mathbb X}_{T,p}$},
        \end{equation}
        \item\label{it:hoeld-emb} for $p>5$ and $\sigma\in(\frac1p,\frac14(1-\frac1p))$, $\mathbb X_{T,p}\hookrightarrow C^{\sigma-\frac1p}([0,T],C^{3+(1-4\sigma-\frac1p)}(I,\R^n))$. Let $T'\in(0,\infty)$. For all $T\in(0,T')$, we have the estimate
        \begin{equation}
            \|f\|_{C^{\sigma-\frac1p}([0,T],C^{3+(1-4\sigma-\frac1p)}(I,\R^n))} \leq C(\sigma,p,T') \|f\|_{\mathbb X_{T,p}}\quad\text{for all $f\in\prescript{}{0}{\mathbb X}_{T,p}$}.
        \end{equation}
    \end{enumerate}
\end{proposition}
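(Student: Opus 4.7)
The plan is to deduce all four statements from classical results about anisotropic parabolic function spaces, collected in references such as Amann's book, Pr\"uss--Simonett \cite{pruesssimonett2016}, Meyries--Schnaubelt, and Denk--Hieber--Pr\"uss \cite{denkhieberpruess2007}. The main point is that $\mathbb{X}_{T,p}$ is the natural maximal regularity space for a fourth-order parabolic problem, so the embeddings below are standard; the only genuine task is to make all constants independent of $T$ on the subspace $\prescript{}{0}{\mathbb{X}}_{T,p}$, which we achieve via the extension lemma (Lemma~A.1).

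For \emph{(i)}, I would invoke the classical temporal trace theorem for the interpolation couple $(L^p(I,\R^n), W^{4,p}(I,\R^n))$: any $f \in W^{1,p}(0,T;L^p) \cap L^p(0,T;W^{4,p})$ lies in $\mathrm{BUC}([0,T], (L^p, W^{4,p})_{1-\frac1p,p})$, and the real interpolation space on the right equals $B^{4(1-1/p)}_{p,p}(I,\R^n) = W^{4(1-1/p),p}(I,\R^n)$ by the definition \eqref{eq:SobSlobSpaces}. To obtain the $T$-uniform estimate on $\prescript{}{0}{\mathbb{X}}_{T,p}$, given $f \in \prescript{}{0}{\mathbb{X}}_{T,p}$ I would extend it to $\bar f \in \prescript{}{0}{\mathbb{X}}_{\bar T, p}$ (for any fixed $\bar T$, e.g.\ $\bar T = 1$) via Lemma~A.1, apply the trace theorem on the fixed interval $[0,\bar T]$, and restrict back.

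For \emph{(ii)}, I would use that $W^{k,p} = H^{k,p}$ for integer $k$ (stated in Appendix~A) together with the mixed-derivative theorem: since $\partial_x^k$ loses $k$ orders spatially and hence $k/4$ orders temporally in this scaling, one gets the continuity into $H^{1-k/4,p}(0,T;L^p) \cap L^p(0,T;H^{4-k,p})$. The embedding \eqref{eq:mixed-sobolev-embedding} then follows by complex interpolation of the two components at parameter $\theta \in (0,1)$, combined with $H^{s,p} \hookrightarrow W^{s,p}$ when $s$ is noninteger. Again the $T$-independence on $\prescript{}{0}{\mathbb{X}}_{T,p}$ comes from Lemma~A.1. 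Part \emph{(iii)} then follows from \emph{(ii)}: choose $\theta \in (0,1)$ so that $(1-\theta)(4-k) = \frac{1}{p} + \varepsilon$ with $\varepsilon>0$ small, apply the spatial trace theorem $W^{\frac1p+\varepsilon,p}(I) \to W^{\varepsilon,p}(\partial I) \cong \R^n\times\R^n$, and let $\varepsilon \downarrow 0$ to obtain the sharp temporal regularity $1 - \frac k4 - \frac{1}{4p}$. Alternatively (and this is the standard route) one invokes the anisotropic trace theorem directly, see for example \cite[Chapter~3]{denkhieberpruess2007}.

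For \emph{(iv)}, the embedding is obtained by combining \emph{(i)} (or more precisely the time-uniform version on $\prescript{}{0}{\mathbb{X}}_{T,p}$) with Morrey's inequality \eqref{eq:morrey} applied in both variables. Spatially, since $p>5$ we have $4(1-\frac1p) - \frac1p = 4 - \frac{5}{p} > 3$, giving $W^{4(1-1/p),p}(I) \hookrightarrow C^{3 + (1 - 5/p)}(I)$. For the temporal H\"older regularity, I would interpolate the BUC-embedding of \emph{(i)} with the continuity of $\partial_t$ into $L^p(0,T; L^p(I))$: choose $\theta \in (0,1)$ such that $\theta \cdot 1 < \sigma$ to extract $W^{\sigma,p}(0,T)$-regularity with values in an appropriately interpolated spatial space, then apply Morrey once more in time to lose $\frac1p$ and obtain $C^{\sigma - \frac1p}$. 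For the $T'$-uniform bound on $(0,T') \ni T$, I again extend via Lemma~A.1 to the fixed interval $[0,T']$. The only delicate point is keeping the constants independent of $T$, which as noted is handled uniformly by the extension lemma; this is the step that requires attention, but it is otherwise routine.
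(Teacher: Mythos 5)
Your proposal follows essentially the same route as the paper: the published proof is a sequence of citations to the anisotropic temporal trace theorem (for~(i)), the mixed-derivative theorem and interpolation of the intersection space (for~(ii)), the anisotropic boundary trace theorem (for~(iii)), and Morrey's inequality applied to~(ii) with $k=0$ (for~(iv)), with the $T$-uniform constants on $\prescript{}{0}{\mathbb X}_{T,p}$ obtained exactly as you describe, by extending to a fixed time interval via the extension lemma and restricting back. One caveat concerning~(iii): your first route --- taking the spatial trace at regularity $\frac1p+\varepsilon$ and ``letting $\varepsilon\downarrow 0$'' --- does not produce the endpoint exponent. With $(1-\theta)(4-k)=\frac1p+\varepsilon$ the resulting temporal regularity is $\theta(1-\tfrac k4)=1-\tfrac k4-\tfrac1{4p}-\tfrac\varepsilon4$, strictly below the claimed $1-\tfrac k4-\tfrac1{4p}$ for every $\varepsilon>0$, and membership in $W^{s-\varepsilon,p}$ for all $\varepsilon>0$ does not imply membership in $W^{s,p}$ (nor do the constants stay bounded as $\varepsilon\downarrow0$). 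Your stated alternative --- invoking the anisotropic trace theorem directly --- is the correct argument for the sharp exponent and is what the paper actually does.
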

\begin{proof}
    Item \eqref{it:buc-emb} is a special case of \cite[Theorem~4.2]{meyriesschnaubelt2012}, Item \eqref{it:kder-emb} (except for the embedding \eqref{eq:mixed-sobolev-embedding}) follows from \cite[Lemma~3.4]{meyriesschnaubelt2012} and Item \eqref{it:trkder-emb} is a consequence of \eqref{it:kder-emb} together with \cite[(4.13) and Theorem~4.5]{meyriesschnaubelt2012}. The embedding \eqref{eq:mixed-sobolev-embedding} follows from \cite[Proposition~3.2]{meyriesschnaubelt2012}. Finally, \eqref{it:hoeld-emb} follows by combining \eqref{it:kder-emb} with $k=0$ and \eqref{eq:morrey} where the time-uniform estimate is due to \Cref{lem:0xtp-extension-lemma}.% \todo{(maybe comment on time independence in the zero-trace-estimate)}.
\end{proof}

\begin{lemma}\label{lem:estimate-in-boundary-terms}
    Let $p\in(1,\infty)$, $s\in (\frac1p,1)$, and $T'>0$. Then, there exists $C=C(T',s,p)>0$ with
    \begin{equation}\label{eq:end1}
        \|\langle f, g\rangle \|_{W^{s,p}(0, T)} \leq C(T',s,p)([f]_{W^{s,p}(0, T)} + \|f\|_{C^0([0, T])}) \|g \|_{W^{s,p}(0, T)}
    \end{equation}
    for all $T\in (0,T']$, $f\in W^{s,p}(0,T;\R^N)$, and $g\in \prescript{}{0}{W}^{s,p}(0,T;\R^N)$. If $f,g\in \prescript{}{0}{W}^{s,p}(0,T';\R^N)$, then
    \begin{equation}\label{eq:end2}
        \|\langle f, g\rangle \|_{W^{s,p}(0, T)} \leq o(1) \|g\|_{W^{s,p}(0, T)}\quad\text{for $ T\searrow 0$}.
    \end{equation}
\end{lemma}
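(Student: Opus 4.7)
\textbf{Proof plan for \Cref{lem:estimate-in-boundary-terms}.} The plan is to split $\|\cdot\|_{W^{s,p}(0,T)}$ into its $L^p$-part and its Gagliardo seminorm, apply an elementary pointwise product inequality to each, and then use the vanishing trace of $g$ at $t=0$ to trade the resulting $C^0$-norm of $g$ for a Slobodetskii seminorm with a constant that is monotone in~$T$. The starting point is the identity
\[
\langle f(x),g(x)\rangle-\langle f(y),g(y)\rangle=\langle f(x)-f(y),g(x)\rangle+\langle f(y),g(x)-g(y)\rangle,
\]
which, after inserting into \eqref{eq:def-sobslob-seminorm} and applying the triangle inequality in $L^p((0,T)^2;|x-y|^{-1-sp}\,dx\,dy)$, yields the product estimate
\[
[\langle f,g\rangle]_{W^{s,p}(0,T)}\leq [f]_{W^{s,p}(0,T)}\|g\|_{C^0([0,T])}+\|f\|_{C^0([0,T])}[g]_{W^{s,p}(0,T)}.
\]
For the $L^p$-part, H\"older directly gives $\|\langle f,g\rangle\|_{L^p(0,T)}\leq \|f\|_{C^0([0,T])}\|g\|_{L^p(0,T)}$.

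The crucial second ingredient is a scale-invariant Morrey estimate: since $s>\frac1p$, one has, for any $h\in W^{s,p}(0,T;\R^N)$,
\[
[h]_{C^{0,s-1/p}([0,T])}\leq C(s,p)\,[h]_{W^{s,p}(0,T)},
\]
with a constant independent of~$T$ (provable e.g.\ via a Campanato-type oscillation argument over shrinking intervals). Applied to $g$ at the endpoint $0$, where $g(0)=0$, this gives
\[
\|g\|_{C^0([0,T])}\leq C(s,p)\,T^{s-1/p}[g]_{W^{s,p}(0,T)}\leq C(s,p)\,(T')^{s-1/p}[g]_{W^{s,p}(0,T)}
\]
for every $T\in(0,T']$. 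Inserting this into the product estimate converts the $[f]_{W^{s,p}}\|g\|_{C^0}$ term into $C(T',s,p)\,[f]_{W^{s,p}(0,T)}\,\|g\|_{W^{s,p}(0,T)}$, and combining with the $L^p$-bound produces \eqref{eq:end1} with a constant that depends only on $(T')^{s-1/p}$.

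For \eqref{eq:end2}, the same Morrey estimate applied to $f$ (now also vanishing at $0$) gives $\|f\|_{C^0([0,T])}\leq C(s,p)(T')^{s-1/p}[f]_{W^{s,p}(0,T)}$, so the right-hand side of \eqref{eq:end1} is dominated by $\tilde C(T',s,p)\,[f]_{W^{s,p}(0,T)}\,\|g\|_{W^{s,p}(0,T)}$. Since $f\in W^{s,p}(0,T';\R^N)$ is fixed, the dominated convergence theorem applied to the Gagliardo double integral on $(0,T)^2\subseteq(0,T')^2$ yields $[f]_{W^{s,p}(0,T)}\to 0$ as $T\searrow 0$, which gives \eqref{eq:end2}. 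The only genuinely delicate step is the $T$-independence of the Morrey constant above, which is what permits trading a $C^0$-norm for a Slobodetskii seminorm on a shrinking interval; every other step is an essentially mechanical combination of the product rule with the embedding.
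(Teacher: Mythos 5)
Your proof is correct, and the decomposition into the $L^p$-part plus the Gagliardo seminorm together with the bilinear product estimate is exactly what the paper does. The one step where you genuinely diverge is how the $T$-uniformity of the constant in \eqref{eq:end1} is obtained: the paper extends $g$ (using $g(0)=0$) to a function $\tilde g$ on the \emph{fixed} interval $[0,T']$ with $\|\tilde g\|_{W^{s,p}(0,T')}\leq C(s,p)\|g\|_{W^{s,p}(0,T)}$ and then applies the ordinary Morrey embedding \eqref{eq:morrey} on $[0,T']$, so that the $T$-independence is inherited from the extension operator; you instead invoke the scale-invariant seminorm estimate $[h]_{C^{0,s-1/p}([0,T])}\leq C(s,p)[h]_{W^{s,p}(0,T)}$ and use $g(0)=0$ to get $\|g\|_{C^0([0,T])}\leq C(s,p)T^{s-1/p}[g]_{W^{s,p}(0,T)}$ directly. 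Both are valid; your version trades the extension lemma for the interval-independence of the fractional Morrey constant, which is indeed true (it follows from the fixed-interval case by the scaling $x\mapsto Tx$, under which $[h]_{C^{0,s-1/p}}$ and $[h]_{W^{s,p}}$ rescale by the same factor $T^{s-1/p}$), and it yields the slightly sharper information that the constant degenerates like $(T')^{s-1/p}$. For \eqref{eq:end2} the paper simply notes that the prefactor $[f]_{W^{s,p}(0,T)}+\|f\|_{C^0([0,T])}$ in \eqref{eq:end1} tends to $|f(0)|=0$; your route through dominating $\|f\|_{C^0([0,T])}$ by $[f]_{W^{s,p}(0,T)}$ and then sending the Gagliardo double integral to zero by dominated convergence reaches the same conclusion with one extra (harmless) step.
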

\begin{proof}
    Throughout this proof, $C$ is a constant that changes from line to line and depends at most on $T'$, $p$, and $s$. Let $T\in(0,T']$, $f\in W^{s,p}(0,T;\R^N)$, and $g\in \prescript{}{0}{W}^{s,p}(0,T;\R^N)$. Clearly,
    \begin{equation}
        \|\langle f,g\rangle\|_{L^p(0, T)} \leq \|f\|_{C^0([0, T])} \|g\|_{L^p(0, T)}. 
    \end{equation}
    Since $g(0)=0$, arguing as in \cite[Lemma~3.12]{garckemenzelpluda2020}, there exists $\tilde g\in W^{s,p}(0,T')$ with $\tilde g|_{[0,T]}=g$ and $\|\tilde g\|_{W^{s,p}(0,T')}\leq C(p,s) \|g\|_{W^{s,p}(0,T)}$. Using \eqref{eq:morrey} with $J=[0,T']$, we have
    \begin{equation}
        \|g\|_{C^0([0,T])} \leq  \|\tilde g\|_{C^0([0,T'])} \leq C(T',s,p) \|\tilde g\|_{W^{s,p}(0,T')} \leq C(T',s,p)\|g\|_{W^{s,p}(0,T)}.
    \end{equation}
    Thus, working with \eqref{eq:def-sobslob-seminorm},
    \begin{align}
        [\langle f,g\rangle]_{W^{s,p}(0, T)} &\leq \|g\|_{C^0([0, T])} [f]_{W^{s,p}(0, T)} + \|f\|_{C^0([0, T])} [g]_{W^{s,p}(0, T)}\\
        &\leq C(T',s,p) ([f]_{W^{s,p}(0, T)} + \|f\|_{C^0([0, T])}) \|g\|_{W^{s,p}(0, T)},
    \end{align}
    which yields \eqref{eq:end1}. Furthermore, \eqref{eq:end2} follows as $[f]_{W^{s,p}(0, T)} + \|f\|_{C^0([0, T])} \to |f(0)|$ for $T \searrow 0$. 
\end{proof}

%%%%%%%%%%%%%%%%%%%%%%%%%%%%%%%%%%%%%%%%%%%%%%%%%%%%%%%%%%%%%%%%%%%%%%%%%%%%%%%%%%%%%%%%%%%%%%%%
%%%%%%%%%%%%%%%%%%%%%%%%%%%%%%%%%%%%%%%%%%%%%%%%%%%%%%%%%%%%%%%%%%%%%%%%%%%%%%%%%%%%%%%%%%%%%%%%
\subsection{Well-posedness of the linear system: Proof of \Cref{prop:wp-lin-system}}\label{app:wp-lin-system}
%%%%%%%%%%%%%%%%%%%%%%%%%%%%%%%%%%%%%%%%%%%%%%%%%%%%%%%%%%%%%%%%%%%%%%%%%%%%%%%%%%%%%%%%%%%%%%%%
%%%%%%%%%%%%%%%%%%%%%%%%%%%%%%%%%%%%%%%%%%%%%%%%%%%%%%%%%%%%%%%%%%%%%%%%%%%%%%%%%%%%%%%%%%%%%%%%

In this section, we prove well-posedness of \eqref{eq:lin-system} and the estimate \eqref{eq:time-indep-maxreg-estimate}. To this end, we use \cite[Theorem~5.4]{solonnikov1965MathPhysics} and start by rewriting the linear system in the notation of \cite{solonnikov1965MathPhysics}. With 
\begin{align}
    \mathcal L(t,x,\partial_t,\partial_x) = \Big(\partial_t &+ \frac2{|\partial_x\bar\gamma(t,x)|^4}\partial_x^4 + \sum_{j=1}^3 a_{j}(|\partial_x\bar\gamma(t,x)|^{-1},\partial_x\bar\gamma(t,x),\dots,\partial_x^3\bar\gamma(t,x))\partial_x^j \\
    &- \Lambda(0) \cdot \sum_{j=1}^2b_{j}(|\partial_x\bar\gamma(t,x)|^{-1},\partial_x\bar\gamma(t,x),\partial_x^2\bar\gamma(t,x))\partial_x^j \Big) \mathrm{Id}_{n\times n}
\end{align}
and 
\begin{align}
    \mathcal B(t,x,\partial_t,\partial_x) = \begin{pmatrix}
        \xi(\bar\gamma(t,x))^t\\
        T_1(\bar\gamma(t,x))^t\partial_x+c^{(1)}(\bar\gamma(t,x))\\
        \vdots\\
        T_{n-1}(\bar\gamma(t,x))^t\partial_x+c^{(n-1)}(\bar\gamma(t,x))\\
        \frac{1}{|\partial_x\bar\gamma(t,x)|^3}\partial_x^3\mathrm{Id}_{n\times n} + \sum_{j=0}^2 d_{j,\bar\gamma(t,x)}(|\partial_x\bar\gamma(t,x)|^{-1},\partial_x\bar\gamma(t,x),\dots,\partial_x^3\bar\gamma(t,x))\partial_x^j
    \end{pmatrix},
\end{align}
\eqref{eq:lin-system} can be rewritten as $\mathcal L(t,x,\partial_t,\partial_x)\varphi = f$, $\varphi|_{t=0}=0$ and $\mathcal B(t,x,\partial_t,\partial_x)\varphi = (h_0,h_1,h_3)$.

With the notation of \cite[Section~1]{solonnikov1965MathPhysics}, the associated principal parts are given by
\begin{align}
    \mathcal L_0(t,x,p,\iu\zeta) &= (p+\frac{2}{|\partial_x\bar\gamma(t,x)|^4}\zeta^4) \mathrm{Id}_{n\times n},\\ 
    L(t,x,p,\iu \zeta) &= \det\mathcal L(t,x,p,\iu x)= (p+\frac{2}{|\partial_x\bar\gamma(t,x)|^4}\zeta^4)^n
\end{align}
and 
\begin{equation}
    \mathcal B_0(t,x,p,\iu\zeta)= \begin{pmatrix}
        \xi(\bar\gamma(t,x))^t\\
        \iu\zeta T_1(\bar\gamma(t,x))^t\\
        \vdots\\
        \iu\zeta T_{n-1}(\bar\gamma(t,x))^t\\
        -\iu\zeta^3 \frac{1}{|\bar\gamma(t,x)|^3}\mathrm{Id}_{n\times n} \end{pmatrix}.
\end{equation}

\subsubsection*{Parabolicity condition} Clearly, all zeros of $L$ as a polynomial in $p$ satisfy $p = -\frac2{|\partial_x\bar\gamma(t,x)|^4}\zeta^4$, so we have uniform parabolicity in the sense of \cite[p.~9]{solonnikov1965MathPhysics} with $\delta=\min_{t\in[0,T_0],\ x\in[-1,1]}\frac2{|\partial_x\bar\gamma(t,x)|}>0$, using that $\bar\gamma(t,\cdot)$ is an immersion for all $0\leq t\leq T_0$ by our choice of $T_0$.

\subsubsection*{Complementary condition} For the complementary condition as on \cite[p.~11]{solonnikov1965MathPhysics}, fix $x\in\partial I=\{\pm1\}$. W.l.o.g.\ we only consider the case $x=-1$ so that $\zeta(x)=0$, tangential vector, and $\nu(x)=1$ for the interior normal. The case $x=1$ is completely analogous with $\nu(x)=-1$. Let $t>0$ and $p\in\C$ with $p\neq 0$ and $\mathrm{Re} \ p\geq 0$. Writing 
\begin{equation}
    -p=|p|e^{\iu\theta}\quad\text{for some $\theta\in[\frac12\pi,\frac32\pi]$},
\end{equation}
the polynomial $M^+(t,x,p,\zeta,\tau)$ on \cite[p.~11]{solonnikov1965MathPhysics} turns out to be given by
\begin{equation}
    M^+(t,x,p,\zeta,\tau)=M^+(t,x,\tau) = (\tau-\tau_1^+)^n(\tau-\tau_2^+)^n
\end{equation}
where 
\begin{equation}
    \tau_1^+=|\partial_x\bar\gamma(t,x)|\sqrt[4]{|p|/2}\ e^{\iu \frac{\theta}{4}}\quad\text{and}\quad\tau_2^+=|\partial_x\bar\gamma(t,x)|\sqrt[4]{|p|/2}\ e^{\iu( \frac{\theta}{4}+\frac\pi2)}.
\end{equation}
\begin{remark}\label{rem:tau^2-and-tau^3}
    One computes
    \begin{equation}
        (\tau_1^+)^2 = - (\tau_2^+)^2\quad\text{and}\quad (\tau_1^+)^3 = - \iu (\tau_2^+)^3.
    \end{equation}
\end{remark}
Writing $\widetilde{\mathcal{A}}(t,x,p,\iu(\zeta+\tau\nu))=\mathcal B_0(t,x,p,\iu(\zeta+\tau\nu))\hat{\mathcal L}_0(t,x,p,\iu(\zeta+\tau\nu))$ where $\hat{\mathcal L}_0=L(\mathcal L_0)^{-1}$, the complementary condition is satisfied if and only if the rows of $\widetilde{\mathcal{A}}(t,x,p,\iu(\zeta+\tau\nu))$ are linearly independent modulo $M^+(t,x,p,\zeta,\tau)$ if $|p|^2>0$. Simplifying common factors in $\widetilde{\mathcal{A}}$ and $M^+$, we have that this is equivalent to the following. For all $w_1,w_2\in\C^n$,
\begin{equation}\label{eq:complementary-condition}
    (w_1^t,w_2^t) \mathcal B_0(t,x,p,\iu \tau) = 0 \text{ modulo }(\tau-\tau_1^+)(\tau-\tau_2^+) \implies w_1=w_2=0.
\end{equation}
Suppose $w_1,w_2\in\C^n$ satisfy $(w_1^t,w_2^t) \mathcal B_0(t,x,p,\iu \tau) = 0$ modulo $(\tau-\tau_1^+)(\tau-\tau_2^+)$. Plugging in $\tau=\tau_k^+$ for $k=1,2$ yields
\begin{equation}\label{eq:comp-cond-1}
    w_1^{(1)}\xi(\bar\gamma(t,x)) + \sum_{j=2}^n (\iu\tau_k^+w_1^{(j)}) T_{j-1}(\bar\gamma(t,x)) =  (\tau_k^+)^3 \sum_{j=1}^n \frac{\iu w_2^{(j)}}{|\partial_x\bar\gamma(t,x)|^3} e_j.
\end{equation}
Dividing by $(\tau_k^+)^3$, the right hand side no longer depends on $k$. So using that the collection of vectors $\{\xi(\bar\gamma(t,x)),T_1(\bar\gamma(t,x)),\dots,T_{n-1}(\bar\gamma(t,x))\}$ is linearly independent, we get
\begin{equation}
    \begin{cases}
        w_1^{(1)}\frac{1}{(\tau_1^+)^3} = w_1^{(1)}\frac{1}{(\tau_2^+)^3} &\text{and}\\
        w_1^{(j)}\frac{\iu}{(\tau_1^+)^2} = w_1^{(j)}\frac{\iu}{(\tau_2^+)^2}&\text{for all $j=2,\dots,n$}.
    \end{cases}
\end{equation} 
Using \Cref{rem:tau^2-and-tau^3}, this yields $w_1=0$. By \eqref{eq:comp-cond-1}, we find $w_2=\sum_{j=1}^n w_2^{(j)}e_2=0$ so that the complementary condition \eqref{eq:complementary-condition} is satisfied.

\subsubsection*{Regularity of the coefficients}

%By \todo{Appendix ToDo}, $\bar\gamma\in \mathbb X_{T_0,p}\hookrightarrow C^{\sigma-\frac1p}([0,T_0],C^{3+(1-4\sigma-\frac1p)}(I,\R^n))$ for all $\sigma\in(\frac1p,\frac14(1-\frac1{p}))$, using $p>5$. 
In the following, we verify the assumptions in \cite[Theorem~5.4]{solonnikov1965MathPhysics} on the regularity of the coefficients of $\mathcal L$ and $\mathcal B$. In the setting of \cite[Theorem~5.4]{solonnikov1965MathPhysics}, see also page 10 in the same reference, we have $k=0$, $b=2$, $l=0$, $m=n$, $t_1,\dots,t_m=4$, $s_1,\dots,s_m=0$, $r=m=n$, $\rho_1,\dots,\rho_m=-4$, and for the boundary conditions $\sigma_1=-4$, $\sigma_2,\dots,\sigma_n=-3$ and $\sigma_{n+1}=\dots=\sigma_{2n}=-1$. 
\begin{itemize}
    \item We need to check that any $\partial_t^\mu\partial_x^\nu$ derivative of the coefficients of $\mathcal L$ with $4\mu+\nu\leq 0$ is continuous, that is, $\mathcal L$ has continuous coefficients. This is clearly satisfied by \Cref{prop:xtp-mappings-and-embeddings}\eqref{it:hoeld-emb} since the coefficients are smooth functions of at most third-order spatial derivatives of $\bar\gamma$. 
    
    For the reader's convenience we point out that there is a typo in the english translation \cite[Theorem~5.4]{solonnikov1965MathPhysics} of the original russian version of \cite[Theorem~5.4]{solonnikov1965}.
    \item The coefficients of $\mathcal B_{qj}$ for $1\leq q\leq n$ and $1\leq j\leq n$ are constant in time and thus trivially belong to any (parabolic) Hölder space $C^{\frac14s,s}(\Gamma)=C^{\frac14s}([0,T_0])$ of the boundary cylinder $\Gamma=[0,T_0]\times \{\pm1\}$. Indeed, we have $\mathrm{tr}_{\partial I}\bar\gamma(t,\cdot)=\mathrm{tr}_{\partial I}\gamma_0$ by \eqref{eq:def-bargamma}. 
    \item Additionally using $\mathrm{tr}_{\partial I}\partial_x^3\bar\gamma(t,\cdot)=\mathrm{tr}_{\partial I}\partial_x^3\gamma_0$ by \eqref{eq:def-bargamma}, time-dependence of the coefficients of $\mathcal B_{qj}$ for $n+1\leq q\leq 2n$ and $1\leq j\leq n$ is due to $\mathrm{tr}_{\partial I}\partial_x\bar\gamma$ and $\mathrm{tr}_{\partial I}\partial_x^2\bar\gamma$ which both belong to $C^{\frac12-\frac{1}{4p}-\frac1p}([0,T_0])$ by \Cref{prop:xtp-mappings-and-embeddings}\eqref{it:trkder-emb} and \eqref{eq:morrey}. Using that 
    \begin{equation}
        \frac12-\frac{1}{4p}-\frac1p = \frac{1}{4} - \frac{1}{4p} + \underbrace{\frac14-\frac1p}_{>\frac14-\frac15>0},
    \end{equation}
    and that $C^{\frac12-\frac{1}{4p}-\frac1p}([0,T_0])$ is an algebra, also the coefficients of $\mathcal B_{qj}$ have the required regularity for $n+1\leq q\leq 2n$ and $1\leq j\leq n$.
\end{itemize}

\subsubsection*{%Third order 
Compatibility conditions}

The compatibility conditions of order $-1$, i.e.\ $i_q=0$ in \cite[Equation~(4.19)]{solonnikov1965MathPhysics} for all $1\leq q\leq 2n$, are clearly satisfied for zero initial data due to the zero time-trace-spaces used for the boundary data in $\prescript{}{0}{\mathbb F}_{T,p}$.

\subsubsection*{Conclusion}

Altogether, \cite[Theorem~5.4]{solonnikov1965MathPhysics} applies and yields the claim of \Cref{prop:wp-lin-system} except for the $T_0$-dependence of the constant in \eqref{eq:unif-bd-derivatives}. The fact that the constant only depends on $T_0$, not on $T$ can be shown by applying what we proved above on the interval $[0,T_0]$ and, for $T\leq T_0$, extending the data onto $[0,T_0]$ as in \cite[Lemmas~3.10 and 3.12]{garckemenzelpluda2020}, critically using that the temporal trace vanishes.

Finally, the continuous dependence of the constant on $\gamma_0$ in $W^{4(1-\frac1p),p}(I)$ follows from \Cref{prop:xtp-mappings-and-embeddings}\eqref{it:hoeld-emb} combined with \eqref{eq:bargamma-gamma0-estimate} applied to $E\gamma_0-E\tilde\gamma_0$.

%\section*{Acknowledgements}
% \section*{Data availability} 
% Data sharing is not applicable to this article as no datasets were generated or analyzed during the current study. 

% \section*{Declarations} 
% \textbf{Ethics approval} An ethics statement is not applicable because this study is based exclusively on literature and original ideas.

% \textbf{Conflicts of Interest} The authors have no competing interests to declare that are relevant to the content of this article. 

\printbibliography

% \bibliographystyle{abbrv}
% \bibliography{biblio}

\end{document}